\definecolor{tocolor}{rgb}{.1,.1,.5}
\definecolor{urlcolor}{rgb}{.2,.2,.6}
\definecolor{linkcolor}{rgb}{.1,.1,.6}
\definecolor{citecolor}{rgb}{.6,.2,.1}
\definecolor{darkgreen}{rgb}{0.0, 0.5, 0.0}
\providecommand{\U}[1]{\protect\rule{.1in}{.1in}}
\newtheorem{theorem}{Theorem}[section]
\newtheorem*{theorem*}{Theorem}
\newtheorem*{claim*}{Claim}
\newtheorem{corollary}[theorem]{Corollary}
\newtheorem{definition}[theorem]{Definition}
\newtheorem{example}[theorem]{Example}
\newtheorem{lemma}[theorem]{Lemma}
\newtheorem{proposition}[theorem]{Proposition}
\newtheorem*{proposition*}{Proposition}
\newtheorem{remark}[theorem]{Remark}
\numberwithin{equation}{section}
\newcommand{\id}{\mathrm{Id}}
\newcommand{\G}{\mathcal{G}}
\renewcommand{\U}{\mathcal{U}}
\renewcommand{\O}{\mathcal{O}}
\newcommand{\K}{\mathcal{K}}
\renewcommand{\H}{\mathcal{H}}
\newcommand{\ka}{\mathfrak{k}}
\renewcommand{\gg}{\mathfrak{g}}
\newcommand{\Rep}{V}
\newcommand{\Ato}{\Rightarrow}
\newcommand{\al}{\alpha}                
\newcommand{\be}{\beta}                 
\newcommand{\ga}{\gamma}                
\newcommand{\s}{\mathbf{s}}             
\renewcommand{\t}{\mathbf{t}}           
\newcommand{\tto}{\rightrightarrows}    
\newcommand{\timesst}{\tensor[_\s]{\times}{_\t}} 
\newcommand{\timestp}{\tensor[_\t]{\times}{_p}} 
\newcommand{\timessp}{\tensor[_\s]{\times}{_p}} 
\newcommand{\ract}{\curvearrowright}
\newcommand{\lact}{\curvearrowleft}
\newcommand{\Lie}{\mathscr{L}}
\newcommand{\dd}{\operatorname{d}}
\renewcommand{\d}{\dd}
\newcommand{\D}{\operatorname{D}}
\newcommand{\bas}{\mathrm{bas}}
\newcommand{\Der}{\operatorname{Der}}
\newcommand{\pr}{\operatorname{pr}}
\newcommand{\Aut}{\operatorname{Aut}}
\newcommand{\ad}{\operatorname{ad}}
\newcommand{\rank}{\operatorname{rank}}
\newcommand{\im}{\operatorname{Im}}
\newcommand{\mult}{\mathrm{M}}
\newcommand{\imult}{\mathrm{IM}}
\newcommand{\can}{\mathrm{can}}
\newcommand{\gl}{\operatorname{\mathfrak{gl}}}
\newcommand{\Hol}{\operatorname{Hol}}
\newcommand{\Ad}{\operatorname{Ad}}
\newcommand{\R}{\mathbb{R}}
\newcommand{\Z}{\mathbb{Z}}
\newcommand{\X}{\mathfrak{X}}
\newcommand{\diffto}{\xrightarrow{\raisebox{-0.2 em}[0pt][0pt]{\smash{\ensuremath{\sim}}}}}
\newcommand{\rmap}{\longrightarrow}
\newcommand{\eps}{\varepsilon}
\newcommand{\bO}{\overline{\Omega}}
\begin{document}
\title{Multiplicative Ehresmann connections}

\author{Rui Loja Fernandes}
\address{Department of Mathematics, University of Illinois at Urbana-Champaign, 1409 W. Green Street, Urbana, IL 61801 USA}
\email{ruiloja@illinois.edu}

\author{Ioan M\u{a}rcu\cb{t}}
\address{Radboud University Nijmegen, IMAPP, 6500 GL, Nijmegen, The Netherlands}
\email{i.marcut@math.ru.nl}

\thanks{RLF was partially supported by NSF grants DMS-2003223 and DMS-2303586.}

\begin{abstract}
We develop the theory of multiplicative Ehresmann connections for Lie groupoid submersions covering the identity, as well as their infinitesimal counterparts. We construct obstructions to the existence of such connections, and we prove existence for several interesting classes of Lie groupoids and Lie algebroids, including all proper Lie groupoids. We show that many notions from the theory of principal bundle connections have analogues in this general setup, including connections 1-forms, curvature 2-forms, Bianchi identity, etc. In \cite{FerMa22} we provide a non-trivial application of the results obtained here to construct local models in Poisson geometry and to obtain linearization results around Poisson submanifolds.
\end{abstract}
\maketitle

\setcounter{tocdepth}{1}
\tableofcontents

\section{Introduction}

The notion of principal bundle connection is ubiquitous in differential geometry and its applications. In this work we study a far reaching extension of this notion. In order to explain it, recall that given a principal $G$-bundle $p:P\to M$ one has an associated gauge groupoid $\G:=P\times_G P\tto M$ and its anchor is a groupoid submersion $\Phi:=(\t,\s):\G\to\H$ onto the pair groupoid $\H:=M\times M\tto M$. Then there is a 1-to-1 correspondence:
\[ 
\left\{\txt{principal bundle\\ \ connections for $p:P\to M$ \\ \, } \right\}
\tilde{\longleftrightarrow}
\left\{\txt{multiplicative Ehresmann\\  \ connections for $\Phi:\G\to\H$ \\ \,}\right\}
\]
Here, by a \emph{multiplicative} Ehresmann connection we mean a distribution $E$ in $\G$ such that $T\G=E\oplus\ker\d\Phi$, which is a \emph{subgroupoid} of the tangent groupoid $T\G\tto TM$. 

We study in this paper multiplicative Ehresmann connections for
any surjective, submersive, Lie groupoid map $\Phi:\G\to\H$ covering the identity. We will see that such connections share many properties with principal bundle connections. One reason for this similarity is that given such a groupoid map its kernel $\K=\ker\Phi$ is a bundle of Lie groups and one obtains for each $x\in M$ a principal bundle $\Phi:\s_{\G}^{-1}(x)\to \s_{\H}^{-1}(x)$ with structure group $\K_x$. We will see that a  multiplicative Ehresmann connection $E$ for $\Phi$ gives rise to a family of principal connections, one for each principal bundle $\Phi:\s_{\G}^{-1}(x)\to \s_{\H}^{-1}(x)$. One should think of this family as a \emph{leafwise component} of $E$. 

Recall that a Cartan connection on a Lie groupoid $\G\tto M$ is an Ehresmann connection for the source map $\s:\G\to M$, which is at the same time a multiplicative distribution. Such connections have been studied extensively, sometimes under different names (see, e.g., \cite{AC13,Blaom06,Blaom12,Behrend05,CrSchStr,Tang06}). For a bundle of Lie groups $p:\K\to M$, they coincide with our notion of multiplicative Ehresmann connection, if we think of the bundle projection $p$ as a groupoid morphism onto the identity groupoid. We will see that a multiplicative Ehresmann connection for $\Phi:\G\to\H$ also gives rise to a Cartan connection $E^\K$ on the kernel $\K=\ker\Phi$. We think of $E^\K$ as the \emph{kernel component} of $E$.

The existence of Cartan connections is, in general, rather mysterious. There are examples of proper groupoids (even transitive!) groupoids which do not admit any Cartan connection. By contrast, we will see that multiplicative Ehresmann connections for a morphism $\Phi:\G\to \H$ often (but not always!) exist. 
For example, they always exist for the anchor map $(\t,\s):\G\to M\times M$ of a transitive Lie groupoid, because of the 1-to-1 correspondence with principal connections, explained in the first paragraph. Also, we will prove the following:

\begin{theorem*}
\label{thm:Morita}
If $\G$ is a proper Lie groupoid, any morphism $\Phi:\G\to \H$ admits a multiplicative Ehresmann connection.
\end{theorem*}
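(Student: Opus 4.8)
The plan is to reformulate the problem as the existence of a \emph{multiplicative splitting} and then to kill the resulting cohomological obstruction using properness. Since $\Phi$ is a surjective submersion covering the identity, $\d\Phi:T\G\to T\H$ is a fiberwise surjective morphism of the tangent groupoids whose kernel is the multiplicative vertical distribution $V=\ker\d\Phi$. A multiplicative Ehresmann connection is then the same as a multiplicative section $\sigma$ of $\d\Phi$, i.e.\ a splitting of the short exact sequence of VB-groupoids
\[
0\longrightarrow V\longrightarrow T\G\stackrel{\d\Phi}{\longrightarrow}\d\Phi(T\G)\longrightarrow 0
\]
in the category of Lie groupoids over $TM$. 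Forgetting multiplicativity, such splittings always exist: over the manifold $\G$ the complements to $V$ inside $T\G$ form an affine bundle modelled on $\operatorname{Hom}(\d\Phi(T\G),V)$, and an affine bundle with contractible fibers admits a global section by a partition of unity. So I would first fix an arbitrary, a priori non-multiplicative, Ehresmann connection $\sigma_0$.

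Second, I would measure the failure of $\sigma_0$ to be a subgroupoid. Comparing $\sigma_0$ transported along the two factors of a composable pair against $\sigma_0$ on the product defines a cochain $\delta\sigma_0$ over the manifold $\G^{(2)}$ of composable arrows, with values in the linear coefficients $\operatorname{Hom}(\d\Phi(T\G),V)$. Restricting the linear data to the units identifies the relevant coefficient object with a genuine representation $\mathcal R$ of $\G$ built from the Lie algebra bundle $\la=\mathrm{Lie}(\K)$ of the kernel and from $T\H$; with respect to the differential of the groupoid (bar) complex $C^\bullet(\G;\mathcal R)$ one checks that $\delta\sigma_0$ is a $2$-cocycle, whose class $[\delta\sigma_0]\in H^2(\G;\mathcal R)$ is independent of $\sigma_0$. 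Because the fibers of the connection bundle are affine spaces, this single class is the complete obstruction: a multiplicative connection exists if and only if $[\delta\sigma_0]=0$, in which case any trivialization $\delta\sigma_0=\delta\alpha$ yields the multiplicative connection $\sigma_0-\alpha$, and the remaining choices form a torsor over $H^1(\G;\mathcal R)$.

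Finally, I would invoke properness. When $\G$ is proper it carries a Haar system together with a cutoff function, and Crainic's averaging argument shows that the differentiable cohomology $H^k(\G;\mathcal R)$ vanishes for all $k\geq 1$ and every representation $\mathcal R$. In particular $H^2(\G;\mathcal R)=0$, so $[\delta\sigma_0]=0$ and a multiplicative Ehresmann connection exists. Equivalently, one can bypass the cohomology and average the distribution $\sigma_0$ directly over $\G$ with the Haar system and cutoff: the integral lands in the \emph{same} affine space of complements to $V$ by convexity, hence is again an honest Ehresmann connection, while invariance of the construction under the groupoid multiplication forces it to be multiplicative. I expect the main obstacle to be the bookkeeping in the second step: correctly identifying the coefficients as a bona fide $\G$-representation over $M$ (so that Crainic's vanishing theorem applies) and verifying that the multiplicativity defect is a genuine cocycle in that complex, since the vertical bundle $V$ and the image $\d\Phi(T\G)$ live over $\G$ rather than over $M$ and must be transported to the units with care.
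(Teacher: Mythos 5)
Your reduction of the problem to a splitting question, the existence of a non-multiplicative connection $\sigma_0$ by a partition of unity, and the fact that the defect $\delta\sigma_0$ is a cocycle whose (relative) class is the full obstruction are all fine. The gap is in the two steps that are supposed to finish the proof. First, the coefficients of this cocycle are \emph{not} a genuine representation of $\G$: the defect takes values in $\operatorname{Hom}(\Phi^*T\H,\s^*\ka)$, and along the units $T\H$ restricts to $TM\oplus B$, where $B=\operatorname{Lie}(\H)$; neither $TM$ nor $B$ carries a $\G$-action (only the VB-groupoid $T\H$, i.e.\ a representation \emph{up to homotopy}, exists). So Crainic's vanishing theorem, which is a theorem about honest representations, does not apply to the complex in which $[\delta\sigma_0]$ lives. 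Second, and for the same reason, neither the cohomological averaging nor the ``direct'' averaging of $\sigma_0$ can even be written down: any homotopy/averaging operator must evaluate the form-valued cochain at translated points $gk$ and compare the result with data at $g$, which requires transporting tangent vectors of $\G$ (or of $\H$) along the integration variable $k$ --- that is, it requires precisely the kind of connection you are trying to build. Convexity does not rescue this, because the horizontal subspaces being averaged sit inside different fibers $T_{gk}\G$ with no canonical identification between them.

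That this is a fatal gap and not bookkeeping is shown by the paper's own remark that there exist proper (even transitive) Lie groupoids admitting \emph{no} Cartan connection: that existence problem is likewise governed by cohomology with coefficients in a two-term representation up to homotopy, so if ``properness plus averaging'' killed all such obstruction classes, it would kill that one as well. The truth of the theorem therefore rests on non-formal input, and this is exactly where the paper's proof (Theorem~\ref{thm:proper:partially:split}) diverges from your plan: (1) it invokes the slice theorem for proper Lie groupoids to replace $\G$, near each orbit, by an action groupoid $G_x\ltimes V$ with $G_x$ compact, where averaging \emph{is} legitimate because one integrates over a compact group acting on fixed vector spaces (Proposition~\ref{prop:action}); (2) it proves Morita invariance of the partially split condition (Theorem~\ref{theo:Morita}) --- notably a property that the Cartan-connection problem lacks --- to transport those local solutions to saturated open sets; and (3) it glues the resulting multiplicative connection $1$-forms by a $\G$-invariant partition of unity, $\al=\sum_i(\rho_i\circ\s)\,\al_{U_i}$, the invariance of the $\rho_i$ being exactly what makes a convex combination of multiplicative forms again multiplicative. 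Your proposal contains none of these ingredients, and the vanishing statement it substitutes for them is false at the level of generality at which it is invoked.
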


The reason why this result holds is that multiplicative Ehresmann connections, unlike Cartan connections, can be transported along Morita equivalences. However, this requires to consider multiplicative Ehresmann connections for slightly more general objects than groupoid morphisms. Given a Lie groupoid $\G$ with Lie algebroid $A$ we will define multiplicative Ehresmann connections for any bundle of ideals $\ka\subset A$ which are invariant under the conjugation action of $\G$. When $\Phi:\G\to\H$ is a morphism as above and $\ka$ is the bundle of ideals induced by its kernel $\K$ (i.e., the Lie algebroid of $\K$), the two notions coincide. 

A bundle of ideals $\ka$ for a Lie groupoid $\G$ is called \emph{partially split} if it admits a multiplicative Ehresmann connection $E$. Note that this more general notion appears naturally in applications. We were originally led to introduce them in the study of local models around Poisson submanifolds in Poisson geometry.
There one finds a constant rank multiplicative 2-form $\omega\in\Omega^2_M(\G)$ whose kernel defines a bundle of ideals. A multiplicative Ehresmann connection for this bundle of ideals is necessary to construct a groupoid coisotropic embedding of $(\G,\omega)$ into a symplectic groupoid. The infinitesimal version of this embedding is precisely a local model for a Poisson submanifold. We will not discuss any of these applications here and we instead refer the reader to \cite{FerMa22}.

Just like for principal connections, rather than specifying a multiplicative Ehresmann connection via a distribution $E\subset T\G$ one can give instead a \emph{multiplicative connection 1-form}. This is a multiplicative 1-form $\al\in\Omega^1_\mult(\G;\ka)$ with coefficients in the bundle of ideals $\ka$ which additionally must satisfy:
\[ \al(\xi^L)=\xi, \quad (\xi\in\Gamma(\ka)), \]
where $\xi^L\in\X(\G)$ is the left-invariant vector field associated to the section $\xi$. The \emph{curvature of the connection} is then a multiplicative 2-form with coefficients in $\ka$, denoted $\Omega=D\al\in\Omega^2_\mult(\G;\ka)$. It measures the failure in $E$ being an involutive distribution. 
One advantage of specifying a multiplicative Ehresmann connection via a connection 1-form is that is leads immediately to its infinitesimal version. Namely, given a Lie algebroid $A\Ato M$, integrable or not, and a bundle of ideals $\ka\subset A$, an \emph{IM (infinitesimal multiplicative) Ehresmann connection} is given by an IM 1-form on $A$ with coefficients in $\ka$ (see, e.g., \cite{CrSaSt15}), whose symbol $l:A\to \ka$ satisfies:
\[ l(\xi)=\xi, \quad (\xi\in\Gamma(\ka)). \]

The multiplicative Ehresmann connections studied in this paper have been studied before in \cite{LSX09}, for the special case of a groupoid extension in the context of the theory of non-abelian gerbes. There, by a groupoid extension the authors mean a groupoid map  $\Phi:\G\to\K$ between Hausdorff Lie groupoids which is locally trivial. It implies, in particular, that whenever it admits a connection, its kernel is a locally trivial bundle of Lie groups. Many of the results we obtain in Section \ref{sec:multiplicative:connections} recover, in this special case, results of \cite{LSX09}. Our result mentioned above concerning transporting multiplicative connections along Morita equivalences also extends a theorem from \cite{LSX09}, which establishes Morita invariance of the existence of connections. Note that the notion of Morita equivalence of groupoid extensions, used in \cite{LSX09}, is more restrictive than the Morita equivalence of Lie groupoids used here. In particular, Theorem \ref{thm:Morita} does not follow from the results of \cite{LSX09}.

In \cite[Section 6.5]{BD19} the authors consider multiplicative tensors $K\in\Omega^1(\G,T\G)$ satisfying $K^2=K$, which they call \emph{multiplicative projections}. These include, as a special case, multiplicative Ehresmann connections, and the so-called Fr\"olicher-Nijenhuis  bracket of $K$ (see \cite{BD19}) coincide with our curvature 2-forms. Also, the notion of matched pair of Lie algebroid from \cite{BD19} can be interpreted as a version of our IM connections.

There are many questions related to our theory of connections and its applications that we do not discuss in this paper. Besides the applications to Poisson geometry \cite{FerMa22} and to gerbes \cite{LSX09} already mentioned, it is also natural to consider Ehresmann connections for morphisms of groupoids over different bases or more general types of bundles of ideals \cite{JO14}. Another intriguing question is to define appropriate moduli spaces of flat multiplicative connections. We plan to return to some these questions in future work.

This paper is organized as follows. In Section \ref{sec:multiplicative:connections} we introduce multiplicative Ehresmann connections, and we discuss:
\begin{itemize}
    \item[\tiny$\bullet$] alternative characterizations of connections;
    \item[\tiny$\bullet$] obstructions to their existence;
    \item[\tiny$\bullet$] necessary and sufficient criteria for completeness;
    \item[\tiny$\bullet$] curvature, structure equation and Bianchi's identity; 
    \item[\tiny$\bullet$] various notions of flatness and relationship to semi-direct products.
\end{itemize}
In Section \ref{examples:groupoids}, we give many classes of Lie groupoids and morphisms that admit multiplicative Ehresmann connections. Section \ref{sec:Morita} is dedicated to the proof of Morita invariance and to show the existence of multiplicative Ehresmann connections for proper groupoids. In this section, we also illustrate our results with an application to the theory of $S^1$-gerbes over a manifold, recovering the classical result of Murray about representing the Dixmier-Douady class in real cohomology. In Section \ref{section: IM Ehresmann}, we introduce IM Ehresmann connections, and we discuss:
\begin{itemize}
    \item[\tiny$\bullet$] alternative characterizations of IM connections;
    \item[\tiny$\bullet$] obstructions to their existence;
    \item[\tiny$\bullet$] a theory of coupling forms for IM connections, generalizing the classical coupling description of symplectic fibrations;
    \item[\tiny$\bullet$] curvature of IM connections; 
    \item[\tiny$\bullet$] various notions of flatness and relationship to semi-direct products of Lie algebroids.
\end{itemize}
In Section \ref{sec:examples}, we give many classes of Lie algebroids and morphisms that admit IM Ehresmann connections, including non-integrable algebroids. At the end of the paper, we have included an appendix with background and results on multiplicative forms and IM forms with coefficients, needed throughout the paper. The results in Section \ref{sec:differentiation:forms:coefficients} concerning covariant differentiation of such forms seem to be new.

\medskip
{\bf Acknowledgments.} We would like to thank Henrique Bursztyn for bringing to our attention his work \cite{BD19}. We specially would like to thank Camille Laurent Gengoux for many comments on a first version of this paper posted in the arXiv, and for pointing out the connections to his joint work with Mathieu Sti\'enon and Ping Xu \cite{LSX09}, which we were unaware of.

\bigskip
{\bf Conventions and notations.} We denote a Lie groupoid by $\G\tto M$, with source/target $\s,\t:\G\to M$ and multiplication $m:\G\timesst \G\to \G$. We denote a Lie algebroid by $A\Ato M$, with Lie bracket $[\cdot,\cdot]_A:\Gamma(A)\times\Gamma(A)\to\Gamma(A)$ and anchor $\rho_A:A\to TM$. Our conventions are as in \cite{CFM21}, so the Lie algebroid of $\G$ is the vector bundle $A=\ker(\d\t)|_M$ with Lie bracket induced from the bracket of left-invariant vector fields and anchor $\rho=\d\s$. Also,
Proper Lie groupoids are assumed Hausdorff, while general Lie groupoids are not necessarily Hausdorff, unless stated otherwise. 

\section{Multiplicative Ehresmann connections}
\label{sec:multiplicative:connections}

\subsection{Ehresmann connection for a Lie groupoid submersion}
Let $\G\tto M$ and $\H\tto M$ be two Lie groupoids, and 
\[\Phi:\G\to \H\]
be a Lie groupoid map covering $\id_M$ which is a surjective submersion. We are investigating the existence problem for the following objects.

\begin{definition}
\label{def:Ehresmann connection:morphism}
A \textbf{multiplicative Ehresmann connection} for $\Phi$ is a ``horizontal'' distribution 
\[T\G=\ker(\dd \Phi)\oplus E\] 
which is also a Lie subgroupoid $E\tto TM$ of the tangent groupoid $T\G\tto TM$.
\end{definition}

Notice that the kernel of a Lie groupoid map $\Phi:\G\to \H$ as above 
\[\K:=\ker \Phi = \Phi^{-1}(\textrm{units}) \subset \G\]
is a bundle of Lie subgroups of $\G$ which is \textbf{normal}, i.e., conjugation by any $g\in \G$ gives a Lie group isomorphism:
\[\Ad_g:\K_{\s(g)}\diffto \K_{\t(g)},\quad k\mapsto gkg^{-1}.\]

Let $A\Ato M$ and $B\Ato M$ denote the Lie algebroids of $\G\tto M$ and $\H\tto M$, respectively, and let $\phi:A\to B$ be the Lie algebroid map induced by $\Phi$. Then the Lie algebroid $\ka\Ato M$ of $\K$ is a bundle of Lie algebras that fits into the short exact sequence of Lie algebroids:
\[ \xymatrix{0\ar[r]& \ka\ar[r]& A\ar[r]^{\phi}& B\ar[r]& 0}.\]
The ``conjugation action" induces an action on the isotropies:
\begin{equation}
\label{eq:representation:grpd} 
\Ad_g:\ker\rho|_{\s(g)}\diffto \ker\rho|_{\t(g)}, \quad g\cdot \al=\frac{\d}{\d t}\Big|_{t=0} g\, \exp_{\s(g)}(t\alpha)\, g^{-1},
\end{equation}
where $\exp_{x}:\ker\rho|_x\to \G_{x}$ is the Lie group exponential. Since $\K$ is invariant under conjugation, we obtain a representation of $\G$ on $\ka$. 

Finally, note that the subbundle $K:=\ker (\d\Phi)$ can be recovered from $\ka$ by using (left or right) translations:
\[K_g=(\ker \d\Phi)|_g=\dd L_g(\ka|_{\s(g)})=\dd R_g(\ka|_{\t(g)})\subset \ker(\dd \s)\cap \ker(\dd \t).\]

\begin{remark}
\label{rem:LSX}
Connections in the sense of Definition \ref{def:Ehresmann connection:morphism} were introduced in \cite{LSX09} under the extra assumption that $\Phi$ is a locally trivial fibration. The authors call them ``connections for Lie groupoid extensions''.
\end{remark}

\subsection{Partially split bundles of ideals}
It turns out that multiplicative Ehresmann connections can be defined in a more general setting, without the presence of a Lie groupoid morphism.

\begin{definition}
Given a Lie groupoid $\G\tto M$ with Lie algebroid $A\Ato M$, we call a vector subbundle $\ka\subset A$ a \textbf{bundle of ideals} of $\G$ if $\ka\subset \ker\rho$ and $\ka$ is invariant under the $\G$-action by conjugation \eqref{eq:representation:grpd}.
\end{definition}

The fact that $\ka$ is invariant under conjugation implies that \[
 \al\in\Gamma(A), \be\in\Gamma(\ka) \quad \Longrightarrow\quad [\al,\be]\in\Gamma(\ka).
 \]
When $\G\tto M$ is source-connected, this condition is equivalent to $\ka\subset \ker \rho$ being a bundle of ideals (see, e.g., Appendix B \cite{Marcut14}). So this condition defines the notion of a \textbf{bundle of ideals} for any Lie algebroid $A$, integrable or not. We will explore this in Section \ref{section: IM Ehresmann}, where we study the infinitesimal analog of multiplicative Ehresmann connections. 

Given a bundle of ideals $\ka\subset A$ for $\G$, by using (left or right) translations we obtain the involutive distribution on $\G$:
\begin{equation}
    \label{eq:bundle:K}
    K_g:=\dd L_g(\ka_{\s(g)})=\dd R_g(\ka_{\t(g)})\subset \ker(\dd \s)\cap \ker(\dd \t).
\end{equation}
Note that $K\tto 0_M$ is a subgroupoid of the tangent groupoid $T\G\tto TM$.

\begin{definition}
\label{def:Ehresmann connection:ideals}
A bundle of ideals $\ka$ of $\G$ is said to be \textbf{partially split} if it admits a \textbf{multiplicative Ehresmann connection}, that is, if there is a wide Lie subgroupoid $E\subset T\G$ such that
\[T\G=E\oplus K\] 
where $K$ is the subgroupoid \eqref{eq:bundle:K}.
\end{definition}

\begin{remark}
Note that, given a bundle of ideals $\ka\subset A$, we still have a short exact sequence of Lie algebroids:
\[ \xymatrix{0\ar[r]& \ka\ar[r]& A\ar[r]& B\ar[r]& 0}.\]
where $B:=A/\ka$. However, at the groupoid level, there may not exist a closed embedded subgroupoid $\K\subset \G$ integrating $\ka$ and even $B$ may fail to be integrable (see Example \ref{example:Lie-Poisson:sphere}). So in this more general setup a Lie groupoid morphism $\Phi:\G\to\H$ as in Definition \ref{def:Ehresmann connection:morphism} may not exist. If it exists, then $K=\ker\dd\Phi$.
\end{remark}


\begin{remark}
\label{rem:nomenclature}
We will be using the following nomenclature:
\begin{itemize}
    \item[\tiny$\bullet$] A {\bf multiplicative Ehresmann connection for a morphism} $\Phi:\G\to\H$, as in Definition \ref{def:Ehresmann connection:morphism}. We will always assume that it is a surjective, submersive, Lie groupoid morphism covering the identity, without explicitly mentioning it;
    \item[\tiny$\bullet$] A {\bf multiplicative Ehresmann connection for a bundle of ideals} $\ka$ of a groupoid $\G$, as in Definition \ref{def:Ehresmann connection:ideals};
    \item[\tiny$\bullet$] A {\bf Cartan connection for a groupoid} $\G$: by this we mean a multiplicative distribution $E\subset T\G$ such that
    \[ T\G=\ker\d\s\oplus E. \]
\end{itemize}
The last notion has been discussed and used extensively in the literature, sometimes under different names (see, e.g., \cite{AC13,Blaom06,Blaom12,Behrend05,CrSchStr,Tang06}). For a general groupoid, the source map $\s:\G\to M$ cannot be viewed as groupoid morphism, and this notion is distinct from the notions introduced above. However, for a bundle of Lie groups $p:\G\to M$, where $\s=\t=p$, one can view the projection as a groupoid morphism onto the identity groupoid $M\tto M$. The corresponding bundle of ideals is the Lie algebroid $A$ of $\G$. So a Cartan connection for a bundle of Lie groups $p:\G\to M$ is the same as a multiplicative Ehresmann connection for the morphism $p$, or a multiplicative Ehresmann connection for the bundle of ideals $A$.
\end{remark}

\subsection{The partially split condition}

We will now look for alternative characterizations of partially split bundles of ideals. We start by observing that $K$ is a actually a semi-direct product:

\begin{lemma}
The subgroupoid $K\tto 0_M$ of $T\G\tto TM$ is canonically isomorphic to the semi-direct product $\G\times_M\ka\tto M$, with multiplication:
\[ (g,v)\cdot (h,w)=(gh,h^{-1}\cdot v+w), \quad \text{if}\quad \s(g)=\t(h), v\in \ka_{\s(g)}, w\in\ka_{\s(h)}. \]
\end{lemma}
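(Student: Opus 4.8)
The plan is to write down an explicit map and verify it is an isomorphism of Lie groupoids, the only genuine work being the computation of the tangent multiplication $\dd m$. I would define
\[ \Psi\colon \G\times_M\ka\to K,\qquad (g,v)\longmapsto \dd L_g(v)=v^L_g\in K_g, \]
where $L_g$ denotes left translation by $g$. Since by \eqref{eq:bundle:K} we have $K_g=\dd L_g(\ka_{\s(g)})$ and $\dd L_g$ restricts to a linear isomorphism $\ka_{\s(g)}\diffto K_g$, the map $\Psi$ is a smooth vector bundle isomorphism over $\G$ with smooth inverse $v^L_g\mapsto(g,v)$; in particular it is a diffeomorphism. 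I would then record compatibility with the structure maps: an element $v^L_g\in K_g$ lies in $\ker(\dd\s)\cap\ker(\dd\t)$, so its source and target in $T\G\tto TM$ are $0_{\s(g)}$ and $0_{\t(g)}$, which under the identification $0_M\cong M$ correspond exactly to $\s(g)$ and $\t(g)$, the source and target of $(g,v)$. Consequently two elements $v^L_g,\,w^L_h\in K$ are composable in $T\G$ precisely when $\s(g)=\t(h)$, which is the condition appearing in the semi-direct product multiplication.

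The core of the argument is to show that $\Psi$ is multiplicative, i.e. that $\dd m(v^L_g,w^L_h)=(h^{-1}\cdot v+w)^L_{gh}$ whenever $\s(g)=\t(h)$. I would compute $\dd m$ using curves adapted to the isotropy groups. Set $a(t)=\exp_{\s(g)}(tv)\in\G_{\s(g)}$ and $b(t)=\exp_{\s(h)}(tw)\in\G_{\s(h)}$; these are legitimate since $\ka\subset\ker\rho$, and they satisfy $a(0)=1_{\s(g)}$, $\dot a(0)=v$ and similarly for $b$. Then $v^L_g=\frac{\d}{\d t}\big|_{t=0}\,g\,a(t)$ and $w^L_h=\frac{\d}{\d t}\big|_{t=0}\,h\,b(t)$. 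The key point is that the curves $g\,a(t)$ and $h\,b(t)$ are composable for \emph{every} $t$: since $a(t)$ lies in the isotropy at $\s(g)$ one has $\s(g\,a(t))\equiv\s(g)$, and since $b(t)$ lies in the isotropy at $\s(h)$ one has $\t(h\,b(t))\equiv\t(h)=\s(g)$. Hence the tangent multiplication is the derivative of the pointwise product: $\dd m(v^L_g,w^L_h)=\frac{\d}{\d t}\big|_{t=0}\,(g\,a(t))(h\,b(t))$.

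It then remains to massage this product. Writing $g\,a(t)\,h\,b(t)=gh\,(h^{-1}a(t)h)\,b(t)$, I would observe that $c(t):=h^{-1}a(t)h$ is a curve in the isotropy group $\G_{\s(h)}$ (using that $\ka$ is conjugation-invariant), based at $1_{\s(h)}$, with derivative $\dot c(0)=h^{-1}\cdot v$ by the very definition \eqref{eq:representation:grpd} of the conjugation action. Both $c(t)$ and $b(t)$ are curves through the unit of the Lie group $\G_{\s(h)}$, so the product rule in a Lie group gives $\frac{\d}{\d t}\big|_{t=0}\,c(t)b(t)=h^{-1}\cdot v+w=:u\in\ka_{\s(h)}$. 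Therefore the product equals $L_{gh}(e(t))$ with $e(t)=c(t)b(t)$, and differentiating yields $\dd m(v^L_g,w^L_h)=\dd L_{gh}(u)=(h^{-1}\cdot v+w)^L_{gh}=\Psi\big((g,v)\cdot(h,w)\big)$. This is exactly multiplicativity; combined with the first paragraph it shows $\Psi$ is an isomorphism of Lie groupoids, the remaining groupoid axioms (units, inverses, associativity of the semi-direct product) being inherited through $\Psi$ from those of $T\G$.

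I expect the main obstacle to be the multiplication computation of the second and third paragraphs: one must choose the representing curves inside the isotropy groups so that they remain composable for all $t$ (otherwise $\dd m$ cannot be realized as the derivative of a pointwise product) and so that the additive Lie-group product rule applies, while correctly invoking \eqref{eq:representation:grpd} to identify the derivative of $h^{-1}a(t)h$ with the conjugation action $h^{-1}\cdot v$. A secondary point to state carefully is the identification $0_M\cong M$ used to match the source and target maps of the two groupoids.
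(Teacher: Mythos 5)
Your proposal is correct and follows exactly the paper's approach: you use the very same map $(g,v)\mapsto \d L_g(v)$ that the paper writes down, the only difference being that the paper dismisses the verification with ``one checks immediately'' while you carry out the details (the curve computation of $\d m$ via isotropy-group curves and the conjugation action \eqref{eq:representation:grpd}), and that computation is sound.
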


\[
\vcenter{
\xymatrix@R=8pt{
{}\save[]+<-30pt,0cm>*\txt{$\G\times_M\ka\simeq$}\restore K\, \ar@<0.15pc>[dd] \ar@<-0.15pc>[dd] \ar@{^{(}->}@<-0.10pc>[rr]  \ar[dr]&  &  T\G  \ar@<0.15pc>[dd] \ar@<-0.15pc>[dd] \ar[dl]\\
 & 
\G \ar@<0.15pc>[dd] \ar@<-0.15pc>[dd] & \\
0_M\, \ar[dr] \ar@{^{(}-}[r] &\ar[r]  & TM \ar[dl]\\
 & M}
 }
\]

\begin{proof}
One checks immediately that the map:
\[ \G\times_M\ka\to K , \quad (g,v)\mapsto \d L_g(v), \]
is a groupoid isomorphism.
\end{proof}

The Lie groupoids $T\G\tto TM$ and $K\tto 0_M$ are examples of VB groupoids with cores $A\to M$ and $\ka\to M$, respectively. Under the duality operation in the category of VB groupoids (see \cite{Mackenzie05,BCdH16,GrMe17}), these have duals the VB groupoids $T^*\G\tto A^*$ and $K^*\tto \ka^*$, with cores $T^*M \to M$ and $0_M\to M$, respectively. The groupoid $K^*\tto \ka^*$ is isomorphic to the action groupoid $\G\ltimes\ka^*\tto \ka^*$ resulting from the dual action of $\G$ on $\ka^*$. 
Moreover, by applying duality to the inclusion map, the restriction map $T^*\G\to K^*$ is a groupoid morphism:
\[
\vcenter{
\xymatrix@R=8pt{
{}\save[]+<-30pt,0cm>*\txt{$\G\ltimes\ka^*\simeq$}\restore K^*\, \ar@<0.15pc>[dd] \ar@<-0.15pc>[dd] \ar[dr] &  &  T^*\G\ar@{->>}[ll] 
\ar@<0.15pc>[dd] \ar@<-0.15pc>[dd] \ar[dl]\\
 &  \G  \ar@<0.15pc>[dd] \ar@<-0.15pc>[dd]  \\
\ka^* \ar[dr] & \ar@{->>}[l] & A^* \ar[dl] \ar@{-}[l]\\
 & M
}}
\]




The following proposition gives useful characterizations of the partially split condition
(for the terminology, see the appendix): 

\begin{proposition}
\label{prop:partially:split:grpd}
Let $\ka$ be a bundle of ideals for $\G$. The following structures are in 1-to-1 correspondence: 
\begin{enumerate}[(i)]
\item VB subgroupoids $E\subset T\G$ that are complementary to $K$:
\[ T\G=K\oplus E; \]
\item VB groupoid morphisms $\Theta:\G\ltimes\ka^* \to T^*\G$ that are splittings of the projection $p:T^*\G\to \G\ltimes \ka^*$:
\[ p\circ\, \Theta=\id;\]
\item $\ka$-valued, multiplicative, 1-forms $\alpha\in\Omega^1_\mult(\G,\ka)$ that restrict to the identity on $\ka\subset T_M\G$:
\[ \al|_\ka=\id;\]
\item linear, closed, multiplicative, 2-forms $\omega\in\Omega^2_\mult(\G\ltimes \ka^*)$ (see Definition \ref{definition:linear:multipl}) that restrict to the canonical symplectic form on $\ka\times_M\ka^*\subset T_M(\G\ltimes \ka^*)$:
\[ \omega((v_1,\xi_1),(v_2,\xi_2)) = \xi_2(v_1)-\xi_1(v_2), \quad \text{if} \quad(v_k,\xi_k)\in\ka\times_M\ka^*.\]
\end{enumerate}
\end{proposition}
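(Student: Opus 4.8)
The plan is to observe that all four structures encode, from different viewpoints, a single piece of data: a fiberwise projection $\pr_K\colon T\G\to K$ onto $K$ that is compatible with the groupoid structures. I would organize the proof as the correspondences $(i)\Leftrightarrow(iii)$, $(i)\Leftrightarrow(ii)$ and $(ii)\Leftrightarrow(iv)$, in each case separating the pointwise (linear-algebra) content from the groupoid-compatibility content; the latter will follow uniformly from functoriality of the VB-groupoid duality of the appendix and from the definitions of multiplicative and linear forms.

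For $(i)\Leftrightarrow(iii)$, given a complement $T\G=K\oplus E$ I let $\pr_K\colon T\G\to K$ be the associated projection and define $\al\in\Omega^1(\G,\ka)$ by $\al(X_g)=\d L_{g^{-1}}\big(\pr_K(X_g)\big)$, using the trivialization $K_g=\d L_g(\ka_{\s(g)})$. Then $\al|_\ka=\id$ is exactly the statement that $\pr_K$ restricts to the identity on $K$, and $E=\ker\al$; conversely $\al$ with $\al|_\ka=\id$ determines $\pr_K$ and hence $E:=\ker\al$. The only nontrivial point is that $E$ is a VB subgroupoid of $T\G\tto TM$ if and only if $\al$ is multiplicative: this is built into the definition of a multiplicative form with coefficients, since the kernel of a multiplicative $\ka$-valued form is a subgroupoid and, conversely, $K$ and $E$ being complementary subgroupoids forces $\pr_K$ (equivalently $\al$) to be multiplicative.

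For $(i)\Leftrightarrow(ii)$ I would dualize the retraction. The projection $\pr_K$ is a retraction of the inclusion $\iota\colon K\hookrightarrow T\G$, i.e. $\pr_K\circ\iota=\id$; since $p=\iota^\ast$ under VB-groupoid duality, its dual $\Theta:=\pr_K^\ast\colon \G\ltimes\ka^\ast\to T^\ast\G$ satisfies $p\circ\Theta=(\pr_K\circ\iota)^\ast=\id$, so $\Theta$ is a splitting, with image the annihilator $E^\circ$. Because duality is a contravariant functor sending VB-groupoid morphisms to VB-groupoid morphisms, $\Theta$ is a morphism precisely when $\pr_K$ is, i.e. precisely when $E$ is a subgroupoid, and $T\G=K\oplus E$ dualizes to $T^\ast\G=K^\circ\oplus E^\circ$, which is the splitting property. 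Thus $(i)\Leftrightarrow(ii)$ is the pointwise statement that complements dualize to splittings, together with functoriality of duality.

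Finally, for $(ii)\Leftrightarrow(iv)$ I would use the canonical symplectic form on the cotangent groupoid $T^\ast\G\tto A^\ast$, which is closed, linear (homogeneous of degree one for the fiber scaling of $T^\ast\G\to\G$) and multiplicative. Given a splitting $\Theta$, I set $\omega:=\Theta^\ast\omega_{\can}$ on $\G\ltimes\ka^\ast$; it is closed, linear and multiplicative because $\Theta$ is a linear VB-groupoid morphism, and a direct computation on the distinguished subbundle $\ka\times_M\ka^\ast$ shows that the normalization in $(iv)$ is equivalent to $p\circ\Theta=\id$. Conversely, the appendix description of linear closed multiplicative $2$-forms (Definition \ref{definition:linear:multipl}) as pullbacks of $\omega_{\can}$ along VB-groupoid morphisms into $T^\ast\G$ recovers $\Theta$ from $\omega$. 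This last correspondence is where I expect the main obstacle to lie: the chain ``complement $\leftrightarrow$ splitting $\leftrightarrow$ $1$-form'' is elementary, and the subgroupoid/morphism/multiplicativity equivalences follow formally from duality, but verifying that every such $2$-form with the prescribed core normalization arises as $\Theta^\ast\omega_{\can}$ for a unique $\Theta$ is precisely what the appendix machinery on linear and multiplicative forms with coefficients must supply.
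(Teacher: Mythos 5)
Your architecture is sound, and for the first three items it essentially reproduces the paper's proof: the paper also obtains (i)$\Leftrightarrow$(ii) from VB-groupoid duality applied to the inclusion $K\hookrightarrow T\G$, while your direct proof of (i)$\Leftrightarrow$(iii) is a mild variant of the paper's route (the paper instead dualizes $\Theta$ to a VB-groupoid morphism $\Theta^\vee:T\G\to\G\times_M\ka$ and invokes the identification of such morphisms with $\ka$-valued multiplicative $1$-forms from Example \ref{ex:degree1:E-valued:M:forms}). Your assertion that complementarity of the subgroupoids $K$ and $E$ forces $\pr_K$, hence $\al$, to be multiplicative is correct but deserves its one-line justification: $\d m$ is fiberwise linear on $T_{(g_1,g_2)}\G^{(2)}$, and the $K$- and $E$-components of a composable pair are separately composable (because $K\subset\ker\d\s\cap\ker\d\t$), so $\d m(v_1,v_2)=\d m(k_1,k_2)+\d m(e_1,e_2)$ with $\d m(k_1,k_2)\in K$ and $\d m(e_1,e_2)\in E$, which is exactly the statement that $\pr_K$ is a groupoid morphism.

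The genuine gap is in (ii)$\Leftrightarrow$(iv), in the place you yourself flagged, and your pointer to the appendix does not fill it. Lemma \ref{lemma:decomposition:linear:forms} is a statement about the linear structure only: it identifies closed linear $2$-forms on the vector bundle $\G\ltimes\ka^*\to\G$ with bundle maps $\Theta:\G\ltimes\ka^*\to T^*\G$ covering the identity via $\omega=\Theta^*\omega_{\can}$, and says nothing about multiplicativity; Definition \ref{definition:linear:multipl} is only a definition, not a characterization of linear multiplicative forms as pullbacks of $\omega_{\can}$ along VB-groupoid morphisms. So after recovering the bundle map $\Theta$ from a multiplicative $\omega$, you still owe the argument that $\Theta$ is a groupoid morphism, and no result you cite provides it. The missing idea (which is how the paper closes this step) is that multiplicativity of $\omega$ makes $\omega^\flat:T(\G\ltimes\ka^*)\to T^*(\G\ltimes\ka^*)$ a morphism of VB groupoids, and the bundle map produced by Lemma \ref{lemma:decomposition:linear:forms} is precisely the composition $\G\ltimes \ka^*\hookrightarrow T_{\G}(\G\ltimes \ka^*) \stackrel{\omega^{\flat}}{\rmap}T_{\G}^*(\G\ltimes \ka^*)\rmap T^*\G$ of the core inclusion, $\omega^\flat$, and the canonical projection, each of which is a groupoid morphism; hence so is $\Theta$. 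With that factorization inserted (the forward direction, that $\Theta^*\omega_{\can}$ is multiplicative when $\Theta$ is a groupoid map, is fine as you state it, since $\omega_{\can}$ is multiplicative), your proof is complete.
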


\begin{proof}
The equivalence between (i) and (ii) follows from the discussion above.

Let $A$ be the Lie algebroid of $\G$. A bundle map 
\[
\xymatrix@C=10pt@R=10pt{
\G\ltimes \ka^* \ar@<0.15pc>[dd] \ar@<-0.15pc>[dd] \ar[rr]^{\Theta} \ar[dr]&  &  T^*\G  \ar@<0.15pc>@{-}[d] \ar@<-0.15pc>@{-}[d] \ar[dr]\\
 & 
\G\ar[rr]^(.3){\id}  \ar@<0.15pc>[dd] \ar@<-0.15pc>[dd] & \ar@<0.15pc>[d] \ar@<-0.15pc>[d] &\G  \ar@<0.15pc>[dd] \ar@<-0.15pc>[dd] \\
\ka^* \ar[dr] \ar@{-}[r] &\ar[r]  & A^* \ar[dr]\\
 & M\ar[rr] &  &M
}
\]
is a VB groupoid morphism if and only if its dual is a VB groupoid morphism:
\[
\xymatrix@C=10pt@R=10pt{
T\G \ar@<0.15pc>[dd] \ar@<-0.15pc>[dd] \ar[rr]^{\Theta^\vee} \ar[dr]&  &  \G\times_M \ka  \ar@<0.15pc>@{-}[d] \ar@<-0.15pc>@{-}[d] \ar[dr]\\
 & 
\G\ar[rr]^(.3){\id}  \ar@<0.15pc>[dd] \ar@<-0.15pc>[dd] & \ar@<0.15pc>[d] \ar@<-0.15pc>[d] &\G  \ar@<0.15pc>[dd] \ar@<-0.15pc>[dd] \\
TM \ar[dr] \ar@{-}[r] &\ar[r]  & 0_M \ar[dr]\\
 & M\ar[rr] &  &M
}
\]
But a VB groupoid morphism $\Theta^\vee:T\G\to \G\times_M \ka$ covering the identity is the same thing as a $\ka$-valued, multiplicative 1-form $\alpha\in\Omega^1_\mult(\G,\ka)$ (see Example \ref{ex:degree1:E-valued:M:forms}). This gives a correspondence $\Theta\leftrightarrow \al$, where the conditions $p\circ\, \Theta=\id$ and $\al|_\ka=\id$ correspond to each other, proving the equivalence between (ii) and (iii).

By Lemma \ref{lemma:decomposition:linear:forms}, there is a 1-to-1 correspondence between linear, closed forms $\omega\in \Omega^2_{\mult}(\G\ltimes \ka^*)$ and vector bundle maps $\Theta:\G\ltimes \ka^* \to T^*\G$ covering the identity, determined by the relation:
\[\omega=\Theta^*(\omega_{\can}).\]
Clearly, if $\Theta$ is a groupoid map, it follows that $\omega$ is multiplicative. Conversely, if $\omega$ is multiplicative, then we have a VB groupoid morphism:
\[
\xymatrix{
T(\G\ltimes \ka^*)\ar[r]^{\omega^\flat}  \ar@<0.15pc>[d] \ar@<-0.15pc>[d]  & T^*(\G\ltimes \ka^*)  \ar@<0.15pc>[d] \ar@<-0.15pc>[d] \\
T\ka^*\ar[r] & (A\ltimes \ka^*)^*
}
\]
Via the assignment of Lemma \ref{lemma:decomposition:linear:forms}, the map $\Theta$ is the composition: 
\[\G\ltimes \ka^*\hookrightarrow T_{\G}(\G\ltimes \ka^*) \stackrel{\omega^{\flat}}{\rmap}T_{\G}^*(\G\ltimes \ka^*)\rmap T^*\G.\]
Hence, $\Theta$ is a groupoid morphism. Under the correspondence $\Theta\leftrightarrow \omega$, the extra conditions in (ii) and (iv) correspond to each other, proving the equivalence between these two items. 
%
%
\end{proof}

The characterization of multiplicative Ehresmann connections in terms of multiplicative 1-forms (item (iii) in the previous proposition) also appears in \cite[Thm 4.12]{LSX09} for the case of Lie groupoid extensions (cf.~Remark \ref{rem:LSX}). We will be making extensive use of it.

\begin{definition}
Given a partially split bundle of ideals $\ka$ with a multiplicative Ehresmann connection $E\subset T\G$, the corresponding $\ka$-valued 1-form $\alpha\in\Omega^1_\mult(\G,\ka)$, given by Proposition \ref {prop:partially:split:grpd} (iii), is called
the \textbf{multiplicative connection 1-form}.
\end{definition}


Next we deduce two obstructions for a bundle of ideals to be partially split. We start by observing that for bundles of Lie groups multiplicative connections induce ordinary connections, a remark that will also be useful in future sections. This remark follows, e.g., by observing that a multiplicative connection on a bundle of Lie groups is an example of a Cartan connection (see Remark \ref{rem:nomenclature}), hence it induces an infinitesimal Cartan connection on its Lie algebroid (see \cite{Blaom16} for more details and further references). We give an independent proof:

\begin{proposition}
\label{prop:connection:partial:split}
Let $p:\K\to M$ be a bundle of Lie groups endowed with a multiplicative Ehresmann connection: $T\K=E^{\K}\oplus \ker\d p$. Then the corresponding bundle of Lie algebras $\ka\to M$ has an induced linear connection $\nabla$ which preserves the Lie bracket:
\[\nabla_{X}[\xi,\eta]_{\ka}=[\nabla_{X}\xi,\eta]_{\ka}+[\xi,\nabla_{X}\eta]_{\ka},\]
for all $X\in\X(M)$, $\xi,\eta\in\Gamma(\ka)$. 
The corresponding linear Ehresmann connection  $E^{\ka}\subset T\ka$ is related to $E^{\K}$ via the exponential map $\exp:\ka\to \K$ on a neighborhood $U\subset\ka$ of the zero section: 
\begin{equation}\label{eq:exp:connection}
 (\d\exp)(E^{\ka}|_U)=E^{\K}|_{\exp(U)}.   
\end{equation}
\end{proposition}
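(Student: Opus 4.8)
\textit{Proof proposal.}
The plan is to build $\nabla$ from the parallel transport defined by the horizontal distribution $E^{\K}$, exploiting multiplicativity to ensure that this parallel transport is by Lie group homomorphisms. First I would record a structural observation about $E^{\K}$ along the unit section $u\colon M\to\K$, $x\mapsto 1_x$. Since $E^{\K}\tto TM$ is a wide subgroupoid of $T\K\tto TM$, its unit manifold is all of $TM=\d u(TM)\subset T_M\K$, so $\d u(T_xM)\subseteq E^{\K}_{1_x}$; as $E^{\K}_{1_x}$ is complementary to $\ker\d p|_{1_x}=\ka_x$ and has dimension $\dim M$, this forces $E^{\K}_{1_x}=\d u(T_xM)$. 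Consequently, for any $X\in\X(M)$ the horizontal lift $X^h\in\Gamma(E^{\K})$ (the unique section of $E^{\K}$ with $\d p(X^h)=X$) is tangent to $u(M)$ and restricts there to $x\mapsto\d u(X_x)$; hence the unit section is invariant under the locally defined flow $\phi_t$ of $X^h$, which covers the flow $\psi_t$ of $X$, maps each fibre $\K_x$ to $\K_{\psi_t(x)}$, and sends $1_x$ to $1_{\psi_t(x)}$.

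The crucial step is that each $\phi_t\colon\K_x\to\K_{\psi_t(x)}$ is a Lie group homomorphism. Given integral curves $t\mapsto\phi_t(g)$ and $t\mapsto\phi_t(h)$ of $X^h$ with $g,h\in\K_x$, their fibrewise product $t\mapsto\phi_t(g)\phi_t(h)$ has velocity the tangent-groupoid product of the two velocities, which lie in $E^{\K}$ and are composable in $T\K$ (both project to $\dot\psi_t\in TM$); since $E^{\K}$ is a subgroupoid, this product velocity again lies in $E^{\K}$, so $t\mapsto\phi_t(g)\phi_t(h)$ is the horizontal lift through $gh$, i.e.\ $\phi_t(g)\phi_t(h)=\phi_t(gh)$. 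Differentiating at the unit gives a Lie algebra isomorphism $\Psi_t:=\d(\phi_t)_{1_x}\colon\ka_x\to\ka_{\psi_t(x)}$, which I take as the parallel transport of a connection $\nabla$, setting $\nabla_X\xi|_x:=\frac{\d}{\d t}\big|_{t=0}\Psi_t^{-1}\big(\xi(\psi_t(x))\big)$. That $\nabla$ is a genuine linear connection is standard. To see it preserves the bracket, I differentiate at $t=0$ the identity $\Psi_t^{-1}\big([\xi,\eta]_\ka(\psi_t(x))\big)=\big[\Psi_t^{-1}\xi(\psi_t(x)),\,\Psi_t^{-1}\eta(\psi_t(x))\big]_{\ka_x}$, valid since each $\Psi_t$ is a Lie algebra homomorphism: the left side yields $\nabla_X[\xi,\eta]_\ka$, while, the bracket on the fixed fibre $\ka_x$ being bilinear, the Leibniz rule on the right yields $[\nabla_X\xi,\eta]_\ka+[\xi,\nabla_X\eta]_\ka$.

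For the exponential relation \eqref{eq:exp:connection}, I would use naturality of $\exp$: since $\phi_t\colon\K_x\to\K_{\psi_t(x)}$ is a group homomorphism with differential $\Psi_t$ at the unit, we have $\phi_t\circ\exp_x=\exp_{\psi_t(x)}\circ\Psi_t$. The horizontal curves of $E^{\ka}$ through $v\in\ka_x$ are the parallel-transport curves $t\mapsto\Psi_t(v)$, and those of $E^{\K}$ through $g\in\K_x$ are $t\mapsto\phi_t(g)$; the intertwining identity gives $\exp(\Psi_t v)=\phi_t(\exp v)$, so $\d\exp$ carries horizontal curves of $E^{\ka}$ to horizontal curves of $E^{\K}$. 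Differentiating at $t=0$ yields $\d\exp(E^{\ka}_v)\subseteq E^{\K}_{\exp v}$, and since $\exp$ is a diffeomorphism on $U$ and both distributions have rank $\dim M$, this is an equality on $U$. I expect the main obstacle to be precisely that crucial step — making rigorous, via the tangent groupoid multiplication, that the fibrewise product of horizontal integral curves is again horizontal, so that parallel transport is multiplicative — together with the minor bookkeeping of working with the only locally defined flows $\phi_t$, which suffices since both conclusions are infinitesimal and local.
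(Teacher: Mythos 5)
Your proposal is correct, but it takes a genuinely different route from the paper's. You argue with the integrated objects throughout: the key step is that closedness of $E^{\K}$ under the tangent-groupoid multiplication forces the flow $\phi_t$ of a horizontal lift to restrict to Lie group isomorphisms $\K_x\to\K_{\psi_t(x)}$; then $\nabla$ is the derivative of the transports $\Psi_t=\d(\phi_t)_{1_x}$, bracket preservation follows because each $\Psi_t$ is a Lie algebra isomorphism, and \eqref{eq:exp:connection} follows from naturality of $\exp$ under the homomorphisms $\phi_t$ plus a rank count. The paper instead works infinitesimally: it defines $\nabla_X\xi=[\widetilde{X},\xi^L]|_M$, deduces bracket preservation from the claim (stated without proof there) that multiplicativity makes $[\widetilde{X},\xi^L]$ left-invariant together with the Jacobi identity, and proves \eqref{eq:exp:connection} by showing that $(\d\exp)(E^{\ka}|_U)$ is a multiplicative distribution — via the Baker--Campbell--Hausdorff series and the fact that $\nabla$ preserves brackets — and then invoking uniqueness of (local) Lie subgroupoids of $T\K$ with a given Lie algebroid. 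Your route buys self-containedness: it avoids BCH and the Lie-theoretic uniqueness argument, and your multiplicativity-of-parallel-transport step is precisely the integrated form of (indeed, a proof of) the paper's unproved left-invariance claim; the paper's route is shorter and stays at the level of Lie algebroid data. Two points you dismiss as ``standard'' deserve a line each: (1) $C^\infty(M)$-linearity of $\nabla$ in $X$ is not immediate from the transport picture, but follows by observing that your definition unwinds to $\nabla_X\xi|_x=\frac{\d}{\d t}\big|_{t=0}\,\d\phi_{-t}\bigl(\xi(\psi_t(x))\bigr)=[X^h,\xi^L]\big|_{1_x}$ (recovering the paper's formula), whence tensoriality follows since $\xi^L$ is vertical, so $\xi^L(f\circ p)=0$; (2) identifying $\Psi_t$ with the $\nabla$-parallel transport along flow lines, which you use to describe the horizontal curves of $E^{\ka}$, requires the cocycle property of the family $\Psi_t$, which holds because $\phi_t$ is a flow preserving the unit section. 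Neither affects the correctness of your argument.
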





\begin{proof}
The connection is defined by:
\begin{equation}\label{eq:of:nabla}
\nabla_X\xi=[\widetilde{X},\xi^{L}]|_M,\quad X\in \X(M),\ \xi\in \Gamma(\ka),
\end{equation}
where $\widetilde{X}$ is the horizontal lift with respect to $E^{\K}$ and $\xi^{L}\in \X(\K)$ is the left-invariant vector field corresponding to $\xi$. Multiplicativity of $E^\K$ implies that $[\widetilde{X},\xi^{L}]$ is a left-invariant vector field, so the Jacobi identity shows that the connection preserves the Lie bracket on $\ka$.

We now claim that if $U\subset\ka$ is a neighborhood of the zero section where $\exp$ is a diffeomorphism, then \eqref{eq:exp:connection} holds. For this, we will show that the left-hand side is a multiplicative distribution. It then follows that both sides are (local) Lie subgroupoids of $T\K$ with the same Lie algebroid, so they must coincide. Hence, we assume that $\K=\ka$ and multiplication is given by the (fiberwise) Baker-Campbell-Hausdorff series $\mathrm{BCH}(\cdot,\cdot)$. 

Notice that:
\[ E^{\ka}_u=\big\{(\d_x s)(v): v\in T_xM,\, s\in\Gamma(\ka),\, s(x)=u,\, (\nabla s)|_x=0\big\}. \]
So it is enough to check that for $s_1,s_2\in \Gamma(\ka)$:
\[ (\nabla s_1)|_x=(\nabla s_2)|_x=0\quad \Longrightarrow \quad (\nabla \textrm{BCH}(s_1,s_2))|_x=0. \]
This follows from the expression of $\mathrm{BCH}(\cdot,\cdot)$ as a sum of commutators because $\nabla$ preserves the brackets.
\end{proof}

\begin{corollary}
\label{cor:connection:partial:split}
Given a partially split bundle of ideals $\ka$ for a Lie groupoid $\G$ each multiplicative Ehresmann connection $E\subset T\G$ determines a linear connection $\nabla^E$ on the bundle of Lie algebras $\ka\to M$ which preserves the Lie bracket.
\end{corollary}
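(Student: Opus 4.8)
The plan is to reduce the statement to Proposition \ref{prop:connection:partial:split}, i.e.\ to the case of a bundle of Lie groups. The difficulty is that, in the present generality, the bundle of ideals $\ka$ need not integrate to a closed embedded kernel subgroupoid $\K\subset\G$, so there is no global object to which that proposition applies directly. I would circumvent this by observing that the statement is local along $M$ and that, near the unit section, a kernel always exists: even though the immersed isotropy subgroups $\K_x\subset\G_x$ may fail to assemble into an embedded subgroupoid, their germs at the identity always do.

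Concretely, I would first build a \emph{local kernel}. The isotropy exponentials $\exp_x\colon\ker\rho|_x\to\G_x$ from \eqref{eq:representation:grpd} depend smoothly on $x$, so their restriction to $\ka\subset\ker\rho$ defines a smooth map $\exp\colon\ka\to\G$ that is a fibrewise diffeomorphism near the zero section. Hence, for a sufficiently small neighborhood $U\subset\ka$ of the zero section, $N:=\exp(U)$ is an embedded submanifold of $\G$ containing $M$ which (after shrinking $U$) is closed under multiplication and inversion. Thus $N$ is a bundle of local Lie groups with $\s|_N=\t|_N=:p$, whose bundle of Lie algebras is $\ka$ and whose vertical bundle is $\ker\dd p=K|_N$, with $K$ as in \eqref{eq:bundle:K}.

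Next I would restrict the connection to $N$ by setting $E^N:=E\cap TN$. The two points to verify are: (i) complementarity $TN=E^N\oplus K|_N$, which holds because $K\subset TN$, so for $v\in T_kN$ the decomposition $v=v_E+v_K$ relative to $T\G=E\oplus K$ has $v_K\in K_k\subset T_kN$ and therefore $v_E\in E\cap T_kN=E^N_k$; and (ii) multiplicativity, since $E$ and $TN$ are (local) subgroupoids of the tangent groupoid $T\G\tto TM$, and $E^N$, being their intersection and wide (it contains the units $\dd u(TM)\subset E\cap TN$), is again a wide (local) subgroupoid. Thus $E^N$ is a Cartan connection on the bundle of local Lie groups $N$ (cf.\ Remark \ref{rem:nomenclature}). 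Since the argument of Proposition \ref{prop:connection:partial:split} is entirely local around the unit section, it applies to $N$ and produces, over the given neighborhood in $M$, a linear connection $\nabla$ on $\ka$ preserving the fibrewise Lie bracket.

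Finally, I would check that these locally defined connections are independent of the choices ($U$, the exponential) and hence glue to a global $\nabla^E$. The cleanest way is to identify $\nabla$ with the intrinsic formula
\[
\nabla^E_X\xi=\al\big([\widetilde X,\xi^L]\big)\big|_M,\qquad X\in\X(M),\ \xi\in\Gamma(\ka),
\]
where $\al\in\Omega^1_\mult(\G,\ka)$ is the connection $1$-form of Proposition \ref{prop:partially:split:grpd}(iii) and $\widetilde X\in\Gamma(E)$ is any horizontal extension of the lift $\dd u(X)$ of $X$ tangent to $M$. This is well defined because the ambiguity in $\widetilde X$ lies in $\ker\al=E$: replacing $\widetilde X$ by $\widetilde X+Z$ with $Z\in\Gamma(E)$ vanishing on $M$ changes the bracket at $M$ by $[Z,\xi^L]|_M\in E$, which $\al$ annihilates. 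A direct computation, using that $\xi^L$ is tangent to the $\s$- and $\t$-fibers and that $\al|_\ka=\id$, then yields $C^\infty(M)$-linearity in $X$ and the Leibniz rule in $\xi$, while agreement with the connection produced on each local kernel $N$ follows by restricting the formula to $N$. I expect the main obstacle to be the construction and verification of the local kernel $N$ — in particular, checking that $E$ restricts to a genuine Cartan connection there — since this is exactly the step that replaces the missing global kernel groupoid; the linearity, Leibniz, and gluing properties are then routine.
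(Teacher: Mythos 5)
Your proof is correct, and it reaches the result by a genuinely different route than the paper. The paper's proof is two lines: let $\G(\ka)$ be the bundle of simply connected Lie groups integrating $\ka$, integrate the inclusion $\ka\hookrightarrow A$ to an immersion $i:\G(\ka)\hookrightarrow\G$ (Lie~II), observe that $(\d i)^{-1}(E)$ is a multiplicative Ehresmann connection on the bundle of Lie groups $p:\G(\ka)\to M$, and apply Proposition \ref{prop:connection:partial:split}. You replace this global, abstractly integrated kernel by a local embedded one $N=\exp(U)$, restrict $E$ to it, run the (purely local) argument of Proposition \ref{prop:connection:partial:split} on $N$, and glue through the intrinsic formula $\nabla^E_X\xi=\al([\widetilde X,\xi^L])|_M$. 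Both arguments funnel through the same proposition; yours avoids the Lie-theoretic input (existence of the possibly non-Hausdorff integration $\G(\ka)$, and Lie~II for the inclusion) at the price of the local-kernel construction and a gluing step. Two remarks. First, the embeddedness of $N$ --- which you rightly single out as the delicate point --- is actually dispensable: it suffices that $\exp|_U$ be an immersion, since you can pull $E$ back by $\d\exp$ exactly as the paper pulls it back by $\d i$; equivalently, equip $U\subset\ka$ with the fibrewise BCH product and view $\exp$ as a morphism of bundles of local Lie groups, after which your argument becomes a word-for-word local version of the paper's and the troublesome step evaporates. Second, your intrinsic formula, together with the (correct) observation that $[Z,\xi^L]|_M$ takes values in $E$ whenever $Z\in\Gamma(E)$ vanishes along $M$, is a worthwhile byproduct: it exhibits $\nabla^E$ directly on $\G$, independently of any choice of kernel, whereas the paper only obtains it through the kernel via the formula \eqref{eq:of:nabla}.
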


\begin{proof}
Let  $\G(\ka)$ be the bundle of simply connected Lie groups integrating $\ka$ and $i:\G(\ka)\hookrightarrow \G$ the immersion integrating the inclusion. Given a multiplicative Ehresmann connection $E$ on $\G$ for $\ka$ the pullback $(\d i)^{-1}(E)\subset T \G(\ka)$ is a multiplicative Ehresmann connection for the bundle of Lie groups $p:\G(\ka)\to M$. Hence, the result follows from the proposition. 
\end{proof}

\begin{corollary}\label{corollary:locally:trivial:groupoid}
If a bundle of ideals $\ka\subset A$ for $\G$ is partially split, then $\ka$ is a locally trivial bundle of Lie algebras.
\end{corollary}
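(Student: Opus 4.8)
The plan is to exploit the linear connection on $\ka$ furnished by Corollary \ref{cor:connection:partial:split} together with the fact that it preserves the Lie bracket. Fix once and for all the connection $\nabla:=\nabla^E$ on the bundle of Lie algebras $\ka\to M$ determined by a multiplicative Ehresmann connection $E$; by Corollary \ref{cor:connection:partial:split} it satisfies
\[
\nabla_X[\xi,\eta]_\ka=[\nabla_X\xi,\eta]_\ka+[\xi,\nabla_X\eta]_\ka,\qquad X\in\X(M),\ \xi,\eta\in\Gamma(\ka).
\]

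The key observation is that parallel transport with respect to such a $\nabla$ is a morphism of Lie algebras. Indeed, given a path $\gamma:[0,1]\to M$ and $\nabla$-parallel sections $\xi(t),\eta(t)$ along $\gamma$, the Leibniz identity above gives $\nabla_{\dot\gamma}[\xi,\eta]_\ka=0$, so $t\mapsto[\xi(t),\eta(t)]_\ka$ is again parallel. Hence the parallel transport operator $\tau_\gamma:\ka_{\gamma(0)}\to\ka_{\gamma(1)}$ is a linear isomorphism intertwining the fiber brackets, i.e.\ a Lie algebra isomorphism.

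It remains to build a local trivialization out of these isomorphisms. Around a point $x_0\in M$ I would choose a chart identifying a neighborhood $U$ with a ball in $\R^n$ centered at $x_0$, and for each $x\in U$ let $\tau_x:\ka_{x_0}\to\ka_x$ be parallel transport along the straight segment from $x_0$ to $x$. The resulting map $U\times\ka_{x_0}\to\ka|_U$, $(x,v)\mapsto\tau_x(v)$, is a smooth vector bundle isomorphism (this radial trivialization is the standard way to trivialize a vector bundle with connection over a contractible base), and by the previous paragraph each $\tau_x$ is a Lie algebra isomorphism from $(\ka_{x_0},[\cdot,\cdot]_{x_0})$ onto $(\ka_x,[\cdot,\cdot]_x)$. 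Therefore this trivialization carries the fiberwise bracket of $\ka|_U$ to the constant bracket of $\ka_{x_0}$, exhibiting $\ka$ as locally trivial with typical fiber the Lie algebra $\ka_{x_0}$.

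There is no serious obstacle here; the only points requiring (routine) verification are that parallel transport intertwines the brackets---which follows immediately from the Leibniz identity---and that the radial frame $x\mapsto\tau_x$ depends smoothly on $x$, which is standard from the smooth dependence of solutions of the parallel transport ODE on parameters.
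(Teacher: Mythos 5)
Your proof is correct and is essentially the argument the paper intends: the corollary is stated as an immediate consequence of Corollary \ref{cor:connection:partial:split}, the point being exactly that a linear connection preserving the fiberwise bracket has bracket-preserving parallel transport, which then trivializes $\ka$ as a Lie algebra bundle over any contractible chart. Your write-up merely makes explicit the routine steps (parallel sections have parallel bracket, smooth dependence of the radial frame) that the paper leaves unstated.
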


The corollary implies that $\G(\ka)$ is a Hausdorff manifold (recall that bundles of Lie algebras may fail to have Hausdorff integrations). 

The partial split condition also places restrictions on the isotropy Lie algebras:

\begin{corollary}\label{corollary:complement:groupoid}
If a bundle of ideals $\ka\subset A$ for $\G$ is partially split, then for each $x\in M$ the isotropy Lie algebra splits into a direct sum of ideals: 
\[ \gg_x=\ka_x\oplus \mathfrak{l}_x. \]
\end{corollary}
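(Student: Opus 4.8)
The plan is to localize the connection at the isotropy group over $x$ and to read off from its connection $1$-form an $\Ad$-equivariant projection onto $\ka_x$; the kernel of this projection will be the complementary ideal $\mathfrak{l}_x$.

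First I would fix the isotropy group $G:=\s^{-1}(x)\cap\t^{-1}(x)$, a Lie group with Lie algebra $\gg_x=\ker\rho|_x$, inside which $\ka_x$ is an ideal. Restricting the conjugation representation \eqref{eq:representation:grpd} to $x$ gives the adjoint action of $G$ on $\gg_x$, and invariance of $\ka$ means this action preserves $\ka_x$. By hypothesis there is a multiplicative Ehresmann connection $E$, and by Proposition \ref{prop:partially:split:grpd}(iii) a connection $1$-form $\al\in\Omega^1_\mult(\G,\ka)$ with $\al|_\ka=\id$. Since restriction of forms to a subgroupoid preserves multiplicativity and the coefficient bundle $\ka$ restricts over $\{x\}$ to the $G$-representation $\ka_x$, I obtain a multiplicative form $\al_x\in\Omega^1_\mult(G,\ka_x)$ satisfying $\al_x|_{\ka_x}=\id$.

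The core of the argument is to evaluate $\al_x$ at the unit, producing a linear map $c:=(\al_x)_{1_x}:\gg_x\to\ka_x$, which is a projection because $\al_x|_{\ka_x}=\id$, and to show it is $\Ad(G)$-equivariant. I would spell out the multiplicativity (cocycle) identity, which in the source-valued convention reads
\[ \al_{gh}(\d m(v,w))=\Ad_{h^{-1}}\al_g(v)+\al_h(w),\qquad \s(g)=\t(h), \]
and specialize it twice: setting $g=1_x$ gives $\al_h(\xi^R)=\Ad_{h^{-1}}c(\xi)$ on right-invariant vectors, while setting $h=1_x$ gives $\al_g(\xi^L)=c(\xi)$ on left-invariant vectors. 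Comparing these through the identity $\xi^R_g=\d L_g(\Ad_{g^{-1}}\xi)$ yields $c(\Ad_{g^{-1}}\xi)=\Ad_{g^{-1}}c(\xi)$, i.e.\ $c$ intertwines the adjoint action on $\gg_x$ with the conjugation action on $\ka_x$.

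Finally I would set $\mathfrak{l}_x:=\ker c$. Equivariance makes $\mathfrak{l}_x$ an $\Ad(G)$-invariant complement of $\ka_x$, and differentiating $c(\Ad_{\exp t\xi}\eta)=\Ad_{\exp t\xi}c(\eta)$ at $t=0$ for $\eta\in\mathfrak{l}_x$ gives $c([\xi,\eta])=0$ for all $\xi\in\gg_x$, so $\mathfrak{l}_x$ is an ideal. As $c$ is a projection onto $\ka_x$, we get $\gg_x=\ka_x\oplus\mathfrak{l}_x$, and since both summands are ideals, $[\ka_x,\mathfrak{l}_x]\subseteq\ka_x\cap\mathfrak{l}_x=0$, making this a direct sum of ideals. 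The main obstacle I anticipate is purely in the bookkeeping of the multiplicativity convention (source- versus target-valued coefficients and the side of the action); I expect the precise form of the cocycle identity to differ from the one above, but any such convention delivers equivariance of $c$ up to interchanging $\Ad_g$ with $\Ad_{g^{-1}}$, which leaves the conclusion unchanged.
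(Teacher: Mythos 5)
Your proof is correct and follows essentially the same route as the paper: the paper likewise restricts the multiplicative connection $1$-form to the isotropy group at $x$ and then invokes its Lie-group example (Subsection \ref{ex:Lie:groups}), whose content---that multiplicativity forces the kernel of the induced projection $\gg_x\to\ka_x$ at the identity to be an $\Ad$-invariant, hence ideal, complement---is exactly what you verify by hand via the cocycle identity. The only cosmetic differences are that you work with the full (possibly disconnected) isotropy group and with the $1$-form directly, whereas the paper passes to the identity component $\G_x^0$ and phrases the Lie-group step in terms of the distribution $E\subset TG\simeq G\ltimes\gg$.
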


\begin{proof}
Observe that if $\ka\subset A$ is partially split for $\G$ then for each $x\in M$ the ideal $\ka_x\subset\gg_x$ is partially split for $\G_x^0$, the connected component of the isotropy Lie group. This follows, e.g., by restricting a multiplicative connection 1-form $\alpha\in\Omega^1_\mult(\G,\ka)$ to $\G_x^0$. The corollary now follows from the fact that an ideal $\ka\subset \gg$ for a connected Lie group $G$ is partially split if and if only it admits a complementary ideal in $\gg$, as discussed in the example from Subsection \ref{ex:Lie:groups}. 
\end{proof}

\begin{remark}
In the case of connections for Lie groupoid extensions, the associated linear connection $\nabla$ and Proposition \ref{prop:connection:partial:split} was also deduced in \cite{LSX09} (see Corollary 4.5, Lemma 4.6 and Proposition 4.21 there).
\end{remark}

\subsection{Completeness}
\label{sec:complete}

In this section we will assume that all Lie groupoids are Hausdorff. Given a multiplicative Ehresmann connection for a groupoid homomorphism $\Phi:\G\to \H$ covering the identity we can ask if it is complete, i.e., if one can lift any path in $\H$ to an horizontal path in $\G$. In this section we prove the following result which completely settles this question. 

\begin{theorem}
\label{thm:completeness}
Let $\Phi:\G\to \H$ be a surjective, submersive, Lie groupoid homomorphism covering the identity. A multiplicative Ehresmann connection for $\Phi$ is complete if and only the kernel $\K=\ker\Phi$ is a locally trivial bundle of Lie groups.
\end{theorem}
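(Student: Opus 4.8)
The plan is to prove both directions, with the interesting content being the ``only if'' direction and the constructive ``if'' direction being a fairly standard completeness-via-horizontal-lifting argument.

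\medskip

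\textbf{The ``only if'' direction.}
Suppose the multiplicative Ehresmann connection $E$ for $\Phi$ is complete. I want to deduce that $\K=\ker\Phi$ is a \emph{locally trivial} bundle of Lie groups. The key idea is that completeness means every path in $\H$ lifts horizontally, and in particular I can use horizontal lifts to produce local trivializations of $\K$. Fix $x_0\in M$ and a point in a small neighborhood. For each $x$ near $x_0$, completeness lets me horizontally lift a path in $\s_\H^{-1}(x_0)$ (or in a suitable neighborhood) connecting units, and parallel transport along these horizontal lifts gives maps between the fibers $\K_x$. Concretely, the horizontal lift of the family of constant-type paths joining $x$ to $x_0$ in $M$ (lifted to $\H$) yields, via the multiplicative structure, a comparison between the principal bundles $\Phi:\s_\G^{-1}(x)\to\s_\H^{-1}(x)$ and between their structure groups $\K_x$ and $\K_{x_0}$. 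I would argue that the parallel transport respects the group multiplication because $E$ is multiplicative (it is a subgroupoid of $T\G$), hence the comparison maps $\K_x\to\K_{x_0}$ are Lie group isomorphisms depending smoothly on $x$. This smooth family of isomorphisms is exactly a local trivialization of the bundle of Lie groups $\K$, giving local triviality.

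\medskip

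\textbf{The ``if'' direction.}
Conversely, assume $\K$ is locally trivial and let $E$ be a multiplicative Ehresmann connection with connection $1$-form $\alpha\in\Omega^1_\mult(\G,\ka)$ (Proposition \ref{prop:partially:split:grpd}(iii)). I want completeness: every path in $\H$ has a horizontal lift. Here I would exploit that $\Phi:\s_\G^{-1}(x)\to\s_\H^{-1}(x)$ is a principal $\K_x$-bundle and that $E$ restricts to a principal connection on it, as explained in the introduction (the ``leafwise component''). The completeness of a principal connection is governed by the completeness of its horizontal lifts; for a principal bundle with structure group $G$, a connection is complete whenever $G$ is a Lie group whose action gives a proper/complete flow along fibers — and local triviality of $\K$ is precisely what is needed to glue the locally-defined horizontal lifts into a globally defined lift over any compact time interval. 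The main technical point is the standard continuation argument: given a path $h(t)$ in $\H$ and a partial horizontal lift $g(t)$ defined on $[0,T)$, local triviality of $\K$ near the limit point lets me extend the lift past $T$ by solving the horizontal lift ODE in a local trivialization, where the fiber is a \emph{fixed} Lie group $\K_{x}$ and the defining equation $\alpha(\dot g)=0$ has solutions for all time.

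\medskip

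\textbf{Main obstacle.}
The hard part will be the ``only if'' direction: extracting genuine local triviality of $\K$ (a smooth family of Lie group isomorphisms of the fibers) purely from the existence of complete horizontal lifts. The subtlety is that parallel transport a priori only gives diffeomorphisms between the fibers of $\K$, and I must use multiplicativity of $E$ carefully to upgrade these to group homomorphisms, and then to check smoothness in the base direction. I expect to need the relationship between $E^\K$ (the Cartan connection on $\K$) and the induced linear bracket-preserving connection $\nabla$ on $\ka$ from Proposition \ref{prop:connection:partial:split}, together with equation \eqref{eq:exp:connection} relating $E^\ka$ to $E^\K$ via the exponential map, so that parallel transport on $\ka$ (which is linear and manifestly gives Lie-algebra isomorphisms, since $\nabla$ preserves the bracket) exponentiates to the desired Lie group isomorphisms of the fibers $\K_x$. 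Completeness guarantees these parallel transport isomorphisms are globally defined, and their smooth dependence on the base point then yields the local trivialization.
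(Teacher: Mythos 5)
Your ``only if'' direction is essentially the paper's argument and is fine: horizontal lifts of the unit paths $1_{\gamma(t)}$ over small segments $\gamma$ in $M$ stay inside $\K$, and multiplicativity of $E$ (products of composable horizontal paths are horizontal) makes the resulting parallel transports $\K_{x_0}\to\K_x$ group isomorphisms, so $(x,k)\mapsto\widetilde{\gamma}^k_x(1)$ is a local trivialization; the exponential-map detour you anticipate in your ``main obstacle'' is unnecessary, since the homomorphism property is immediate from multiplicativity. The genuine gap is in the ``if'' direction. Your key step --- that in a local trivialization of $\K$ the horizontal lift equation $\al(\dot g)=0$ ``has solutions for all time'' because the fiber is a fixed Lie group --- is exactly the point at issue, and as stated it is false for a general Ehresmann connection: local triviality of a fibration with non-compact fibers does \emph{not} force completeness (the horizontal distribution spanned by $\partial_x+y^2\partial_y$ on $\R\times\R\to\R$ is incomplete although the bundle is trivial, and the paper's remark following the theorem stresses that local triviality is equivalent only to the existence of \emph{some} complete connection, while properness is what makes \emph{all} connections complete). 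So multiplicativity must be invoked precisely at this step, and your proposal never says how. Your leafwise principal-bundle observation does not fill the hole either: it only produces lifts of paths lying in a single source fiber $\s_{\H}^{-1}(x)$, whereas the problematic direction is the transverse one, where $\s_{\H}(h(t))$ moves in $M$.

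What is needed (and what the paper does) is the following. First, reduce completeness of $E$ to completeness of the induced connection $E^\K=E\cap T\K$ on the kernel, using the fact that any two horizontal lifts of the same path in $\H$ differ by right multiplication by an $E^\K$-horizontal path in $\K$ (Proposition \ref{prop:complete:Ehresmann:connection}); this is also what makes your gluing/continuation step work, and it is missing from the proposal. Second, on the kernel, completeness over the identity component $\K^0$ is a group-theoretic argument: the unit path is a globally defined horizontal lift, so by openness of the flow domain there is a neighborhood $U$ of $1_{\gamma(0)}$ in the fiber all of whose points admit lifts defined on $[0,1]$; since $U$ generates the connected group $\K^0_{\gamma(0)}$ and products of horizontal lifts are horizontal, every $g=g_1\cdots g_N$ with $g_i\in U$ admits the global lift $\widetilde{\gamma}^{g_1}(t)\cdots\widetilde{\gamma}^{g_N}(t)$. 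Note that local triviality plays no role here. Third, local triviality enters only to guarantee that $\K/\K^0\to M$ is a covering map, so that lifts exist at the level of connected components; one then applies the kernel-reduction step to the morphism $\K\to\K/\K^0$. Without an argument of this shape, solving the ODE ``in a local trivialization'' cannot rule out finite-time blow-up in the (non-compact) fiber, and the continuation argument does not close.
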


\begin{remark}
Given a surjective submersion $\Phi:M\to N$ between two manifolds one has (see, e.g., \cite{dH16,Fr19}):
\begin{enumerate}[(i)]
    \item $\Phi$ is locally trivial if and only if $\Phi:M\to N$ admits a complete Ehresmann connection;
    \item $\Phi$ is proper if and only if every Ehresmann connection on $\Phi:M\to N$ is complete.
\end{enumerate}
One can think of the theorem as a multiplicative version of (i). On the other hand, it it is not hard to see that a surjective, submersive, groupoid morphism $\Phi:\G\to \H$ covering the identity is proper if and only if its kernel $\K$ is proper (i.e., $p:\K\to M$ is a proper map). 
\end{remark}

The rest of this section is devoted to the proof of the theorem. As before, set
\[ K:=\ker(\d\Phi)\subset T\G. \] 
We also denote by $p:=\s_\K=\t_\K$ the common source and target of the bundle of Lie groups $\K\to M$. Denoting by $i:\K\hookrightarrow \G$ the inclusion, notice that 
\[ E^\K:=(\d i)^{-1}(E)=E\cap T\K, \]
gives a multiplicative Ehresmann connection on the bundle $p:\K\to M$:
\[ T\K=\ker(\d p)\oplus E^\K,\text{ where }E^\K:=E\cap T\K. \]
 In terms of the nomenclature of Remark \ref{rem:nomenclature}, $E^\K$ is a Cartan connection for $\K$. The resulting linear connection on the Lie algebroid $\ka$ of $\K$ is precisely the linear connection $\nabla^E$ of Proposition \ref{prop:connection:partial:split}.

The following result reduces the proof of Theorem \ref{thm:completeness} to the case of a bundle of Lie groups.


\begin{proposition}
\label{prop:complete:Ehresmann:connection}
A multiplicative Ehresmann connection for a groupoid homomorphism $\Phi:\G\to \H$ is complete if and only if the multiplicative Ehresmann connection induced on its kernel $p:\K\to M$ is complete.
\end{proposition}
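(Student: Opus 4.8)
The plan is to prove the two implications separately, working throughout with the multiplicative connection $1$-form $\alpha\in\Omega^1_\mult(\G,\ka)$ determined by $E$ via Proposition \ref{prop:partially:split:grpd}, so that $E=\ker\alpha$ and $\alpha|_\ka=\id$. As noted above, the induced connection on $p:\K\to M$ is $E^\K=E\cap T\K=\ker(\alpha|_{T\K})$, and we write $\alpha^\K:=\alpha|_{T\K}$ for its connection $1$-form.

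The implication ``$E$ complete $\Rightarrow$ $E^\K$ complete'' is obtained by lifting \emph{unit paths}. Given a path $x:[0,1]\to M$ to be lifted into $\K$ and an initial point $k_0\in\K_{x(0)}$, consider the unit path $t\mapsto 1_{x(t)}$ in $\H$, which passes through $\Phi(k_0)=1_{x(0)}$. By completeness of $E$ it has an $E$-horizontal lift $g:[0,1]\to\G$ with $g(0)=k_0$ and $\Phi(g(t))=1_{x(t)}$; hence $g(t)\in\K$ and $\dot g(t)\in E\cap T\K=E^\K$, while $p(g(t))=\s(g(t))=x(t)$ because $\Phi$ covers $\id_M$. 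Thus $g$ is the desired $E^\K$-horizontal lift.

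For the converse, assume $E^\K$ is complete, fix $h:[0,1]\to\H$ and $g_0\in\Phi^{-1}(h(0))$, and let $[0,T)$ (with $T\le 1$) be the maximal interval on which the $E$-horizontal lift $g$ of $h$ with $g(0)=g_0$ exists; by local existence and uniqueness of horizontal lifts it suffices to show $\lim_{t\to T}g(t)$ exists. Since $\Phi$ is a surjective submersion, $h$ admits a smooth global lift $\tilde g:[0,1]\to\G$ (patch local sections over the compact interval). As $\Phi$ covers $\id_M$ and $\Phi(g(t))=\Phi(\tilde g(t))=h(t)$, the element $k(t):=\tilde g(t)^{-1}g(t)$ is defined, lies in $\K_{a(t)}$ with $a(t):=\s_\H(h(t))$, and satisfies $g(t)=\tilde g(t)\cdot k(t)$; since $\tilde g$ and $a$ extend continuously to $t=T$, it remains only to show that the group-coordinate $k(t)$ does not escape as $t\to T$. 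Imposing $\alpha(\dot g)=0$ on $g=m(\tilde g,k)$ and using multiplicativity of $\alpha$ turns horizontality into an equation of the form
\[ \alpha^\K(\dot k(t))=-\Ad_{k(t)}^{-1}\big(\alpha(\dot{\tilde g}(t))\big), \]
that is, the $E^\K$-horizontal-lift equation along $a(t)$ driven by a continuous $\ka$-valued source term. Because $E^\K$ is \emph{multiplicative}, its parallel transport preserves products, so by the completeness hypothesis it defines Lie group isomorphisms $P_t:\K_{a(0)}\to\K_{a(t)}$ for all $t\in[0,T]$; writing $k(t)=P_t(c(t))$ with $c(t)$ in the \emph{fixed} group $\K_{a(0)}$ trivializes the connection term and reduces the above to a time-dependent right-invariant ODE on $\K_{a(0)}$ with coefficient $P_t^{-1}$ applied to the (bounded) source term. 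Such an invariant ODE is complete on $[0,T]$, so $c$, and hence $k$, extends to $t=T$; therefore $\lim_{t\to T}g(t)=\tilde g(T)\cdot k(T)$ exists, and by maximality $T=1$.

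I expect the third step to be the main obstacle. Two points require care: first, extracting the precise multiplicativity identity for $\alpha$ on a product $g=\tilde g\cdot k$ in order to obtain the displayed $k$-equation (including the exact placement of the conjugation twist); and second, the transfer of completeness from the \emph{homogeneous} parallel transport of $E^\K$ to global solvability of the \emph{inhomogeneous}, $\Ad$-twisted equation for $k$, over a base path along which the fibers $\K_{a(t)}$ vary smoothly but a priori not locally trivially. The decisive input is that multiplicativity makes the $P_t$ group isomorphisms, which lets one pass to the fixed model group $\K_{a(0)}$ and recognize the resulting equation as a complete time-dependent right-invariant ODE—exactly the mechanism that renders ordinary principal connections complete, here applied fiberwise over the base. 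The remaining ingredients (unit-path lifting, the global reference lift $\tilde g$, and the decomposition $g=\tilde g\cdot k$) are routine.
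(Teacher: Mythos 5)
Your forward direction (lifting unit paths of $\H$ and observing that the resulting $E$-horizontal paths stay in $\K$ and are $E^\K$-horizontal) is correct, and matches the direction the paper treats as immediate. In the converse, your later steps are also sound: multiplicativity of $\alpha$ does give $\alpha^{\K}(\dot k)=-\Ad_{k^{-1}}\big(\alpha(\dot{\tilde g})\big)$ for $g=\tilde g\,k$; multiplicativity together with completeness of $E^\K$ does make each parallel transport $P_t:\K_{a(0)}\to\K_{a(t)}$ a Lie group isomorphism; and substituting $k(t)=P_t(c(t))$ does reduce the equation to $\dot c(t)=-\d R_{c(t)}\gamma(t)$ with $\gamma(t)=(\d_e P_t)^{-1}\alpha(\dot{\tilde g}(t))$, a time-dependent right-invariant ODE, which is indeed globally solvable over a compact time interval. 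The genuine gap is earlier: the claim that $h$ admits a smooth \emph{global} lift $\tilde g:[0,1]\to\G$ ``since $\Phi$ is a surjective submersion (patch local sections)''. Surjective submersions do not have the path-lifting property, and the patching argument does not work: two local lifts over overlapping subintervals differ by a path in a fibre of $\Phi$, and modifying one to match the other is itself a lifting problem for the possibly non-locally-trivial kernel $\K$ --- precisely the kind of problem whose solvability is at stake in this section (cf.\ the remark following Theorem \ref{thm:completeness}: a surjective submersion admits a complete Ehresmann connection if and only if it is locally trivial). Worse, under the hypotheses of the proposition the existence of some global lift of every path in $\H$ is a \emph{consequence} of the statement being proved, so assuming it at the outset renders the argument circular as written.

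The gap is, however, easily repaired, because your argument never needs $\tilde g$ on all of $[0,1]$, only near the escape time $T$: a single local section of $\Phi$ through any point of $\Phi^{-1}(h(T))$ yields a lift $\tilde g$ on $(T-\eps,T+\eps)$, and running your ODE argument on that interval extends $k$, hence $g=\tilde g\,k$, past $T$, contradicting maximality. With this localization your proof is correct, and it is genuinely different from the paper's. The paper compares the maximal lift $g$ not with an arbitrary lift but with a \emph{local horizontal} lift $\overline g$ near the escape time; since $g$ and $\overline g$ are both horizontal lifts of the same path $h$, the correction $k=g^{-1}\overline g$ is automatically an $E^\K$-horizontal path in $\K$ (products and inverses of $E$-horizontal paths are $E$-horizontal), so completeness of $E^\K$ extends it with no computation at all --- no inhomogeneous equation, no trivialization by $P_t$, and no appeal to completeness of invariant ODEs on Lie groups. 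Your route buys an explicit gauge-theoretic description of the correction term, at the cost of that extra ODE machinery; the paper's buys brevity by exploiting that the comparison path can itself be chosen horizontal.
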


\begin{proof}
In one direction the statement is obvious. 

For the converse, assume that $\Phi:\G\to \H$ admits a multiplicative Ehresmann connection $E\subset T\G$ such that the induced Ehresmann connection on the kernel $E^\K\subset T\K$ is complete. We observe that the following properties hold, independent of completeness:
\begin{enumerate}[(i)]
    \item If $g:I\to \G$ is a horizontal path, then $t\mapsto g(t)^{-1}$ is also horizontal;
    \item If $g_1,g_2:I\to\G$ are horizontal paths with $\s(g_1(t))=\t(g_2(t))$ for all $t\in I$, then $t\mapsto g_1(t)g_2(t)$ is also horizontal;
    \item if $g:I\to \G$ is a horizontal lift of a path $h:I\to \H$ and $k:I\to \K$ is a horizontal lift of $\gamma:=\s\circ h:I\to M$, then
    \[ \overline{g}:I\to \G,\quad \overline{g}(t):=g(t)k(t), \]
    is also a horizontal lift of $h:I\to \H$;
    \item any two horizontal lifts $g_1,g_2:[a,b]\to \G$ of $h:[a,b]\to \H$ are related by:
    \[g_2(t)=g_1(t)k(t) \quad (t\in[a,b]),\]
    where $k:[a,b]\to \K$ is the horizontal lift of $\s\circ h:[a,b]\to M$ with $k(a)=g_1(a)^{-1}g_2(a)$.
\end{enumerate}
Indeed, (i) and (ii) follow immediately from the fact that $E\tto TM$ is a subgroupoid of $T\G\tto TM$. Then (iii) follows from (i) and the fact that the connection $E^\K$ on $\K\to M$ is the restriction of $E$ to $T\K$. Finally, (iv) follows from (i)-(iii) and the uniqueness of horizontal lifts.

Now, using these facts, we have the following standard argument showing that for every path $h:[0,1]\to \H$ and any $g_0\in \G$ with $\Phi(g_0)=h(0)$ there exists a horizontal lift $g:[0,1]\to\G$ starting at $g_0$. First, we always have an horizontal lift $g:[0,t_0)\to\G$, defined for some $t_0>0$. Then we can take any horizontal lift $ \overline{g}:(t_0-\eps,t_0+\eps)\to \G$ of $h:(t_0-\eps,t_0+\eps)\to \H$. Then we can write:
\[  \overline{g}(t)=g(t)k(t),\quad t\in (t_0-\eps,t_0),\]
where $k:(t_0-\eps,t_0)\to \K$ is a horizontal lift of $\s\circ h:(t_0-\eps,t_0)\to M$.
Since $E^\K\subset T\K$ is complete, the path $k:(t_0-\eps,t_0)\to \K$ extends to a (unique) horizontal lift of $\s\circ h:[0,1]\to M$ on the whole interval $[0,1]$. But then we obtain an horizontal extension $g:[0,t_0+\eps)\to \G$ by setting $g(t):= \overline{g}(t)k(t)^{-1}$, for $t\in (t_0-\eps,t_0+\eps)$. We conclude that $g(t)$ extends to whole interval $[0,1]$.
\end{proof}

To complete the proof of Theorem \ref{thm:completeness} we will show the following:

\begin{proposition}
\label{prop:completeness:bundle of groups}
Let $p:\K\to M$ be a bundle of Lie groups. The following are equivalent:
\begin{enumerate}[(a)]
  \item $\K$ admits a complete multiplicative Ehresmann connection;
    \item $p:\K\to M$ is a locally trivial bundle of groups.
\end{enumerate}
Moreover, in this case any multiplicative Ehresmann on $\K$ is complete. 
\end{proposition}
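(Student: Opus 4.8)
I would prove $\text{(a)}\Rightarrow\text{(b)}$ by parallel transport, and prove $\text{(b)}$ implies both the completeness of \emph{every} multiplicative connection and the existence of one (hence $\text{(b)}\Rightarrow\text{(a)}$). Throughout I use that, since $E^\K\tto TM$ is a subgroupoid of $T\K$, the horizontal lifts of paths in $M$ are closed under the groupoid operations: the unit section is horizontal, and products and inverses of horizontal lifts over the \emph{same} base path are again horizontal. These are the bundle-of-groups analogues of properties (i)--(iii) in the proof of Proposition \ref{prop:complete:Ehresmann:connection}.

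\textbf{Proof of $\text{(a)}\Rightarrow\text{(b)}$.} Assume $E^\K$ is complete. Then for every path $\gamma$ from $x$ to $y$ the horizontal lift is defined on all of $\K_x$, so parallel transport $P_\gamma:\K_x\to\K_y$ is everywhere defined; the three facts above show that $P_\gamma$ sends the identity to the identity and respects products, hence is a Lie group homomorphism, and it is invertible (with inverse the transport along the reversed path), hence a Lie group isomorphism. To build a local trivialization, fix $x_0$, choose a chart $U\ni x_0$ and the radial paths $\gamma_x$ from $x_0$ to $x$, depending smoothly on $x$. Then
\[ \Psi:U\times\K_{x_0}\to\K|_U,\qquad \Psi(x,k)=P_{\gamma_x}(k), \]
is everywhere defined by completeness, smooth by smooth dependence of flows on parameters, a fiberwise group isomorphism, and a diffeomorphism with inverse given by reverse transport. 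Thus $p:\K\to M$ is a locally trivial bundle of Lie groups.

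\textbf{Proof that $\text{(b)}$ implies completeness of every connection.} Let $E^\K$ be an arbitrary multiplicative connection and $\gamma:[0,1]\to M$ a path. Since $[0,1]$ is contractible and $\K$ is locally trivial, the pullback $\gamma^*\K$ is trivial, $\gamma^*\K\cong[0,1]\times G$, and lifting $\gamma$ amounts to integrating the time-dependent vector field $X_t\in\X(G)$ obtained as the horizontal lift of $\partial_t$. The facts that the unit section is horizontal and that products of horizontal lifts over $\gamma$ are horizontal force, for each $t$,
\[ X_t(e)=0,\qquad X_t(gh)=\d R_h\,X_t(g)+\d L_g\,X_t(h), \]
i.e.\ $X_t$ is an infinitesimal automorphism of $G$, so its flow consists of group automorphisms. (The infinitesimal input for existence near $e$ is Proposition \ref{prop:connection:partial:split}: via $\exp$ the lift equation matches the linear connection $E^\ka$, whose parallel transport is complete; the automorphism structure globalizes this.) Equivalently, $t\mapsto X_t$ is a smooth path in $\operatorname{Lie}(\Aut(G))$, and $\dot g=X_t(g)$ is the right-invariant (Lie-type) equation $\dot\psi_t=X_t\cdot\psi_t$ on the Lie group $\Aut(G)$. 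Right-invariant time-dependent equations on a Lie group have solutions for all time, so $\psi_t\in\Aut(G)$ exists for all $t\in[0,1]$, and then $t\mapsto\psi_t(g)$ is the horizontal lift through any $g$, defined on all of $[0,1]$. (If $G$ is disconnected one first applies this on the identity component, where $\Aut$ is a Lie group, and transports components by the canonical flat structure on the covering $\pi_0(\K)\to M$.) Hence $E^\K$ is complete.

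\textbf{Existence and the main obstacle.} A locally trivial bundle of groups does carry a multiplicative connection: over trivializing opens take the flat connection $TU\oplus 0$, with connection $1$-forms $\alpha_i$, and glue by a partition of unity $\{\rho_i\}$ pulled back from $M$. Multiplicativity is a linear condition, and since $p\circ m=p\circ\pr_k$ the combination $\alpha=\sum p^*\rho_i\,\alpha_i$ is again multiplicative with $\alpha|_\ka=\id$; by the previous paragraph it is complete, proving $\text{(b)}\Rightarrow\text{(a)}$ and the final ``moreover''. The main obstacle is the completeness step: after reducing to a trivial bundle over an interval, the essential point is to recognize—using multiplicativity—that the horizontal-lift equation is of Lie type on $\Aut(G)$, whose flows are automatically complete; the remaining verifications are routine bookkeeping.
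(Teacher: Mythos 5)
Your argument for (a)$\Rightarrow$(b) and your partition-of-unity construction of a connection under (b) are essentially the paper's. Where you genuinely diverge is the completeness step. The paper never writes an ODE on a fixed fiber: it first proves completeness of the restriction to $\K^0$ \emph{without using local triviality at all} (the unit section is a global horizontal lift, so some neighborhood $U$ of the identity in the fiber has global lifts, and $\K^0_{\gamma(0)}=\bigcup_{n\geq 1}U^n$ together with ``products of horizontal lifts are horizontal'' finishes the argument), and then handles the disconnected case by lifting the path to the covering $\K/\K^0\to M$ and invoking Proposition \ref{prop:complete:Ehresmann:connection} for the morphism $\K\to\K/\K^0$. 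Your route --- trivialize $\gamma^*\K\cong[0,1]\times G$ and recognize the horizontal-lift equation as a time-dependent multiplicative vector field, hence an equation of Lie type --- is a legitimate alternative with its own appeal, but note that it consumes local triviality already on $\K^0$, whereas the paper's argument yields the stronger intermediate statement (completeness on $\K^0$ for \emph{any} bundle of Lie groups) that underlies Corollary \ref{cor:connected:Lie:groups}.

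Two points in your completeness step need repair. First, the identification of $X_t$ with an element of $\operatorname{Lie}(\Aut(G^0))$ is not automatic when $G^0$ is not simply connected: multiplicativity only gives a derivation-type identity, i.e.\ an element of $\Der(\gg)$, and $\operatorname{Lie}(\Aut(G^0))$ can a priori be smaller (e.g.\ $\Aut(T^2)=GL(2,\Z)$ is discrete, while $\Der(\R^2)=\gl(2,\R)$). The fix is to lift $X_t$ to the universal cover $\widetilde{G^0}$, where multiplicative vector fields are exactly $\Der(\gg)=\operatorname{Lie}(\Aut(\widetilde{G^0}))$ and the Lie-type argument applies, and then check that the flow descends: the projection of the integral curve through any $\gamma\in\pi_1(G^0)\subset Z(\widetilde{G^0})$ is an integral curve of $X_t$ through $e$, hence constant, so the flow fixes $\pi_1(G^0)$ pointwise. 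Second, and more seriously, the disconnected case is a genuine gap as written: the flat structure of the covering $\pi_0(\K)\to M$ only tells you \emph{which} components a horizontal lift must traverse; it does not show that the ODE has global solutions on a non-identity component $C=g_0G^0$. The restriction of $X_t$ to $C$ is not multiplicative in any group sense ($C$ is not a subgroup); pulling it back by $L_{g_0}$ gives the affine field $h\mapsto \d R_h v_t + X_t(h)$ on $G^0$, so you need either the Lie-type argument on the larger group $G^0\rtimes\Aut(G^0)$, or the paper's patching argument: a maximal lift through $g_0$ extends past any finite time because it differs from a local lift by a horizontal path in $\K^0$, and the connection on $\K^0$ is complete --- which is exactly the mechanism of Proposition \ref{prop:complete:Ehresmann:connection} that you cite at the outset but never actually invoke at this step.
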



\begin{proof}
Suppose a multiplicative Ehresmann connection exists which is complete. Then if we fix $x_0\in M$ and take a ball $B_\eps(x_0)$ inside some chart, we define:
\[ \phi:B_\eps(x_0)\times \K_{x_0}\to \K,\quad (x,k)\mapsto \widetilde{\gamma}_x^k(1), \]
where $\widetilde{\gamma}_x^k:[0,1]\to \K$ denotes the horizontal lift through $k$ of the line segment $\gamma_x:[0,1]\to B_\eps(x_0)$ joining $x_0$ to $x$. Since the Ehresmann  connection is multiplicative, for each $x\in B_\eps(x_0)$ the map $k\mapsto \phi(x,k)$ is a group isomorphism, so $\phi$ defines a local trivialization around $x_0$.

For the converse, assume that $\K$ is locally trivial, and fix an open cover $\{U_i\}$ over which $\K$ trivializes. On each $U_i$ consider the trivial multiplicative Ehresmann connection (see the example from Subsection \ref{example:os:products}) with corresponding multiplicative connection 1-form $\al_i\in \Omega^1_\mult(\K|_{U_i};\ka)$. Then choosing a partition of unity $\{\chi_i\}$ subordinate to this cover, we obtain a multiplicative connection 1-form 
\[ \al=\sum_i\chi_i\al_i\in \Omega^1_\mult(\K;\ka).\]

Next, we show that any multiplicative Ehresmann connection  restricted to the connected component of the identity, denoted $\K^0$, is complete. This will not use that $\K$ is locally trivial. 
Let $\gamma:[0,1]\to M$ be a path. The lift at $1_{\gamma(0)}$ is defined as $1_{\gamma(t)}$ for all $t\in [0,1]$. Therefore there is some neighborhood $U\in \K^0_{\gamma(0)}$ such that for any $g\in U$ the horizontal lift $\widetilde{\gamma}^{g}$ is defined on $[0,1]$. By a well-known result in Lie theory, we have:
\[ \K^0_{\gamma(0)}=\bigcup_{n\ge 1} U^n. \]
Hence, given any $g\in \K^0_{\gamma(0)}$ we can factor it as a product:
\[ g=g_1\cdots g_N, \quad g_i\in U. \]
Then the horizontal lift of $\gamma$ through $g$ is defined on $[0,1]$ and is given by: 
\[
\widetilde{\gamma}^{g}(t):=\widetilde{\gamma}^{g_1}(t)\cdots \widetilde{\gamma}^{g_N}(t),\]
(see item (ii) in the proof of Proposition \ref{prop:complete:Ehresmann:connection}). This proves the claim for $\K^0$.

For the general case, since $\K$ is locally trivial bundle of Lie groups, it follows that $\K/\K^0\to M$ is a covering map. Hence any path $\gamma:[0,1]\to M$ has a complete lift $\widetilde{\gamma}^{[g]}:[0,1]\to \K/\K^0$, for any initial value $[g]\in \K/\K^0|_{\gamma(0)}$. Next, we apply Proposition \ref{prop:complete:Ehresmann:connection} to the groupoid homomorphism:
\[\Phi:\K\to \K/\K^0,\]
with kernel $\K^0$. Then we know that $\widetilde{\gamma}^{[g]}$ lifts to a horizontal path $\widetilde{\gamma}^{g}:[0,1]\to \K$, with any starting condition $g\in \K_{\gamma(0)}$. This proves completeness.

Note that the specific form of the Ehresmann connection did not play a role, and therefore we obtain the last claim of the statement.
\end{proof}

From the proof, we deduce:
\begin{corollary}
\label{cor:connected:Lie:groups}
A bundle of connected Lie groups is locally trivial if and only if it admits a multiplicative Ehresmann connection. Any such connection is complete. 
\end{corollary}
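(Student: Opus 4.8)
The plan is to read the corollary off from the proof of Proposition \ref{prop:completeness:bundle of groups}, exploiting that for a bundle of \emph{connected} Lie groups one has $\K=\K^0$, where $\K^0\to M$ denotes the bundle of identity components.

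First I would dispatch the easy implication: if $\K\to M$ is locally trivial, then the construction in the converse direction of Proposition \ref{prop:completeness:bundle of groups} already yields a multiplicative connection 1-form $\al=\sum_i\chi_i\al_i\in\Omega^1_\mult(\K;\ka)$, obtained by patching the trivial connections over a trivializing cover with a partition of unity $\{\chi_i\}$. This produces a multiplicative Ehresmann connection and uses no connectedness hypothesis.

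For the remaining implication and the completeness claim, I would start from a bundle of connected Lie groups $\K$ carrying a multiplicative Ehresmann connection $E$. The crucial point is that the proof of Proposition \ref{prop:completeness:bundle of groups} shows that the restriction of \emph{any} multiplicative Ehresmann connection to $\K^0$ is complete, and this part of the argument never uses local triviality: it relies only on the constant lift $t\mapsto 1_{\gamma(t)}$ of a path $\gamma$ through the identity, the resulting neighborhood $U$ of $1_{\gamma(0)}$ from which lifts extend over $[0,1]$, the Lie-theoretic identity $\K^0_{\gamma(0)}=\bigcup_{n\ge1}U^n$, and the multiplicativity of $E$ used to express the lift through $g=g_1\cdots g_N$ as the product of the lifts through the factors. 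Since each fiber is connected, $\K=\K^0$, so this already shows that $E$ is complete---proving the last assertion of the corollary---and then the implication (a)$\Rightarrow$(b) of Proposition \ref{prop:completeness:bundle of groups} forces $p:\K\to M$ to be locally trivial.

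The main thing to check, and the only possible obstacle, is that the $\K^0$-step of the proof of Proposition \ref{prop:completeness:bundle of groups} is genuinely independent of local triviality. This is indeed the case: each of the four ingredients above is intrinsic to $E$ and to the fiberwise group structure, and none of them refers to a local trivialization. So beyond isolating this step and observing that $\K=\K^0$ in the connected case, no new computation is required.
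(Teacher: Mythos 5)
Your proposal is correct and is precisely the deduction the paper intends: the corollary is stated with ``From the proof, we deduce,'' and that deduction consists exactly of your observations---the partition-of-unity construction gives the easy direction, the $\K^0$-completeness step of Proposition \ref{prop:completeness:bundle of groups} is explicitly independent of local triviality (the paper even flags this), $\K=\K^0$ for connected fibers, and then the implication (a)$\Rightarrow$(b) yields local triviality. Nothing further is needed.
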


Note also that there are bundle of groups which are not locally trivial and admit multiplicative Ehresmann connections (necessarily non-complete). For example, bundles of discrete groups admit unique multiplicative Ehresmann connections. 

\begin{remark}
In \cite[Prop 4.4]{LSX09} the authors show that connections for a Lie groupoid extension are always complete. This also follows from our results. Indeed, the proof of our Proposition \ref{prop:complete:Ehresmann:connection} shows that a multiplicative connection on a bundle of Lie groups which is a trivial fibration must be complete: for completeness all that is used in the proof is that $\K/\K^0\to M$ is a covering map, and this follows if $\K\to M$ is a locally trivial fibration.
\end{remark}

\subsection{Curvature}
\label{sec:curvature}
In this section we fix a partially split bundle of ideals $\ka$ for $\G$ with a multiplicative Ehresmann connection $E\subset T\G$. We denote by
$h:T\G\to E$ the horizontal projection relative the decomposition
\[ T\G=K\oplus E.\]
We denote by $\nabla:=\nabla^E$ the induced linear connection on the bundle $\ka\to M$ given by Corollary \ref{cor:connection:partial:split}, and by $\nabla^{\s}:=\s^*\nabla$ the induced connection on $\s^*\ka=\G\times_M\ka\to \G$. This allows us to differentiate forms in $\G$ with coefficients in $\ka$ (see Section \ref{sec:differentiation:forms:coefficients}).
We introduce the {\bf exterior covariant derivative} of $E$ as the operator $\D:\Omega^k(\G,\ka)\to \Omega^{k+1}(\G,\ka)$ given by
\[ 
(\D\omega)(X_1,\dots,X_{k+1}):=(\d^{\nabla}\omega)(h(X_1),\dots,h(X_{k+1})).
\]

\begin{definition}
The {\bf curvature} of the multiplicative Ehresmann connection $E$ is the 2-form:
\[ \Omega:=\D\al\in\Omega^2(\G,\ka), \]
where $\al\in\Omega^1_{\mult}(\G,\ka)$ is the multiplicative connection 1-form of $E$.
\end{definition}

The next properties of the curvature are reminiscent of properties of principal connections. In fact, as we will see in Subsection \ref{ex:bundle:ideals:transitive}, principal connections can be seen as examples of multiplicative Ehresmann connections.

\begin{proposition}
\label{prop:curvature:multiplicative}
The curvature of a multiplicative Ehresmann connection $E\subset T\G$ is a multiplicative form $\Omega\in\Omega^2_\mult(\G,\ka)$, which for horizontal vector fields $X,Y\in\Gamma(E)$ is given by:
\begin{equation}
    \label{eq:curvature:bracket}
    \d L_g\cdot \Omega(X,Y)_g=\big(h([X,Y])-[X,Y]\big)_g.
\end{equation}
In particular, if $\ka$ is induced by a Lie groupoid submersion $\Phi:\G\to\H$, and $\widetilde{U},\widetilde{V}\in \X(\G)$ are the horizontal lifts of $U,V\in \X(\H)$, then \[ \d L_{g}\cdot \Omega(\tilde{U},\tilde{V})_g=\big(\widetilde{[U,V]}-[\widetilde{U},\widetilde{V}]\big)_{g}.\]
\end{proposition}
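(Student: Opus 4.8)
The plan is to establish the three claimed properties of the curvature $\Omega=\D\al$ in turn: first that it is horizontal (vanishes when any argument is in $K$), second that it satisfies the bracket formula \eqref{eq:curvature:bracket}, and third that multiplicativity follows. The starting point is the definition of $\D$, which inserts the horizontal projection $h$ before applying $\d^\nabla$. Since $h$ kills the kernel $K$, it is immediate that $\Omega(X_1,X_2)=0$ whenever $X_1$ or $X_2$ is a section of $K$; this also shows that $\Omega$ is $C^\infty(\G)$-bilinear even though $\d^\nabla\al$ alone need not be, because pre-composing with the tensorial projection $h$ restores tensoriality. Hence it suffices to compute $\Omega$ on horizontal vector fields.

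For the bracket formula, I would first unwind $(\d^\nabla\al)(X,Y)$ for $X,Y\in\Gamma(E)$ using the standard Koszul-type formula for the covariant exterior derivative:
\[
(\d^\nabla\al)(X,Y)=\nabla^\s_X\big(\al(Y)\big)-\nabla^\s_Y\big(\al(X)\big)-\al([X,Y]).
\]
Because $X,Y$ are horizontal and $\al$ restricts to the identity on $\ka$ while $E=\ker\al$ (the connection $1$-form annihilates its own horizontal distribution), we have $\al(X)=\al(Y)=0$, so the first two terms drop out and we are left with $(\d^\nabla\al)(X,Y)=-\al([X,Y])$. Now decompose $[X,Y]=h([X,Y])+\big([X,Y]-h([X,Y])\big)$, where the second summand lies in $K$; applying $\al$ kills the horizontal part, so $-\al([X,Y])=-\al\big([X,Y]-h([X,Y])\big)$. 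The vertical part $[X,Y]-h([X,Y])$ is a section of $K$, and via the identification $K_g=\d L_g(\ka_{\s(g)})$ the form $\al$ reads off its $\ka$-coordinate; translating back by $\d L_g$ recovers exactly \eqref{eq:curvature:bracket}. The submersion version follows by specializing $X=\widetilde U$, $Y=\widetilde V$ to horizontal lifts and noting $h([\widetilde U,\widetilde V])=\widetilde{[U,V]}$, since the horizontal lift of a vector field is $\Phi$-related to the original and horizontal projection of a $\Phi$-related bracket is the lift of the base bracket.

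For multiplicativity, rather than a direct computation I would invoke the machinery of the appendix on covariant differentiation of forms with coefficients: $\al$ is multiplicative by construction (Proposition \ref{prop:partially:split:grpd}(iii)), and the exterior covariant derivative built from the connection $\nabla^\s=\s^*\nabla$, where $\nabla$ preserves the multiplicative structure, sends multiplicative forms to multiplicative forms. The one subtlety is that $\D$ is not simply $\d^\nabla$ but $\d^\nabla$ pre-composed with $h$; I would argue that $h$ itself is a multiplicative bundle map, since $E$ and $K$ are both subgroupoids of $T\G$ and $h$ is the projection onto $E$ along $K$, so pre-composition preserves multiplicativity. The main obstacle I anticipate is precisely verifying cleanly that $\Omega=\D\al$ is multiplicative as a $\ka$-valued form; this requires that the covariant differential $\d^\nabla$ interact correctly with the groupoid structure, which rests on the results of Section \ref{sec:differentiation:forms:coefficients}, and that $h$ be multiplicative. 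Once these are in place, the bracket formula and horizontality are comparatively routine.
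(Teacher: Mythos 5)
Your treatment of horizontality/tensoriality and of the bracket formula \eqref{eq:curvature:bracket} (including its specialization to horizontal lifts of vector fields on $\H$) is correct and coincides with the paper's own computation. The problem is the multiplicativity step, and there the gap is genuine. Your argument rests on the claim that the operator $\d^{\nabla^\s}$ built from $\nabla^\s=\s^*\nabla$ sends multiplicative $\ka$-valued forms to multiplicative forms. The appendix result you are invoking (Proposition \ref{prop:differ:mult:forms}) requires the connection $\nabla$ on $\ka$ to be \emph{$\G$-invariant} in the sense of Definition \ref{definition:invariant:connection}, which is a much stronger condition than the one the induced connection $\nabla^E$ actually satisfies (Proposition \ref{prop:connection:partial:split} only gives that it preserves the fiberwise Lie bracket). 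Indeed, by Proposition \ref{prop:Invariant:connections}, $\G$-invariance forces $R^{\nabla}(\rho(a),v)=0$, whereas the structure equation (S2) of Proposition \ref{prop:structure:eqs} shows that for $\nabla^E$ one has $R^{\nabla^E}(\rho(\al),X)=\ad_{U(\al,X)}$, which is nonzero except in special (essentially kernel-flat, center-valued) situations. Concretely, the structure equation $\Omega=\d^\nabla\al+\tfrac{1}{2}[\al,\al]_\ka$ shows that $\d^{\nabla^\s}\al$ differs from $\Omega$ by $-\tfrac{1}{2}[\al,\al]_\ka$, and a short check using multiplicativity of $\al$ shows that $[\al,\al]_\ka$ produces non-vanishing cross terms under $\d m$; so $\d^{\nabla^\s}\al$ is \emph{not} multiplicative in general, and no precomposition with $h$ can repair a false premise. (Your observation that $h$ is a groupoid morphism and that precomposition with it preserves multiplicativity is correct, and is in fact used by the paper — but only as one ingredient, not as the main mechanism.)

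The point you are missing is that multiplicativity of $\Omega$ cannot be obtained formally from multiplicativity of $\al$ plus functoriality of covariant differentiation; it must be proved by hand, and the bracket formula \eqref{eq:curvature:bracket} is precisely the tool that makes this possible. The paper's argument runs as follows: set $\bO(X,Y)_g:=\d L_g\cdot\Omega(X,Y)_g$, note that $\bO$ vanishes whenever one argument is vertical, so the multiplicative identity for $\bO$ need only be verified on horizontal composable tangent vectors; since groupoid multiplication is a submersion and $h:T\G\to E$ is a groupoid morphism, such vectors extend to \emph{horizontal} local vector fields $V=m_*(V_1,V_2)$, $W=m_*(W_1,W_2)$ that are $m$-related; then \eqref{eq:curvature:bracket} expresses $\bO$ on these fields through Lie brackets, and the identity $[m_*(V_1,V_2),m_*(W_1,W_2)]=m_*([V_1,W_1],[V_2,W_2])$ for $m$-related fields, together with $h\circ m_*=m_*\circ(h,h)$, yields exactly the required multiplicativity. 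Replacing your appeal to the appendix with this direct computation closes the gap.
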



\begin{corollary}
A multiplicative Ehresmann connection is involutive if and only if its curvature 2-form vanishes identically.
\end{corollary}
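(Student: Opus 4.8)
The plan is to deduce the corollary directly from the curvature formula \eqref{eq:curvature:bracket} established in Proposition \ref{prop:curvature:multiplicative}. Recall that the multiplicative Ehresmann connection $E$ is, by Definition \ref{def:Ehresmann connection:ideals}, a distribution giving the splitting $T\G=E\oplus K$; such a distribution is \emph{involutive} precisely when $[\Gamma(E),\Gamma(E)]\subset\Gamma(E)$. The horizontal projection $h:T\G\to E$ satisfies $h(Z)=Z$ if and only if $Z\in E$, so for $X,Y\in\Gamma(E)$ the vector field $[X,Y]$ lies in $E$ exactly when $h([X,Y])=[X,Y]$, i.e., when the difference $\big(h([X,Y])-[X,Y]\big)$ vanishes.

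First I would spell out the logical structure. Suppose $\Omega\equiv 0$. For any horizontal $X,Y\in\Gamma(E)$, formula \eqref{eq:curvature:bracket} gives
\[
\d L_g\cdot\Omega(X,Y)_g=\big(h([X,Y])-[X,Y]\big)_g,
\]
and since the left-hand side vanishes for every $g$, we conclude $h([X,Y])=[X,Y]$, hence $[X,Y]\in\Gamma(E)$. As this holds for all horizontal vector fields, $E$ is involutive. Conversely, suppose $E$ is involutive. Then for $X,Y\in\Gamma(E)$ we have $[X,Y]\in\Gamma(E)$, so $h([X,Y])=[X,Y]$ and the right-hand side of \eqref{eq:curvature:bracket} vanishes; applying $(\d L_g)^{-1}$ shows $\Omega(X,Y)_g=0$ for all $g$.

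The one point requiring a word of justification is why vanishing of $\Omega$ on \emph{horizontal} pairs $X,Y\in\Gamma(E)$ suffices to conclude $\Omega\equiv 0$ as a $2$-form on all of $T\G$. This follows from the definition of the exterior covariant derivative $\D$, in which every entry is first passed through the horizontal projection $h$: indeed $\Omega(X_1,X_2)=(\d^\nabla\al)(h(X_1),h(X_2))$, so $\Omega$ factors through $h$ in each argument and is completely determined by its values on horizontal vectors, while its value on any pair one of whose entries lies in $K=\ker h$ is automatically zero. Thus the condition $\Omega(X,Y)=0$ for all $X,Y\in\Gamma(E)$ is equivalent to $\Omega\equiv 0$.

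I expect no serious obstacle here: the corollary is a clean restatement of \eqref{eq:curvature:bracket}, and the only mild subtlety is the reduction from arbitrary vector fields to horizontal ones via the factorization of $\D$ through $h$, which is immediate from the definition of $\D$ given just above.
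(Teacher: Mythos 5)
Your proof is correct and takes essentially the approach the paper intends: the corollary is read off directly from formula \eqref{eq:curvature:bracket} of Proposition \ref{prop:curvature:multiplicative}, together with the observation that $\Omega=\D\al$ factors through the horizontal projection $h$ in each slot, so vanishing on horizontal pairs is equivalent to vanishing identically. The paper treats this deduction as immediate (the proof block printed after the corollary in the source is actually the proof of the proposition itself, establishing \eqref{eq:curvature:bracket} and multiplicativity), and your write-up simply spells out the two directions and the reduction to horizontal vectors.
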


\begin{proof}
Let $\nabla^\s$ be the pullback of the connection $\nabla$ to $\s^*\ka$. The multiplicative connection 1-form $\alpha$ vanishes on horizontal vectors. Hence, by applying the definition of $\d^{\nabla}$, we find for any horizontal vector fields $X,Y\in\Gamma(E)\subset \X(\G)$:
\begin{align*}
    \Omega(X,Y)&=\d^{\nabla}\al(X,Y)\\
        &=\nabla^\s_{X}(\al(Y))-\nabla^\s_{Y}(\al(X))-\al([X,Y])\\
        &=\al(h([X,Y])-[X,Y]).
\end{align*}
Since $h([X,Y])-[X,Y]\in K$, the definition of $\al$ implies \eqref{eq:curvature:bracket}.

Let $\bO\in \Omega^2(\G,K)$ be the $K$-valued 2-form on $\G$ defined by
\[ \bO(X,Y)_g:=\d L_g\cdot  \Omega(X,Y)_g. \]
By the first part, if $X,Y\in\X(\G)$ are horizontal vector fields, we have:
\begin{equation}
    \label{eq:aux:multiplicative:curvature:1}
    \bO(X,Y)=h([X,Y])-[X,Y].
\end{equation} 
On the other hand, the multiplicativity of $\Omega$ is equivalent to:
\begin{align}
    \label{eq:aux:multiplicative:curvature:2}
    \bO(\d m(v_1,v_2),\d m(w_1,w_2))
    &=\d R_{g_2}\cdot\bO(v_1,w_1)+\d L_{g_1}\cdot\bO(v_2,w_2)\notag\\
    &=\d m(\bO(v_1,w_1),\bO(v_2,w_2)),
\end{align}
for any pair of composable arrows $(g_1,g_2)\in\G^{(2)}$ and any two pairs of composable tangent vectors $(v_1,v_2),(w_1,w_2)\in T_{(g_1,g_2)}\G^{(2)}$. Decomposing each tangent vector into horizontal and vertical component, and using the fact that $\bO(v,w)$ vanishes if either $v$ or $w$ is vertical, we see that \eqref{eq:aux:multiplicative:curvature:2} holds if and only if holds for horizontal tangent vectors. 

Now observe that, since multiplication in a groupoid is a submersion, given any composable tangent vectors $(v,w)\in T_{(g_1,g_2)}\G^{(2)}$, we can find (local) vector fields $V,W,V_i,W_i\in\X(G)$ such that 
\begin{align*}
    &V=m_*(V_1,V_2),  & &W=m_*(W_1,W_2),\\
    &(v_1,v_2)=(V_1,V_2)|_{(g_1,g_2)}, &  &(w_1,w_2)=(W_1,W_2)|_{(g_1,g_2)}.
\end{align*}
Moreover, $h:T\G\to E$ is a groupoid morphism, so it follows that:
\[ h(V)=m_*(h(V_1),h(V_2)),\quad h(W)=m_*(h(W_1),h(W_2)). \]
Hence, to prove the multiplicativity property \eqref{eq:aux:multiplicative:curvature:2}, it is enough to prove that for any \emph{horizontal} (local) vector fields $V,W,V_i,W_i\in\X(G)$ such that $V=m_*(V_1,V_2)$ and 
$W=m_*(W_1,W_2)$ one must have:
\[
\bO(m_*(V_1,V_2),m_*(W_1,W_2))=m_*(\bO(V_1,W_1),\bO(V_2,W_2)).
\]
Since all vector fields are horizontal we can use \eqref{eq:aux:multiplicative:curvature:1}, and we find:
\begin{align*}
    \bO(m_*(V_1,V_2),&m_*(W_1,W_2))=\\
    =&h([m_*(V_1,V_2),m_*(W_1,W_2)])-[m_*(V_1,V_2),m_*(W_1,W_2)]\\
    =&h(m_*([(V_1,V_2),(W_1,W_2)]))-m_*([(V_1,V_2),(W_1,W_2)])\\
    =&m_*(h([V_1,W_1]),h([V_2,W_2]))-m_*([V_1,W_1],[V_2,W_2])\\
    =&m_*(\bO(V_1,W_1),\bO(V_2,W_2)),
\end{align*}
as desired.
\end{proof}

To list further properties of the curvature, given $\ka$-valued  forms $\be\in\Omega^k(\G,\ka)$ and $\ga\in\Omega^l(\G,\ka)$ we will denote by $[\be,\ga]_\ka$ the $\ka$-valued form of degree $k+l$ given by:
\[ [\be,\ga]_\ka(X_1,\dots,X_{k+l})=
\sum_{\sigma\in S_{k+l}}(-1)^{|\sigma|}[\be(X_{\sigma(1)},\dots,X_{\sigma(k)}),\ga(X_{\sigma(k+1)},\dots,X_{\sigma(k+l)})]_\ka.\]
We have the following suggestive result:

\begin{proposition}
The curvature $\Omega$ of a multiplicative Ehresmann connection with multiplicative connection 1-form $\al$ satisfies:
\begin{enumerate}[(i)]
    \item Structure equation: $\Omega=\d^\nabla\al+\frac{1}{2}[\al,\al]_\ka$;
    \item Bianchi's identity: $\D\Omega=0$.
\end{enumerate}
\end{proposition}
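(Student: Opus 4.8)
The plan is to prove the two identities by leveraging the multiplicativity already established for $\alpha$ and $\Omega$, reducing each claim to a computation involving the connection $\nabla$ on $\ka$ and the covariant exterior derivative $\d^\nabla$. For the structure equation, the idea is that both sides are multiplicative $\ka$-valued 2-forms, so it suffices to check they agree after decomposing an arbitrary pair of tangent vectors into horizontal and vertical parts. On purely horizontal vectors the previous proposition gives $\Omega(X,Y) = \alpha(h([X,Y]) - [X,Y])$, and since $\alpha$ kills horizontal vectors one has $\d^\nabla\alpha(X,Y) = -\alpha([X,Y])$ while $[\alpha,\alpha]_\ka(X,Y) = 2[\alpha(X),\alpha(Y)]_\ka = 0$; thus the identity holds on horizontal inputs. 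The work is to verify it on mixed and vertical inputs.

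First I would treat the vertical directions. When one argument is vertical, say $Y = \xi^L$ for $\xi \in \Gamma(\ka)$, we have $\alpha(\xi^L) = \xi$ (the defining property of the connection 1-form), so $\frac{1}{2}[\alpha,\alpha]_\ka(X,\xi^L) = [\alpha(X), \xi]_\ka$, and $\d^\nabla\alpha(X,\xi^L)$ expands via the Koszul-type formula for $\d^\nabla$ into terms involving $\nabla^\s_X(\alpha(\xi^L)) = \nabla^\s_X\xi$, $\nabla^\s_{\xi^L}(\alpha(X))$, and $\alpha([X,\xi^L])$. Meanwhile $\Omega(X,\xi^L) = \d^\nabla\alpha(h(X), h(\xi^L)) = 0$ because $h(\xi^L) = 0$. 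So the whole content on such inputs is the Maurer--Cartan-type relation
\[
\nabla^\s_X\xi - \nabla^\s_{\xi^L}(\alpha(X)) - \alpha([X,\xi^L]) + [\alpha(X),\xi]_\ka = 0,
\]
which I would verify by recalling that the induced connection $\nabla$ satisfies $\nabla_X\xi = [\widetilde{X},\xi^L]|_M$ (equation \eqref{eq:of:nabla}) and that $\nabla$ preserves the bracket (Corollary \ref{cor:connection:partial:split}); the Leibniz identity for $[\cdot,\xi^L]$ acting on the decomposition $X = h(X) + (\text{vertical})$ should close the computation.

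For Bianchi's identity $\D\Omega = 0$, the strategy is to exploit that $\D$ only sees horizontal inputs: $(\D\Omega)(X_1,X_2,X_3) = (\d^\nabla\Omega)(h(X_1),h(X_2),h(X_3))$. Applying $\d^\nabla$ to the structure equation and using $(\d^\nabla)^2\alpha = R^\nabla \wedge \alpha$ (curvature of $\nabla$ acting by the representation), together with the graded derivation property of $\d^\nabla$ on $[\alpha,\alpha]_\ka$ and the fact that $\nabla$ is a bracket-derivation, yields $\d^\nabla\Omega = \d^\nabla\alpha \wedge_\ka \alpha$-type terms; restricting all arguments to horizontal vectors then annihilates every surviving term since $\alpha$ vanishes there. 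Concretely, the cleanest route is to differentiate $\Omega = \d^\nabla\alpha + \frac{1}{2}[\alpha,\alpha]_\ka$ and observe that on horizontal arguments the representation-curvature contribution $R^{\nabla^\s}$ pairs against $\alpha(h(\cdot)) = 0$, and the bracket terms likewise contain a factor $\alpha$ evaluated on a horizontal vector.

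\textbf{The main obstacle} I anticipate is bookkeeping the curvature-of-connection term: since $\nabla^\s$ is generally not flat, $(\d^\nabla)^2 \neq 0$, so Bianchi's identity is not the naive Jacobi-type cancellation one sees for Lie-algebra-valued forms on a principal bundle. The argument must carefully track that the non-flatness of $\nabla^\s$, encoded by $R^{\nabla^\s}(h(X_i),h(X_j))\alpha(h(X_k))$, is harmless precisely because $\alpha$ evaluated on horizontal vectors vanishes — so the whole potential obstruction is killed by horizontality. Making this cancellation rigorous, rather than importing the flat principal-bundle argument verbatim, is the delicate point; I would isolate it as a lemma identifying $\d^\nabla \circ \d^\nabla$ with the $R^{\nabla^\s}$-action and then invoke $\alpha|_E = 0$.
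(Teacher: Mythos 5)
Your Bianchi argument for (ii), and your horizontal--horizontal and vertical--vertical cases of the structure equation (i), are correct and coincide with the paper's own proof; moreover, the obstacle you single out at the end --- that $(\d^\nabla)^2\neq 0$ --- is handled exactly as you propose and exactly as in the paper, since every term of $(\d^\nabla)^2\al+[\d^\nabla\al,\al]_\ka$ carries a factor $\al(X_i)=0$ when all arguments are horizontal. (Two side remarks: the reduction to a case-check needs only bilinearity of both sides, not your opening claim that the right-hand side is multiplicative; and that claim is not obvious anyway, since $\nabla$ need not be $\G$-invariant, so $\d^{\nabla^\s}$ is not known a priori to preserve multiplicativity.)

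The genuine gap is in the mixed case, which is where essentially all of the work in the paper's proof lies. For $X$ horizontal and $\s$-projectable to $U=\s_*X$ and $Y=\xi^L$ vertical, your Maurer--Cartan relation reduces (using $\al(X)=0$ and $\al(\xi^L)=\s^*\xi$) to
\[
\nabla^\s_X(\s^*\xi)=\al([X,\xi^L]),
\]
which is precisely the paper's equation \eqref{eq:connection:between:connections}, and it must hold at \emph{every} arrow $g\in\G$, not just along the units. Your proposed justification cannot produce this: the decomposition $X=h(X)+(\text{vertical})$ is trivial here, since $X$ is already horizontal, and the defining formula \eqref{eq:of:nabla} computes brackets of the form $[\widetilde{X},\xi^L]$ only \emph{restricted to the unit section} (inside $\K$, resp.\ $\G(\ka)$), which is exactly where $\nabla$ is defined in the first place --- it says nothing about the value of $\al([X,\xi^L])$ at a general point $g$. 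Bridging this is not formal bookkeeping: the paper computes $[X,\xi^L]_g$ as a second derivative of the commutator of the flows of $X$ and $\xi^L$, uses the multiplicativity of $\al$ to strip off translations and push the computation into the kernel directions, and then crucially invokes the compatibility \eqref{eq:exp:connection} of Proposition \ref{prop:connection:partial:split} --- that $\exp:\ka\to\G$ intertwines the linear connection determined by $\nabla$ with the distribution $E$ near the zero section --- in order to recognize the resulting derivative as $\nabla_U\xi$. This step, roughly a page of the paper's argument, has no counterpart in your plan; as written, your proof establishes the structure equation on horizontal pairs, on vertical pairs, and on mixed pairs only along the units, hence not on all of $T\G$.
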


\begin{proof}
To prove (i), observe that from the definitions: 
\[ 
\big(\d^\nabla\al+\tfrac{1}{2}[\al,\al]_\ka\big)(X,Y)
=\nabla^{\s}_X\al(Y)-\nabla^{\s}_Y\al(X)-\al([X,Y])+[\al(X),\al(Y)]_\ka. 
\]
We consider various cases:
\smallskip

$\bullet$  \emph{$X=\xi^L$ and $Y=\eta^L$,   $\xi,\eta\in \Gamma(\ka)$, are  vertical and left invariant:} then $\Omega(X,Y)=0$. On the other hand, $\xi=\alpha(\xi^L)|_M$, $\eta=\alpha(\eta^L)|_M$ and:
    \[\al(\xi^L)=s^*\xi,\quad \al(\eta^L)=s^*\eta,\quad s_*\xi^L=s_*\eta^L=0. \] 
    Hence, $\nabla^{\s}_X\al(Y)=\nabla^{\s}_Y\al(X)=0$, and $[X,Y]=[\xi,\eta]^L$, so 
    \[ \al([X,Y])=\s^*[\xi,\eta]_{\ka}=[\al(X),\al(Y)]_\ka. \]
    Hence:
    \[
    \big(\d^\nabla\al+\tfrac{1}{2}[\al,\al]_\ka\big)(X,Y)=-\al([X,Y])+\al([X,Y])=0=\Omega(X,Y).
    \]

$\bullet$ \emph{$X$ and $Y$ horizontal:} then $\al(X)=\al(Y)=0$ and we find:
        \[
        \big(\d^\nabla\al+\tfrac{1}{2}[\al,\al]_\ka\big)(X,Y)=
        -\al([Y,X])=\Omega(X,Y),
        \]
    where the last identity follows from the previous proposition.
\smallskip

$\bullet$ \emph{$X=\xi^L$, $\xi\in\Gamma(\ka)$, vertical and left invariant, and $Y$ horizontal and $\s$-projectable to $U=\s_*(Y)\in \X(M)$:} Then $\Omega(X,Y)=0$ and $\al(Y)=0$. We claim that the following relation holds: 
    \begin{equation}\label{eq:connection:between:connections}
    \nabla^{\s}_Y\al(X)=\al([Y,X]),
    \end{equation}
    which implies (i) in this case: 
    \[
        \big(\d^\nabla\al+\tfrac{1}{2}[\al,\al]_\ka\big)(X,Y)=
        -\al([Y,X])+\al([X,Y])=0=\Omega(X,Y).
        \]
To prove \eqref{eq:connection:between:connections}, we calculate Lie bracket using the commutator of the flows:
\[ [X,Y]_g=\frac{\d}{\d \eps}\Big|_{\eps=0}\frac{\d}{\d t}\Big|_{t=0}c_{t,\eps}(g), \]
where:
\[c_{t,\eps}(g):=\phi_{\xi^L}^{-\eps}\circ \phi_Y^{-t}\circ \phi_{\xi^L}^{\eps}\circ \phi_Y^{t}(g)=\phi_Y^{-t}\big(\phi_Y^t(g)\cdot \exp(\eps \xi|_{\phi_U^t(x)})\big)\cdot \exp(-\eps \xi|_x),\]
with $x=\s(g)$ and $\exp:\ka\to \G$ the exponential map of the bundle of Lie algebras $\ka$. Note that in the non-Hausdorff case, the flows might not be unique - however, they do exist in local charts, and this suffices to prove \eqref{eq:connection:between:connections}. We find first:
\begin{align*}
\frac{\d}{\d t}\Big|_{t=0}c_{t,\eps}(g)=
\d R_{k_{\eps}^{-1}}\Big(-Y|_{g\cdot k_{\eps}}+\d m(Y|_g,\frac{\d }{\d t}\Big|_{t=0}\exp(\eps \xi|_{\phi_U^t(x)}))\Big)\in T_{g}\G,
\end{align*}
where $k_{\eps}:=\exp(\eps \xi|_x)$ and $R_{k}$ denotes right translation by $k$. Using that $\alpha$ is multiplicative and vanishes on $E$, we then have: 
\begin{align*}
\al\Big(
\frac{\d}{\d t}\Big|_{t=0}c_{t,\eps}(g)\Big)&=\Ad_{k_{\eps}}\circ\, \alpha \Big(\d m\big(Y|_g,\frac{\d }{\d t}\Big|_{t=0}\exp(\eps \xi|_{\phi_U^t(x)})\big)\Big)\\
&=\Ad_{k_{\eps}}\circ\, \alpha\Big( \frac{\d }{\d t}\Big|_{t=0}\exp(\eps \xi|_{\phi_U^t(x)})\Big).
\end{align*}
The exponential map factors as $\exp=i\circ \exp^{\ka}$, where $\exp^{\ka}:\ka\to \G(\ka)$ is the exponential map of the bundle of 1-connected Lie groups $\G(\ka)$ integrating $\ka$, and  $i:\G(\ka)\hookrightarrow \G$ the induced immersion. By Corollary \ref{cor:connection:partial:split}, the linear connection $\nabla$ on $\ka$ is obtained by differentiating the Ehresmann connection $\d i^{-1}(E)\subset T\G(\ka)$ and, by Proposition \ref{prop:connection:partial:split}, $\exp^{\ka}:\ka\to \G(\ka)$ is connection preserving around the zero-section. So we have the following decomposition into horizontal and vertical component:
\[\frac{\d }{\d t}\Big|_{t=0}\exp\big(\eps \xi|_{\phi_U^t(x)}\big)
= \widetilde{U}_{k_{\eps}}\oplus \frac{\d}{\d t}\Big|_{t=0}\exp\big(\eps(\xi+t\nabla_U\xi)|_x\big)\in E_{k_{\eps}}\oplus K_{k_{\eps}},\]
for some $\widetilde{U}_{k_{\eps}}\in E_{k_{\eps}}$ that projects to $U|_{x}=\d \s(Y|_g)$. Therefore, we have that:
\begin{align*}
\al\Big(
\frac{\d}{\d t}\Big|_{t=0}c_{t,\eps}(g)\Big)&=
\frac{\d}{\d t}\Big|_{t=0}\exp\big(\eps(\xi+t\nabla_U\xi)|_x\big)\exp\big(-\eps\xi|_x\big)\in \ka_x.
\end{align*}
and so we conclude that:
\[
\al([X,Y]_g)=\al\Big(\frac{\d}{\d \eps}\Big|_{\eps=0}\frac{\d}{\d t}\Big|_{t=0}c_{t,\eps}(g)\Big|_{t=\eps=0}\Big)=(\nabla_U\xi)\big|_x.\]
But then:
\[ \big(\nabla^{\s}_Y\alpha(X)\big) \big|_g=\s^*(\nabla_{\s_*(Y)}\xi)|_{g}=\nabla_U\xi|_{x}=\alpha([Y,X]|_g). \]
so we obtained \eqref{eq:connection:between:connections}.
\smallskip

Since the vectors fields of the types considered in the cases above span all tangent vectors, item (i) is proven.
\smallskip


To prove (ii), note that $\D\Omega(X,Y,Z)$ vanishes if one of the vectors is vertical. So we can assume that $X,Y,Z\in \X(\G)$ are horizontal vector fields, and we find:
\begin{align*}
    \D\Omega(X,Y,Z)&=\d^\nabla\Omega(X,Y,Z)\\
    &=(\d^\nabla)^2\al(X,Y,Z)+[\d^\nabla\al,\al]_\ka(X,Y,Z)\\
    &=R^\nabla(X,Y)\al(Z)+R^\nabla(Z,X)\al(Y)+R^\nabla(Y,Z)\al(X)=0,
\end{align*}
since $\al(X)=\al(Y)=\al(Z)=0$.
\end{proof}

\begin{remark}
For connections on a Lie groupoid extension $\Phi:\G\to\H$ the authors of \cite{LSX09} also introduce the curvature 2-form and prove analogues of the structure equations and Bianchi identity. See loc.\ cit.\ Section 4.6.
\end{remark}

Consider now a surjective, submersive, Lie groupoid morphism $\Phi:\G\to\H$. Given a multiplicative Ehresmann connection $E$ for $\Phi$ we have the induced connection $E^\K$ on the kernel $p:\K\to M$ and the linear connection $\nabla:=\nabla^E$ on $\ka=\ker\d p|_M$. The curvature of $E^\K$ is the multiplicative 2-form $i^*\Omega$, where $i:\K\to\G$ is the inclusion and $\Omega$ is the curvature of $E$. By Proposition \ref{prop:curvature:multiplicative}, this form is multiplicative.

\begin{proposition}
\label{prop:flat:kernel:connection}
The IM 2-form associated with  $i^*\Omega\in\Omega^2_\mult(\K,\ka)$ is given by $(R^\nabla,0)\in\Omega^2_\imult(\ka,\ka)$. In particular, if $E^{\K}$ is involutive then $R^\nabla=0$, and the converse holds if $\K$ has connected fibers.
\end{proposition}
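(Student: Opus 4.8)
The plan is to read off the two components of the IM $2$-form—the symbol $l$ and the top (degree-$2$) part $D$—from the differentiation (van Est) map $\Omega^2_\mult(\K,\ka)\to\Omega^2_\imult(\ka,\ka)$ of the appendix, and to treat them separately. Throughout I use that for the bundle of Lie groups $p:\K\to M$ the anchor of $\ka$ vanishes, that $\ka=\ker(\d p)|_M$, and that for $\xi\in\Gamma(\ka)$ both invariant extensions $\xi^L,\xi^R$ are $p$-vertical, i.e.\ sections of $K=\ker\d p$. The reduction has already been set up in the text: $i^*\Omega$ is the (multiplicative) curvature of $E^\K$, and $\nabla=\nabla^E$ is the induced linear connection on $\ka$.

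First I would dispose of the symbol. Since $i^*\Omega=\D(\alpha^\K)$ with $\D\omega=\d^\nabla\omega(h\,\cdot,\dots,h\,\cdot)$, the curvature vanishes as soon as one argument is vertical, because then its horizontal projection is zero. The symbol is obtained by contracting $i^*\Omega$ with the vertical invariant field $\xi^R$ and restricting to the units; hence $l=0$, which matches the second entry of $(R^\nabla,0)$.

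The heart of the argument is the identification of the top component with $R^\nabla$. Here I would first record a purely infinitesimal computation on $\K$. Using $\nabla_X\xi=[\widetilde X,\xi^L]|_M$ from \eqref{eq:of:nabla}, together with the fact (multiplicativity of $E^\K$) that $[\widetilde X,\xi^L]=(\nabla_X\xi)^L$ is again left-invariant, the Jacobi identity gives
\[
R^\nabla(X,Y)\xi=\big([[\widetilde X,\widetilde Y],\xi^L]-[\widetilde{[X,Y]},\xi^L]\big)\big|_M=\big[[\widetilde X,\widetilde Y]-\widetilde{[X,Y]},\,\xi^L\big]\big|_M .
\]
Because horizontal lifts are $p$-projectable, $h([\widetilde X,\widetilde Y])=\widetilde{[X,Y]}$, so $[\widetilde X,\widetilde Y]-\widetilde{[X,Y]}$ is exactly the vertical part of $[\widetilde X,\widetilde Y]$, which by the curvature formula \eqref{eq:curvature:bracket} equals $-\bO(\widetilde X,\widetilde Y)$. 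Thus $R^\nabla(X,Y)\xi=-[\bO(\widetilde X,\widetilde Y),\xi^L]|_M$. On the other hand, the degree-$2$ component of the differentiated form is a Lie-derivative expression $\Lie^{\nabla^\s}_{\xi^R}(i^*\Omega)$ evaluated on horizontal lifts at the units; since $l=0$, the connection-correction terms drop out, and carrying out this bracket at $1_x$ reproduces precisely $-[\bO(\widetilde X,\widetilde Y),\xi^L]|_M$. This yields $D=R^\nabla$, and hence the IM $2$-form of $i^*\Omega$ is $(R^\nabla,0)$.

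Finally, the two consequences follow formally. If $E^\K$ is involutive then $i^*\Omega=0$ by the criterion that a multiplicative Ehresmann connection is involutive if and only if its curvature vanishes, so its IM form $(R^\nabla,0)$ is zero and therefore $R^\nabla=0$. Conversely, if $R^\nabla=0$ then the IM form of the multiplicative form $i^*\Omega$ vanishes; when the fibers of $\K$ are connected, $\K$ is source-connected, so the differentiation map is injective and forces $i^*\Omega=0$, i.e.\ $E^\K$ is involutive. I expect the main obstacle to be the top-degree computation: matching the abstract differentiation formula of the appendix to the concrete bracket $-[\bO(\widetilde X,\widetilde Y),\xi^L]|_M$ requires careful bookkeeping of the coefficient connection $\nabla^\s$ and of the discrepancy between the left- and right-invariant extensions at the units, where $\xi^L$ and $\xi^R$ agree but their brackets with $\bO(\widetilde X,\widetilde Y)$ do not.
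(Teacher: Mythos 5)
Your proposal is correct, and its core step takes a genuinely different route from the paper's. The two proofs agree at the ends: the symbol vanishes because the curvature $i^*\Omega$ kills vertical vectors, and the flatness equivalences follow because a multiplicative form on $\K$ vanishes on $\K^{0}$ exactly when its IM form does; both also use that, $i^*\Omega$ being horizontal, one may replace $\d\phi^t_{\xi^L}(X)$ in \eqref{eq:M:IM:E:forms:2} by the horizontal lifts $\widetilde{X}_{\exp(t\xi)}$. The divergence is in identifying $L$. The paper transfers the computation to the vector bundle $\ka$: by Proposition \ref{prop:connection:partial:split} the exponential map intertwines $E^{\K}$ with the linear connection (equation \eqref{eq:exp:connection}), and then
\[
L(\xi)(X,Y)=\tfrac{\d}{\d t}\big|_{t=0}\big(\overline{[X,Y]}-[\overline{X},\overline{Y}]\big)_{t\xi}=R^\nabla(X,Y)(\xi)
\]
is the classical vertical-part description of the curvature of a linear connection. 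You instead stay on $\K$ and use pure bracket calculus: the Jacobi identity plus \eqref{eq:of:nabla} give $R^\nabla(X,Y)\xi=-[\bO(\widetilde{X},\widetilde{Y}),\xi^L]|_M$, and you match this with the van Est operator. The matching you defer as ``bookkeeping'' is indeed a short argument, and it is essentially the point the paper also reaches before switching to $\ka$: the flow of $\xi^L$ is $R_{\exp(t\xi)}$, the coefficient transport in \eqref{eq:M:IM:E:forms:2} is $\Ad_{\exp(t\xi)}$, and $\Ad_{\exp(t\xi)}\circ\,\d L_{\exp(-t\xi)}=\d R_{\exp(-t\xi)}$, whence $L(\xi)(X,Y)=\frac{\d}{\d t}\big|_{t=0}\d R_{\exp(-t\xi)}\bO(\widetilde{X},\widetilde{Y})_{\exp(t\xi)}=[\xi^L,\bO(\widetilde{X},\widetilde{Y})]|_M$, exactly your bracket. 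One repair is needed: the Lie derivative must be taken along $\xi^L$, whose flow appears in \eqref{eq:M:IM:E:forms:2}, not along $\xi^R$ as you wrote --- as you yourself observe, brackets with $\xi^L$ and $\xi^R$ differ at the units, so this is not cosmetic. In exchange for that care, your route avoids Proposition \ref{prop:connection:partial:split} altogether, while the paper's route avoids unwinding the coefficient transport by outsourcing the curvature identification to the linear model $\ka$.
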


\begin{proof}
Denote the horizontal lift of a vector field $X\in \X(M)$ to $\K$ by $\widetilde{X}\in \Gamma(E^{\K})$. Then Proposition \ref{prop:curvature:multiplicative}, implies that
\[(i^*\Omega)(\widetilde{X},\widetilde{Y})_k=\d L_{k^{-1}}\big(\widetilde{[X,Y]}-[\widetilde{X},\widetilde{Y}]\big)\in \ka_{p(k)}.\]
To find the IM form corresponding to $i^*\Omega$ we apply \eqref{eq:M:IM:E:forms} and \eqref{eq:M:IM:E:forms:2}. For the symbol we obtain, for $\xi\in\Gamma(\ka)$,
\[ l(\xi)=(i_{\xi^L}i^*\Omega)|_{TM}=0,\]
since $\xi^L\in\X(\K)$ is a vertical vector field. 
On the other hand, the flow of $\xi^L$ is given by $\phi^t_{\xi^L}(k)=k\exp(t\xi)$, so for $X,Y\in\X(M)$ we find
\begin{align*}
(i^*\Omega)(\d \phi_{\al^{L}}^{t}(X),\d \phi_{\al^{L}}^{t}(Y))
&=(i^*\Omega)(\d R_{\exp(t\xi)}(X),\d R_{\exp(t\xi)}(Y))\\
&=(i^*\Omega)(h(\d R_{\exp(t\xi)}(X)),h(\d R_{\exp(t\xi)}(Y)))\\
&=(i^*\Omega)(\widetilde{X},\widetilde{Y})_{\exp(t\xi)}.
\end{align*}
Then \eqref{eq:M:IM:E:forms:2} reduces to:
\[
L(\xi)(X,Y)=\frac{\d}{\d t}\Big|_{t=0} \d R_{\exp(-t\xi)}\big(\widetilde{[X,Y]}-[\widetilde{X},\widetilde{Y}]\big)_{_{\exp(t\xi)}}.
\]
The exponential $\exp:\ka\to \K$ maps the $\nabla$-horizontal lift $\overline{X}\in\X(\ka)$ to the $E^{\K}$-horizontal lift $\widetilde{X}\in\X(\K)$ (see Proposition \ref{prop:connection:partial:split}). 
It follows then that:
\[
    L(\xi)(X,Y)=\frac{\d}{\d t}\Big|_{t=0} \big(\overline{[X,Y]}-[\overline{X},\overline{Y}]\big)_{t\xi}=R^\nabla(X,Y)(\xi).
\]
The multiplicative 2-form $i^*(\Omega)$ vanishes on the bundle $\K^{0}$ of connected components of the identities if and only if the corresponding IM 2-form $(R^{\nabla},0)$ vanishes. This implies the last part of the statement. 
\end{proof}

\subsection{Flat multiplicative connections}
Given a multiplicative Ehresmann connections $E$ for a morphism $\Phi:\G\to\H$, there are various levels of flatness one can require.

We start by looking at the case where the induced Ehresmann connection $E^{\K}$ is involutive, and hence, by Proposition \ref{prop:flat:kernel:connection}, the linear connection $\nabla$ on $\ka$ is flat.


\begin{definition}
A multiplicative Ehresmann connection $E$ for a groupoid morphism $\Phi:\G\to\H$ is called {\bf kernel flat} if $E^\K$ is involutive.
\end{definition}

In order to state the basic properties of kernel flat connections, let us make the following general observation about $\Phi:\G\to\H$. The $\G$-action on $\ka$ restricts to a $\K$-action on $\ka$ which is just the fiberwise adjoint action. Hence, its fixed point set is a collection of vector subspaces of the center:
\[ \ka^\K\subset z(\ka). \]
In general, this fails to be a vector subbundle. It is one if either:
\begin{enumerate}[(a)]
    \item $\ka$ is locally trivial and $\K$ has connected fibers, in which case $\ka^\K=z(\ka)$, or
    \item $\K$ is locally trivial, in which case one can have $\ka^\K\subsetneq z(\ka)$.
\end{enumerate}
The $\G$-action preserves $\ka^\K$ and descends to an $\H$-action on $\ka^\K$. So when $\ka^\K$ is subbundle we obtain a representation of $\H$. 

\begin{lemma}
If $\ka^\K$ is a subbundle then the linear connection $\nabla$ restricts on $\ka^\K$ to an $\H$-invariant connection.
\end{lemma}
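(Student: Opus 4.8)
The plan is to show two things about the connection $\nabla$ restricted to the subbundle $\ka^\K \subset \ka$: first, that $\nabla$ preserves $\ka^\K$ (so that the restriction is well-defined), and second, that the restricted connection is $\H$-invariant, meaning it is the linear connection underlying the $\H$-representation on $\ka^\K$ mentioned just before the statement. The key structural input is that $\ka^\K$ is, by hypothesis, a vector subbundle, the $\G$-action on $\ka$ descends to an $\H$-action on $\ka^\K$, and $E^\K$ is involutive (kernel flat), so that by Proposition~\ref{prop:flat:kernel:connection} the curvature $R^\nabla$ of $\nabla$ vanishes. So $\nabla$ is a flat connection on $\ka$, and I expect to use flatness crucially.

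First I would show that $\nabla$ preserves $\ka^\K$. Recall from \eqref{eq:of:nabla} that $\nabla_X \xi = [\widetilde{X}, \xi^L]|_M$, where $\widetilde{X}$ is the horizontal lift to the kernel $\K$ and $\xi^L$ is the left-invariant vector field associated to $\xi \in \Gamma(\ka)$. The subbundle $\ka^\K$ consists of sections fixed by the fiberwise adjoint $\K$-action; concretely, a section $\xi$ lies in $\Gamma(\ka^\K)$ iff $\Ad_k \xi_{p(k)} = \xi_{p(k)}$ for all $k \in \K$. The idea is that the horizontal lift $\widetilde{X}$, being $E^\K$-horizontal and hence tangent to a multiplicative distribution, transports the $\K$-invariance along: since the connection $\nabla$ preserves the Lie bracket (Corollary~\ref{cor:connection:partial:split}), differentiating the invariance condition $\Ad_k \xi = \xi$ in the horizontal direction shows $\nabla_X \xi$ remains $\Ad$-invariant, i.e. lands in $\ka^\K$. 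More precisely, I would exploit that the adjoint action on $\ka$ is by the groupoid $\G$ and that $\nabla$ is both bracket-preserving and flat, so parallel transport along paths in $M$ is well-defined (independent of path within a fixed homotopy class by flatness) and intertwines the isotropy actions; invariance is then a closed condition preserved under parallel transport.

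The cleanest route is probably to use flatness directly: since $R^\nabla = 0$, the connection $\nabla$ is locally trivial, so over a simply-connected neighborhood $U$ there is a frame of $\nabla$-parallel sections of $\ka$. Because $\nabla$ preserves the bracket, this frame identifies $\ka|_U$ with a fixed Lie algebra $\gg_0$ in a way that makes the fiberwise adjoint $\K$-action correspond to a family of Lie algebra automorphisms; the fixed-point subspaces $\ka^\K_x$ then vary by parallel transport, which shows $\ka^\K$ is $\nabla$-parallel and hence $\nabla$-invariant. To then identify the restricted connection with the $\H$-invariant one, I would check that the descended $\H$-action on $\ka^\K$ differentiates, via the formula analogous to \eqref{eq:of:nabla} with horizontal lifts now taken in $\G$ rather than just $\K$, to the same covariant derivative: the point is that for sections of $\ka^\K$ the full horizontal lift $\widetilde{X} \in \Gamma(E)$ and the kernel horizontal lift agree modulo vertical terms that act trivially on invariant sections, so the two connections coincide on $\Gamma(\ka^\K)$.

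The main obstacle I anticipate is reconciling the two a priori different connections on $\ka^\K$: the restriction of $\nabla = \nabla^E$ (defined via horizontal lifts $\widetilde{X} \in \Gamma(E)$ in $\G$, or equivalently in $\K$) and the connection coming from the $\H$-representation on $\ka^\K$ (defined via horizontal lifts of vector fields on $M$ to $\H$, pushed through $\Phi$). Showing these are equal requires carefully tracking how the horizontal distribution $E$ projects under $\Phi$ and verifying that, on invariant sections, the bracket $[\widetilde{X}, \xi^L]$ computing $\nabla_X \xi$ depends only on the $\H$-data. I would handle this by reducing to the representation description of $\nabla$ as a derivative of the conjugation action and invoking that on $\ka^\K$ the $\K$-part of the $\G$-action is trivial, so that only the induced $\H$-action survives; the needed compatibility should then follow from the multiplicativity of $E$ together with naturality of the exponential map used in the proof of Proposition~\ref{prop:flat:kernel:connection}.
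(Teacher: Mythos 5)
There are two genuine problems with your plan. First, you import a hypothesis the lemma does not have: kernel flatness. The lemma is stated as a ``general observation'' about an arbitrary multiplicative Ehresmann connection $E$ for $\Phi:\G\to\H$; the assumption that $E^\K$ is involutive (hence $R^\nabla=0$, by Proposition~\ref{prop:flat:kernel:connection}) is only introduced \emph{after} the lemma, for Proposition~\ref{prop:curvature:basic}. Since you say you will use flatness ``crucially'' (local parallel frames, path-independence of parallel transport), your argument at best proves the lemma for kernel flat connections, not as stated; the paper's own proof uses no flatness at all. Relatedly, you misread the conclusion: $\H$-invariance does not mean that $\nabla|_{\ka^\K}$ coincides with some ``linear connection underlying the $\H$-representation'' --- a groupoid representation does not induce a canonical linear connection, and there is no connection on $\H$ with which to take ``horizontal lifts of vector fields on $M$ to $\H$''. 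Invariance is meant in the sense of Definition~\ref{definition:invariant:connection}: the isomorphism $\s^*\ka^\K\to\t^*\ka^\K$, $(h,v)\mapsto(h,h\cdot v)$, preserves the pullback connections, or equivalently the $\H$-action sends $\nabla$-horizontal curves to horizontal curves. The paper proves exactly this using only multiplicativity of $E$, by writing $h_t\cdot z_t=\widetilde h_t\, z_t\, \widetilde h_t^{-1}$ with $\widetilde h_t$ the $E$-horizontal lift of $h_t$.

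Second, and more seriously, the step carrying all the weight --- ``the fixed-point subspaces $\ka^\K_x$ vary by parallel transport'', i.e.\ $\K$-invariance is ``a closed condition preserved under parallel transport'' --- is asserted, not proved, and the natural attempt to prove it fails. To show that a $\nabla$-parallel section $\xi_t$ over a path $\gamma$ with $\xi_0\in\ka^\K_{\gamma(0)}$ stays in $\ka^\K$, you must verify $\Ad_k\xi_{t_0}=\xi_{t_0}$ for \emph{every} $k\in\K_{\gamma(t_0)}$. The only available mechanism is to transport $k$ back along $\gamma$ using $E^\K$ and invoke the intertwining of the two parallel transports (which itself requires multiplicativity of $E^\K$ plus Proposition~\ref{prop:connection:partial:split}); but $E^\K$-parallel transport is in general only defined for short times --- it is complete if and only if $\K$ is a locally trivial bundle of groups (Theorem~\ref{thm:completeness}), which is not assumed --- and even when defined it need not map $\K_{\gamma(0)}$ \emph{onto} $\K_{\gamma(t_0)}$. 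So this argument only shows that $\xi_{t_0}$ is fixed by the elements reachable by parallel transport, not by all of $\K_{\gamma(t_0)}$. The paper's proof is designed precisely to avoid this: it integrates $\ka^\K$ to the bundle of groups $Z(\K)^{\circ}$, and for a curve $z_t\in Z(\K)^{\circ}$ differentiates the identity $z_t=k_t z_t k_t^{-1}$ --- which holds for \emph{any} curve $k_t$ over $p(z_t)$ because $z_t$ is central at each instant $t$ --- choosing $k_t$ to be the short-time horizontal lift through an \emph{arbitrary} $k\in\K_{p(z_0)}$. Multiplicativity of $E^\K$ then yields $w=\Ad_k(w)$ for the vertical component $w$ of $\dot z_0$, with $k$ arbitrary, so $w\in\ka^\K$ and $E^\K$ is tangent to $Z(\K)^{\circ}$. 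This pointwise centrality trick quantifies over all of $\K_{p(z_0)}$ without ever transporting group elements over finite distances, and it is what your proposal is missing.
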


\begin{proof}
Consider the center of the bundle of Lie groups $p:\K\to M$, denoted
\[ 
Z(\K):=\{z\in\K: kzk^{-1}=z,\ \forall\ k\in p^{-1}(p(z))\},
\]
and its connected components of the identity, denoted $Z(\K)^{\circ}$. Under the assumption that $\ka^\K$ is a subbundle of $\ka$, this is a bundle of Lie groups integrating $\ka^\K$
\[ \ka^\K_x=T_{1_x} Z(\K)^{\circ},\quad  (x\in M). \]
We claim that for all $z\in Z(\K)^{\circ}$ we have:
\[ E^\K_z\subset T_zZ(\K)^{\circ}. \]
This claim implies that $\nabla$ restricts to a linear connection on $\ka^\K$. 

To prove the claim, we consider any smooth curve $z_t\in Z(\K)^{\circ}$ and show that, given the decomposition into vertical and horizontal vectors:
\[ \dot{z}_0=\dot{z}_0^h\oplus\dot{z}_0^v\in E^\K_{z_0}\oplus K_{z_0},\]
we have $\dot{z}_0^v\in T_{z_0}Z(\K)^{\circ}$. Then $\dot{z}_0^h\in T_{z_0}Z(\K)^{\circ}$ and the claim follows. Let $k\in\K_{p(z_0)}$ and $k_t$ be the parallel transport of $k$ along the curve $p(z_t)$. Then:
\[ z_t=k_tz_tk^{-1}_t. \]
Differentiating both sides at $t=0$ and decomposing into horizontal and vertical components, using the multiplicativity of $E^\K$, we obtain:
\[ \dot{z}_0^v=0_{k}\cdot\dot{z}_0^v\cdot 0_{k^{-1}}, \]
where multiplication and inverse are the operations in the tangent group $T(\K_{p(z_0)})$. Writing $\dot{z}_0^v=0_{z_0}\cdot w$, with $w\in \ka_{p(z_0)}$, and using that $z_0\in Z(\K)$, we obtain: 
\[w=\Ad_k(w),\quad \forall\, k\in \K.\]
Hence $w\in \ka^{\K}$, and so $\dot{z}_0^v\in T_{z_0}Z(\K)^{\circ}$. 

We now show that $\nabla$ restricts on $\ka^\K$ to a $\H$-invariant connection. For this, it is enough to show that if $z_t\in Z(\K)^{\circ}$ is a horizontal curve and $h_t\in \H$ is any curve with $\s(h_t)=p(z_t)$ then $h_t\cdot z_t$ is a horizontal curve. For this observe that 
\[ h_t\cdot z_t=\widetilde{h}_t\cdot z_t, \]
where $\widetilde{h}_t\in\G$ is any curve with $\Phi(\widetilde{h}_t)=h_t$. In particular, we can take $\widetilde{h}_t$ to be the horizontal lift of $h_t$ and multiplicativity of $E$ gives that $h_t\cdot z_t$ is horizontal.
\end{proof}

Let us now assume that $E$ is kernel flat:

\begin{proposition}
\label{prop:curvature:basic}
Let $E$ be a kernel flat multiplicative Ehresmann connection for a morphism $\Phi:\G\to\H$. Then:
\begin{enumerate}[(i)]
    \item The curvature $\Omega$ takes values in $\ka^\K$;
    \item If $\ka^\K$ is a vector bundle, there is a multiplicative form $\underline{\Omega}\in\Omega^2_\mult(\H,\ka^\K)$ such that:
    \[ \Omega=\Phi^*\underline{\Omega},\]
    and which is $\d^{\nabla}$-closed.
\end{enumerate}
\end{proposition}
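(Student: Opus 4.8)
The plan is to exploit three facts: that the exterior covariant derivative builds horizontality into $\Omega=\D\al$, that kernel-flatness forces the restriction $i^*\Omega$ to vanish, and that $\Omega$ is multiplicative by Proposition \ref{prop:curvature:multiplicative}. First, since $(\D\omega)(X_1,\dots,X_{k+1})=(\d^\nabla\omega)(h(X_1),\dots,h(X_{k+1}))$ and the horizontal projection $h$ kills the vertical bundle $K$, the curvature $\Omega$ is automatically horizontal: it vanishes whenever one argument lies in $K=\ker\d\Phi$. Second, because $E$ is kernel flat the connection $E^\K$ on $\K$ is involutive, so by the characterization of involutivity through vanishing curvature its curvature $i^*\Omega$ (with $i\colon\K\hookrightarrow\G$) vanishes identically; equivalently $\Omega_k(a,b)=0$ for all $a,b\in T_k\K$, $k\in\K$.

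For (i), I would translate the multiplicativity of $\Omega$ into the pointwise cocycle identity
\[ \Omega_{g_1g_2}(\d m(v_1,v_2),\d m(w_1,w_2))=\Ad_{g_2^{-1}}\Omega_{g_1}(v_1,w_1)+\Omega_{g_2}(v_2,w_2), \]
valid for composable tangent vectors, the $\Ad$-twist arising when the $K$-valued form $\bO$ of \eqref{eq:aux:multiplicative:curvature:2} is converted back to $\ka$-coefficients. Fix $g\in\G$, $U,V\in\X(\H)$ with horizontal lifts $\widetilde{U},\widetilde{V}$. Given $k\in\K_{\s(g)}$, choose $a,b\in E^\K_k$ to be the $E^\K$-horizontal lifts of $\d\s(\widetilde{U}_g),\d\s(\widetilde{V}_g)$; multiplicativity of $E$ gives $\d m(\widetilde{U}_g,a)=\widetilde{U}_{gk}$ and $\d m(\widetilde{V}_g,b)=\widetilde{V}_{gk}$, since both sides are horizontal and $\Phi$-project to the same vector. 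The cocycle identity together with $\Omega_k(a,b)=0$ then yields
\[ \Omega_{gk}(\widetilde{U},\widetilde{V})=\Ad_{k^{-1}}\Omega_g(\widetilde{U},\widetilde{V}). \]
A symmetric computation with $k'=gkg^{-1}\in\K_{\t(g)}$ acting on the left, again using $i^*\Omega=0$, gives $\Omega_{k'g}(\widetilde{U},\widetilde{V})=\Omega_g(\widetilde{U},\widetilde{V})$. Since $gk=k'g$ is the same point and horizontal lifts are determined pointwise, comparing the two identities forces $\Ad_{k^{-1}}\Omega_g(\widetilde{U},\widetilde{V})=\Omega_g(\widetilde{U},\widetilde{V})$ for every $k\in\K_{\s(g)}$; as the horizontal lifts span $E$ and $\Omega$ is horizontal, this shows $\Omega$ is $\ka^\K$-valued.

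For (ii), assuming $\ka^\K$ is a subbundle I would define a $\ka^\K$-valued $2$-form on $\H$ by $\underline{\Omega}(U,V)|_{\Phi(g)}:=\Omega_g(\widetilde{U},\widetilde{V})$. Since the $\Phi$-fibre through $g$ is the coset $g\,\K_{\s(g)}$, the identity $\Omega_{gk}(\widetilde{U},\widetilde{V})=\Ad_{k^{-1}}\Omega_g(\widetilde{U},\widetilde{V})$ combined with part (i) (so $\Ad_{k^{-1}}$ acts trivially) shows $\underline{\Omega}$ is well defined; horizontality of $\Omega$ then gives $\Omega=\Phi^*\underline{\Omega}$, and writing $\underline{\Omega}$ locally as $\sigma^*\Omega$ for local sections $\sigma$ of the submersion $\Phi$ proves smoothness. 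Multiplicativity of $\underline{\Omega}$ follows from that of $\Omega$: pulling back the multiplicativity equation for $\underline{\Omega}$ along the surjective submersion $\Phi\times\Phi$ recovers the one for $\Omega=\Phi^*\underline{\Omega}$, and surjectivity lets one cancel $\Phi\times\Phi$. Finally, for $\d^\nabla$-closedness I would use Bianchi's identity $\D\Omega=0$: for horizontal lifts $\widetilde{U},\widetilde{V},\widetilde{W}$ one has $\d^{\s^*\nabla}\Omega(\widetilde{U},\widetilde{V},\widetilde{W})=\D\Omega(\widetilde{U},\widetilde{V},\widetilde{W})=0$, while naturality of the covariant exterior derivative under the pullback connection $\s_\G^*\nabla=\Phi^*\s_\H^*\nabla$ gives $\d^{\s^*\nabla}\Omega=\Phi^*(\d^\nabla\underline{\Omega})$; evaluating on horizontal lifts and using surjectivity of $\d\Phi$ yields $\d^\nabla\underline{\Omega}=0$, where $\nabla$ is the $\H$-invariant connection on $\ka^\K$ from the preceding Lemma.

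The main obstacle is the bookkeeping in (i): one must verify that the conjugation twist in the cocycle identity is exactly $\Ad_{g_2^{-1}}$, that the left- and right-translation computations match at the common point $gk=k'g$, and crucially that kernel-flatness enters (via $i^*\Omega=0$) precisely to kill the kernel-direction contributions $\Omega_k(a,b)$ and $\Omega_{k'}(a',b')$. The remaining descent arguments in (ii) are standard once horizontality, the invariance from (i), and Bianchi's identity are in hand.
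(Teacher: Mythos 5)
Your proposal is correct and takes essentially the same route as the paper's own proof: both use the multiplicativity cocycle identity for $\Omega$ together with kernel-flatness (in the form $i^*\Omega=0$) once for right translation ($g_2=g_1k$, producing the $\Ad_{k^{-1}}$-twist) and once for left translation ($g_2=lg_1$, producing fiber-constancy), compare the two to get $\ka^\K$-valuedness and the well-defined descended form $\underline{\Omega}$, and then obtain multiplicativity and $\d^{\nabla}$-closedness of $\underline{\Omega}$ from the multiplicativity of $\Omega$ and the Bianchi identity. The only differences are presentational: you fix $g$ and exploit $gk=(gkg^{-1})g$ where the paper compares two arbitrary points of a $\Phi$-fiber, and you spell out the smoothness, descent-of-multiplicativity, and naturality-of-$\d^{\nabla}$ details that the paper leaves implicit.
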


\begin{proof}
Let $g_1,g_2\in \G$ such that $\Phi(g_1)=h=\Phi(g_2)$. Let $X,Y\in T_h\H$, and consider their horizontal lifts $\widetilde{X}_{g_i},\widetilde{Y}_{g_i}\in T_{g_i}\G$, for $i=1,2$. First, write $g_2=g_1k$, for $k\in \K$. Let $\overline{X}_k, \overline{Y}_k\in T_k\K $ be the horizontal lift of $\d_h\s X$ and $\d_h\s Y$ with respect to the connection $E^{\K}$. Since $E$ is a subgroupoid of $T\G$, we have that: 
\[\d m(\widetilde{X}_{g_1},\overline{X}_k)=\widetilde{X}_{g_2}, \quad \d m(\widetilde{Y}_{g_1},\overline{Y}_k)=\widetilde{Y}_{g_2}.\]
Since $\Omega$ is multiplicative, this yields:
\[\Omega(\widetilde{X}_{g_2},\widetilde{Y}_{g_2})=k^{-1}\cdot \Omega(\widetilde{X}_{g_1},\widetilde{Y}_{g_1})+\Omega(\overline{X}_k,\overline{Y}_k)=k^{-1}\cdot \Omega(\widetilde{X}_{g_1},\widetilde{Y}_{g_1}),\]
where in the last equation we used that $E^{\K}$ is flat, so $i^*\Omega=0$. Next, write $g_2=lg_1$, with $l\in \K$ and let $\overline{X}_l, \overline{Y}_l\in T_l\K $ be the horizontal lift of $\d_h\t (X)$ and $\d_h\t (Y)$ with respect to the connection $E^{\K}$. Then we also have that: 
\[\d m(\overline{X}_l,\widetilde{X}_{g_1})=\widetilde{X}_{g_2}, \quad \d m(\overline{Y}_l,\widetilde{Y}_{g_1})=\widetilde{Y}_{g_2},\]
and so, since $\Omega$ is multiplicative:
\[\Omega(\widetilde{X}_{g_2},\widetilde{Y}_{g_2})=g_1^{-1}\cdot \Omega(\overline{X}_{l},\overline{Y}_{l})+\Omega(\widetilde{X}_{g_1},\widetilde{Y}_{g_1})=\Omega(\widetilde{X}_{g_1},\widetilde{Y}_{g_1}).\]
Since $\Omega$ vanishes on vertical vectors, the obtained equations show that $\Omega$ takes values in $\ka^{\K}$. They also show that we obtain a well-defined $\ka^\K$-valued 2-form on $\H$, by setting for $X,Y\in T_h\H$:
\[ \underline{\Omega}(X,Y)=\Omega(\widetilde{X}_{g},\widetilde{Y}_{g}) \quad (g\in \Phi^{-1}(h)).\]
When $\ka^\K$ is a vector bundle the $\G$-action factors through the $\H$-action, and the multiplicativity of $\underline{\Omega}$ follows from the multiplicativity of $\Omega$.

The fact that $\underline{\Omega}$ is $\d^{\nabla}$-closed follows from the Bianchi identity for $\Omega$.
\end{proof}

\begin{remark}
Kernel flat connections play an important role in the theory of gerbes developed in \cite{LSX09}. There, for a groupoid extension, the authors obtain a version of the previous proposition (see loc.\ sit.\ Theorem 6.38).
\end{remark}

We now describe another type of flatness. Observe that, if $\Phi:\G\to\H$ is a surjective, submersive, morphism covering the identity, for each $x\in M$ the restriction:
    \[ \Phi_x:=\Phi|_{\s_{\G}^{-1}(x)}: \s_{\G}^{-1}(x)\to \s_{\H}^{-1}(x). \]
is a principal $\K_x$-bundle. Given a multiplicative Ehresmann connection $E$ for a $\Phi:\G\to\H$ one obtains an Ehresmann connection on the restriction:
\[ \ker\d_g\s_{\G}=K_g\oplus E_g^x,\quad E^x_g:=\ker\d_g\s_{\G}\cap E\quad (g\in \s_{\G}^{-1}(x)). \]

\begin{lemma}
For each $x\in M$, the restricted connection $E^x$ is a principal bundle connection.
\end{lemma}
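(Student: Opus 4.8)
The plan is to verify directly the two defining properties of a principal bundle connection for $\Phi_x\colon \s_\G^{-1}(x)\to \s_\H^{-1}(x)$, whose structure group $\K_x$ acts on the right by $g\cdot k = gk$ (this is well defined since $\s(g)=x=\t(k)$ and $\Phi(gk)=\Phi(g)$). First I would identify the vertical distribution of $\Phi_x$: a vector $w\in T_g\s_\G^{-1}(x)$ is vertical precisely when $\d\Phi(w)=0$, so using that $K_g=\ker\d_g\Phi\subseteq \ker\d_g\s_\G$ (as recorded after Definition~\ref{def:Ehresmann connection:morphism}), the vertical space at $g$ equals $\ker\d_g\s_\G\cap K_g=K_g$. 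The given splitting $\ker\d_g\s_\G=K_g\oplus E^x_g$ then already exhibits $E^x$ as a horizontal complement to the vertical distribution; smoothness and constancy of rank follow because $E^x=E\cap T\s_\G^{-1}(x)$ is a clean intersection of subbundles of $T\G$ along $\s_\G^{-1}(x)$.

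The substance of the argument is the equivariance $\d R_k(E^x_g)=E^x_{gk}$ under the structure group, and this is where multiplicativity of $E$ enters. Since right translation by a fixed $k\in\K_x$ is $R_k(g)=m(g,k)$, I would write $\d R_k(w)=\d m(w,0_k)$, where $0_k\in T_k\G$ is the zero vector. Two observations make this work: first, $0_k$ lies in the linear subspace $E_k\subset T_k\G$, so it is an arrow of the subgroupoid $E\tto TM$; second, the pair $(w,0_k)$ is composable in $E$, because $w\in\ker\d\s_\G$ gives $\d\s(w)=0_x=\d\t(0_k)$. Multiplicativity of $E$ then yields $\d R_k(w)\in E$. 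Applying $\d\s_\G$ and using $\s\circ m=\s\circ\pr_2$ shows $\d\s_\G(\d R_k(w))=\d\s_\G(0_k)=0_x$, so $\d R_k(w)\in\ker\d_{gk}\s_\G$ as well; hence $\d R_k(w)\in E\cap\ker\d_{gk}\s_\G=E^x_{gk}$. Since $R_k$ is a diffeomorphism preserving the fibers of $\Phi_x$ and $E^x$ has constant rank, the inclusion $\d R_k(E^x_g)\subseteq E^x_{gk}$ upgrades to an equality, giving $\K_x$-invariance.

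The only real subtlety, and the step I expect to require care, is the bookkeeping in this equivariance argument: one must confirm that the zero vector $0_k$ genuinely belongs to the subgroupoid $E$ and that it is composable with $w$, so that multiplicativity is applicable, and then track the source condition to land back inside $\ker\d\s_\G$. Everything else is an unwinding of definitions, since complementarity to the vertical distribution is handed to us by the defining splitting together with the identification of the vertical bundle with $K$.
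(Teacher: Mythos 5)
Your proof is correct and follows essentially the same route as the paper's: the paper's (much terser) argument is exactly that multiplicativity of $E$ gives $v\cdot k=\d_{(g,k)}m(v,0_k)\in E^x_{gk}$ for $v\in E^x_g$, $k\in\K_x$, which is the computation you carry out in detail. The extra bookkeeping you supply (that $0_k\in E_k$, the composability condition $\d\s(v)=0_x=\d\t(0_k)$, the source condition landing back in $\ker\d\s_\G$, and the rank argument upgrading the inclusion to an equality) is precisely what the paper leaves implicit.
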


\begin{proof}
Using the multiplicativity of $E$ one finds that if $k\in \K_x$ and $v\in E^x_g$, for $g\in \s_{\G}^{-1}(x)$, then:
\[ v\cdot k=\d_{(g,k)}m(v,0_k)\in E^x_{gk}. \]
Hence, $E^x$ is $\K_x$-invariant.
\end{proof}

If $\al\in\Omega^1_\mult(\G,\ka)$ and $\Omega \in\Omega^2_\mult(\G,\ka)$ are the multiplicative connection  1-form and curvature form of $E$ then the principal bundle connection $E^x$ has connection 1-form $\al^x\in \Omega^1(\s_{\G}^{-1}(x),\ka_x)$ and curvature form $\Omega^x\in \Omega^2(\s_{\G}^{-1}(x),\ka_x)$ the restrictions:
\[ \al^x=i^*_x\al,\quad \Omega^x=i^*_x\Omega, \]
where $i_x:\s_{\G}^{-1}(x)\hookrightarrow\G$ is the inclusion.

\begin{definition}
A multiplicative Ehresmann connection $E$ for a groupoid morphism $\Phi:\G\to\H$ is called {\bf leafwise flat} if $\Omega^x=0$ for every $x\in M$.
\end{definition}

We have the following characterizations:
\begin{lemma}
Given a multiplicative Ehresmann connection $E$ for a groupoid morphism $\Phi:\G\to\H$ the following are equivalent:
\begin{enumerate}[(i)]
    \item $E$ is leafwise flat;
    \item $\ker \d \s\cap E\subset T\G$ is involutive;
    \item $B':=(\ker \d \s\cap E)|_M\subset A$ is a Lie subalgebroid.
\end{enumerate}
\end{lemma}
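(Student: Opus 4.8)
The plan is to prove the cyclic chain of implications $(i)\Rightarrow(ii)\Rightarrow(iii)\Rightarrow(i)$, exploiting the interplay between the leafwise curvature forms $\Omega^x$, the involutivity of the distribution $\ker\d\s\cap E$ on $\G$, and the closure of its restriction $B'$ under the Lie algebroid bracket on $A$. The central tool throughout is Proposition \ref{prop:curvature:multiplicative}, which expresses the curvature of $E$ on horizontal vector fields as the $K$-component of their bracket, together with the observation that restricting the connection 1-form $\al$ and curvature $\Omega$ to the source-fibers $\s_\G^{-1}(x)$ recovers the principal connection data $\al^x=i_x^*\al$ and $\Omega^x=i_x^*\Omega$.

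First I would prove $(i)\Leftrightarrow(ii)$. The distribution $D:=\ker\d\s\cap E$ is spanned pointwise by the vectors tangent to the source-fibers that lie in the horizontal distribution, i.e.\ by horizontal lifts of vectors tangent to the base fibers $\s_\H^{-1}(x)$. For horizontal vector fields $X,Y$ tangent to a fixed source-fiber, formula \eqref{eq:curvature:bracket} gives $\d L_g\cdot\Omega(X,Y)_g=(h([X,Y])-[X,Y])_g$, so the curvature measures exactly the vertical ($K$-valued) part of $[X,Y]$. Since $\ker\d\s\cap E$ and $K$ together with the rest of $E$ decompose $T\G$, involutivity of $D$ is equivalent to $[X,Y]$ having no $K$-component for all such $X,Y$, which is precisely the vanishing of the leafwise curvatures $\Omega^x=i_x^*\Omega$ for every $x$. (One must note that the bracket of two vector fields tangent to the source-fibration stays tangent to it, so no horizontal-but-not-leafwise component is generated.) This gives the equivalence of $(i)$ and $(ii)$.

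Next I would establish $(ii)\Leftrightarrow(iii)$. Here the key structural fact is that $B':=(\ker\d\s\cap E)|_M\subset A$, being the restriction of a \emph{multiplicative} distribution (a subgroupoid $E\subset T\G$) intersected with $\ker\d\s$, is precisely the Lie algebroid of the subgroupoid $\ker\d\s\cap E\tto 0_M$ when the latter is a subgroupoid, and more generally is a vector subbundle of $A$. The standard correspondence between involutive multiplicative distributions on $\G$ and Lie subalgebroids of $A$ (the infinitesimal counterpart of wide Lie subgroupoids) yields that $D=\ker\d\s\cap E$ is involutive if and only if its restriction $B'=D|_M$ is closed under the Lie algebroid bracket, i.e.\ is a Lie subalgebroid. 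The forward direction uses that left-invariant vector fields generated by sections of $B'$ span $D$ (since $D$ is a right-translation-invariant, or equivalently left-invariant up to the groupoid structure, subbundle of $\ker\d\s$), so involutivity of $D$ descends to bracket-closure of $B'$; the converse reconstructs involutivity from the fact that brackets of left-invariant vector fields correspond to the algebroid bracket.

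I expect the main obstacle to be the careful bookkeeping in $(ii)\Leftrightarrow(iii)$: one must verify that $D=\ker\d\s\cap E$ is genuinely spanned by left-invariant vector fields coming from sections of $B'=D|_M$, which relies on $D$ being invariant under left translations in the groupoid. This invariance should follow from $E$ being a subgroupoid of $T\G$ together with $\ker\d\s$ being the left-translate of $A=\ker\d\s|_M$, but making this precise — especially ensuring the intersection $\ker\d\s\cap E$ has constant rank so that $B'$ is a genuine subbundle and the subgroupoid/subalgebroid correspondence applies — is the delicate point. Once that is in place, the equivalences close up into the desired cycle and the lemma follows.
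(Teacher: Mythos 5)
Your overall route is the same as the paper's: (i)$\Leftrightarrow$(ii) by reading the leafwise curvature as the obstruction to involutivity of $D:=\ker\d\s\cap E$ via \eqref{eq:curvature:bracket}, and (ii)$\Leftrightarrow$(iii) by spanning $D$ with invariant extensions of sections of $B'$. Your argument for (i)$\Leftrightarrow$(ii) is correct and simply fleshes out the paper's ``clearly'': since $K\subset\ker\d\s$ and $T\G=K\oplus E$, one has $\ker\d\s=K\oplus D$, so for $X,Y\in\Gamma(D)$ the bracket $[X,Y]$ lies in $\Gamma(\ker\d\s)$ and its $K$-component equals $-\d L_g\cdot\Omega(X,Y)_g$ by Proposition \ref{prop:curvature:multiplicative}; as $\Omega$ vanishes whenever an argument lies in $K$, each $\Omega^x$ is determined by its values on $D$, so $\Omega^x\equiv 0$ for all $x$ iff $D$ is involutive. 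Note that this same decomposition $\ker\d\s=K\oplus D$ disposes of the ``delicate point'' you flag at the end: $D$ automatically has constant rank, being a complement of $K$ inside $\ker\d\s$, so $B'$ is a genuine subbundle.

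The concrete flaw is in (ii)$\Leftrightarrow$(iii): your spanning claim has the wrong handedness, and as literally stated it fails. Left-invariant vector fields are sections of $\ker\d\t$ (left translation preserves the collection of $\t$-fibers), so they cannot span $D\subset\ker\d\s$. What is true --- and what the paper's proof uses --- is that $D$ is spanned by \emph{right}-invariant extensions of elements of $B'$: your parenthetical observation that $D$ is right-translation invariant is the correct one, since for $v\in D_g$ and composable $h$ one has $\d R_h(v)=\d m(v,0_h)\in E\cap\ker\d\s$ by multiplicativity of $E$, whence $D_g=\d R_g\big(B'|_{\t(g)}\big)$; but the phrase ``equivalently left-invariant up to the groupoid structure'' has no content, and your converse step appeals to brackets of left-invariant vector fields, which do not lie in $D$. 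Relatedly, the ``standard correspondence between involutive multiplicative distributions and Lie subalgebroids'' you invoke does not apply to $D$: elements of $D$ satisfy $\d\s(v)=0$ but in general $\d\t(v)\neq 0$, so $D$ is \emph{not} a subgroupoid of $T\G$, and the right tool really is right-invariance. The repair is routine but necessary: with the paper's conventions the algebroid bracket is computed from left-invariant fields, and right-invariant extensions obey $[\al^R,\be^R]=-[\al,\be]^R$ (after the canonical identification of $\ker\d\s|_M$ with $A=\ker(\d\t)|_M$), so involutivity of $D$ is equivalent to closure of $\Gamma(B')$ under the bracket, i.e.\ to $B'$ being a wide Lie subalgebroid --- the sign being irrelevant for closure. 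With these corrections your proof closes up and coincides with the paper's.
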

\begin{proof}
Clearly, (i) and (ii) are equivalent. Equivalence between (ii) and (iii) uses that $\ker \d \s\cap E$ is spanned by right invariant extensions of elements in $B'$. 
\end{proof}

\begin{remark}
The pair $(\G,E)$ is an example of a Pfaffian groupoid (see \cite{MSalazarThesis}).
\end{remark}

The lemma shows that if $\Phi:\G\to \H$ admits a leafwise flat multiplicative Ehresmann connection, then the induced Lie algebroid map $\phi:A\to B$ admits a Lie algebroid splitting with image $B'$. We briefly recall the groupoid version of this condition. 

Let $\H\tto M$ be a Lie groupoid which acts on a bundle of Lie groups $p:\K\to M$ preserving the group structure on the fibers. The \textbf{semi-direct product} is the Lie groupoid $\G:=\H\times_M\K\tto M$ with multiplication is given by:
\[ (h_1,k_1)(h_2,k_2)=(h_1 h_2,(h_2^{-1}\cdot k_1)k_2). \]
This fits into a short exact sequence of groupoids:
\[
\begin{tikzcd}
\K \ar[r, hook] & \H \times_M\K \ar[r, two heads, "\pr_1"]& \H
\end{tikzcd}
\]
which is canonically split by the groupoid morphism:
\[ \sigma:\H\longrightarrow \H\times_M \K, \quad h\mapsto (h,1_{\s(h)}). \]

Conversely, given a surjective, submersive, Lie groupoid map $\Phi:\G\to\H$ covering the identity, which admits a \textbf{Lie groupoid splitting}:
\[
\begin{tikzcd}
\K\ar[r, hook] & \G \ar[r, two heads, "\Phi"] & \H\ar[l, dashed, bend left, "\sigma"]
\end{tikzcd}
\]
we obtain an action of $\H\tto M$ on the bundle of Lie groups $p:\K\to M$:
\[ h\cdot k=\sigma(h)k\sigma(h)^{-1}, \]
preserving the group structure on the fibers, and a Lie groupoid isomorphism:
\[ \H\times_M \K \simeq \G,\quad (h,k)\mapsto \sigma(h)k. \]

Given a leafwise flat multiplicative Ehresmann $E$ for a groupoid map $\Phi:\G\to \H$, the induced Lie algebroid splitting integrates to a groupoid morphism on a possibly different Lie groupoid with the same Lie algebroid as $\H$. To explain this, let $\widetilde{\H}\tto M$ be the universal covering groupoid of $\H^0\tto M$, the connected component of the identity of $\H\tto M$. Recall that it consists of $\s$-leafwise path-homotopy classes of paths in $\H$ starting at the identity:
\[ \widetilde{\H}=\big\{[\gamma]~|~ \gamma:[0,1]\to \H,\, \s(\gamma(t))=x,\, \gamma(0)=1_x,\, x\in M\big\}. \]

Recall that principal bundle connections are always complete, so the horizontal lifts of $\s$-leafwise paths are always defined, and we can set:

\begin{definition}\label{def:holonomy:map}
Given a leafwise flat, multiplicative, Ehresmann connection $E$ for $\Phi:\G\to\H$ the {\bf holonomy} of $E$ is the map:
\[ \Hol:\widetilde{\H}\to \G,\quad [\gamma]\mapsto \widetilde{\gamma}(1), \]
where $\widetilde{\gamma}:[0,1]\to \G$ is the horizontal lift of $\gamma$ starting at $1_{\s(\gamma(0))}$.
\end{definition}

\begin{proposition}
The holonomy of a leafwise flat, multiplicative Ehresmann connection $E$ for $\Phi:\G\to\H$ is a Lie groupoid morphism making the following diagram commute:
\[ \xymatrix@R=15pt{ & \widetilde{\H}\ar[dl]_{\Hol}\ar[d]\\ \G\ar[r]_{\Phi} & \H}\]
\end{proposition}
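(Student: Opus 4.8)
The plan is to check, in this order, that $\Hol$ is (a) well defined on homotopy classes, (b) smooth, (c) compatible with the source, target and unit maps and makes the triangle commute, and (d) multiplicative. Of these, (a) is where leafwise flatness is essential and (d) is where the multiplicativity of $E$ is essential; the remaining points are essentially bookkeeping.

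For (a), fix $x\in M$ and recall from the leafwise-flat hypothesis that $E^x:=E\cap\ker\dd\s_\G$ is a \emph{flat} principal $\K_x$-connection on $\Phi_x:\s_\G^{-1}(x)\to\s_\H^{-1}(x)$. If $\gamma$ is an $\s$-leafwise path with $\s(\gamma(t))\equiv x$, then its $E$-horizontal lift $\widetilde\gamma$ stays in $\s_\G^{-1}(x)$, since $\s_\G\circ\widetilde\gamma=\s_\H\circ\Phi\circ\widetilde\gamma=\s_\H\circ\gamma\equiv x$, and is therefore the $E^x$-horizontal lift. As $E^x$ is flat, parallel transport along $\gamma$ depends only on the path-homotopy class rel endpoints, which is exactly the datum of an element of $\widetilde\H$; hence $\widetilde\gamma(1)$ depends only on $[\gamma]$ and $\Hol$ is well defined. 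Existence of the lift on all of $[0,1]$ is guaranteed because principal connections are complete, as recalled before Definition \ref{def:holonomy:map}, and (b) follows from smooth dependence of the horizontal-lift equation on its data together with the standard smooth structure of the universal covering groupoid $\widetilde\H$. For (c), since $\widetilde\gamma$ covers $\gamma$ we get $\Phi(\Hol([\gamma]))=\Phi(\widetilde\gamma(1))=\gamma(1)$, which is precisely the image of $[\gamma]$ under the projection $\widetilde\H\to\H$, giving commutativity of the triangle; that $\widetilde\gamma$ stays in $\s_\G^{-1}(x)$ and that $\Phi$ covers the identity yield $\s_\G(\Hol([\gamma]))=x$ and $\t_\G(\Hol([\gamma]))=\t_\H(\gamma(1))$, so $\Hol$ intertwines source and target; and the unit at $x$, being the class of the constant path at $1_x$, lifts to the constant path at $1_x$ because the zero section lies in $E$, so $\Hol$ preserves units.

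The core computation is (d). Writing the product $[\gamma_1][\gamma_2]$ of composable classes as a reparametrization of the concatenation of $\gamma_2$ with the translated path $t\mapsto\gamma_1(t)\,\gamma_2(1)$, I would lift the concatenation in two stages: first lift $\gamma_2$ from the unit to $\widetilde h_2:=\Hol([\gamma_2])$, and then lift $t\mapsto\gamma_1(t)\,\gamma_2(1)$ starting at $\widetilde h_2$. The key claim is that this second lift is $t\mapsto\widetilde\gamma_1(t)\,\widetilde h_2$, where $\widetilde\gamma_1$ is the lift of $\gamma_1$ from the unit: the arrows are composable because $\widetilde\gamma_1(t)$ stays in $\s_\G^{-1}(x_1)$ with $x_1=\t(\gamma_2(1))=\t_\G(\widetilde h_2)$; its velocity $\dd m(\dot{\widetilde\gamma}_1(t),0_{\widetilde h_2})$ lies in $E$ because $E\tto TM$ is a subgroupoid of $T\G$ containing the zero section; and under $\Phi$ it projects to the velocity of $t\mapsto\gamma_1(t)\,\gamma_2(1)$. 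By uniqueness of horizontal lifts the concatenation therefore ends at $\widetilde\gamma_1(1)\,\widetilde h_2=\Hol([\gamma_1])\,\Hol([\gamma_2])$, which is $\Hol([\gamma_1][\gamma_2])$; this is exactly properties (i)--(ii) of Proposition \ref{prop:complete:Ehresmann:connection} applied to the lifts. The main obstacle is purely organizational: fixing the base points and the precise concatenation formula for the multiplication in $\widetilde\H$ so that the translation argument lands on $\Hol([\gamma_1])\,\Hol([\gamma_2])$ and not on some conjugate or reordering; once the conventions are pinned down, each verification is a one-line computation.
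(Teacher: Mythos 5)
Your proposal is correct and takes essentially the same route as the paper: the paper's (much terser) proof invokes exactly your ingredients, namely smoothness of horizontal flows, preservation of units, and the fact that products of horizontal lifts are horizontal combined with the definition of multiplication on $\widetilde{\H}$, which is precisely your step (d) carried out via properties (i)--(ii) of Proposition \ref{prop:complete:Ehresmann:connection}. Your step (a), establishing well-definedness on homotopy classes from flatness of the leafwise principal connections $E^x$, is left implicit in the paper (it is what makes Definition \ref{def:holonomy:map} well posed), and you correctly identify this as the point where leafwise flatness is used.
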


\begin{proof}
The smoothness of $\Hol$ follows from smoothness of flows of horizontal vector fields. Clearly, from the definition $\Hol(1_x)=1_x$. The fact that $\Phi$ preserves multiplication follows from the fact that products of horizontal lifts are horizontal and the definition of multiplication on $\widetilde{\H}$.
\end{proof}

Notice that the kernel of $\Hol:\widetilde{\H}\to \G$ is contained in the kernel of the covering map $\widetilde{\H}\to\H$. 
We will say that a leafwise flat, multiplicative Ehresmann connection $E$ for $\Phi:\G\to\H$ has {\bf trivial holonomy} if the two kernels coincide. Then we find:

\begin{corollary}
\label{cor:flat:splitting}
Let $\Phi:\G\to \H$ be a morphism covering the identity and assume $\H$ is source connected. If $\Phi:\G\to \H$ admits a leafwise flat, multiplicative, Ehresmann connection with trivial holonomy then it admits a Lie groupoid splitting.
\end{corollary}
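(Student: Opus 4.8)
The plan is to produce the splitting $\sigma:\H\to\G$ by descending the holonomy morphism through the universal covering projection. Since $\H$ is source connected we have $\H=\H^0$, and the projection $\pi:\widetilde{\H}\to\H$ is a surjective Lie groupoid morphism covering the identity which is moreover a local diffeomorphism (on each source fiber it is the universal covering map of that fiber), with kernel $\ker\pi$ a bundle of discrete groups. The preceding proposition supplies the holonomy morphism $\Hol:\widetilde{\H}\to\G$, a Lie groupoid morphism satisfying $\Phi\circ\Hol=\pi$, and the trivial-holonomy hypothesis says precisely that $\ker\Hol=\ker\pi$.

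First I would check that $\Hol$ is constant on the fibers of $\pi$. If $a,b\in\widetilde{\H}$ satisfy $\pi(a)=\pi(b)$, then, because $\pi$ covers the identity, $a$ and $b$ have the same source and target; hence $b^{-1}a$ is a well-defined isotropy element with $\pi(b^{-1}a)=\pi(b)^{-1}\pi(a)=\id$, so that $b^{-1}a\in\ker\pi=\ker\Hol$. Multiplicativity of $\Hol$ then gives $\Hol(b)^{-1}\Hol(a)=\Hol(b^{-1}a)=\id$, i.e.\ $\Hol(a)=\Hol(b)$. Therefore $\Hol$ factors through $\pi$ as $\Hol=\sigma\circ\pi$ for a unique set-theoretic map $\sigma:\H\to\G$.

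It then remains to verify that $\sigma$ has the required structural properties, which I expect to be routine once the descent is in place. Smoothness of $\sigma$ follows from $\pi$ being a local diffeomorphism: locally one can write $\sigma=\Hol\circ\tau$ for a smooth local section $\tau$ of $\pi$. That $\sigma$ is a groupoid morphism follows from the surjectivity of $\pi$ together with the multiplicativity of $\pi$ and $\Hol$: given composable $h_1,h_2\in\H$, choose composable lifts $a_1,a_2\in\widetilde{\H}$ and compute $\sigma(h_1h_2)=\Hol(a_1a_2)=\Hol(a_1)\Hol(a_2)=\sigma(h_1)\sigma(h_2)$. Finally, $\Phi\circ\sigma\circ\pi=\Phi\circ\Hol=\pi$ and $\pi$ is an epimorphism, so $\Phi\circ\sigma=\id_{\H}$, exhibiting $\sigma$ as a Lie groupoid splitting. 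The only genuine subtleties---and hence the steps I would treat with care---are the existence of composable lifts and the smoothness of $\sigma$, both of which rest on $\pi$ being a surjective local diffeomorphism; the heart of the argument, however, is the fiberwise constancy of $\Hol$, which is exactly what the trivial-holonomy hypothesis provides.
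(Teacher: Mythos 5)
Your proposal is correct and follows exactly the paper's argument: the paper's proof is the one-line observation that, with trivial holonomy, $\Hol:\widetilde{\H}\to\G$ descends through the covering projection to a groupoid splitting $\sigma:\H\to\G$, and your write-up simply supplies the (correct) details of this descent — fiberwise constancy of $\Hol$ via $\ker\Hol=\ker\pi$, smoothness via local sections of the local diffeomorphism $\pi$, and the morphism and splitting identities. Note that your worry about composable lifts is vacuous, since $\pi$ preserves source and target, so lifts of composable arrows are automatically composable.
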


\begin{proof}
If the holonomy is trivial, then the holonomy morphism $\Hol:\widetilde{\H}\to \G$ descends to a groupoid splitting $\sigma:\H\to\G$.
\end{proof}

We will see later (Corollary \ref{cor:proper:semi-direct}) that when $\G$ is proper and $\ker \Phi$ is abelian, then the converse holds: if $\Phi:\G\to \H$ admits a Lie groupoid splitting then it admits a leafwise flat, multiplicative Ehresmann connection with trivial holonomy.

Next, we look for the conditions which ensure that a semi-direct product admits a leafwise flat, multiplicative Ehresmann connection. For that, we introduce an extension of Definition \ref{definition:invariant:connection} to non-linear connections.

\begin{definition}\label{definition:invariant:Ehresmann:connection}
Let $\H\tto M$ be a Lie groupoid which acts on a submersion $p:N\to M$. An Ehresmann connection $E$ on $p:N\to M$ is called {\bf $\H$-invariant} if the diffeomorphism:
\[ 
\vcenter{\xymatrix@C=15pt@R=15pt{\H\timessp N\ar[rr]^{\Psi}\ar[dr] & & \H\timestp N\ar[dl]\\ & \H }}
\qquad (g,p)\mapsto (g,g\cdot p),
\]
induces an isomorphism $\d\Psi:(\d\s)^*E\diffto (\d\t)^*E$.
\end{definition}

Note that in the distributions $(\d\s)^*E$ and $(\d\t)^*E$ appearing in this definition are Ehresmann connections for the projections in the diagram. We now have:

\begin{proposition}\label{prop:invariant:connection}
Let $\Phi:\G\to \H$ be a surjective, submersive groupoid map, and let $\K:=\ker\Phi$. The following are equivalent:
\begin{enumerate}[(i)]
    \item There is a multiplicative Ehresmann connection $E$ on $\G$ and $\Phi$ admits a groupoid splitting $\sigma:\H\to \G$ which is horizontal with respect to $E$.
    \item $\G$ is isomorphic to a semidirect product $\G\simeq \H\times_M\K$, where $\K$ admits an $\H$-invariant multiplicative connection.
\end{enumerate}
If these hold, then $\Phi:\G\to \H$ admits a leafwise flat multiplicative Ehresmann connection with trivial holonomy.
\end{proposition}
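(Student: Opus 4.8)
The plan is to prove the two implications $(i)\Rightarrow(ii)$ and $(ii)\Rightarrow(i)$ separately, and then to observe that the connection produced in the second implication is automatically leafwise flat with trivial holonomy, which settles the final assertion.

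For $(i)\Rightarrow(ii)$, the horizontal groupoid splitting $\sigma$ supplies, as recalled in the paragraphs preceding the proposition, the isomorphism $\G\simeq\H\times_M\K$ together with the $\H$-action $h\cdot k=\sigma(h)k\sigma(h)^{-1}$. I would then take the induced Cartan connection $E^\K=E\cap T\K$ on the kernel and claim it is $\H$-invariant in the sense of Definition \ref{definition:invariant:Ehresmann:connection}. To check this, recall from the proof of Proposition \ref{prop:complete:Ehresmann:connection} that products and inverses of horizontal vectors are again horizontal, since $E\tto TM$ is a subgroupoid of $T\G\tto TM$. Given $w\in E^\K_k$ and $Y\in T_h\H$ with $\d\s(Y)=\d p(w)$, the derivative of $k\mapsto\sigma(h)k\sigma(h)^{-1}$ is a triple groupoid product built from $\d\sigma(Y)\in E$ (as $\sigma$ is horizontal), from $w\in E\cap T\K$, and from the inverse direction of $\d\sigma(Y)$; hence it lies in $E$, and since conjugation preserves $\K$ it lies in $E\cap T\K=E^\K$. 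This is precisely the statement that $\d\Psi$ carries $(\d\s)^*E^\K$ into, hence by equality of ranks onto, $(\d\t)^*E^\K$, i.e. $\H$-invariance.

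For $(ii)\Rightarrow(i)$, using $\G\simeq\H\ltimes\K$ I would define $E:=(\d r)^{-1}(E^\K)$, where $r:\G\to\K$, $(h,k)\mapsto k$, and $E^\K\subset T\K$ is the given $\H$-invariant multiplicative connection. A dimension count, together with $E^\K\cap\ker\d p=0$, shows $E$ is complementary to $\ker\d\Phi$, and the canonical splitting $\sigma(h)=(h,1_{\s(h)})$ is horizontal because the unit section of $\K$ is $E^\K$-horizontal. The substance of the argument is to verify that $E$ is a subgroupoid of $T\G$. Writing out $\d m$ on the semidirect product, the $\H$-component of a product of horizontal vectors is automatically tangent to $\H$, while the $\K$-component is a product in $\K$ of $w_2\in E^\K$ with the derivative of the conjugation term $h_2^{-1}\cdot k_1$. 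The key point is that this conjugation derivative is horizontal exactly by the $\H$-invariance of $E^\K$ — the base-point matching needed to invoke Definition \ref{definition:invariant:Ehresmann:connection} is supplied by the tangent composability condition $\d\s(X_1)=\d\t(X_2)$ — after which multiplicativity of $E^\K$ shows that the product of the two horizontal $\K$-vectors is again horizontal; compatibility with units and inverses is checked the same way. I expect this multiplicativity verification to be the main obstacle, as it is where the two hypotheses on $E^\K$ (multiplicativity along $\K$ and $\H$-invariance) must combine to yield multiplicativity on all of $\G$.

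Finally, for the leafwise flat claim I would work in the trivialized picture $\s_\G^{-1}(x)\simeq\s_\H^{-1}(x)\times\K_x$ and compute $E^x=\ker\d\s_\G\cap E$ on this fiber. Since on $\s_\G^{-1}(x)$ both $\d\s_\H$ and $\d p$ vanish and $E^\K\cap\ker\d p=0$, one finds $E^x=T\s_\H^{-1}(x)\oplus 0$, which is manifestly involutive; hence $E$ is leafwise flat by the characterization of leafwise flatness established just before Definition \ref{def:holonomy:map}. The horizontal leaves are the slices $\s_\H^{-1}(x)\times\{k\}$, so the horizontal lift of an $\s$-leafwise path starting at a unit stays in $\sigma(\H)$ and $\Hol([\gamma])=\sigma(\gamma(1))$. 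As $\sigma$ is an injective groupoid morphism with $\sigma(1_x)=1_x$, the kernel of $\Hol$ coincides with the kernel of the covering $\widetilde{\H}\to\H$, i.e. the holonomy is trivial.
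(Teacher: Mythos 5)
Your proposal is correct and follows essentially the same route as the paper's proof: the same identification $\G\simeq\H\times_M\K$ via the horizontal splitting, the same connection $E=(\d\s)^*E^\K$ (your $(\d r)^{-1}(E^\K)$ is that distribution), and the same key step of combining $\H$-invariance of $E^\K$ with its multiplicativity to handle the conjugation term in the semidirect product. The only differences are presentational — you argue with tangent vectors where the paper argues with horizontal paths, and you spell out the leafwise-flat/trivial-holonomy claim that the paper merely asserts.
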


\begin{proof}
We first prove (i) $\Rightarrow$ (ii). For this, we use $\sigma$ to identify $\G\simeq \H\times_M\K$. Then any path of the form $t\mapsto(h_t,1_{\s(h_t)})$ is $E$-horizontal. We claim that the multiplicative connection $E^\K=T\K\cap E$ on $\K$ is $\H$-invariant. To see this let $h:[0,1]\to \H$ be any path, and let $k:[0,1]\to \K$ be an $E^\K$-horizontal path covering $t\mapsto\s(h_t)$. Then $t\mapsto (h_t,k_t)\in \H\times_M \K$ is horizontal for the connection $(\d\s)^*E^\K$. Moreover, all $(\d\s)^*E^\K$-horizontal paths are of this form. It suffices to show that the pair $t\mapsto (h_t,h_t\cdot k_t)$ is horizontal for $(\d\t)^*E^\K$, or equivalently, that $t\mapsto h_t\cdot k_t$ is horizontal for $E^\K$. This follows because $t\mapsto(h_t,1_{\s(h_t)})$ and $t\mapsto(1_{\s(h_t)},k_t)$ are $E$-horizontal, $E$ is multiplicative, and so also the following product is $E$-horizontal:
\[t\mapsto (h_t,1_{\s(h_t)})\cdot (1_{\s(h_t)},k_t) \cdot (h_t,1_{\s(h_t)})^{-1}=(1_{\t(h_t)},h_t\cdot k_t).\]

Conversely, to prove that (ii) $\Rightarrow$ (i). If $E^\K$ is an $\H$-invariant multiplicative Ehresmann connection on $p:\K\to M$, then $E=(\d\s)^*E^\K$ is an Ehresmann connection on $\H\times_M \K$ for which a path $(h_t,k_t)\in \H\times_M \K$ is $E$-horizontal if and only if $k_t$ is $E^\K$-horizontal. In particular, the splitting $h\to (h,1_{\s(h)})$ is horizontal. To show that $E$ is multiplicative it suffices to show that for any pair of paths $(h_t,k_t)$ and $(\tilde{h}_t,\tilde{k}_t)$ in $\H\times_M \K$ that are pointwise composable and $E$-horizontal, their product is also $E$-horizontal. But $E^\K$ is $\H$-invariant, so we have that $\tilde{h}_t^{-1}\cdot k_t$ is $E^\K$-horizontal. Since $E^{\K}$ is multiplicative, also $(\tilde{h}_t^{-1}\cdot k_t)\,\tilde{k}_t$ is $E^\K$-horizontal, or equivalently, $(h_t,k_t)\cdot(\tilde{h}_t,\tilde{k}_t)$ is $E$-horizontal. Since 

 For the last part of the proposition, notice that the connection $E=(\d\s)^*E^\K$ is leafwise flat with trivial holonomy.
 \end{proof}

For semi-direct products with abelian kernel we obtain: 

\begin{corollary}\label{corollary:semi-direct:abelian:kernel}
Let $\H\tto M$ be a Lie groupoid which acts on a bundle of abelian groups $p:A\to M$ preserving addition on the fibers. Then  $\pr_1:\H\times_M A\to \H$ admits a multiplicative Ehresmann connection if and only if $p:A\to M$ admits an $\H$-invariant multiplicative connection.
\end{corollary}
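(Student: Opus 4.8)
The statement is the special case of Proposition \ref{prop:invariant:connection} in which $\G$ is already presented as the semi-direct product $\H\times_M A$, with abelian kernel $\K=A$ of $\Phi=\pr_1$ and canonical splitting $\sigma(h)=(h,0_{\s(h)})$. The plan is to deduce both implications from that proposition. Since $A$ is abelian we have $\ka=A$ (as a bundle of Lie algebras the exponential is the identity), so I will write $\ka=A$ throughout. The implication that an $\H$-invariant multiplicative connection on $A$ yields a multiplicative Ehresmann connection for $\pr_1$ is immediate: the hypothesis is exactly condition (ii) of Proposition \ref{prop:invariant:connection} for $\G=\H\times_M A$, so (i) holds and in particular $\pr_1$ carries a multiplicative Ehresmann connection.

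The content is the converse, and the plan is to start from an arbitrary multiplicative Ehresmann connection for $\pr_1$, encoded by its connection $1$-form $\al\in\Omega^1_\mult(\G,\ka)$ with $\al|_\ka=\id$ (Proposition \ref{prop:partially:split:grpd}(iii)), and to \emph{modify} it so that the canonical splitting $\sigma$ becomes horizontal; once $\sigma$ is horizontal we are in situation (i) of Proposition \ref{prop:invariant:connection}, and (i)$\Rightarrow$(ii) produces the desired $\H$-invariant multiplicative connection on $A$. The modification uses abelianness in two ways. First, a short computation with the semi-direct product multiplication shows that the conjugation action \eqref{eq:representation:grpd} of $\G$ on $\ka=A$ satisfies $\Ad_{(h,a)}b=h\cdot b$, independent of $a$; hence it factors through $\Phi=\pr_1$ to the given $\H$-representation, and so pulling $\ka$-valued multiplicative forms back along both $\Phi$ and $\sigma$ makes sense for the same coefficient representation. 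Second, for fixed coefficients the connection $1$-forms form an affine space modelled on multiplicative forms vanishing on $\ka$, so a multiplicative correction term may be subtracted without leaving the class of connection forms.

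Concretely, I would set $\be:=\sigma^*\al\in\Omega^1_\mult(\H,A)$, which is multiplicative because $\sigma$ is a groupoid morphism, and then define
\[ \al':=\al-\Phi^*\be\in\Omega^1_\mult(\G,\ka). \]
The three points to verify are routine: $\al'$ is multiplicative, being a difference of $\ka$-valued multiplicative forms; $\al'|_\ka=\id$, because $\Phi^*\be$ annihilates $\ka\subset\ker\dd\Phi$, so $\al'$ is still a connection $1$-form; and $\sigma^*\al'=\be-(\Phi\circ\sigma)^*\be=\be-\be=0$, so the horizontal distribution $E'=\ker\al'$ contains $\dd\sigma(T\H)$, i.e.\ $\sigma$ is $E'$-horizontal. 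This places us in condition (i) of Proposition \ref{prop:invariant:connection}, and the proposition then finishes the argument, producing the $\H$-invariant multiplicative connection on $A=\K$ recorded in condition (ii).

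The only genuine obstacle is this gauging step $\al\mapsto\al'$ in the converse direction. It is precisely the abelian hypothesis that guarantees both that $\Phi^*\be$ is a legitimate $\ka$-valued (rather than merely $\Phi^*A$-valued) multiplicative form and that subtracting it preserves multiplicativity and the normalization $\al'|_\ka=\id$. For a non-abelian kernel neither the affine structure on the space of connection forms nor the factoring of the coefficient representation through $\Phi$ is available, which explains why the corollary is confined to the abelian case.
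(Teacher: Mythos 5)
Your proposal is correct and is essentially the paper's own proof: the paper also gauges the connection form by $\al\mapsto\al-\pr_1^*\sigma^*(\al)$ to make the canonical splitting horizontal, justifies multiplicativity of the corrected form by observing that abelianness makes the conjugation action of $\H\times_M A$ on $A$ factor through $\pr_1$, and then concludes via Proposition \ref{prop:invariant:connection} in both directions. Your additional explicit checks (that $\Phi^*\be$ annihilates $\ka$ and that $\sigma^*\al'=0$) are exactly the implicit steps the paper leaves to the reader.
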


\begin{proof}
Assume that $\pr_1:\H\times_M A\to \H$ admits a multiplicative Ehresmann connection with connection 1-form $\alpha$. We can insure that $\sigma^*\al=0$, by replacing $\alpha$ by $\alpha - \pr_1^*\sigma^*(\alpha)$. This form is multiplicative because $A$ being abelian implies that the action of $\H\times_M A$ on $A$ is the pullback of the action of $\H$ on $A$. The corollary now follows from the proposition.
\end{proof}

For semi-direct products which have linear kernel, we obtain the following: 

\begin{corollary}\label{corollary:partial:split:for:semi-direct}
Given a Lie groupoid $\H\tto M$ and a representation of $\H$ on a vector bundle $p:V\to M$, the projection $\pr_1:\H\times_M V\to \H$ admits a multiplicative Ehresmann connection if and only if $V$ admits an $\H$-invariant connection $\nabla$.
\end{corollary}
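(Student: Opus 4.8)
The plan is to reduce the statement to the abelian case already settled in Corollary \ref{corollary:semi-direct:abelian:kernel}. A vector bundle $p:V\to M$ is in particular a bundle of abelian Lie groups under fiberwise addition, and a representation of $\H$ on $V$ by vector bundle automorphisms is precisely an action of $\H$ on this bundle of abelian groups preserving the (additive) group structure on the fibers (each $g\cdot:V_{\s(g)}\to V_{\t(g)}$ is linear, hence additive). Thus Corollary \ref{corollary:semi-direct:abelian:kernel} applies verbatim and yields the first reduction: $\pr_1:\H\times_M V\to\H$ admits a multiplicative Ehresmann connection if and only if $V$, viewed as a bundle of abelian groups, admits an $\H$-invariant multiplicative connection.

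It then remains to identify $\H$-invariant multiplicative connections on the abelian bundle $V$ with $\H$-invariant linear connections $\nabla$ on the vector bundle $V$. The key point is that the Lie algebra bundle of $V$ is $V$ itself with trivial bracket, so the exponential map $\exp:\ka=V\to\K=V$ is the identity. Consequently, Proposition \ref{prop:connection:partial:split}, and in particular relation \eqref{eq:exp:connection}, shows that the horizontal distribution $E^{V}\subset TV$ of any multiplicative connection coincides \emph{globally} with the linear Ehresmann distribution $E^{\nabla}$ of the induced linear connection $\nabla:=\nabla^{E}$. Conversely, the specialization of the $\mathrm{BCH}$ argument in that proposition, where $\mathrm{BCH}(s_1,s_2)=s_1+s_2$ in the abelian case, shows that the linear Ehresmann distribution of any linear connection on $V$ is a subgroupoid of $T V\tto TM$, hence multiplicative. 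This sets up a bijection between multiplicative connections on the abelian bundle $V$ and linear connections on the vector bundle $V$.

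Finally I would verify that this bijection matches the two notions of invariance: the non-linear one of Definition \ref{definition:invariant:Ehresmann:connection} applied to the multiplicative connection $E^{V}$, and the linear one of Definition \ref{definition:invariant:connection} applied to $\nabla$. Since the $\H$-action is by fiberwise linear maps and $\exp=\id$, the diffeomorphism $\Psi(g,w)=(g,g\cdot w)$ appearing in Definition \ref{definition:invariant:Ehresmann:connection} is fiberwise linear and carries $(\d\s)^*E^{\nabla}$ to $(\d\t)^*E^{\nabla}$ exactly when $\nabla$ is $\H$-invariant in the linear sense. Combining this identification with the abelian-kernel corollary gives the claim. The only genuinely delicate step is this last compatibility between the two invariance notions; everything else is a direct specialization of results already in hand, with the abelian structure collapsing both $\exp$ and $\mathrm{BCH}$ to linear data.
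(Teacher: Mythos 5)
Your proposal is correct and follows essentially the same route as the paper's own proof: reduce to Corollary \ref{corollary:semi-direct:abelian:kernel} by viewing $V$ as a bundle of abelian groups, identify multiplicative Ehresmann connections on $V$ with linear connections, and check that the two notions of $\H$-invariance (Definitions \ref{definition:invariant:Ehresmann:connection} and \ref{definition:invariant:connection}) agree for linear connections. The paper simply asserts the latter two facts as observations, while you justify them via Proposition \ref{prop:connection:partial:split} (with $\exp=\id$ and $\mathrm{BCH}$ collapsing to addition), which is a legitimate way of filling in the same steps.
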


\begin{proof}
Note that an Ehresmann connection $E^V\subset TV$ is multiplicative if and only if it defines a linear connection $\nabla$. Also note that, for linear connections on $V$, the two notions of $\H$-invariance from Definitions \ref{definition:invariant:Ehresmann:connection} and \ref{definition:invariant:connection} coincide. Hence, the corollary follows from the previous one.
\end{proof}

\medskip

Finally, we discuss the situation where a multiplicative Ehresmann connection $E$ for a morphism $\Phi:\G\to \H$ is involutive, in which case we say that $E$ is {\bf totally flat}. The following proposition shows that when the connection is complete then, up to a cover, $\G$ is a trivial product and $\Phi$ is the projection:

\begin{proposition}
\label{prop:totally:flat:grpd}
Let $\Phi:\G\to \H$ be a morphism which admits a complete, totally flat, multiplicative, Ehresmann connection, and assume that $\G$ is target connected. Then there is a covering space of the base $p:\widetilde{M}\to M$ and a commutative diagram of Lie groupoid morphisms
\[
\xymatrix@R=15pt@C=30pt{
p^*\widetilde{\H}\times G\ar[d]\ar[r]^--{\pr} & p^*\widetilde{\H}\ar[d]\\
\G\ar[r]_{\Phi} & \H
}
\]
where $G\simeq \K_x$ is a Lie group, the vertical maps are surjective local diffeomorphisms covering $p:\widetilde{M}\to M$, and the pullback of the Ehresmann connection under the morphism on the left is $T(p^*\widetilde{\H})\times 0_G$.
\end{proposition}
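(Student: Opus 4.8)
The plan is to realise $(\Phi,E)$, after passing to a cover, as the projection of a direct product groupoid carrying the trivial product connection, by combining the holonomy morphism produced by leafwise flatness with a global trivialisation of the kernel produced by kernel flatness. First I would record the structural consequences of the hypotheses. Since $E$ is involutive it is both kernel flat and leafwise flat; completeness together with Theorem \ref{thm:completeness} shows that $\K=\ker\Phi\to M$ is a locally trivial bundle of Lie groups, and Proposition \ref{prop:flat:kernel:connection} shows that the induced linear connection $\nabla$ on $\ka$ is flat. Hence the Cartan connection $E^\K=E\cap T\K$ is flat, complete and multiplicative, so $E^\K$-parallel transport along paths in $M$ consists of Lie group isomorphisms between the fibres of $\K$. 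Working over one connected component of $M$ at a time, fix $x_0$, set $G:=\K_{x_0}$, and pass to the universal cover $p:\widetilde{M}\to M$; there the flat bundle of groups trivialises, and I write $\tau_{\widetilde{x}}:G\diffto\K_{p(\widetilde{x})}$ for the resulting parallel-transport isomorphisms, which are path-independent because $\widetilde{M}$ is simply connected (so $\tau_{\widetilde{x}}(e)=1_{p(\widetilde{x})}$). Because $E$ is leafwise flat we also have the holonomy morphism $\Hol:\widetilde{\H}\to\G$ of Definition \ref{def:holonomy:map}, a Lie groupoid morphism with $\Phi\circ\Hol$ equal to the covering $\widetilde{\H}\to\H^0\hookrightarrow\H$.

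Next I would assemble these two ingredients. Denoting the arrows of the pullback groupoid $p^*\widetilde{\H}\tto\widetilde{M}$ by triples $(\widetilde{x},h',\widetilde{y})$ with $p(\widetilde{x})=\t(h')$, $p(\widetilde{y})=\s(h')$, define
\[
 F:p^*\widetilde{\H}\times G\longrightarrow\G,\qquad F(\widetilde{x},h',\widetilde{y},a)=\Hol(h')\cdot\tau_{\widetilde{y}}(a),
\]
which is well defined because $\s(\Hol(h'))=\s(h')=p(\widetilde{y})$ and $\tau_{\widetilde{y}}(a)\in\K_{p(\widetilde{y})}$. Since $p^*\widetilde{\H}\times G$ is a direct product groupoid and both $\Hol$ and $\tau$ are morphisms, the statement that $F$ is a groupoid morphism reduces to the intertwining identity
\[
 \Hol(h')^{-1}\,\tau_{\widetilde{y}}(a)\,\Hol(h')=\tau_{\widetilde{z}}(a),
\]
i.e.\ conjugation by the horizontal element $\Hol(h')$ agrees with $E^\K$-parallel transport between the fibres of $\K$. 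I expect \emph{this intertwining to be the main obstacle}, since it is exactly where full multiplicativity and flatness of $E$ are used. It is proved by choosing an $\s$-leafwise path $h'_t$ representing $h'$ and observing that for the $E^\K$-horizontal lift $k_t$ of a given initial value the curve $t\mapsto\Hol(h'_t)\,k_t\,\Hol(h'_t)^{-1}$ is a product of $E$-horizontal paths, hence (by multiplicativity of $E$ and items (i)--(ii) in the proof of Proposition \ref{prop:complete:Ehresmann:connection}) is $E^\K$-horizontal, and projects under $\Phi$ to the identity; therefore it \emph{is} the $E^\K$-parallel transport of its initial value, which is the required identity. Commutativity of the square with $F_0:=\pi\circ\pr:p^*\widetilde{\H}\to\H$ is then immediate, as $\Phi\circ F=\pi\circ\pr=F_0\circ\pr$ and both vertical maps cover $p$ on objects.

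Finally I would verify the remaining assertions. For surjectivity, target-connectedness of $\G$ (and connectedness of the component of $M$) forces $\Phi(\G)\subseteq\H^0$; given $g\in\G$ put $h:=\Phi(g)$, choose $h'\in\widetilde{\H}$ over $h$, write the unique $k\in\K_{\s(h)}$ with $g=\Hol(h')\,k$, and express $k=\tau_{\widetilde{y}}(a)$ for a lift $\widetilde{y}$ of $\s(h)$ and some $a\in G$. A dimension count gives $\dim(p^*\widetilde{\H}\times G)=\dim\H+\dim\K_{x_0}=\dim\G$, and $\d F$ is an isomorphism because the $\widetilde{\H}$-directions map into $E$ while the $G$-directions map (via $\d L_{\Hol(h')}$ applied to vertical vectors of $\K$) into $K=\ker\d\Phi$, and $E\oplus K=T\G$; hence $F$, and likewise $F_0$, is a surjective local diffeomorphism. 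For the connection, set $E':=\d F^{-1}(E)$, a multiplicative distribution. Along the slice $a=e$ one has $F(\,\cdot\,,e)=\Hol$, whose differential lands in $E$: $\Hol$ maps units to units (hence into the unit section of $E\tto TM$) and $\s$-fibres to leafwise-horizontal lifts, and by multiplicativity its image is tangent to the foliation integrating $E$. Using multiplicativity of both $E$ and the product distribution $T(p^*\widetilde{\H})\times 0_G$ (writing a general vector as a right translate of one over the slice $a=e$), it follows that $T(p^*\widetilde{\H})\times 0_G\subseteq E'$ everywhere; as both have rank $\dim\H$ they coincide, giving $\d F^{-1}(E)=T(p^*\widetilde{\H})\times 0_G$ and completing the proof.
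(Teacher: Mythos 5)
You have faithfully reproduced the paper's strategy: the holonomy morphism $\Hol:\widetilde{\H}\to\G$ coming from leafwise flatness, the trivialisation of the flat kernel connection $E^\K$ over a cover of $M$, and the key identity ``conjugation by $\Hol(h')$ equals $E^\K$-parallel transport along the base path $\t\circ h'$'', which your horizontal-lift argument proves correctly (it is the paper's equation relating the parallel transports). The genuine gap is in the very last inference of your intertwining step, ``\dots which is the required identity''. What you actually prove is that $\Hol(h')\,\tau_{\widetilde{y}}(a)\,\Hol(h')^{-1}=\tau_{\widetilde{x}'}(a)$, where $\widetilde{x}'$ is the endpoint of the lift to $\widetilde{M}$ of the path $\t\circ h'$ starting at $\widetilde{y}$. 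But an arrow of the pullback groupoid $p^*\widetilde{\H}=\widetilde{M}\times_M\widetilde{\H}\times_M\widetilde{M}$ is a triple $(\widetilde{x},h',\widetilde{y})$ in which $\widetilde{x}$ is an \emph{arbitrary} lift of $\t(h')$, unrelated to $\widetilde{y}$; when $\widetilde{x}\neq\widetilde{x}'$ the isomorphisms $\tau_{\widetilde{x}}$ and $\tau_{\widetilde{x}'}$ differ by the $E^\K$-holonomy of an essential loop in $M$, which need not be trivial (path-independence of $\tau$ holds only for paths lifted from $\widetilde{M}$, not for arbitrary paths in $M$ between the projected endpoints). Hence your $F$ fails to be multiplicative precisely on these ``incompatible'' arrows.

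This failure is real and not merely expository. Take $M=S^1$, let $\H=M\tto M$ be the unit groupoid (so $\widetilde{\H}=\H$), and let $\G=\K=(\R\times\R)/\Z$ with $(t,v)\sim(t+1,2v)$, a bundle of abelian Lie groups over $S^1$, with $\Phi$ the bundle projection and $E$ the image of $T\R\times 0$: this connection is multiplicative, complete and involutive, and $\G$ is target connected, but the holonomy of $E^\K$ around the circle is multiplication by $2$. Here $\widetilde{M}=\R$, $p^*\widetilde{\H}\times G\simeq(\Z\ltimes\R)\times\R$, and your map is $F((n,t),a)=[(t,a)]$; multiplicativity fails because $F((n,t+m),a)\cdot F((m,t),b)=[(t+m,a)]\cdot[(t,b)]=[(t,2^{-m}a+b)]$, whereas $F((n+m,t),a+b)=[(t,a+b)]$. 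In fact no morphism over any cover satisfies all the required conclusions: for an abelian kernel, the condition $\d F^{-1}(E)=T(p^*\widetilde{\H})\times 0_G$ together with multiplicativity forces the $E^\K$-holonomy of every loop in $M$ to be an inner automorphism of the fibre, hence trivial, contradicting the holonomy $2$. You should know that you did not introduce this gap: the paper's own proof makes exactly the same jump when it asserts that ``the action of $p^*\widetilde{\H}$ on $p^*\K$ becomes the trivial action'', so the statement itself needs amending. Both the statement and the argument (yours and the paper's) are repaired by replacing $p^*\widetilde{\H}$ with the action groupoid $\widetilde{\H}\ltimes\widetilde{M}$, whose arrows are pairs $([h],\widetilde{y})$ with $\s([h])=p(\widetilde{y})$ and whose target is the endpoint of the $\widetilde{M}$-lift of $\t\circ h$ starting at $\widetilde{y}$ (equivalently, the open subgroupoid of ``compatible'' triples in $p^*\widetilde{\H}$); on this groupoid your map $F(([h],\widetilde{y}),a)=\Hol([h])\,\tau_{\widetilde{y}}(a)$ and the rest of your argument go through verbatim.
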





\begin{proof}
By completeness, the parallel transport exists along a path $h:[0,1]\to \H$: 
\[\tau_h:\Phi^{-1}(h(0))\diffto \Phi^{-1}(h(1)).\]
We note the following equalities, for any $h:[0,1]\to \H$, with $h(0)=1_x$ and $\s\circ h= x$:
\begin{equation}\label{eq:parllel:transports}
\tau_h(k)=\tau_h(1_x)k=\tau_{1_{\t\circ h}}(k)\tau_h(1_x), \quad (k\in \K_x)    
\end{equation}
These follow because $E$ is multiplicative, so all the paths above are $E$-horizontal, they cover $h$ and start at $k$. 

Since $E$ is leafwise flat, the holonomy map of Definition \ref{def:holonomy:map} is a groupoid map:
\[\Hol:\widetilde{\H}\to \G, \quad [h]\mapsto \tau_h(1_{x}), \quad  x=\s([h])\]
where $\widetilde{\H}$ is the target 1-connected cover of $\H$ - which has connected $\t$-fibers, because $\G$ has. We have an induced action of $\widetilde{\H}$ on $\K$ by group automorphisms, and so a surjective groupoid morphism 
\[\widetilde{\H}\times_M\K\to \G, \quad ([h],k)\mapsto \tau_h(1_x)k.\]
From \eqref{eq:parllel:transports} we deduce that the action of $\widetilde{\H}$ on $\K$ is given by:
\[[h]\cdot k := \tau_h(1_x)k\tau_h(1_x)^{-1}=\tau_{1_{\t\circ h}}(k), \quad k\in \K_x, \quad x=\s ([h]).
\]
In other words, $[h]$ acts via the parallel transport on ${\K}$ along its base path $[\t\circ h]$ with respect to the flat connection $E^{\K}$. Consider the holonomy cover $p:\widetilde{M}\to M$ for $E^\K$, i.e., the smallest cover where $E^\K$ becomes the trivial connection. Upon pullback to $\widetilde{M}$, $p^*\K=\widetilde{M}\times G$, where $G\simeq\K_x$, and the action of $p^*\widetilde{\H}$ on $p^*\K$ becomes the trivial action; hence we have a groupoid isomorphism: $p^*(\widetilde{\H}\times_M\K)\simeq p^*\widetilde{\H}\times G$. By comparing the parallel transport maps, one obtains the claim about the pullback of $E$ to $p^*\widetilde{\H}\times G$.
\end{proof}

\section{Examples and applications (groupoids)}
\label{examples:groupoids}

We discuss the partially split condition for several classes of groupoids with bundles of ideals.

\subsection{Lie groups}
\label{ex:Lie:groups}
Let $G$ be a connected Lie group with Lie algebra $\gg$. We claim that an ideal $\ka\subset \gg$ is partially split for $G$ if and only if we have a decomposition $\gg=\ka\oplus\mathfrak{h}$, for some ideal $\mathfrak{h}\subset \gg$. To see this, using left translations, we identify $TG$ with the semi-direct:
\[ TG\simeq G\ltimes \gg, \]
where $G$ acts on $\gg$ via the adjoint action. Under this isomorphism, $K\subset TG$ is identified with the subgroup
\[ K\simeq G\ltimes\ka. \]
Moreover, one checks easily that a distribution $E\subset TG$ is a subgroup if and only if it takes the form:
\[ E\simeq G\ltimes \mathfrak{h}, \]
where $\mathfrak{h}\subset \gg$ is an ideal. Hence, the claim follows. 

When $\mathfrak{k}$ integrates to a closed subgroup of $G$, we have a surjective Lie group morphism $\Phi:G\to H$ inducing $\ka$. The distributions $E\subset TG$, induced by ideals $\mathfrak{h}\subset \gg$ as above, are then multiplicative Ehresmann connections for $\Phi$, which are complete by Theorem \ref{thm:completeness}.


\subsection{Products}\label{example:os:products}
Let $\H\tto M$ be a Lie groupoid with Lie algebroid $B\Ato M$ and $G$ be a Lie group with Lie algebra $\gg$. The product
\[ \G:=\H \times G\tto M\]
comes with the bundle of ideals $\ka := 0_M\times \gg\subset B\times \gg$, which corresponds to the Lie groupoid morphism $\pr_1:\G\to \H$. This is partially split with complete, multiplicative  Ehresmann connection $E=T\H\times G\subset T(\H\times G)$. 

\subsection{Bundle of groups}
As we already pointed out before, a  bundle of groups $p:\G\to M$ can be thought of as a groupoid submersion onto the identity groupoid $M\tto M$. The corresponding bundle of ideals is its Lie algebroid $\ka=A$. By Corollary \ref{corollary:locally:trivial:groupoid}, if $\ka$ is partially split for $\G$, then $\ka$ must be locally trivial. The converse, in general, may fail, but it holds if $\G$ is assumed to be a bundle of simply connected Lie groups. This follows from Corollary \ref{cor:connected:Lie:groups}, since in this case $\G$ is locally trivial (it follows also from the global to infinitesimal correspondence given in Theorem \ref{thm:Lie:functor:connections}).




A simple example of a bundle of simply connected groups that is not locally trivial is $\G:=\R\times \R^2 \to \R$, with multiplication given at $x\in \R$ by:
\[ (u_1,v_1)\cdot(u_2,v_2)= (u_1+u_2,v_1+e^{xu_1}v_2).\]
Therefore, this does not admit multiplicative Ehresmann connections.

We note that, in the case of a vector bundle $\G=V\to M$, viewed as a bundle of Lie groups, a multiplicative Ehresmann connection is the same as a linear connection on $V$.

\subsection{Transitive groupoids}
\label{ex:bundle:ideals:transitive}
For a transitive Lie groupoid $\G\tto M$ the bundle of isotropy Lie algebras $\ka:= \ker \rho$ is a bundle of ideals, with 
\[K=\ker \d \s \cap \ker  \d \t \subset T\G.\] 
This is the bundle of ideals determined by the groupoid submersion 
\[ \Phi:=(\t, \s):\G\to M\times M \]
and it is always partially split. Indeed, recall that $\G$ can be identified with a gauge groupoid
\[\G(P):= P\times_GP\tto M,\] 
where, for a fixed $x\in M$, $G$ denotes the isotropy group $\G_{x}$, and $P:=\s^{-1}(x)$ is a principal $G$-bundle with projection $\t:P\to M$. Under this identification, the bundle $\ka$ coincides with the adjoint bundle:
\[ \ka=P[\gg]:=P\times_G \gg.\]
If we choose a principal bundle connection $\eta\in\Omega^1(P;\gg)$, the 1-form $\pr_2^*\eta-\pr_1^*\eta\in \Omega^1(P\times P;\gg)$ descends to a 1-form $\alpha\in\Omega^1(\G(P);P[\gg])$:
\[ q^*\al=\pr_2^*\eta-\pr_1^*\eta,\]
where $q:P\times P \to \G(P)$ is the projection. This gives a multiplicative connection 1-form $\alpha\in\Omega^1_\mult(\G(P);P[\gg])$, and so $\ka=P[\gg]$ is partially split. 

In fact, this assignment yields a 1-to-1
correspondence:
\[ 
\left\{\txt{principal bundle\\ connections $\eta\in\Omega^1(P;\gg)$\,} \right\}
\tilde{\longleftrightarrow}
\left\{\txt{multiplicative connections\\ {  $\alpha\in\Omega^1_{\mult}(\G(P);P[\gg])$ } \,}\right\}
\]
Since principal bundle connections are always complete, we see that any multiplicative Ehresmann connection for $\Phi:=(\t, \s):\G\to M\times M$ is complete. This agrees, of course, with Theorem \ref{thm:completeness} since the kernel of $\Phi$ is the locally trivial bundle of groups $P[G]=P\times_GG\to M$ ($G$ acts by conjugation).

\subsection{Principal type}
\label{ex:princ:type:part:split}
Let $\G(P)=P\times_GP\tto M$ be the gauge groupoid of a principal $G$-bundle $P\to M$, and $\H \tto M$ be any Lie groupoid. Consider the fiber product:
\[\G:=\{(g,h)\in \G(P)\times\H\, :\, \s_{\G(P)}(g)=\s_{\H}(h),\, \t_{\G(P)}(g)=\t_{\H}(h)\}\tto M,\]
with groupoid structure such that the inclusion $\G\hookrightarrow \G(P)\times \H$ is a groupoid map. Smoothness of $\G$ follows because the anchor map  $(\t_{\G(P)},\s_{\G(P)}):\G(P)\to M\times M$ is a surjective submersion. This also implies that the groupoid morphism: 
\[\Phi=\pr_2:\G\to \H\]
is a surjective submersion. This Lie groupoid map admits a multiplicative Ehresmann connection, constructed as follows. Let $A(P)=TP/G$ be the Lie algebroid of $\G(P)$, and $B$ that of $\H$. Identifying the Lie algebroid of $\G$ with  $A=A(P)\times_{TM} B$, we see that the bundle of ideals $\ka$ corresponding to $K=\ker \d \Phi$ is identified with the isotropy bundle of $A(P)$ which, as we saw in the previous example, is $P[\gg]$. Also as in the previous example, any principal connection $\eta\in \Omega^1_{\mult}(P;\gg)$ gives rise to a multiplicative connection 1-form $\alpha\in \Omega^1(\G(P),P[\gg])$. Using the projection $\pr_{1}:\G\to \G(P)$, which is a groupoid map, we obtain a multiplicative 1-form on $\G$:
\[\pr_{1}^*\alpha\in \Omega_{\mult}^{1}(\G,\ka).\]
We conclude that $\ka$ is partially split. 

The groupoids endowed with a bundle of ideals obtained via this construction will be called of \textbf{principal type}. Some of the previous examples fit into this setting: 
\begin{itemize}
\item[\tiny$\bullet$] When $P$ is the trivial principal bundle, $P= G\times M$, one obtains the product $\G=G\times \H$ from Subsection \ref{example:os:products}.
\item[\tiny$\bullet$] For a pair groupoid $\H=M\times M$, we obtain back the transitive Lie groupoids $\G=\G(P)$ with $K=\ker \d\s \cap \ker\d\t$ from Subsection \ref{ex:bundle:ideals:transitive}.
\item[\tiny$\bullet$] When $\H$ is a bundle of groups, we obtain the bundle of groups which is the fiberwise product $\G=P[G]\times_M\H$, where $P[G]=P\times_GG$.
\end{itemize}

The kernel of the groupoid morphism $\Phi=\pr_2:\G\to\H$ is $P[G]=P\times_GG\to M$, which is locally trivial. Hence, by Theorem \ref{thm:completeness}, the multiplicative Ehresmann connections for this class of examples are complete.

\subsection{Action groupoids}\label{action:groupoids}

Consider an action groupoid 
\[ \G:=G\ltimes M\tto M\]
associated with an action of a Lie group $G$ on a manifold $M$. The Lie algebroid is the action algebroid:
\[ A:=\gg\ltimes M\implies M\]
associated with the infinitesimal action $\rho:\gg\to\X(M)$. We have $(v,x)\in\ker\rho_x$ if and only if $v$ lies in the isotropy Lie algebra $\gg_x$ of the infinitesimal action. Moreover, the action of an arrow $(g,x)\in G\ltimes M$ on an element $(v,x)\in\ker\rho_x$ is given by:
\[ (g,x)\cdot (v,x)=(\Ad_g v,gx). \]
It follows that a subbundle $\ka\subset \gg\ltimes M$ is a bundle of ideals if and only if it satisfies:
\begin{enumerate}[(i)]
\item $\ka_x\subset \ker\rho_x$;
\item $\Ad_g(\ka_x)=\ka_{gx}$.
\end{enumerate}

The following is immediate:
\begin{proposition}
\label{prop:action}
If the inclusion of a bundle of ideals $\ka\hookrightarrow \gg\times M$ admits a $G$-equivariant splitting $l:\gg\times M\to \ka$, then $\ka$ is partially split with multiplicative connection 1-form $\al\in\Omega^1_\mult(G\ltimes M;\ka)$ given by:
\[\al_{(g,x)}(\dd L_g(v),w):=l(v,x).\]
In particular, if the action of $G$ on $M$ is proper, or if $\gg$ admits a $G$-invariant inner product, then $\ka$ is partially split.
\end{proposition}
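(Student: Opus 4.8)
The plan is to verify directly that the displayed $1$-form $\al$ is a multiplicative connection $1$-form, and then, under either hypothesis, to build the equivariant splitting $l$ by averaging. The cleanest route is to recognize $\al$ as a composite. Let $\theta\in\Omega^1(G;\gg)$ be the left Maurer--Cartan form of $G$, so $\theta_g(\dd L_g v)=v$, and let $\pi:G\ltimes M\to G$, $(g,x)\mapsto g$, which is a Lie groupoid morphism. Since every tangent vector at $(g,x)$ is uniquely of the form $(\dd L_g v,w)$ with $v\in\gg$, $w\in T_xM$, the formula in the statement is exactly $\al=l\circ\pi^*\theta$ (writing $l_x(v):=l(v,x)$), and in particular $\al$ is well defined.

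I would then check the two required properties. For multiplicativity, the point is that $\theta$ is itself a multiplicative $1$-form on $G$ with coefficients in the adjoint representation: using $\dd m(\dd L_g v,\dd L_h w)=\dd L_{gh}(\Ad_{h^{-1}}v+w)$ one gets $\theta_{gh}(\dd m(\cdot,\cdot))=\Ad_{h^{-1}}\theta_g(\cdot)+\theta_h(\cdot)$, which is precisely the multiplicativity relation for coefficients in $\gg$. Pulling back along $\pi$ makes $\pi^*\theta$ multiplicative with coefficients in the pullback representation $\gg\times M$, whose $G\ltimes M$-action $(g,x)\cdot(v,x)=(\Ad_g v,gx)$ is the conjugation action \eqref{eq:representation:grpd}; here condition (ii) on a bundle of ideals, $\Ad_g\ka_x=\ka_{gx}$, says exactly that $\ka\subset\gg\times M$ is a subrepresentation. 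Because $l$ is $G$-equivariant, i.e.\ a morphism of $G\ltimes M$-representations, post-composition preserves multiplicativity (by linearity and equivariance, $l$ commutes past the adjoint twist), so $\al=l\circ\pi^*\theta\in\Omega^1_\mult(G\ltimes M;\ka)$. For the normalization $\al|_\ka=\id$, note that $\xi\in\ka_x\subset\ker\rho_x$ sits in $A_x=\ker\dd\t|_{(e,x)}$ as the tangent vector $(\xi,0)$ (its $TM$-component vanishes because $\rho_x(\xi)=0$), so $\al_{(e,x)}(\xi,0)=l_x(\xi)=\xi$ since $l$ splits the inclusion. By Proposition \ref{prop:partially:split:grpd}(iii), $\al$ then corresponds to a multiplicative Ehresmann connection, and $\ka$ is partially split.

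For the final two cases I would reduce both to producing a $G$-invariant fibre metric on the equivariant bundle $\gg\times M\to M$: the fibrewise orthogonal projection onto the invariant subbundle $\ka$ is then a $G$-equivariant splitting $l$, automatically the identity on $\ka$. If $\gg$ carries a $G$-invariant inner product, the constant fibre metric works, and invariance together with $\Ad_g\ka_x=\ka_{gx}$ yields $l_{gx}\circ\Ad_g=\Ad_g\circ l_x$. If instead the $G$-action on $M$ is proper, I would choose a cutoff function $c$ for the action and average an arbitrary fibre metric $h^0$, setting $h_x(v,w)=\int_G c(g^{-1}x)\,h^0_{g^{-1}x}(\Ad_{g^{-1}}v,\Ad_{g^{-1}}w)\,\dd g$; properness makes the integrand compactly supported, so $h$ is a smooth $G$-invariant fibre metric (invariance follows from left-invariance of Haar measure under $g\mapsto ag$). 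The one nonformal ingredient, and hence the main obstacle, is this averaging step: it rests on the existence of cutoff functions for proper actions. Everything else is a direct consequence of recognizing $\al$ as $l\circ\pi^*\theta$.
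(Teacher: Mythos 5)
Your proof is correct, and it supplies exactly the direct verification the paper has in mind: Proposition \ref{prop:action} is stated there without proof (prefaced by ``The following is immediate''), the intended content being precisely that $G$-equivariance of $l$ yields multiplicativity of $\al$ (via $\dd m(\dd L_g v,\dd L_h w)=\dd L_{gh}(\Ad_{h^{-1}}v+w)$ and the conjugation action $(h,x)^{-1}\cdot\xi=\Ad_{h^{-1}}\xi$), that $l|_\ka=\id$ gives the normalization required in Proposition \ref{prop:partially:split:grpd}(iii), and that the two special cases follow by orthogonal projection with respect to an invariant fibre metric on $\gg\times M$ --- constant in the bi-invariant case, and obtained by cutoff-function averaging in the proper case, which is the same averaging the paper invokes in Subsection \ref{ex:bi:invariant:metrics}. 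Your packaging of $\al$ as $l\circ\pi^*\theta$, with $\theta$ the left Maurer--Cartan form viewed as a multiplicative $\gg$-valued $1$-form, is a clean and faithful organization of that same computation.
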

\subsection{Non-integrable quotient}
\label{example:Lie-Poisson:sphere}
Let $G$ be a compact semi-simple Lie group, and restrict the adjoint action of $G$ to the unit sphere $M\subset \gg$, with respect to an invariant inner product. The action groupoid $\G:=G\ltimes M\tto M$ has the bundle of ideals 
\[\ka\subset A:=\gg\ltimes M,\quad  \ka|_{x}:=\R x,\  x\in M.\]
By Proposition \ref{prop:action}, $\ka$ is partially split. However, for $\gg\not\simeq \mathfrak{so}(3,\mathbb{R})$, the Lie algebroid $B:=A/\ka$ is not integrable. To see this, it is enough to consider the case when $G$ is simply-connected. If $\H\tto M$ is a Lie groupoid integrating $B$, then we have an induced groupoid morphism $\Phi:\G\to \H$. So for each $x\in M$, we have an induced Lie group map between the isotropy groups $\Phi_x:G_x\to \H_x$. The kernel of $\Phi_x$ is a 1-dimensional group whose connected component is $\exp(\R x)\subset H_x$. However, for $\gg\not\simeq \mathfrak{so}(3,\mathbb{R})$, there are $x\in M$ for which $\exp(\R x)$ is not a closed submanifold of $G$. This is a contradiction.  
\subsection{Groupoids with bi-invariant metrics}
\label{ex:bi:invariant:metrics}
For a general groupoid $\G$ the notion of bi-invariant metric does not make sense: for this we need to lift the right and left actions of $\G$ on itself to its tangent bundle. So assume that $\G\tto M$ is equipped with a Cartan connection, i.e., a multiplicative distribution complementary to the source fibers:
\[ T\G=\ker\d\s\oplus H. \]
Then one can lift the left and right actions of $\G$ on itself to $T\G$ by setting:
\begin{align*}
\lambda^L_g:T_h\G\to T_{gh}\G,&\quad v\mapsto g v:=\d_{(g,h)}m(w,v)\\
\lambda^R_h:T_g\G\to T_{gh}\G,&\quad v'\mapsto v'h:=\d_{(g,h)}m(v',w').
\end{align*}
where $w\in H_g$ and $w'\in H_h$ are the unique tangent vectors such that $\d\s(w)=\d\t(v)$ and $\d\s(v')=\d\t(w')$.
These actions extend the left and right actions of $\G$ on $\ker\d\t$ and $\ker\d\s$ and they are related by the groupoid inversion.

Now, given a groupoid $\G$ with a multiplicative connection $H\subset T\G$, one says that a Riemannian metric $\eta$ on $\G$ is \textbf{bi-invariant} if it is invariant under left translations, right translations and inversion (see \cite{KotovStrobl18}). 
If $\G$ admits a bi-invariant metric, then any bundle of ideals $\ka$ is partially split. In fact, for a bi-invariant metric $\eta$ the distribution orthogonal to $K$:
\[ E:=K^\perp\subset T\G, \]
is a VB subgroupoid of $T\G$, so satisfies Proposition \ref{prop:partially:split:grpd} (i). We leave the details to the reader. 

\begin{remark}
An action groupoid $G\ltimes M\tto M$ has an obvious multiplicative connection. If the action of $G$ on $M$ is proper, using averaging, one obtains a bi-invariant metric on $G\ltimes M$. This gives an alternative argument that if the action of $G$ on $M$ is proper then $\ka$ is partially split, a fact also deduced in Proposition \ref{prop:action}. We will see in Section \ref{sec:Cartan} the infinitesimal analog of this result.  
\end{remark}

\subsection{Over-symplectic groupoids}\label{example:oversympl:groupoid}
A closed multiplicative 2-form $\omega\in \Omega^2_{\mult}(\G)$ is called an \textbf{over-symplectic structure} on the Lie groupoid $\G\tto M$ \cite{BCWZ04} if it satisfies:
\[ \ker\omega\subset \ker \d \t \cap \ker \d \s.\]
It follows from this condition that rank of $\omega$ is constant, equal to $2\dim M$ (see \cite[Proposition 4.5]{BCWZ04}). Then $\ka:=\ker\omega|_M$ defines a bundle of ideals whose associate subgroupoid \eqref{eq:bundle:K} is $K=\ker \omega$. In this case, the existence of partial splittings plays an important role in the study of local models around Poisson submanifolds. For example, it implies that $(\G,\omega)$ embeds coisotropically in some symplectic groupoid. We refer to \cite{FerMa22} where this class of groupoids is discussed in detail.

\section{Morita invariance and properness}
\label{sec:Morita}

In this section we prove the following fundamental result.

\begin{theorem}[Morita invariance]\label{theo:Morita}
Let $\H_1\tto N_1$ and $\H_2\tto N_2$ be two Lie groupoids. A Morita equivalence $\H_1\cong\H_2$ induces a one-to-one correspondence between bundles of ideals in $\H_1$ and bundles of ideals in $\H_2$, under which partially split bundles of ideals are sent to partially split bundles of ideals.
\end{theorem}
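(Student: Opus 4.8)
The plan is to establish Morita invariance by recalling that any Morita equivalence between $\H_1$ and $\H_2$ can be factored through the standard construction: there exists a principal bibundle $P$ realizing the equivalence, and the key mechanism is that bundles of ideals, connections, and all the infinitesimal data transport along the two surjective submersive maps that a Morita equivalence provides. Concretely, I would first reduce to the case of an \emph{elementary} Morita equivalence, i.e.\ one given by a groupoid $\G\tto N$ together with two essentially surjective, fully faithful morphisms (or more precisely by the weak equivalences arising from a surjective submersion $N\to M$), since a general Morita equivalence is a zig-zag of such elementary ones and the correspondence of bundles of ideals composes. The cleanest model is the \textbf{pullback groupoid}: given a surjective submersion $\pi:N\to M$, form $\pi^{!!}\G = N\times_M\G\times_M N\tto N$, which is Morita equivalent to $\G$ via the canonical weak equivalence $\varepsilon:\pi^{!!}\G\to\G$.

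The heart of the proof is therefore to treat this single building block. First I would show that a weak equivalence $\varepsilon:\widetilde\G\to\G$ covering a surjective submersion $\pi:N\to M$ induces, on Lie algebroids, a short exact sequence relating $\widetilde A$ and $A=\pi^*A$ in a way that makes the center/kernel data match: a bundle of ideals $\ka\subset A$ pulls back to a bundle of ideals $\pi^*\ka=\widetilde\ka\subset\widetilde A$, using that $\ka\subset\ker\rho$ and $\G$-conjugation-invariance both pull back cleanly (the conjugation action of $\widetilde\G$ on $\widetilde\ka$ is the pullback of the conjugation action of $\G$ on $\ka$). Conversely, because $\varepsilon$ is a weak equivalence, every bundle of ideals of $\widetilde\G$ descends, establishing the claimed bijection on bundles of ideals. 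This is essentially bookkeeping once the VB-groupoid description is in place.

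The substantive step is transporting the \emph{partially split} condition, and here I would exploit the characterization in Proposition~\ref{prop:partially:split:grpd}(iii): a partial splitting of $\ka$ is exactly a multiplicative connection 1-form $\al\in\Omega^1_\mult(\G,\ka)$ with $\al|_\ka=\id$. Multiplicative forms with coefficients are the objects developed in the appendix, and the crucial input is that they pull back along weak equivalences: $\varepsilon^*\al\in\Omega^1_\mult(\widetilde\G,\widetilde\ka)$ is again multiplicative and still restricts to the identity on $\widetilde\ka$, so a partial splitting upstairs produces one downstairs immediately. The genuinely harder direction is \emph{descent}: given a multiplicative connection 1-form $\widetilde\al$ on $\widetilde\G=\pi^{!!}\G$, I must produce one on $\G$. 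I would do this by averaging/gluing $\widetilde\al$ over the fibers of $\pi$ using a partition of unity on $N\to M$ (or, more invariantly, by using that $\ka$-valued multiplicative forms on $\widetilde\G$ form a module over functions and that the normalization condition $\widetilde\al|_{\widetilde\ka}=\id$ is affine), checking that the convex combination remains multiplicative and still restricts to the identity on $\ka$. The point is that the space of multiplicative connection 1-forms is an affine space modeled on $\ka$-valued \emph{basic} multiplicative 1-forms, and basic forms descend along $\pi$.

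The main obstacle I anticipate is precisely this descent/averaging step: ensuring that a fiberwise convex combination of pulled-back data on $\pi^{!!}\G$ produces a genuinely multiplicative form on $\G$ rather than merely on the pullback groupoid. Multiplicativity is a nonlinear-looking cocycle condition, so I must verify it is preserved by the averaging, which works because multiplicativity is a \emph{linear} condition on the form (it is the statement that a certain VB-groupoid morphism intertwines the structure maps, as in Proposition~\ref{prop:partially:split:grpd}(ii)) and convex combinations of VB-groupoid morphisms covering the identity are again such morphisms. I would therefore phrase the averaging at the level of the splittings $\Theta:\G\ltimes\ka^*\to T^*\G$, where the affine structure is transparent, rather than at the level of distributions $E$, where taking convex combinations is not well-defined. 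Once descent is secured for the elementary building block, composing along the zig-zag defining a general Morita equivalence finishes the theorem.
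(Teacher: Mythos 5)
Your skeleton matches the paper's: reduce to Morita fibrations (pullback groupoids, as in Lemma \ref{lemma:iso:morita:fib}), establish the bijection on bundles of ideals by pullback, and get the easy direction of the partially split condition by pulling back multiplicative connection 1-forms via Proposition \ref{prop:partially:split:grpd}(iii). The gap is in the step you yourself flag as the hard one, and your proposed mechanism fails. Function-weighted convex combinations do \emph{not} preserve multiplicativity: if $\al_1,\al_2\in\Omega^1_\mult(\G;\ka)$ and $\chi\in C^\infty(M)$, then for $\al:=(\chi\circ\s)\,\al_1+(1-\chi\circ\s)\,\al_2$ a direct computation with \eqref{eq:simplicial:E:forms:1} gives
\[
\delta\al=\big(\chi\circ\s\circ\pr_1-\chi\circ\s\circ\pr_2\big)\,\big(m^*(\al_1-\al_2)-\pr_2^*(\al_1-\al_2)\big),
\]
and since $\s\circ\pr_1=\t\circ\pr_2$ on composable pairs, the coefficient at $(g_1,g_2)$ is $\chi(\t(g_2))-\chi(\s(g_2))$, which vanishes identically only when $\chi$ is constant along the orbits of $\G$. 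The same obstruction hits your ``more invariant'' formulation via splittings $\Theta:\G\ltimes\ka^*\to T^*\G$: multiplication in $T^*\G$ is linear on composable pairs, so an affine combination of VB-groupoid morphisms is again a morphism only when the coefficients agree at $g$, $h$ and $gh$ --- orbit-constancy again. This is exactly why the proof of Theorem \ref{thm:proper:partially:split} requires a $\G$-\emph{invariant} partition of unity, available only because of properness; Morita invariance must hold for arbitrary Lie groupoids, where invariant partitions of unity need not exist, so no averaging of this kind can close the argument. There are two further defects: the local forms $\sigma_i^*\widetilde\al$ live only on the restrictions $\G|_{U_i}$ to non-saturated opens (an arrow can have source in $\operatorname{supp}\chi_i$ but target outside $U_i$), so the weighted sum is not even defined on all of $\G$; and the claim that connection 1-forms form an affine space modeled on \emph{basic} (descendable) forms is unjustified --- the difference of two connection forms vanishes on $K=\ker\d\Phi$, but descending along $\Phi$ requires vanishing on $T\K$, which is strictly larger.

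The idea missing from your proposal is the paper's cohomological correction, which confines all partition-of-unity arguments to the kernel, where invariance is automatic. The kernel $\K$ of a Morita fibration $\Phi:\G\to\H$ covering $\phi:M\to N$ is the submersion groupoid $M\times_N M$, whose orbits are precisely the $\phi$-fibers; hence partitions of unity pulled back from $N$ \emph{are} invariant for $\K$, and Lemma \ref{lemma:submersion:groupoid} shows that every multiplicative form on $\K$ is multiplicatively exact. Given a connection 1-form $\al$ for $\Phi^*\ka$, one writes $\al|_\K=\delta\omega$ with $\omega\in\Omega^1(M;\phi^*\ka)$, corrects $\widetilde\al:=\al-\delta\omega$ (still multiplicative, still the identity on $\Phi^*\ka$), and observes that $\widetilde\al$ now pulls back to zero on all of $T\K$. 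Multiplicativity combined with the identification $\G\simeq\phi^*(\H)$ then forces $\widetilde\al=\Phi^*\be$ for a unique $\be\in\Omega^1_\mult(\H;\ka)$ with $\be|_\ka=\id$, which is the desired splitting downstairs. In short: descent is achieved by subtracting a multiplicatively exact term and recognizing the result as a pullback, not by averaging over the fibers.
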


We start by deducing from this result the following important fact, which implies the theorem from the introduction. 

\begin{theorem}
\label{thm:proper:partially:split}
A bundle of ideals in a proper Lie groupoid is partially split.
\end{theorem}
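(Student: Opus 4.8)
The plan is to reduce to the local model of a proper groupoid via Morita invariance and then glue local solutions with an invariant partition of unity. By Proposition \ref{prop:partially:split:grpd}(iii), a bundle of ideals $\ka\subset A$ is partially split exactly when there exists a multiplicative $1$-form $\al\in\Omega^1_\mult(\G,\ka)$ with $\al|_\ka=\id$. The key structural observation I would exploit is that such $1$-forms form an \emph{affine space}: the two conditions are linear in $\al$, so a combination $\sum_i\chi_i\al_i$ still restricts to $\id$ on $\ka$ as soon as $\sum_i\chi_i=1$. Moreover, multiplicativity is preserved by such a combination provided each weight $\chi_i$ is the pullback of a $\G$-invariant function on $M$ (equivalently $\s^*\chi_i=\t^*\chi_i$): for composable $g,h$ invariance gives $\chi_i(\s(g))=\chi_i(\t(h))=\chi_i(\s(h))=\chi_i(\s(gh))$, so an invariant $\chi_i$ enters the cocycle identity defining multiplicativity as if it were a constant. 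This is the mechanism that lets me patch.

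Next I would establish local existence. Since $\G$ is proper, its orbit space $M/\G$ is Hausdorff and paracompact, and all isotropy groups are compact. By the slice theorem for proper Lie groupoids, every orbit admits a saturated open neighborhood $\widehat U\subset M$ such that the restriction $\G|_{\widehat U}$ (an open, hence proper, subgroupoid) is Morita equivalent to an action groupoid $G\ltimes S$ of a \emph{compact} Lie group $G$. The restriction $\ka|_{\widehat U}$ is again a bundle of ideals, and under the Morita equivalence of Theorem \ref{theo:Morita} it corresponds to a bundle of ideals of $G\ltimes S$. Because $G$ is compact the action is proper and its Lie algebra $\gg$ carries an invariant inner product, so Proposition \ref{prop:action} shows that \emph{every} bundle of ideals of $G\ltimes S$ is partially split. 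Applying Theorem \ref{theo:Morita} back, $\ka|_{\widehat U}$ is partially split, and I fix a multiplicative connection $1$-form $\al_{\widehat U}\in\Omega^1_\mult(\G|_{\widehat U},\ka|_{\widehat U})$.

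Finally I would globalize. Projecting the neighborhoods $\widehat U$ to $M/\G$ yields an open cover of the orbit space; by paracompactness I pass to a locally finite refinement and a subordinate partition of unity $\{\bar\chi_i\}$ on $M/\G$. Pulling these back along $M\to M/\G$ gives $\G$-invariant functions $\chi_i$ on $M$ with $\sum_i\chi_i=1$ and $\operatorname{supp}\chi_i$ contained in a saturated open $U_i$ over which a local $1$-form $\al_i$ exists. Since $\chi_i$ is invariant and supported in the saturated set $U_i$, the product $\chi_i\al_i$ extends by zero to a smooth $\ka$-valued $1$-form on all of $\G$: if $\chi_i(\s(g))\neq 0$ then $\s(g)\in U_i$, and saturation forces $g\in\G|_{U_i}$, where $\al_i$ is defined. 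Setting $\al:=\sum_i\chi_i\al_i\in\Omega^1(\G,\ka)$, the affineness and invariance observations of the first paragraph show that $\al$ is multiplicative and satisfies $\al|_\ka=\id$. By Proposition \ref{prop:partially:split:grpd}(iii), $\ka$ is partially split.

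I expect the main obstacle to be the local step: one must invoke the local structure of proper groupoids (the slice theorem producing a Morita equivalence with a compact-group action groupoid) and check that the correspondence of Theorem \ref{theo:Morita} carries $\ka|_{\widehat U}$ to a genuine bundle of ideals of $G\ltimes S$, so that Proposition \ref{prop:action} applies. By contrast, the gluing step is essentially bookkeeping once one records that $\G$-invariant functions commute with the multiplicativity constraint and that invariant supports stay inside saturated opens.
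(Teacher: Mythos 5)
Your proposal follows essentially the same route as the paper's proof: the slice theorem reduces to an action groupoid of a compact group, Proposition \ref{prop:action} together with Morita invariance (Theorem \ref{theo:Morita}) produces multiplicative connection $1$-forms on saturated open sets, and these are glued via a $\G$-invariant partition of unity pulled back through the source map, with multiplicativity preserved exactly by the invariance computation you describe. The only point to tighten is that a partition of unity pulled back from the (non-manifold) orbit space $M/\G$ is a priori only continuous; the paper instead invokes the existence of \emph{smooth} $\G$-invariant partitions of unity for proper groupoids (\cite[Proposition 8]{CrMe18}), which is precisely the statement your gluing step needs.
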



\begin{proof}
Let $x\in M$ and denote by $\O$ the orbit of $\G$ through $x$. By the slice theorem for proper Lie groupoids  \cite{CrSt13,Weinstein02,Zung06}, there is a transversal $T$ to $\O$ such that the restriction 
$\G|_T\tto T$ is isomorphic to an action Lie groupoid $G_x\ltimes V\tto V$, for some open set $0\in V\subset \nu_x(\O)$. Since $\G$ is proper, the isotropy group $G_x$ is compact so this is a proper groupoid. On the other hand, $\G|_T\tto T$ is Morita equivalent to $\G|_{U}\tto U$, where $U$ is the saturation of $T$  (see, e.g., \cite{CrSt13}). Therefore, it follows from Proposition \ref{prop:action} and Theorem \ref{theo:Morita} that $\ka|_{U}$ is a partially split bundle of ideals of $\G|_{U}$

Hence, we can cover $M$ by saturated open sets $\{U_i\}_{i\in I}$ such that, for each $i\in I$, there exists a multiplicative connection 1-form $\al_{U_i} \in\Omega^1_\mult(\G|_{U_i},\ka)$. Since $\G$ is proper, there exists a $\G$-invariant partition of unity $\{\rho_i\}_{i\in I}$ subordinate to the cover $\{U_i\}_{i\in I}$ (see, e.g., \cite[Proposition 8]{CrMe18}). Then the $\ka$-valued 1-form:
\[\al:=\sum_{i\in I}(\rho_i\circ\s)\al_{U_i}\in \Omega^1(\G;\ka).\]
is multiplicative and satisfies:
\[  \al|_\ka=\id. \]
Hence, $\al$ is a multiplicative connection 1-form, so $\ka$ is partially split.
\end{proof}

The theorem also has the following consequence:

\begin{corollary}
\label{cor:proper:semi-direct}
Let $\Phi:\G\to \H$ be a groupoid morphism with abelian kernel and assume that $\G$ is target-connected and proper. Then $\Phi:\G\to \H$ admits a Lie groupoid splitting if and only if it admits a leafwise flat, multiplicative Ehresmann connection with trivial holonomy.
\end{corollary}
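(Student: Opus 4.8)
The plan is to prove the two implications separately. The forward implication (leafwise flat with trivial holonomy $\Rightarrow$ splitting) is essentially already available: it is the content of Corollary \ref{cor:flat:splitting}. The only issue is bookkeeping about connectedness, since that corollary is phrased for source-connected $\H$, whereas here I am only given that $\G$ is target-connected. This does force $\H$ to be target-connected, because $\Phi$ maps each target fiber $\t_\G^{-1}(x)$ \emph{onto} $\t_\H^{-1}(x)$ (if $\Phi(g)=h$ with $\t_\H(h)=x$ then $\t_\G(g)=x$), and a continuous image of a connected set is connected. To bridge the source/target mismatch I would observe that all the notions involved — being a multiplicative Ehresmann connection, leafwise flatness, and triviality of the holonomy — are invariant under the groupoid inversion, which interchanges source and target fibers. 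Hence Corollary \ref{cor:flat:splitting} has a verbatim target-connected analogue, and that is what I apply.

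For the converse, suppose $\Phi$ admits a Lie groupoid splitting $\sigma:\H\to\G$. Using $\sigma$ to identify $\G$ with the semi-direct product $\H\times_M\K$, exactly as in the discussion preceding Definition \ref{def:holonomy:map}, the induced action of $\H$ on the kernel is $h\cdot k=\sigma(h)k\sigma(h)^{-1}$, an action by group automorphisms of the bundle of groups $p:\K\to M$. Since $\K=\ker\Phi$ is abelian by hypothesis, this is precisely an action on a bundle of abelian groups preserving the fibrewise addition, and under this identification $\Phi$ becomes the projection $\pr_1:\H\times_M\K\to\H$.

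Now I bring in properness. Because $\G$ is proper, Theorem \ref{thm:proper:partially:split} guarantees that the bundle of ideals $\ka$ determined by $\K$ is partially split; equivalently, $\Phi=\pr_1$ admits a multiplicative Ehresmann connection. Feeding this into Corollary \ref{corollary:semi-direct:abelian:kernel} — which is exactly where the abelian hypothesis is used, as it allows a multiplicative connection on the total space to be traded for an invariant one on the kernel — I conclude that $\K$ admits an $\H$-invariant multiplicative connection. This is precisely condition (ii) of Proposition \ref{prop:invariant:connection}, whose final assertion then produces a leafwise flat multiplicative Ehresmann connection for $\Phi$ with trivial holonomy, completing the converse.

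The argument is a chaining of earlier results, so the only genuine obstacle is conceptual rather than computational: recognizing that the abelian hypothesis is exactly what lets the ``unconstrained'' multiplicative connection supplied by properness (Theorem \ref{thm:proper:partially:split}) be converted into an $\H$-invariant connection on the kernel via Corollary \ref{corollary:semi-direct:abelian:kernel}. Without abelian-ness the action of $\H\times_M\K$ on $\K$ would fail to factor through $\H$, and this bridge would break down; this is the crucial point to get right. A secondary item to handle carefully is the source-versus-target connectedness mismatch in the forward direction, which is resolved by the inversion symmetry noted above.
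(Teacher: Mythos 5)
Your proof is correct and follows essentially the same route as the paper's: Corollary \ref{cor:flat:splitting} for one direction, and for the converse the identification $\G\simeq\H\times_M\K$, Theorem \ref{thm:proper:partially:split}, Corollary \ref{corollary:semi-direct:abelian:kernel}, and Proposition \ref{prop:invariant:connection} chained in exactly the same order. Your extra remark on the source-versus-target connectedness of $\H$ (which the paper silently elides) is a valid fine point, and is settled even more directly by noting that inversion identifies source and target fibers, so $\H$ is source-connected precisely when it is target-connected.
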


\begin{proof}
Corollary \ref{cor:flat:splitting} gives one direction. To prove the other direction, we can assume that $\Phi=\pr_1:\H\times_M A\to \H$ where $p:A\to M$ is an abelian bundle of Lie groups. Since $\G=\H\times_M A$ is assumed proper, Theorem \ref{thm:proper:partially:split} gives a multiplicative Ehresmann connection for $\pr_1:\H\times_M A\to \H$, and then Corollary \ref{corollary:semi-direct:abelian:kernel} shows that $p:A\to M$ admits an $\H$-invariant multiplicative Ehresmann connection $E^A$. Then $E=(\d\s)^*E^A$ is a leafwise flat, multiplicative Ehresmann connection for $\pr_1:\H\times_M A\to \H$ with trivial holonomy (see Proposition \ref{prop:invariant:connection}).
\end{proof}

\subsection{Morita invariance}
We now turn to the proof of Morita invariance.
For the general theory of Morita equivalences see \cite{dH13} and \cite{MM04}. 
We take the point of view that a Morita equivalence is given by a Lie groupoid $\G$ and Morita maps $\Phi_1:\G\to\H_1$ and $\Phi_2:\G\to\H_2$ which are surjective submersions. This is equivalent to the bibundle definition via the following constructions. The base of $\G$ is a principal $(\H_1,\H_2)$-bibundle, and to a principal bibundle $\H_1\ract M\lact \H_2$ one associates the groupoid \[(\G\tto M):=(M\times_{N_1} \H_1\times_{N_1} M\tto M)\simeq (M\times_{N_2} \H_2\times_{N_2} M\tto M).\]

We will call a Morita map that is a surjective submersion a \textbf{Morita fibration}. Such maps can be characterized as follows:

\begin{lemma}\label{lemma:iso:morita:fib}
A groupoid map $\Phi:\G\to\H$ is a Morita fibration if and only if it covers a surjective submersion $\phi:M\to N$ and $\G\tto M$ is isomorphic to the pullback of $\H\tto N$ along $\phi$:
\[\phi^*(\H)=M\times_N\H\times_NM\tto M\]
 via the map:
\[\overline{\Phi}:\G\to \phi^*(\H), \quad \overline{\Phi}(g)=(\t(g),\Phi(g),\s(g)).\]
\end{lemma}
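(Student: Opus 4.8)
The plan is to unwind the definition of a Morita map as a weak equivalence (see \cite{dH13,MM04}), which consists of two conditions on $\Phi:\G\to\H$ covering $\phi:M\to N$: essential surjectivity (ES), that the map $(x,h)\mapsto\t(h)$ on $\{(x,h):\phi(x)=\s(h)\}$ is a surjective submersion onto $N$; and full faithfulness (FF), that the square with vertical maps $(\t,\s)$ and horizontal maps $\Phi$ and $\phi\times\phi$ is a fibered product of manifolds. The key observation is that (FF) is \emph{literally} the assertion that $\overline{\Phi}$ is an isomorphism: the fiber product $(M\times M)\times_{N\times N}\H$ is canonically $\phi^*(\H)=M\times_N\H\times_N M$, and the comparison map is exactly $g\mapsto(\t(g),\Phi(g),\s(g))=\overline{\Phi}(g)$, which is a groupoid morphism over $\id_M$, hence an isomorphism as soon as it is a diffeomorphism. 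Thus the lemma reduces to matching the surjective-submersion hypothesis on $\Phi$ with the one on $\phi$, together with checking that (ES) is automatic.

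For the forward implication I would first extract the two properties of $\phi$ from $\Phi$ being a surjective submersion, without yet invoking (FF). Surjectivity of $\phi$ is immediate: every unit $1_n\in\H$ equals $\Phi(g)$ for some $g$, whence $n=\phi(\s(g))$. For the submersion property I would use that $\Phi$ covers $\phi$, i.e.\ $\phi\circ\s_{\G}=\s_{\H}\circ\Phi$; the right-hand side is a surjective submersion, being a composite of the submersions $\Phi$ and $\s_{\H}$. Applying the cancellation principle (if $f\circ g$ is a submersion and $g$ is a surjective submersion, then $f$ is a submersion, proved pointwise by choosing a preimage under $g$) to the surjective submersion $\s_{\G}$ then yields that $\phi$ is a submersion. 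Only at this point, with $\phi$ a surjective submersion, is $\phi^*(\H)$ available as a smooth manifold, and (FF) gives that $\overline{\Phi}$ is an isomorphism.

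For the converse I would start from (FF), i.e.\ $\overline{\Phi}:\G\diffto\phi^*(\H)$ an isomorphism, and factor $\Phi=\pr_{\H}\circ\overline{\Phi}$. Since $\phi$ is a surjective submersion, so is $\phi\times\phi$, hence so is its pullback $\pr_{\H}:\phi^*(\H)\to\H$ along $(\t,\s)$; therefore $\Phi$ is a surjective submersion. It remains to verify (ES): the map $(x,h)\mapsto\t(h)$ factors as the projection to $\H$ followed by $\t$, where the projection is the pullback of the surjective submersion $\phi$ along $\s$, hence itself a surjective submersion, and composing with the surjective submersion $\t$ gives (ES). Thus $\Phi$ is a Morita map, and being a surjective submersion it is a Morita fibration.

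The only genuinely delicate points are the cancellation principle for submersions and the ordering in the forward direction: one must establish that $\phi$ is a submersion \emph{before} the fiber product $\phi^*(\H)$ is meaningful as a smooth manifold, so (FF) cannot be exploited until $\phi$'s regularity is in hand. Everything else is formal bookkeeping with pullbacks of surjective submersions, and in particular the argument shows that (ES) is redundant once the surjective-submersion hypotheses are imposed.
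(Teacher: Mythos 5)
Your proof is correct, but note that the paper itself states Lemma \ref{lemma:iso:morita:fib} \emph{without} proof, treating it as a standard characterization from the general theory of Morita equivalences (the references \cite{dH13,MM04}); so there is no in-paper argument to compare against, and your write-up in effect supplies the omitted proof. Your route is the natural one: identify full faithfulness of the weak equivalence $\Phi$ with the assertion that $\overline{\Phi}:\G\to(M\times M)\times_{N\times N}\H\simeq\phi^*(\H)$ is a diffeomorphism (hence a groupoid isomorphism, since $\overline{\Phi}$ is a morphism over $\id_M$), and then account for the regularity hypotheses. The two points you flag as delicate are indeed the ones that need care, and you handle both correctly: the cancellation argument ($\phi\circ\s_\G=\s_\H\circ\Phi$ is a submersion and $\s_\G$ is a surjective submersion, so $\phi$ is a submersion, pointwise by lifting along $\s_\G$) must come \emph{before} invoking full faithfulness, since otherwise $\phi^*(\H)$ is not yet known to be a manifold; and in the converse, essential surjectivity is recovered for free because $M\times_N\H\to\H\xrightarrow{\t}N$ is a composite of surjective submersions (base change of $\phi$, then $\t_\H$), while $\Phi=\pr_\H\circ\overline{\Phi}$ is a surjective submersion because base change of the surjective submersion $\phi\times\phi$ along $(\t,\s):\H\to N\times N$ is again one. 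The only cosmetic remark is that you should state explicitly which definition of Morita map (weak equivalence with (ES) and (FF)) you are unwinding, since the paper's Section \ref{sec:Morita} works instead with the bibundle picture and with Morita maps as a primitive notion; but this is a matter of fixing conventions, not a gap.
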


Given a Morita fibration $\Phi:\G\to\H$, covering a submersion $\phi:M\to N$, it follows from the properties above that its kernel:
\[ \K=\{g\in \G:\Phi(g)=1_x, x\in N\},\]
is a submersion groupoid: an element $g\in \K$ is uniquely determined by its source and target, and the restriction of $\overline{\Phi}$ gives an isomorphism of Lie groupoids:
\[ \K\diffto M\times_N M,\quad g\mapsto (\t(g),\s(g)).\]
On submersion groupoids, multiplicative forms are multiplicatively exact:

\begin{lemma}\label{lemma:submersion:groupoid}
Let $\phi:M\to N$ be a surjective submersion, and $E\to N$ be a vector bundle. Any multiplicative form on the submersion groupoid
\[\alpha\in \Omega^{\bullet}_{\mult}(M\times_NM;\phi^*(E))\] is multiplicativly exact, i.e., there exists $\theta\in \Omega^{\bullet}(M;\phi^*(E))$ such that $\alpha=\delta \theta$, where 
\[\delta:\Omega^{\bullet}(M;\phi^*(E))\to\Omega^{\bullet}_\mult(M\times_NM;\phi^*(E))\] 
is the simplicial differential of $M\times_NM$ with coefficients in $\phi^*(E)$ (see \eqref{eq:simplicial:E:forms:0}).
\end{lemma}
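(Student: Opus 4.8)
The plan is to realise $\delta$ as (part of) the differential of the simplicial complex associated with the nerve of the submersion groupoid, and to produce an explicit contracting homotopy from local sections of $\phi$ together with a partition of unity on $N$. Write $M^{[p]}:=M\times_N\cdots\times_N M$ for the $p$-fold fibre product, so that $M^{[1]}=M$, $M^{[2]}=M\times_N M$ is the arrow space, and $M^{[p+1]}$ is the space of $p$-simplices of the nerve, with face maps $d_i\colon M^{[p+1]}\to M^{[p]}$ the coordinate omissions. Since all coordinates of a point of $M^{[p]}$ share the same image under $\phi$, the bundle $\phi^*E$ is canonically defined on every $M^{[p]}$ and is strictly preserved by every face map; hence the simplicial operator $\delta=\sum_i(-1)^i d_i^*$ acts on $\Omega^q(M^{[p]};\phi^*E)$ with no auxiliary choice, and multiplicativity of $\alpha$ is exactly the condition $\delta\alpha=0$ at simplicial level one. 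It therefore suffices to show that, for each fixed de Rham degree $q$, the complex $(\Omega^q(M^{[\bullet]};\phi^*E),\delta)$ is acyclic in positive degrees.

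To build the homotopy I would first treat the model case of a global section $\sigma\colon N\to M$ of $\phi$. It yields extra degeneracies $s\colon M^{[p]}\to M^{[p+1]}$, $s(x_0,\dots,x_{p-1})=(\sigma(\phi(x_0)),x_0,\dots,x_{p-1})$, satisfying the simplicial identities $d_0 s=\id$ and $d_i s=s\,d_{i-1}$ for $i\geq 1$ (the latter using that all coordinates carry the same $\phi$-value). A direct computation with these identities shows that $h:=s^*$ satisfies $\delta h+h\delta=\id$ in positive degrees, which already settles the case in which $\phi$ admits a global section.

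For the general case I would patch: choose an open cover $\{V_a\}$ of $N$ with local sections $\sigma_a\colon V_a\to M$ and a subordinate partition of unity $\{\lambda_a\}$. Let $\mu_a:=\lambda_a\circ\bar\phi$, where $\bar\phi\colon M^{[p]}\to N$ is the common projection; the key point is that $\mu_a$ is well defined at every simplicial level, is strictly invariant under all face maps, $d_i^*\mu_a=\mu_a$, and satisfies $\sum_a\mu_a=1$. Setting $h:=\sum_a\mu_a\,s_a^*$, where $s_a$ is the extra degeneracy built from $\sigma_a$ (formed and weighted only on $\mathrm{supp}\,\mu_a\subset\bar\phi^{-1}(V_a)$, hence extended by zero), the computation from the model case goes through termwise: because each $\mu_a$ commutes with the face maps it factors out of $\delta$, and recombination via $\sum_a\mu_a=1$ again gives $\delta h+h\delta=\id$ in positive degrees. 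Applying this at level one to a multiplicative (i.e. $\delta$-closed) form $\alpha$ yields $\alpha=\delta(h\alpha)$ with $\theta:=h\alpha=\sum_a\mu_a\,s_a^*\alpha\in\Omega^q(M;\phi^*E)$, as required.

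The main obstacle is precisely the absence of a global section: the whole content is to check that the partition-of-unity patching of the local extra degeneracies still assembles into an honest contracting homotopy. This works because the weights $\mu_a$ are pulled back from $N$ and are therefore constant along the fibres of every face map, so they pass through $\delta$ without error terms and recombine through $\sum_a\mu_a=1$; verifying the identities $d_0 s_a=\id$, $d_i s_a=s_a\,d_{i-1}$ and the invariance $d_i^*\mu_a=\mu_a$ is the routine but essential bookkeeping. Note that no connection on $E$ enters, since only the simplicial direction is used and $\phi^*E$ is canonically identified across all levels.
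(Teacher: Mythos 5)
Your proof is correct and is essentially the paper's own argument: your level-one primitive $h\alpha=\sum_a\mu_a\,s_a^*\alpha$ coincides (up to a sign/slot convention) with the paper's glued primitive $\theta=\sum_i(\chi_i\circ\phi)\,\theta_i$, where $\theta_i(v)=\alpha\big(v,\d(\sigma_i\circ\phi)(v)\big)$, and both rest on the same two points—local sections of $\phi$ give local primitives by pullback, and weights pulled back from $N$ commute with the simplicial differential, so partition-of-unity gluing works. The only difference is packaging: you phrase it as a contracting homotopy built from extra degeneracies, which gives acyclicity of the nerve complex in all positive simplicial degrees, whereas the paper verifies only the degree-one identity it needs.
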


\begin{proof}
Let $U\subset N$ be an open set on which there exists a local section $\sigma:U\to M$ of $\phi$. Define $\theta\in \Omega^{\bullet}(\phi^{-1}(U);\phi^*(E))$ by $\theta(v):=\alpha(v, \d (\sigma\circ \phi)(v))$, for $v\in \wedge^{\bullet}T\phi^{-1}(U)$. Multiplicativity of $\alpha$ implies that $\alpha|_{\phi^{-1}(U)}=\delta \theta$:
\begin{align*}
\alpha(v_1,v_2)&=m^*(\alpha)\big((v_1,\d\sigma(u)),(\d\sigma(u),v_2)\big)=\alpha(v_1,\d\sigma(u))+\alpha(\d\sigma(u),v_2)\\
&=\theta(v_1)-\theta(v_2)=\delta\theta (v_1,v_2),
\end{align*}
for all $v_1,v_2 \in \wedge^{\bullet}T\phi^{-1}(U)$ with $\d\phi (v_1)=u=\d\phi(v_2)$. 

In general, consider an open cover $\{U_i\}_{i\in I}$ of $N$ for which local sections of $\phi$ exist. Then the corresponding local primitives $\theta_i\in \Omega^{\bullet}(\phi^{-1}(U_i);\phi^*(E))$ of $\alpha$ can be glued, using a partition of unity $\{\chi_i\}_{i\in I}$ subordinated to the cover, to a global primitive: 
$\theta:=\sum_{i\in I} \chi_i\circ \phi\cdot \theta_i$.
\end{proof}

The following two propositions prove Theorem \ref{theo:Morita}.

\begin{proposition}
A Morita fibration $\Phi:\G\to\H$ gives a 1-to-1 correspondence between bundles of ideals in $\G$ and bundles of ideals in $\H$.
\end{proposition}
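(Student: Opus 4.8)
The plan is to realize both sides of the correspondence as invariant subbundles of the respective isotropy Lie algebra bundles, and then to exhibit a canonical equivariant identification of these bundles. First I would use Lemma \ref{lemma:iso:morita:fib} to identify $\G\cong\phi^*(\H)=M\times_N\H\times_N M$, so that an arrow of $\G$ is a triple $(y,h,x)$ with $h\in\H$, $\s(h)=\phi(x)$ and $\t(h)=\phi(y)$. At the infinitesimal level this identifies $A$ with the pullback Lie algebroid $\phi^!B=\{(v,b)\in TM\times\phi^*B:\ \d\phi(v)=\rho_B(b)\}$, whose anchor is the projection $(v,b)\mapsto v$. Its isotropy bundle is therefore canonically
\[ \ker\rho_A\cong\phi^*\ker\rho_B, \]
since $(v,b)\in\phi^!B$ lies in $\ker\rho_A$ exactly when $v=0$ and $b\in\ker\rho_B$. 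Thus a subbundle $\ka\subset A$ with $\ka\subset\ker\rho_A$ is the same as a subbundle of $\phi^*\ker\rho_B$, and likewise on the $\H$-side.

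Next I would compute the conjugation action \eqref{eq:representation:grpd} under this identification. Writing $g=(y,h,x)$ and letting $\alpha\in\ker\rho_A|_x$ correspond to $\beta\in\ker\rho_B|_{\phi(x)}$, the exponential $\exp_x(t\alpha)$ is the isotropy arrow $(x,\exp_{\phi(x)}(t\beta),x)$, so that
\[ g\,\exp_x(t\alpha)\,g^{-1}=(y,\,h\exp_{\phi(x)}(t\beta)h^{-1},\,y), \]
and differentiating at $t=0$ shows that $g$ acts by $\Ad_h=\Ad_{\Phi(g)}$. In other words, the $\G$-action on $\ker\rho_A$ is the pullback along $\phi$ of the $\H$-action on $\ker\rho_B$ via $\Phi$. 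Consequently a subbundle of $\ker\rho_A$ is $\G$-invariant if and only if the corresponding subbundle of $\phi^*\ker\rho_B$ is invariant under this pulled-back action.

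The correspondence is then the obvious one: to a bundle of ideals $\mathfrak{j}\subset\ker\rho_B$ of $\H$ I assign its pullback $\phi^*\mathfrak{j}\subset\ker\rho_A$, which is a bundle of ideals of $\G$ by the previous paragraph; conversely, to a bundle of ideals $\ka$ of $\G$ I assign its descent. The key point here is that the descent is well defined: the kernel $\K\cong M\times_N M$ consists of the arrows $(y,1_n,x)$ with $\phi(x)=\phi(y)=n$, and by the action formula each such arrow acts as $\Ad_{1_n}=\id$. Hence $\G$-invariance forces $\ka_x=\ka_{x'}$ inside $\ker\rho_B|_n$ whenever $x,x'$ lie in the same $\phi$-fiber (take $g=(x',1_n,x)\in\K$ and use $\Ad_g=\id$). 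A subbundle of $\phi^*\ker\rho_B$ that is constant along the fibers of $\phi$ descends to a smooth subbundle $\mathfrak{j}\subset\ker\rho_B$ (using local sections of the surjective submersion $\phi$, with smoothness and independence of the section guaranteed by fiber-constancy), and $\ka=\phi^*\mathfrak{j}$. These two assignments are visibly mutually inverse, and the invariance conditions match by the action formula together with surjectivity of $\Phi$, completing the bijection.

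I expect the main obstacle to be the bookkeeping in the second paragraph: verifying that the Lie algebroid of $\phi^*(\H)$ is indeed $\phi^!B$ with the stated anchor, and that the abstract conjugation action really does translate into $\Ad_{\Phi(g)}$ under the identification $\ker\rho_A\cong\phi^*\ker\rho_B$. Once this equivariant isomorphism is secured, the triviality of the $\K$-action, the descent, and the bijection are all routine.
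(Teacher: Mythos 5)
Your proof is correct and follows essentially the same route as the paper's: both rest on the facts that $\Phi$ restricts to isomorphisms on isotropy (so ideals pull back and push forward fiberwise) and that conjugation by kernel arrows is trivial modulo $\Phi$ (so the descent is well defined). The only difference is presentational — you make the identifications $\G\cong\phi^*(\H)$ and $\ker\rho_A\cong\phi^*\ker\rho_B$ explicit and compute the action formula there, whereas the paper works directly with $\d\Phi$ and the relation $\Phi(kgk^{-1})=\Phi(g)$.
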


\begin{proof} For each $x\in M$, $\Phi$ restricts to an isomorphism of Lie groups: $\G_x\diffto \H_{\phi(x)}$. So we can define a map of bundles of ideals: 
\[ A(\H)\supset \ka \longmapsto \Phi^*(\ka), \quad \text{where}\quad \Phi^*(\ka)_x=\{v\in \ker\rho_x: \d_x\Phi(v)\in\ka|_{\phi(x)}\}\subset A(\G),\]

For the inverse map, given a bundle of ideals $\ka$ in $\G$, one needs to show that if $y=\phi(x)=\phi(x')$ then $\d\Phi(\ka_x)=\d\Phi(\ka_{x'})$. As we observed above, there is a unique $k\in\K$ such that $\s(k)=x$ and $\t(k)=x'$, and we have:
\[ \ka_{x'}=k\cdot \ka_x. \]
Since $k$ belongs to the kernel, we have $\Phi(kgk^{-1})=\Phi(g)$, and so we conclude that:
\[ \d\Phi(\ka_{x'})=\d\Phi(k\cdot \ka_x)=\d\Phi(\ka_x). \]
Therefore we get a map  of bundles of ideals: 
\[ A(\G)\supset \ka \longmapsto \Phi_*(\ka), \quad \text{where}\quad \Phi_*(\ka)_{\phi(x)}=\d\Phi(\ka_x)\subset A(\H), \]
which is the inverse of the map above.
\end{proof}

Now if $\al\in \Omega^1_\mult(\H,\ka)$ is a multiplicative connection 1-form for $\ka$, then the pullback form:
\[ \Phi^*\al\in\Omega^1_\mult(\G,\Phi^*\ka), \]
is easily seen to be a multiplicative connection 1-form for $\Phi^*\ka$. This establishes one half of the following proposition:

\begin{proposition}
Given a Morita fibration $\Phi:\G\to\H$, a bundle of ideals $\ka$ in $\H$ is partially split if and only if the bundle of ideals $\Phi^*\ka$ in $\G$ is partially split.
\end{proposition}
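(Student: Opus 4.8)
The one-half established in the text gives the implication ``$\ka$ partially split $\Rightarrow\Phi^*\ka$ partially split''. It remains to prove the converse, and the plan is to push the given connection $1$-form on $\G$ forward along $\d\Phi$ and then descend it to $\H$. So suppose $\Phi^*\ka$ is partially split, with multiplicative connection $1$-form $\beta\in\Omega^1_\mult(\G,\Phi^*\ka)$, $\beta|_{\Phi^*\ka}=\id$. Since $\Phi$ restricts to an isomorphism on each isotropy group, $\d\Phi$ identifies $\Phi^*\ka$ with $\phi^*\ka$, the pullback to $M$ of the bundle $\ka\to N$, and I set $\gamma:=\d\Phi\circ\beta\in\Omega^1_\mult(\G,\phi^*\ka)$. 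This $\gamma$ is multiplicative and, viewing $\Phi^*\ka\cong\phi^*\ka$, it satisfies $\gamma|_{\Phi^*\ka}=\id$. If $\gamma$ were basic for $\Phi$ it would descend directly to the desired $\al\in\Omega^1_\mult(\H,\ka)$; the whole difficulty is that $\gamma$ need not vanish along the fibres of $\Phi$.

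The correction uses the submersion-groupoid structure of the kernel. By Lemma \ref{lemma:iso:morita:fib} and the ensuing discussion, $\K=\ker\Phi\cong M\times_N M$ is a submersion groupoid, and the $\G$-action on $\phi^*\ka$ restricts on $\K$ to the trivial (canonical) action, because it factors through $\Phi$. Hence $i^*\gamma\in\Omega^1_\mult(M\times_N M,\phi^*\ka)$, where $i:\K\hookrightarrow\G$, is multiplicatively exact by Lemma \ref{lemma:submersion:groupoid}: $i^*\gamma=\delta\theta$ for some $\theta\in\Omega^1(M,\phi^*\ka)$. Writing $\delta\colon\Omega^1(M,\phi^*\ka)\to\Omega^1_\mult(\G,\phi^*\ka)$ also for the simplicial differential of $\G$, whose restriction to $\K$ is the one in Lemma \ref{lemma:submersion:groupoid} by naturality, I set
\[\gamma':=\gamma-\delta\theta\in\Omega^1_\mult(\G,\phi^*\ka).\]
Then $\gamma'$ is still multiplicative, it vanishes on $\K$ (i.e.\ $i^*\gamma'=0$), and since $\delta\theta$ kills isotropy directions (both $\d\s$ and $\d\t$ vanish on them) we still have $\gamma'|_{\Phi^*\ka}=\id$.

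It remains to show $\gamma'$ descends. First, $\gamma'$ is $\Phi$-horizontal: any $v\in\ker\d_g\Phi$ can be written as $\d R_g(a)+\d L_g(b)$ with $a,b$ tangent to $\K$ at the appropriate units, and multiplicativity of $\gamma'$ together with $i^*\gamma'=0$ forces $\gamma'_g(v)=0$. Second, $\gamma'$ is constant along the fibres of $\Phi$: if $\Phi(g_1)=\Phi(g_2)$, write $g_2=k g_1$ with $k\in\K$; multiplicativity gives $\gamma'_{g_2}(\d L_k v)=\Phi(k)\cdot\gamma'_{g_1}(v)=\gamma'_{g_1}(v)$, because $\Phi(k)$ is a unit and the action factors through $\Phi$, while horizontality lets me replace $\d L_k v$ by any other $\d\Phi$-lift of the same vector. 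Consequently $\al_h(V):=\gamma'_g(\tilde V)$, for any $g\in\Phi^{-1}(h)$ and any lift $\tilde V$ of $V$, is a well-defined smooth $\ka$-valued $1$-form on $\H$ with $\Phi^*\al=\gamma'$. Multiplicativity of $\al$ follows from that of $\gamma'$ since $\Phi$ (and $\Phi\times\Phi$ on composable pairs) is a surjective submersion, and $\al|_\ka=\id$ follows from $\gamma'|_{\Phi^*\ka}=\id$. Thus $\ka$ is partially split.

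I expect the main obstacle to be this correction step: recognising that the restriction of $\gamma$ to the kernel is the only obstruction to descent, and that it can be removed precisely because $\K$ is a submersion groupoid (Lemma \ref{lemma:submersion:groupoid}), after which the vanishing on $\K$ propagates to full $\Phi$-basicness purely by multiplicativity. Care is also needed to confirm that the coefficient bundle, the relevant base points, and the $\G$-action on $\phi^*\ka$ are all compatible with descent; in particular that the action factors through $\Phi$, which is what makes the holonomy-type term $\Phi(k)\cdot(-)$ trivial.
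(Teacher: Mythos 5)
Your proposal follows essentially the same route as the paper's proof: identify the coefficients $\Phi^*\ka\cong\phi^*\ka$ via $\d\Phi$, use Lemma \ref{lemma:submersion:groupoid} on the submersion groupoid $\K\cong M\times_N M$ to subtract a primitive $\delta\theta$ so that the corrected form vanishes on $T\K$, and then descend by multiplicativity. The correction step, the observation that the coefficient action of $\K$ is trivial because it factors through $\Phi$, the preservation of the condition $\gamma'|_{\Phi^*\ka}=\id$, and the horizontality argument (writing $v\in\ker\d_g\Phi$ as $\d R_g(a)+\d L_g(b)$ with $a,b\in T\K$) are all correct and match the paper.

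There is, however, a genuine error in your fibre-invariance step. You claim that $\Phi(g_1)=\Phi(g_2)$ allows one to write $g_2=kg_1$ with $k\in\K$; this is false in general. A Morita fibration covers a non-injective submersion $\phi:M\to N$, so two elements of the same $\Phi$-fibre need not have the same source (nor the same target): under the identification $\G\cong\phi^*\H=M\times_N\H\times_N M$ of Lemma \ref{lemma:iso:morita:fib}, the fibre through $g=(p,h,q)$ is $\phi^{-1}(\t(h))\times\{h\}\times\phi^{-1}(\s(h))$, whereas the left orbit $\K g$ only moves the first component $p$. Consequently, left-invariance together with pointwise horizontality does not give well-definedness of $\al_h$ over the whole fibre (which may even be disconnected, so horizontality cannot be integrated along it). The repair is routine with your own tools: two elements of the same fibre satisfy $g_2=k\,g_1\,k'$ with $k,k'\in\K$, and the same multiplicativity computation kills both the $k$- and the $k'$-contributions, since $\gamma'$ vanishes on $T\K$ and $\K$ acts trivially on the coefficients. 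This two-sided translation is exactly what the paper's proof encodes in the identity
\[
(X_{1},Y,Z_{1})=(X_{1},\d\phi(X_1),X_{2})*(X_{2},Y,Z_{2})*(Z_{2},\d\phi(Z_1),Z_{1}),
\]
where $*=\d m$ is the multiplication in $T\phi^*(\H)$, the outer factors being tangent to $\K$.
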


\begin{proof}
We already now that if $\ka$ is a partially split ideal in $\H$ then $\Phi^*\ka$ is a partially split ideal in $\G$.

For the converse, let $\ka$ be a bundle of ideals in $\H$ so that $\Phi^*\ka$ is partially split. Fix a multiplicative connection 1-form $\al\in\Omega^1(\G,\phi^*\ka)$. We will ``correct'' $\al$ so that it is the pullback of some multiplicative 1-form in $\H$. As we mentioned already, the kernel $\K$ of a Morita fibration $\Phi$ is a submersion groupoid. So by Lemma \ref{lemma:submersion:groupoid} applied to the restriction $\al|_{\K}$, there is a $\ka$-valued 1-form $\omega\in\Omega^1(M,\phi^*\ka)$ such that:
\[ \al|_\K=\delta\omega, \]
where $\delta:\Omega^1(M,\phi^*\ka)\to\Omega^1_\mult(\K,\phi^*\ka)$ is the simplicial differential of $\K$ with coefficients in $\phi^*\ka$ (see \eqref{eq:simplicial:E:forms:0}). Now consider the 1-form:
\[ \widetilde{\al}:=\al-\delta \omega\in \Omega^1_\mult(\G,\phi^*\ka), \]
where $\delta:\Omega^1(M,\phi^*\ka)\to\Omega^1_\mult(\G,\phi^*\ka)$ is the simplicial differential of $\G$ with coefficients in $\phi^*\ka$. Then the pullback of $\widetilde{\al}$ to $\K$ vanishes. We claim that there exists a unique multiplicative form $\beta\in\Omega^1_\mult(\H,\ka)$ such that: 
\[\widetilde{\al}=\Phi^*\be.\]
For this, we use the identification $\G\simeq \phi^*(\H)$ from Lemma \ref{lemma:iso:morita:fib}. Then we have to show that $\widetilde{\al}$ takes the same value on any two vectors of the form:
\[(X_{i},Y,Z_{i})\in T_{p_i}M\times_{T_{\t(g)}N} T_{g}\H\times_{T_{\s(g)}N}T_{q_i}M, \quad i=1,2.\]
Since $\d\phi(X_1)=\d\t(Y)=\d\phi(X_2)$ and $\d\phi(Z_1)=\d\s(Y)=\d\phi(Z_2)$, we have that:
\[(X_{1},Y,Z_{1})=(X_{1},\d\phi(X_1),X_{2})*(X_{2},Y,Z_{2})*(Z_{2},\d\phi(Z_1),Z_{1}),\]
where $*=\d m$ is the multiplication in $T\phi^*(\H)$. Using that the first and the last element on the right-hand side are in $T\K$, and that $\widetilde{\al}$ is multiplicative, we obtain the claimed equality: 
\[\widetilde{\alpha}(X_{1},Y,Z_{1})=\widetilde{\alpha}(X_{2},Y,Z_{2})\in \ka_{\s(g)}.\]
Finally, $\widetilde{\al}|_{\Phi^*\ka}=\al|_{\Phi^*\ka}=\id$ implies that $\be|_{\ka}=\id$. Hence, $\ka$ is partially split.
\end{proof}

\subsection{Bundle gerbes and connections}
\label{sec:gerbes}
As an application of the results discussed so far, we discuss now how one can recover the curving and 3-curvature for bundle gerbes out of a multiplicative Ehresmann connection. Our aim is not to discuss the general theory of gerbes and multiplicative connections, for which we refer the reader to \cite{LSX09}, but rather to give a simple illustration of our theory. For that reason we consider only $S^1$-gerbes over manifolds (see, \cite{Hitchin01,LSX09,Murray96}). We will see how one can recover from our results the classical theorem of Murray \cite{Murray96} that the class of the curvature 3-form is the image in $H^3(N,\R)$ of the Dixmier-Douady class in $H^3(N,\Z)$ (see Theorem \ref{thm:gerbe}).

Let $N$ be a smooth manifold. By an {\bf $S^1$-central extension} over $N$ we mean a surjective submersion $\phi:M\to N$ together with a groupoid morphism onto  the corresponding submersion groupoid $\Phi:\G\to M\times_\phi M$, with kernel the trivial $S^1$-bundle $\K=S^1_M=M\times S^1$:
\[
\xymatrix{
1\ar[r] & S^1_M\ar[r] & \G\ar[r]^---{\Phi} & M\times_\phi M\ar[r] & 1,
}
\]
such that for all $g\in\G$ and $\theta\in S^1$:
\[ g\cdot (\s(g),\theta)=(\t(g),\theta)\cdot g. \]

A {\bf Morita equivalence} of $S^1$-central extensions $\Phi_1:\G_1\to M_1\times_{\phi_1} M_1$ and $\Phi_2:\G_2\to M_2\times_{\phi_2} M_2$ is given by a principal $(\G_1,\G_2)$-bibundle
\[
\xymatrix{
 \G_1 \ar@<0.25pc>[d] \ar@<-0.25pc>[d]  & \ar@(dl, ul) & P \ar[dll]\ar[drr]  & \ar@(dr, ur) & \G_2 \ar@<0.25pc>[d] \ar@<-0.25pc>[d]  \\
M_1&  & & & M_2}
\]
such that orbits of the actions of the kernels $S^1_{M_1}$ and $S^1_{M_2}$ on $P$ coincide. The orbit space of these actions is then a manifold $Q$, which can be canonically identifies with the fiber product $Q=M_1\tensor[_{\phi_1}]{\times}{_{\phi_2}} M_2$. Moreover, it is a principal bi-bundle for the submersion groupoids:
\[
\xymatrix{
 M_1\times_{\phi_1} M_1 \ar@<0.25pc>[d] \ar@<-0.25pc>[d]  & \ar@(dl, ul) & M_1\tensor[_{\phi_1}]{\times}{_{\phi_2}} M_2 \ar[dll]\ar[drr]  & \ar@(dr, ur) & M_2\times_{\phi_2} M_2 \ar@<0.25pc>[d] \ar@<-0.25pc>[d]  \\
M_1&  & & & M_2}
\]
and one obtains a map of principal bi-bundles:
\[
\xymatrix{ M_1\ar@{=}[d]& P\ar[l]\ar[r]\ar[d] & M_2\ar@{=}[d]\\ M_1& M_1\tensor[_{\phi_1}]{\times}{_{\phi_2}} M_2\ar[l]\ar[r] & M_2}
\]

\begin{definition}
An {\bf $S^1$-gerbe over a manifold} $N$ is a Morita equivalence class of $S^1$-central extensions over $N$.
\end{definition}

We recall that an $S^1$-gerbe is completely characterized by its \emph{Dixmier-Douady class} which can be defined as follows. Given an $S^1$-central extension defined by a submersion $\phi:M\to N$ and a groupoid morphism $\Phi:\G\to M\times_\phi M$, one chooses a good cover $\{U_i\}$ of $N$ for which there exist sections $s_i:U_i\to M$ of $\phi:M\to N$. Then one can find maps $g_{ij}:U_{ij}\to \G$ such that
\[ \Phi(g_{ij}(x))=(s_i(x),s_j(x)). \]
Since, for each $x\in U_{ijk}$, the composition $g_{ij}(x)\cdot g_{jk}(x)\cdot g_{jk}(x)$ is an element of the kernel $S^1_M$, we have that:
\[ g_{ij}(x)\cdot g_{jk}(x)\cdot g_{ki}(x)=(s_i(x),c_{ijk}(x)), \]
for a 3-cocycle $c_{ijk}:U_{ijk}\to S^1$. The class in sheaf cohomology:
\[ c_2(\G):=[c_{ijk}]\in H^2(N,\underline{S^1}), \]
only depends on the Morita equivalence class of the extension and is called the {\bf Dixmier-Douady class of the $S^1$-gerbe}. Using the exponential sequence:
\[
\xymatrix{
1\ar[r] & \Z\ar[r] & \R\ar[r]^---{\exp} & S^1\ar[r] & 1,
}
\]
one can view the Dixmier-Douady class as a class in integer cohomology:
\[ c_2(\G)\in H^3(N,\Z). \]

In this way, one can think that $S^1$-central extensions give geometric representatives of integer cohomology classes in degree 3, the same way as principal $S^1$-bundles give geometric representatives of integer cohomology classes in degree 2, via their Chern class. The same way one can use the curvature of a principal connection to obtain representatives in real cohomology of the Chern class, we will see now that one can use multiplicative Ehresmann connections to obtain representatives in real cohomology of the Dixmier-Douady class.

We start by the following proposition:

\begin{proposition}
Every $S^1$-central extension 
\[
\xymatrix{
1\ar[r] & S^1_M\ar[r] & \G\ar[r]^---{\Phi} & M\times_\phi M\ar[r] & 1,
}
\]
admits a multiplicative Ehresmann connection $E$. The induced multiplicative Ehresmann connection $E^\K$ on the kernel $\K=S^1_M$ is the canonical flat connection.
\end{proposition}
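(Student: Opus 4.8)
The plan is to reduce the existence statement to the already-established Theorem~\ref{thm:proper:partially:split}, and then to obtain the statement about the kernel by a short direct computation. First I would record the relevant algebraic data: the Lie algebroid of the kernel is the trivial line bundle $\ka=M\times\R$, and since $S^1_M$ is \emph{central}, the conjugation action of $\G$ on $\ka$ is trivial. Thus a multiplicative Ehresmann connection for the bundle of ideals $\ka$ is, by Proposition~\ref{prop:partially:split:grpd}(iii), the same as an ordinary (real-valued) multiplicative $1$-form $\al\in\Omega^1_\mult(\G)$ with $\al|_\ka=\id$.

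The key observation is that the central action
\[ R_\theta(g):=g\cdot(\s(g),\theta)=(\t(g),\theta)\cdot g, \qquad \theta\in S^1, \]
is free, has the fibres of $\Phi$ as its orbits, and exhibits $\Phi:\G\to M\times_\phi M$ as a principal $S^1$-bundle. I would use this to show that $\G$ is a proper Lie groupoid. Indeed, the anchor factors as $(\t,\s)=\iota\circ\Phi$, where $\iota:M\times_\phi M\hookrightarrow M\times M$ is the inclusion. Since $N$ is Hausdorff, $M\times_\phi M=(\phi\times\phi)^{-1}(\Delta_N)$ is closed in $M\times M$, so $\iota$ is proper; and $\Phi$ is a fibre bundle with compact fibre $S^1$, hence also proper. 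Therefore $(\t,\s)$ is proper and $\G$ is proper (and Hausdorff, being a principal $S^1$-bundle over the manifold $M\times_\phi M$). Theorem~\ref{thm:proper:partially:split} then applies and produces a multiplicative connection $1$-form $\al$, proving existence.

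For the statement about the kernel, I would argue that \emph{any} multiplicative Ehresmann connection on $\G$ restricts on $\K$ to the canonical flat one. The inclusion $i:\K=M\times S^1\hookrightarrow\G$ is a groupoid morphism, so $i^*\al\in\Omega^1_\mult(\K)$ is a multiplicative connection $1$-form for the trivial bundle of groups $M\times S^1$. Writing a general $1$-form on $M\times S^1$ as $a(x,\theta)\,\d\theta+b_j(x,\theta)\,\d x^j$, multiplicativity $m^*(i^*\al)=\pr_1^*(i^*\al)+\pr_2^*(i^*\al)$ forces $a(x,\theta_1+\theta_2)=a(x,\theta_1)$ and $b_j(x,\theta_1+\theta_2)=b_j(x,\theta_1)+b_j(x,\theta_2)$. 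Hence $a=a(x)$ is independent of $\theta$, while each $\theta\mapsto b_j(x,\theta)$ is a continuous homomorphism $S^1\to\R$ and so vanishes. The normalization $i^*\al|_\ka=\id$ gives $a\equiv 1$, so $i^*\al=\d\theta$, the connection $1$-form of the canonical flat connection $E^\K=\ker(\d\theta)\subset T(M\times S^1)$.

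The main obstacle is the existence half, and the crux is the realization that the central extension makes $\G$ proper: once $\Phi$ is identified as a principal $S^1$-bundle and $\G$ is seen to be proper, multiplicativity of the connection form---the only genuinely nontrivial requirement---comes for free from the proper case via the $\G$-invariant partition-of-unity argument in the proof of Theorem~\ref{thm:proper:partially:split}. Alternatively one could avoid properness, starting from an arbitrary principal $S^1$-connection and correcting its multiplicative defect directly, but this is more laborious, which is why I would prefer the proper route. The kernel computation is then routine, the only point worth emphasizing being that the compactness of $S^1$ is exactly what kills the $\d x$-terms and pins the connection down to the canonical flat one.
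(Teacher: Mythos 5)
Your proposal is correct. The existence half is exactly the paper's route: the paper also observes that $\G$ is necessarily a proper groupoid and invokes Theorem \ref{thm:proper:partially:split}; your factorization $(\t,\s)=\iota\circ\Phi$, with $\Phi$ a principal $S^1$-bundle (hence proper, with Hausdorff total space) and $\iota$ a closed embedding, simply supplies the details that the paper leaves implicit. Where you genuinely diverge is the kernel statement. The paper argues structurally: by Proposition \ref{prop:connection:partial:split}, the restricted connection $E^{\K}$ on $S^1_M$ corresponds under the exponential map to a \emph{linear} connection on the trivial line bundle $\R_M$; the constant integer sections $x\mapsto (x,n)$, $n\in\Z$, exponentiate to the unit section, which is horizontal for any multiplicative connection, so they are flat, and by linearity every constant section is then flat, forcing the trivial connection. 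Your argument is instead a direct computation: writing $i^*\al=a(x,\theta)\,\d\theta+b_j(x,\theta)\,\d x^j$ (legitimate, since centrality makes the coefficient action trivial and reduces $\ka$-valued multiplicative forms to ordinary ones, as you note), multiplicativity forces $a$ to be $\theta$-independent and each $b_j(x,\cdot):S^1\to\R$ to be a continuous homomorphism, hence zero by compactness of $S^1$, and the normalization $i^*\al|_\ka=\id$ pins $a\equiv 1$. Both arguments are complete; yours is more elementary and self-contained, using only the definition of multiplicativity, while the paper's recycles an established proposition and exposes the underlying mechanism (a linear connection admitting a lattice of flat sections must be trivial), which transfers verbatim to higher-rank situations such as torus bundles.
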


\begin{proof}
Notice that $\G$ is necessarily a proper groupoid. Hence, it follows from Theorem \ref{thm:proper:partially:split} that multiplicative Ehresmann connections exist. 

By Proposition \ref{prop:connection:partial:split}, a multiplicative Ehresmann connection $E^{S^1}$ on the trivial $S^1$-bundle $S^1_M$ is related to a linear connection $E^{\nabla}$ on the trivial line bundle $\R_M$ by the exponential map:
\[ (\d\exp)(E^\nabla)=E^{S^1_M}.\]
It follows that constant sections of $\R_M$, $x\mapsto (x,n)$ ($n\in\Z$), are flat. Then any constant section is flat, and so $E^{S^1_M}$ must be the trivial connection. 
\end{proof}

Hence, given a multiplicative Ehresmann connection $E$ for an $S^1$-central extension $\Phi:\G\to M\times_\phi M$, the associated linear connection on $\ka=\R_M$ is the canonical flat connection:
\[ \nabla_X=\Lie_X. \]
Then, by Proposition \ref{prop:curvature:basic}, the curvature $\Omega\in\Omega^2_\mult(\G)$ of $E$ satisfies
\[ \Omega=\Phi^*\underline{\Omega},\]
for a unique closed, multiplicative, 2-form $\underline{\Omega}\in\Omega^2_\mult(M\times_\phi M)$. By Lemma \ref{lemma:submersion:groupoid}, there exists a 2-form $F\in\Omega^2(M)$ (not unique) such that:
\[ \underline{\Omega}=\pr_1^*F-\pr_2^*F,\]
where $\pr_i:M\times_\phi M\to M$ are the source and target of the submersion groupoid.
Equivalently:
\begin{equation}
    \label{eq:curving}
    \Omega=\t^*F-\s^*F.
\end{equation}

\begin{definition}
A form $F\in\Omega^2(M)$ satisfying \eqref{eq:curving} is called a {\bf curving} of the connection $E$.
\end{definition}

Finally, observe that:
\[ 
0=\d\underline{\Omega}=\pr_1^*\d F-\pr_2^*\d F. 
\]
Therefore, there exists a unique 3-form $G\in\Omega^3(N)$ such that:
\begin{equation}
    \label{eq:curvature:3:form}
    \d F=\phi^* G.
\end{equation}

\begin{definition}
The form $G\in\Omega^2(N)$ satisfying \eqref{eq:curvature:3:form} is called the {\bf curvature 3-form} of the curving.
\end{definition}

Note that two curvings $F$ and $F'$ differ by a form $B=F-F'$ such that:
\[ \pr_1^*B-\pr_2^*B=0. \]
Hence, $B=\phi^*\varphi$ for a 2-form $\varphi\in\Omega^2(N)$. The corresponding curvature 3-forms are then related by:
\[ G-G'=\d\varphi.\]
In particular, the class $[G]\in H^3(N)$ does not depend on the choice of curving and is intrinsically associated with the multiplicative Ehresmann connection $E$. We can now state and prove Murray's result:

\begin{theorem}
\label{thm:gerbe}
Let $\Phi:\G\to M\times_\phi M$ be an $S^1$-central extension representing an $S^1$-gerbe over a manifold $N$. Given any multiplicative Ehresmann connection $E$ for $\Phi$, the class $[G]$ of a curvature 3-form is the image of the Dixmier-Douady class of the gerbe $c_2(\G)$ under the canonical map:
\[ H^3(N,\Z)\to H^3(N,\R). \]
\end{theorem}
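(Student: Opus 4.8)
The plan is to run a \v{C}ech--de Rham descent on a good cover $\{U_i\}$ of $N$ carrying the sections $s_i\colon U_i\to M$ of $\phi$ used to define $c_2(\G)$, and to identify the integral class produced at the bottom of the double complex with the Bockstein of the Dixmier--Douady class coming from the exponential sequence $\Z\to\R\to S^1$. Since $\ka=\R_M$ is abelian and $E^\K$ is the trivial flat connection, the structure equation degenerates to $\Omega=\d\al$ (viewing $\al$ as an ordinary real-valued multiplicative $1$-form). As $s_i$ is a section of $\phi$ and $\d F=\phi^*G$, the forms $F_i:=s_i^*F\in\Omega^2(U_i)$ satisfy $\d F_i=G|_{U_i}$, exhibiting $G$ as glued from local primitives. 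On double overlaps I would use the transition maps $g_{ij}\colon U_{ij}\to\G$ with $\t\circ g_{ij}=s_i$, $\s\circ g_{ij}=s_j$: pulling back the curving identity $\Omega=\t^*F-\s^*F$ gives $g_{ij}^*\Omega=F_i-F_j$, so setting $A_{ij}:=g_{ij}^*\al\in\Omega^1(U_{ij})$ yields $F_i-F_j=\d A_{ij}$.

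The decisive step is the triple-overlap relation. Because the conjugation action on the central kernel is trivial, multiplicativity of $\al$ is simply additive under pullback along pointwise products, so $(g_{ij}g_{jk}g_{ki})^*\al=A_{ij}+A_{jk}+A_{ki}$. On the other hand $g_{ij}g_{jk}g_{ki}=(s_i,c_{ijk})$ lands in the kernel $S^1_M$, where $\al$ restricts to the kernel connection $1$-form, which (as $E^\K$ is the canonical flat connection and $\al|_\ka=\id$) is the Maurer--Cartan form of $S^1$. Writing $c_{ijk}=\exp(\lambda_{ijk})$ for a local logarithm $\lambda_{ijk}\colon U_{ijk}\to\R$ (available on the good cover), its pullback is $\d\lambda_{ijk}$, giving $A_{ij}+A_{jk}+A_{ki}=\d\lambda_{ijk}$. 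Arranging $g_{ji}=g_{ij}^{-1}$ so that $A$ is antisymmetric, this reads $\delta A=\d\lambda$ in the double complex, and one more application of $\delta$ produces a locally constant cochain $n_{ijkl}:=(\delta\lambda)_{ijkl}=\lambda_{jkl}-\lambda_{ikl}+\lambda_{ijl}-\lambda_{ijk}$ taking values in $\ker(\exp)=\Z$, precisely because $(c_{ijk})$ is a genuine $\underline{S^1}$-cocycle.

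Finally I would invoke the standard comparison isomorphism $H^3_{\mathrm{dR}}(N)\cong\check{H}^3(N,\R)$ afforded by the double complex: the zig-zag $(F_i)\rightsquigarrow(A_{ij})\rightsquigarrow(\lambda_{ijk})\rightsquigarrow(n_{ijkl})$ exhibits $[G]$ as the image of the \v{C}ech class $[n]$ under this isomorphism. But lifting the $\underline{S^1}$-cocycle $c_{ijk}$ to $\lambda_{ijk}\in\R$ and applying $\delta$ is exactly the recipe for the connecting homomorphism $H^2(N,\underline{S^1})\to H^3(N,\Z)$ of the exponential sequence, so $[n]=c_2(\G)$ as integral classes. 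Composing, $[G]$ is the image of $c_2(\G)$ under $H^3(N,\Z)\to H^3(N,\R)$, as claimed. I expect the main obstacle to be purely organizational: pinning down the signs and the $2\pi$-normalization so that the cyclic relation from multiplicativity matches, on the nose, both the \v{C}ech differential of the double complex and the connecting map of the exponential sequence; the geometric input (flat central kernel, $\Omega=\d\al$, and the curving identity) is already in hand from the preceding results.
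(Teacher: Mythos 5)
Your proposal is correct and follows essentially the same route as the paper: the same good cover with sections $s_i$ and transition maps $g_{ij}$, and the same three local identities $A_{ij}+A_{jk}+A_{ki}=\d\lambda_{ijk}$, $F_i-F_j=\d A_{ij}$, $G|_{U_i}=\d F_i$ obtained by pulling back $\al$, $\Omega$ and the curving along $g_{ij}$ and $s_i$. The only difference is one of explicitness: you carry the \v{C}ech--de Rham zig-zag one step further, to the integer cochain $n_{ijkl}=\delta\lambda_{ijkl}$, thereby spelling out the identification with the connecting homomorphism of the exponential sequence which the paper leaves implicit in its final sentence.
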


\begin{proof}
As above, we pick a good cover $\{U_i\}$ of $N$ for which there exist sections $s_i:U_i\to M$ of $\phi:M\to N$, and maps $g_{ij}:U_{ij}\to \G$ such that
\[ \Phi(g_{ij}(x))=(s_i(x),s_j(x)). \]
The Dixmier-Douady class is then $c_2(\G):=[c_{ijk}]$ where the 3-cocycle $c_{ijk}:U_{ijk}\to S^1$ is given by:
\[ (s_i,c_{ijk})=g_{ij}\cdot g_{jk}\cdot g_{ki}. \]
This is an equality of maps $U_{ijk}\to \G$ and
we are going to pull back the connection 1-form $\al\in\Omega^1(\G)$ under both sides.
For the left-side, it follows from the condition $\al(\xi^R)=\xi$ that
\[ (s_i,c_{ijk})^*\al=c_{ijk}^{-1}\d c_{ijk}=\d f_{ijk}, \]
where we set $c_{ijk}=\exp(f_{ijk})$, for a map $f_{ijk}:U_{ijk}\to\R$, well-defined up to the addition of an integer. On the other hand, for the right-hand side, using that $\al$ is multiplicative, we find:
\begin{align*}
    (g_{ij}\cdot g_{jk}\cdot g_{ki})^*\al&=
    g_{ij}^*\al+g_{jk}^*\al+g_{ki}^*\al\\
    &=A_{ij}+A_{jk}+A_{ki},
\end{align*}
where 
\[ A_{ij}:=g_{ij}^*\al\in\Omega^1(U_{ij}). \] 
We conclude that:
\begin{equation}
    \label{eq:class:1}
    A_{ij}+A_{jk}+A_{ki}=\d f_{ijk}.
\end{equation}
Now, if $F\in\Omega^2(M)$ is a curving, and we let:
\[ F_{i}:=s_{i}^*F\in\Omega^2(U_i), \]
it follows from \eqref{eq:curving} that on $U_{ij}$ we must have:
\begin{align*} 
F_i-F_j=s_i^*F-s_j^*F&=(s_i,s_j)^*\underline{\Omega}=\\
&=g_{ij}^*\Omega
=g_{ij}^*\d\al=\d g_{ij}^*\al.
\end{align*}
In other words,
\begin{equation}
    \label{eq:class:2}
    F_i-F_j=\d A_{ij}.
\end{equation}
On the other hand, the definition \eqref{eq:curvature:3:form} of the curvature 3-form gives:
\begin{equation}
    \label{eq:class:3}
    G|_{U_i}=\d F_i.
\end{equation}
Equations \eqref{eq:class:1}, \eqref{eq:class:2} and \eqref{eq:class:3} show that $[G]\in H^3(N,\R)$ corresponds to the class $[\d f_{ijk}]\in H^2(N,\underline{\Omega^1_{\textrm{cl}}})$, so using that $c_{ijk}=\exp(f_{ijk})$ the result follows.
\end{proof}

\section{IM Ehresmann connections}
\label{section: IM Ehresmann}

\subsection{IM connection for a surjective algebroid morphism}

We now turn to the infinitesimal version of multiplicative Ehresmann connections. If one starts with a surjective Lie algebroid morphism $\phi:A\to B$ covering the identity, its differential is a morphism of VB algebroids:
\[
\xymatrix@C=10pt@R=10pt{
TA \ar@{=>}[dd] \ar[rr]^{\dd\phi} \ar[dr]&  &  TB  \ar@{=}[d] \ar[dr]\\
 & 
A\ar[rr]^(.3){\phi}  \ar@{=>}[dd] & \ar@{=>}[d] &B  \ar@{=>}[dd] \\
TM \ar[dr] \ar@{-}[r]^---{\id} &\ar[r]  & TM \ar[dr]\\
 & M\ar[rr]^(.3){\id} &  &M
}
\]
We recall that double arrows $\implies$ represent Lie algebroids. The kernel of $\dd\phi$ is a VB subalgebroid of $TA$:
\[
\xymatrix{
\ker(\dd\phi) \ar[r] \ar@{=>}[d] & A \ar@{=>}[d]\\
0_M\ar[r] & M
}
\]
This leads naturally to the infinitesimal version of a multiplicative Ehresmann connection:

\begin{definition}
\label{def:IM:connection}
An {\bf IM Ehresmann connection} for $\phi:A\to B$ is a VB subalgebroid $E\Ato TM$ of $TA\Ato TM$ which is ``horizontal":
\[ TA=\ker(\dd\phi)\oplus E. \]
\end{definition}

\subsection{Infinitesimal partially split bundles of ideals}
\label{sec:local:model:algbrd}

A surjective Lie algebroid morphism $\phi:A\to B$ covering the identity is completely determined by its kernel:
\[ \xymatrix{0\ar[r]& \ka\ar[r]& A\ar[r]^{\phi}& B\ar[r]& 0}.\]
So the notion of IM Ehresmann connection for $\phi$ can be rephrased in terms of the bundle of ideals $\ka$, without any mentioning of $\phi$ and $B$. To do this we need to express $\ker(\d\phi)$ in terms of $\ka$, which can be done as follows.

Let $A\Ato M$ be a Lie algebroid and $\ka\subset A$ a \textbf{bundle of ideals}, i.e., $\ka \to M$ is a vector subbundle included in $\ker\rho_A$ and satisfying 
\[
 \al\in\Gamma(A),\ \ga\in\Gamma(\ka) \quad \Longrightarrow\quad [\al,\ga]_A\in\Gamma(\ka).
\]
Then we have the canonical representation of $A$ on $\ka$: 
\begin{equation}
\label{eq:representation:algbrd} 
\nabla^{\ka}_\al\gamma:=[\al,\gamma]_{A},\quad \al\in\Gamma(A), \ga\in\Gamma(\ka).
\end{equation}
This is the infinitesimal version of the groupoid representation \eqref{eq:representation:grpd} and it gives rise to the infinitesimal version of the groupoid $\G\times_M\ka\tto M$. Namely, we have the semi-direct product Lie algebroid $A\times_M \ka\Ato M$, with Lie bracket
\[ [(\al_1,\gamma_1),(\al_2,\gamma_2)]:=([\al_1,\al_2]_{A},\nabla^\ka_{\al_1}\gamma_2-\nabla^\ka_{\al_2}\gamma_1), \]
and anchor $\rho_{A}\circ\pr_{A}$. This is a VB algebroid:
\[
\xymatrix{
A\times_M\ka \ar[r] \ar@{=>}[d] & A \ar@{=>}[d]\\
M\ar[r] & M
}
\]
and, in fact, we have:

\begin{lemma}
Given a bundle of ideals $\ka\subset A$ the image of the inclusion
\begin{equation}\label{inclusion:VB:subalgebroid}
A\times_M\ka\hookrightarrow TA,\quad (a,\gamma)\mapsto \frac{\d}{\d t}\Big|_{t=0}(a+t\gamma),
\end{equation}
coincides with $\ker(\d\phi)$, where $\phi:A\to B:=A/\ka$ is the quotient map.
\end{lemma}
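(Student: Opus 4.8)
The claim is a fibrewise statement about the double vector bundle $TA$, globalised, and the only structural input needed is that $\phi$ covers $\id_M$. First I would record that, writing $\pi_A:A\to M$ and $\pi_B:B\to M$ for the bundle projections, the relation $\pi_B\circ\phi=\pi_A$ yields $\d\pi_B\circ\d\phi=\d\pi_A$. Consequently, if $v\in T_aA$ satisfies $\d\phi(v)=0$, then $\d\pi_A(v)=\d\pi_B(\d\phi(v))=0$, so
\[
\ker(\d\phi)\subseteq VA:=\ker(\d\pi_A)\subset TA,
\]
the vertical subbundle. This is the one place where \emph{$\phi$ covers the identity} is essential: it is exactly what confines the kernel to the vertical directions and places it over $0_M$, in agreement with the diagram preceding the lemma.

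Next I would check that the map \eqref{inclusion:VB:subalgebroid} takes values in $\ker(\d\phi)$. For $(a,\gamma)\in A\times_M\ka$ with $a,\gamma\in A_x$, the curve $t\mapsto a+t\gamma$ stays in the fibre $A_x$, so its velocity $v:=\frac{\d}{\d t}|_{t=0}(a+t\gamma)$ is $\pi_A$-vertical. Since $\phi$ is fibrewise linear and $\gamma\in\ka=\ker\phi$,
\[
\d\phi(v)=\frac{\d}{\d t}\Big|_{t=0}\phi(a+t\gamma)=\frac{\d}{\d t}\Big|_{t=0}\big(\phi(a)+t\,\phi(\gamma)\big)=0,
\]
so $v\in\ker(\d\phi)$ and the image of \eqref{inclusion:VB:subalgebroid} is contained in the kernel.

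For the reverse inclusion I would pin down the kernel exactly using the vertical lift. At each $a\in A_x$ the assignment $\gamma\mapsto\frac{\d}{\d t}|_{t=0}(a+t\gamma)$ is a linear isomorphism $A_x\diffto V_aA$, and the computation of the previous paragraph shows that, under this identification together with the corresponding one for $B$, the restriction $\d_a\phi|_{V_aA}:V_aA\to V_{\phi(a)}B$ is precisely the fibrewise linear map $\phi_x:A_x\to B_x$. Combined with the containment $\ker(\d\phi)\subseteq VA$ from the first paragraph, this gives $\ker(\d_a\phi)=\ker(\d_a\phi|_{V_aA})$, which under the vertical-lift isomorphism is $\ker\phi_x=\ka_x$. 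Since the inclusion \eqref{inclusion:VB:subalgebroid} is exactly this vertical lift restricted to $\ka$, its image over $a$ equals $\ker(\d_a\phi)$; letting $a$ vary yields the asserted equality of subbundles of $TA$.

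The argument involves no genuine obstacle: the content is bookkeeping with the vertical lift in the double vector bundle $TA$. The only subtlety worth flagging is the containment $\ker(\d\phi)\subseteq VA$, which would fail if $\phi$ did not cover the identity; it is precisely this point that makes the full kernel vertical and hence identifiable with the vertical lift of $\ka$, rather than with something strictly larger.
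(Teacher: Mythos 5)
Your proof is correct, but it closes the argument differently than the paper does. Both proofs establish the easy inclusion identically: the curve $t\mapsto a+t\gamma$ stays in a fibre and $\phi$ is fibrewise linear with $\phi(\gamma)=0$, so the image of \eqref{inclusion:VB:subalgebroid} lies in $\ker(\d\phi)$. The paper then finishes with a rank count: since $\phi$ covers the identity and is fibrewise surjective, $\d_a\phi$ is surjective, hence
\[
\rank_A TA-\rank_B TB=\rank_M\ka=\rank_A(A\times_M\ka),
\]
and a subbundle contained in another of the same rank must equal it. You instead prove the reverse inclusion directly: the relation $\pi_B\circ\phi=\pi_A$ forces $\ker(\d\phi)\subseteq VA$, and under the vertical-lift identifications $V_aA\simeq A_x$ and $V_{\phi(a)}B\simeq B_x$ the restriction $\d_a\phi|_{V_aA}$ is precisely $\phi_x$, whose kernel is $\ka_x$. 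Your route is a bit longer but buys two things: it avoids the dimension bookkeeping (and any appeal to surjectivity of $\d\phi$), and it isolates exactly where the hypothesis that $\phi$ covers the identity enters, namely in confining the kernel to the vertical directions. The paper's count is shorter, at the cost of leaving that structural point implicit.
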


\begin{proof}
Since $\phi:A\to B$ is surjective and covers the identity, $\d_a\phi:T_aA\to T_{\phi(a)} B$ is surjective, for any $a\in A$. Also, if $(a,\gamma)\in A\times_M \ka$ then $\phi(a+t\gamma)=\phi(a)$ so under the natural inclusion \eqref{inclusion:VB:subalgebroid} $A\times_M \ka$ is mapped to the kernel of $\d\phi$. Since 
\[ \rank_A T A-\rank_B T B=\rank_M\ka=\rank_A (A\times_M \ka),\] 
the result follows.
\end{proof}

Hence, we can rephrase Definition \ref{def:IM:connection} in terms of bundle of ideals as follows:

\begin{definition}
\label{def:partially:split:algbrd}
A bundle of ideals $\ka\subset A$ is called {\bf partially split} if there is a VB subalgebroid $E\Ato TM$ of $TA\Ato TM$ such that:
\[ TA=(A\times_M\ka)\oplus E. \]
\end{definition}

We emphasize that, unlike the groupoid case, IM Ehresmann connections for a surjective Lie algebroid morphism and partially split bundle of ideals are equivalent notions. From now on we will mostly assume the perspective of bundle of ideals. 

\subsection{The infinitesimal partially split condition}
\label{sec:local:model:description}

We now look for alternative characterizations of (infinitesimal) partially split ideals. 

If we represent the inclusion \eqref{inclusion:VB:subalgebroid} by the diagram of VB algebroids:
\[
\vcenter{
\xymatrix@R=10pt{
A\times_M \ka\  \ar@{=>}[dd] \ar[dr]  \ar[dr]  \ar@{^{(}->}@<-0.10pc>[rr] &  &  TA
\ar@{=>}[dd] \ar[dl]\\
 &  A  \ar@{=>}[dd]  \\
0_M\  \ar[dr]  \ar[dr]  \ar@{^{(}-}[r] &\ar[r] & TM \ar[dl] \\
 & M
}}
\]
then the VB dual to this inclusion is the projection:
\[
\vcenter{
\xymatrix@R=10pt{
A\ltimes\ka^* \ar@{=>}[dd] \ar[dr]  \ar[dr] &  &  T^*A\ar@{->>}[ll] 
\ar@{=>}[dd] \ar[dl]\\
 &  A  \ar@{=>}[dd]  \\
\ka^* \ar[dr]  \ar[dr]& \ar@{->>}[l] & A^* \ar[dl] \ar@{-}[l]\\
 & M
}}
\]
Here $A\ltimes \ka^*\Rightarrow  \ka^*$ is the action algebroid associated with the representation $\nabla^{\ka^*}$ of $A$ on $\ka^*$ dual to the representation \eqref{eq:representation:algbrd}, which is defined by:
\[ \Lie_{\rho_{A}(\al)}(\langle \be,X \rangle)=\langle \nabla^{\ka}_\al\be, X \rangle+\langle \be, \nabla^{\ka^*}_\al X \rangle. \]

We have the following infinitesimal analogue of Proposition \ref{prop:partially:split:grpd} giving alternative characterizations of partial splittings (for the terminology, see the appendix):

\begin{proposition}
\label{prop:partially:split:algbrd}
Given a bundle of ideals $\ka\subset A$, the following structures are in 1-to-1 correspondence:
\begin{enumerate}[(i)]
\item VB subalgebroids $E\subset TA$ that are complementary to $A\times_M\ka$:
\[ TA=(A\times_M\ka)\oplus E;\]
\item VB algebroid morphisms $\theta:A\ltimes \ka^* \to T^*A$ that are splittings of the natural projection $p:T^*A\to A\ltimes \ka^*$:
\[ p\circ\, \theta=\id_{T^*A};\]
\item $\ka$-valued, IM 1-forms $(L,l)\in\Omega^1_\imult(A,\ka)$ that restrict to the identity on $\ka$:
\[ l|_\ka=\id_{\ka}; \]
\item linear, closed, IM 2-forms $\mu\in\Omega^2_\imult(A\ltimes \ka^*)$ that along $\ka\subset (A\ltimes \ka^*)|_M$  satisfy:
\[ \big(\pr_{\ka}\circ  \mu|_M\big)|_\ka=\id_{\ka}. \]
\end{enumerate}
\end{proposition}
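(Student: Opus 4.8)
The plan is to follow, step by step, the proof of the groupoid statement in Proposition \ref{prop:partially:split:grpd}, replacing everywhere VB groupoids by VB algebroids, multiplicative forms by IM forms, and the duality of VB groupoids by the duality of VB algebroids (as in \cite{GrMe17}). Concretely, the two diagrams displayed just before the statement — the inclusion $A\times_M\ka\hookrightarrow TA$ of \eqref{inclusion:VB:subalgebroid} and its VB-dual, the projection $p:T^*A\to A\ltimes\ka^*$ — already set up the objects we need; the content of the proposition is that each of the four structures is a reformulation of a splitting of one of these two dual maps. For the equivalence (i) $\Leftrightarrow$ (ii) I would invoke VB algebroid duality directly. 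A vector subbundle $E\subset TA$ complementary to $A\times_M\ka$ is the same datum as a bundle retraction $r:TA\to A\times_M\ka$ of the inclusion \eqref{inclusion:VB:subalgebroid}, with $E=\ker r$; applying the involutive duality functor turns the inclusion into the projection $p$ and turns $r$ into a vector-bundle splitting $\theta:=r^\vee:A\ltimes\ka^*\to T^*A$ of $p$. The extra condition that $E$ be a VB \emph{subalgebroid} corresponds, under duality, precisely to $\theta$ being a VB \emph{algebroid morphism}, and the relation $r\circ\iota=\id$ dualizes to $p\circ\theta=\id$. Since duality is involutive this is a bijection.

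For (ii) $\Leftrightarrow$ (iii) I would dualize once more and use the appendix dictionary between IM forms with coefficients and VB algebroid morphisms (the infinitesimal counterpart of the identification used in Proposition \ref{prop:partially:split:grpd}(iii)): a VB algebroid morphism $\theta^\vee=r:TA\to A\times_M\ka$ covering the identity on $A$ is exactly the data of a $\ka$-valued IM $1$-form $(L,l)\in\Omega^1_\imult(A,\ka)$, whose symbol is the bundle map $l:A\to\ka$. Under this dictionary the splitting condition $p\circ\theta=\id$ — equivalently, that $r$ restricts to the identity on $A\times_M\ka$, hence on its core $\ka$ — translates into $l|_\ka=\id_\ka$, giving the correspondence $\theta\leftrightarrow(L,l)$ together with the matching of the normalization conditions.

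Finally, for (iii) $\Leftrightarrow$ (iv) I would apply the IM analogue of Lemma \ref{lemma:decomposition:linear:forms}: linear IM $2$-forms $\mu\in\Omega^2_\imult(A\ltimes\ka^*)$ are in bijection with vector-bundle maps $\theta:A\ltimes\ka^*\to T^*A$ covering the identity, via $\mu=\theta^*\omega_{\can}$, and under this bijection the closedness of $\mu$ corresponds to $\theta$ being a VB algebroid morphism. Composing with the previous steps yields the chain $\mu\leftrightarrow\theta\leftrightarrow(L,l)\leftrightarrow E$, and one checks that the normalization of $\mu$ along $\ka\subset(A\ltimes\ka^*)|_M$, namely $(\pr_\ka\circ\mu|_M)|_\ka=\id_\ka$, is exactly the infinitesimal shadow of the canonical-symplectic condition in Proposition \ref{prop:partially:split:grpd}(iv) and matches the splitting condition $p\circ\theta=\id$.

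I expect the genuinely delicate point to be this last step. The remaining equivalences are formal consequences of VB algebroid duality and the IM-form dictionary, but verifying that the pullback formula $\mu=\theta^*\omega_{\can}$ exchanges the closedness-plus-IM conditions on $\mu$ with the VB-algebroid-morphism condition on $\theta$ — and that the normalization along $\ka$ is the right one — requires carefully unwinding the appendix definitions of linear and closed IM forms on the action algebroid $A\ltimes\ka^*$ and tracking them through the duality. This is where the real work lies; once it is in place, the four structures are identified precisely as in the closing paragraphs of the proof of Proposition \ref{prop:partially:split:grpd}.
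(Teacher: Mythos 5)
Your handling of (i) $\Leftrightarrow$ (ii) and (ii) $\Leftrightarrow$ (iii) is correct and coincides with the paper's proof: duality of VB algebroids exchanges the inclusion $A\times_M\ka\hookrightarrow TA$ with the projection $p:T^*A\to A\ltimes\ka^*$, hence complements with splittings, and the dictionary of Example \ref{ex:degree1:E-valued:IM:forms} converts the doubly-dualized morphism $\theta^\vee:TA\to A\oplus\ka$ into a $\ka$-valued IM $1$-form, with $p\circ\theta=\id$ matching $l|_\ka=\id$. The gap is in (iii) $\Leftrightarrow$ (iv): the ``IM analogue of Lemma \ref{lemma:decomposition:linear:forms}'' you plan to invoke is not a true statement as you formulate it. A closed IM $2$-form on $A\ltimes\ka^*\Ato\ka^*$ is a bundle map $\mu:A\ltimes\ka^*\to T^*\ka^*$ (the pair $(\mu,0)$), not a differential form on the total space, so the equation $\mu=\theta^*\omega_{\can}$ does not typecheck; the relation that actually holds is $\mu=\theta^*\mu_{\can}$, with $\mu_{\can}\in\Omega^2_\imult(T^*A)$ the canonical IM $2$-form on the cotangent algebroid, and this pullback of IM forms is defined only when $\theta$ is already a Lie algebroid morphism, so it cannot be used to set up a preliminary bijection on arbitrary bundle maps and then impose conditions afterwards. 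Moreover, the conditions are shuffled: infinitesimally, the counterpart of multiplicativity (which in Proposition \ref{prop:partially:split:grpd} is what corresponds to $\Theta$ being a morphism) is the set of IM equations, not closedness; closedness ($\zeta=0$) is what makes $\mu$ a single bundle map in the first place, and a non-closed linear IM $2$-form $(\mu,\zeta)$ corresponds to no $\theta$ at all, so ``closedness of $\mu$ corresponds to $\theta$ being a VB algebroid morphism'' cannot hold under any bijection defined on all linear IM $2$-forms.

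The idea missing from your plan --- and the one the paper uses to bridge exactly this step --- is to pass to the total space: from a linear closed IM $2$-form $\mu$ one forms the fiberwise-linear $2$-form $\omega_\mu:=\mu^*\omega_{\can}\in\Omega^2(A\ltimes\ka^*)$, and the IM equations for $(\mu,0)$ say precisely that $(\omega_\mu)^\flat:T(A\ltimes\ka^*)\to T^*(A\ltimes\ka^*)$ is a morphism of VB algebroids; restricting this morphism to $TA\subset T(A\ltimes\ka^*)$ yields the VB algebroid morphism $\theta^\vee:TA\to A\oplus\ka$ of items (ii)/(iii), and conversely $\mu$ is recovered from $\theta$ as $\theta^*\mu_{\can}$. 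So your instinct that this last equivalence is where the real work lies is right, but the lemma you propose to do that work with would fail if applied literally; the restriction construction through $(\omega_\mu)^\flat$ is the necessary replacement.
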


\begin{proof}
The equivalence between (i) and (ii) follows by passing from the VB algebroid $TA$ to its dual VB algebroid $T^*A$, as explained above. 

A vector bundle map 
\[
\xymatrix@C=10pt@R=10pt{
A\ltimes \ka^* \ar@{=>}[dd] \ar[rr]^{\theta} \ar[dr]&  & T^*A  \ar@{=}[d] \ar[dr]\\
 & 
A\ar[rr]^(.3){\id}  \ar@{=>}[dd] & \ar@{=>}[d] &A  \ar@{=>}[dd] \\
\ka^* \ar[dr] \ar@{-}[r] &\ar[r]  & A^* \ar[dr]\\
 & M\ar[rr] &  &M
}
\]
is a VB algebroid morphism covering the identity if and only if its dual is a VB algebroid morphism:
\[
\xymatrix@C=10pt@R=10pt{
TA \ar@{=>}[dd] \ar[rr]^{\theta^\vee} \ar[dr]&  &  A\oplus \ka  \ar@{=}[d] \ar[dr]\\
 & 
A\ar[rr]^(.3){\id}  \ar@{=>}[dd] & \ar@{=>}[d] &A  \ar@{=>}[dd] \\
TM \ar[dr] \ar@{-}[r] &\ar[r]  & 0_M \ar[dr]\\
 & M\ar[rr] &  &M
}
\]
But a Lie algebroid morphism $\theta^\vee:TA\to A\oplus \ka$ is the same thing as an $\ka$-valued, IM 1-form $(L,l)\in\Omega^1_\imult(A;\ka)$ (see Example \ref{ex:degree1:E-valued:IM:forms}). This establishes the equivalence between (ii) and (iii), since the extra conditions along $M$ correspond to each other.

Next, a closed IM 2-form $\mu:A\ltimes \ka^*\to T^*\ka^*$ can be viewed, alternatively, as a VB algebroid morphism:
\[
\xymatrix{
T(A\ltimes \ka^*)\ar[r]^{(\omega_\mu)^\flat} \ar@{=>}[d] & T^*(A\ltimes \ka^*) \ar@{=>}[d] \\
T\ka^*\ar[r] & (A\ltimes \ka^*)^*
}
\]
where $\omega_\mu\in\Omega^2(A\ltimes \ka^*)$ is the fiberwise linear 2-form:
\[ \omega_\mu=\mu^*\omega_\can. \]
The algebroid morphism $(\omega_\mu)^\flat$ restricts to a VB algebroid morphism:
\[ \theta^\vee:TA\to A\oplus \ka\] 
covering the identity $\id:A\to A$, such that the following diagram commutes:
\[
\xymatrix@C=10pt@R=10pt{
TA\ar@{=>}[dd]\ar[rr]^{\theta^\vee} \ar@{^{(}->}[dr]&  &  A\oplus \ka\ar@{=}[d] \ar@{^{(}->}[dr]\\
 & 
T(A\ltimes \ka^*)\ar[rr]^(.4){(\omega_\mu)^\flat} \ar@{=>}[dd] &\ar@{=>}[d] &T^*(A\ltimes \ka^*) \ar@{=>}[dd] \\
TM \ar@{^{(}->}[dr] \ar@{-}[r] & \ar[r] & M \ar@{^{(}->}[dr]\\
 & T\ka^*\ar[rr] &  &(A\ltimes \ka^*)^*
}
\]
Moreover, $\theta^\vee$ (or equivalently $\theta$) completely determines $\mu$: using the fact that $\mu$ is a linear IM form, one finds that it coincides with the pullback under the algebroid morphism $\theta:A\ltimes \ka^*\to T^* A$ of the canonical IM 2-form $\mu_\can$:
\[ \mu=\theta^*\mu_\can. \]
This proves the equivalence between (ii)-(iv) since the additional conditions along $M$ correspond to each other.
\end{proof}

\begin{definition}
Given a partially split bundle of ideals $\ka$ with an IM Ehresmann connection $E\subset TA$, the corresponding $\ka$-valued IM 1-form $(L,l)\in\Omega^1_\imult(A;\ka)$, given by Proposition \ref{prop:partially:split:algbrd} (iii), is called
the \textbf{IM connection 1-form}.
\end{definition}

We can now make precise the statement that IM Ehresmann connections are the infinitesimal counterpart of multiplicative Ehresmann connections:

\begin{theorem}
\label{thm:Lie:functor:connections}
Let $\G\tto M$ be a target 1-connected Lie groupoid with Lie algebroid $A\Ato M$. Given a bundle of ideals $\ka\subset A$ there is a 1:1 correspondence:
\[ 
\left\{\txt{multiplicative Ehresmann\\ connections $E\subset T\G$\, \\
for $\G\times_M \ka$} \right\}
\tilde{\longleftrightarrow}
\left\{\txt{IM Ehresmann\\ connections $E_*\subset TA$\, \\
for $A\times_M \ka$
}\right\}
\]
\end{theorem}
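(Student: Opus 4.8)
The plan is to pass through the connection $1$-form descriptions on both sides and then invoke the differentiation/integration correspondence for forms with coefficients from the appendix. First I would use Proposition~\ref{prop:partially:split:grpd}(iii): a multiplicative Ehresmann connection $E\subset T\G$ for $\G\times_M\ka$ is equivalent to a multiplicative $1$-form $\al\in\Omega^1_\mult(\G,\ka)$ satisfying $\al|_\ka=\id$. Dually, Proposition~\ref{prop:partially:split:algbrd}(iii) shows that an IM Ehresmann connection $E_*\subset TA$ for $A\times_M\ka$ is equivalent to an IM $1$-form $(L,l)\in\Omega^1_\imult(A,\ka)$ satisfying $l|_\ka=\id$. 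It therefore suffices to establish a bijection between these two normalized classes of $1$-forms.

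The essential input is the van Est-type differentiation of multiplicative forms with coefficients developed in Section~\ref{sec:differentiation:forms:coefficients}. Because $\G$ is target $1$-connected and the infinitesimal representation of $A$ on $\ka$ given by \eqref{eq:representation:algbrd} integrates precisely to the conjugation representation of $\G$ on $\ka$ given by \eqref{eq:representation:grpd}, that machinery provides a bijection
\[ \Omega^1_\mult(\G,\ka)\;\diffto\;\Omega^1_\imult(A,\ka),\qquad \al\longmapsto (L,l), \]
with inverse given by integration over the $1$-connected $\t$-fibers. Here the symbol and the tensorial component are recovered from $\al$ by the formulas \eqref{eq:M:IM:E:forms} and \eqref{eq:M:IM:E:forms:2} already used in the proof of Proposition~\ref{prop:flat:kernel:connection}.

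It remains only to check that this bijection respects the two normalizations. For $\xi\in\Gamma(\ka)$ the left-invariant vector field $\xi^L$ takes at a unit $1_x$ the value $\xi_x\in\ka_x\subset T_{1_x}\G$, so the symbol of the IM form differentiating $\al$ is
\[ l(\xi)=(i_{\xi^L}\al)|_M=(\al|_\ka)(\xi). \]
Consequently $\al|_\ka=\id$ holds if and only if $l|_\ka=\id$, and the displayed bijection restricts to the claimed $1$-to-$1$ correspondence between multiplicative Ehresmann connections for $\G\times_M\ka$ and IM Ehresmann connections for $A\times_M\ka$.

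The step I expect to be the main obstacle is the well-definedness and bijectivity of the differentiation map for $\ka$-valued forms, i.e.\ verifying that the appendix construction intertwines the multiplicative and IM structures while carrying the coefficient representation correctly; this is where target $1$-connectedness is genuinely used. Once Section~\ref{sec:differentiation:forms:coefficients} is granted, the remaining verification collapses to the one-line symbol computation above, so the new content here is organizational rather than analytic.
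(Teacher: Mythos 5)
Your proposal is correct and follows essentially the same route as the paper: reduce both sides to connection $1$-forms via Propositions \ref{prop:partially:split:grpd}(iii) and \ref{prop:partially:split:algbrd}(iii), invoke the canonical isomorphism $\Omega^1_\mult(\G;\ka)\simeq\Omega^1_\imult(A;\ka)$ valid for target 1-connected groupoids (which the paper cites from \cite{CrSaSt15}), and observe that the normalizations $\al|_\ka=\id$ and $l|_\ka=\id$ correspond under the symbol formula. The only slip is attributional: that isomorphism lives in the appendix subsection on multiplicative and IM forms with coefficients, not in Section \ref{sec:differentiation:forms:coefficients}, which concerns covariant differentiation with respect to an invariant connection and is not needed for this theorem.
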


\begin{proof}
The proof follows from the description of multiplicative and IM Ehresmann connections in terms of their connections 1-forms given by Propositions \ref{prop:partially:split:grpd} and \ref{prop:partially:split:algbrd}, and by observing that for a target 1-connected Lie groupoid one has a canonical isomorphism \cite{CrSaSt15}
\[ \Omega^1_\mult(\G;\ka)\simeq \Omega^1_\imult(A;\ka), \]
under which the extra conditions on the 1-forms correspond to each other.
\end{proof}

If $\G$ is not target 1-connected the differentiation of multiplicative forms still exists, and one can still associate to a multiplicative Ehresmann connection a IM Ehresmann connection, but not conversely.

Note that for a IM connection 1-form $(L,l)\in\Omega^1_\imult(A;\ka)$, the symbol $l:A\to\ka$ satisfies $l|_\ka=\id$ and hence gives a splitting of the short exact sequence:
\[ 
\xymatrix{0\ar[r]& \ka\ar[r]& A\ar@/^/@{-->}[l]^{l}\ar[r]^{\phi}& B\ar[r]& 0}.
\]
In general, the induced splitting $B\to A$ is not a Lie algebroid morphism.

The partially split condition implies certain properties of the bundle of ideals. A first consequence is the following infinitesimal versions of Corollaries \ref{corollary:locally:trivial:groupoid} and \ref{corollary:complement:groupoid}:

\begin{corollary}
\label{corollary:Lie:alg:bundle}
Let $\ka\subset A$ be a partially split bundle of ideals with IM connection 1-form $(L,l)\in\Omega^1_\imult(A;\ka)$, then
\begin{equation} 
\label{eq:connection:ka}
\nabla^L_X\xi:=i_XL(\xi),
\end{equation}
defines a connection on $\ka$ which preserves the Lie bracket:
\[\nabla^{L}_{X}[\xi,\eta]_{\ka}=[\nabla^{L}_{X}\xi,\eta]_{\ka}+[\xi,\nabla^{L}_{X}\eta]_{\ka},\quad X\in\X(S),\xi,\eta\in\Gamma(\ka). \]
In particular, $\ka$ is a locally trivial bundle of Lie algebras.
\end{corollary}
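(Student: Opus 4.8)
The plan is to read off all three assertions directly from the defining structure equations of an IM $1$-form with coefficients. Recall (Example \ref{ex:degree1:E-valued:IM:forms}) that $(L,l)\in\Omega^1_\imult(A;\ka)$ is given by bundle maps $l\colon A\to\ka$ and $L\colon A\to T^*M\otimes\ka$ subject to the Leibniz identity $L(f\al)=fL(\al)+\d f\otimes l(\al)$, the $C^\infty(M)$-linearity of $l$, and the two bracket-compatibility equations $l([\al,\be]_A)=\nabla^\ka_\al l(\be)-i_{\rho_A(\be)}L(\al)$ and $L([\al,\be]_A)=\Lie^\nabla_\al L(\be)-\Lie^\nabla_\be L(\al)$, where $\nabla^\ka$ is the canonical representation \eqref{eq:representation:algbrd} and $\Lie^\nabla$ the associated covariant Lie derivative on $\ka$-valued forms. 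The first step is to check that \eqref{eq:connection:ka} is a genuine connection: $C^\infty(M)$-linearity in $X$ is immediate since $i_X$ is tensorial, while the Leibniz rule in the section variable follows from the Leibniz identity for $L$ together with the splitting condition $l|_\ka=\id$ from Proposition \ref{prop:partially:split:algbrd}(iii), giving $\nabla^L_X(f\xi)=i_X\big(fL(\xi)+\d f\otimes l(\xi)\big)=f\nabla^L_X\xi+(Xf)\xi$ for $\xi\in\Gamma(\ka)$.

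Next I would establish that $\nabla^L$ preserves the bracket. Since $\ka\subset\ker\rho_A$, the fibrewise Lie algebra bracket $[\cdot,\cdot]_\ka$ is just the restriction of $[\cdot,\cdot]_A$, so for $\xi,\eta\in\Gamma(\ka)$ one has $[\xi,\eta]_\ka=[\xi,\eta]_A\in\Gamma(\ka)$ and $\nabla^\ka_\xi\eta=[\xi,\eta]_\ka$. The key simplification is that, because $\rho_A(\xi)=\rho_A(\eta)=0$, the covariant Lie derivatives collapse to their algebraic part: for any $\omega\in\Omega^1(M;\ka)$ one has $(\Lie^\nabla_\xi\omega)(X)=\nabla^\ka_\xi(\omega(X))-\omega([\rho_A(\xi),X])=[\xi,\omega(X)]_\ka$. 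Applying the second bracket-compatibility equation to $[\xi,\eta]_A$, contracting with $X$, and feeding in $\omega=L(\eta)$ and $\omega=L(\xi)$ then gives the chain $\nabla^L_X[\xi,\eta]_\ka=i_XL([\xi,\eta]_A)=[\xi,i_XL(\eta)]_\ka-[\eta,i_XL(\xi)]_\ka=[\xi,\nabla^L_X\eta]_\ka+[\nabla^L_X\xi,\eta]_\ka$, which is exactly the asserted compatibility.

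Finally, local triviality follows exactly as in Corollary \ref{corollary:locally:trivial:groupoid}: fixing a ball $U$ around a point $x_0$ inside a chart and transporting $\ka_{x_0}$ by $\nabla^L$-parallel transport along the radial segments produces a smooth vector-bundle trivialization $U\times\ka_{x_0}\diffto\ka|_U$; because $\nabla^L$ preserves the bracket, the parallel transport of a bracket equals the bracket of the parallel transports, so each fibrewise map is a Lie algebra isomorphism and the trivialization is one of bundles of Lie algebras. I expect the only delicate point to be the bookkeeping with the structure equations from the appendix — specifically, identifying $\nabla^\ka$ as the representation entering $\Lie^\nabla$, and verifying that the $\rho_A(\xi)=0$ collapse of the covariant Lie derivative is precisely what turns the IM bracket axiom into the derivation property of $\nabla^L$. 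Once those conventions are pinned down, all three steps are short.
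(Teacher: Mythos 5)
Your proof is correct and follows essentially the same route as the paper: the symbol equation \eqref{eq:symbol:IM:form} together with $l|_\ka=\id$ gives the connection property, and contracting the second IM compatibility equation in \eqref{eq:compatibility:IM:E:form} with $X$, using that $\rho_A$ vanishes on $\ka$ so the covariant Lie derivative collapses to the fibrewise bracket, gives the derivation property. Your final paragraph merely makes explicit the parallel-transport argument for local triviality that the paper leaves implicit, so there is nothing to correct.
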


\begin{proof}
Since $l|_\ka=\id$, the symbol equation \eqref{eq:symbol:IM:form} implies that \eqref{eq:connection:ka} defines a linear connection $\nabla^L$ on $\ka$. On the other hand, the third equation in the IM condition (\ref{eq:compatibility:IM:E:form}) shows that one has 
\[ \Lie_\xi\eta=[\xi,\eta],\quad \xi,\eta\in\Gamma(\ka). \] 
Then the second equation in (\ref{eq:compatibility:IM:E:form}) shows that $\nabla^L$ preserves the Lie bracket.
\end{proof}

\begin{corollary}\label{corollary:isotropy:splits}
Let $\ka\subset A$ be a partially split bundle of ideals and fix $x\in M$. The symbol of any IM connection 1-form $(L,l)\in\Omega^1_\imult(A;\ka)$ gives a decomposition of the  isotropy Lie algebra $\gg_x:=\ker \rho_{A}|_x$ into a direct sum of ideals: 
\[ \gg_x\simeq (\gg_x\cap \ker l)\oplus \ka_x. \]
\end{corollary}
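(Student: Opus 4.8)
The plan is to reduce the statement to a single algebraic identity on the isotropy Lie algebra $\gg_x := \ker\rho_A|_x$, obtained by evaluating the IM compatibility equations on the isotropy, where all anchor-dependent terms disappear. First I would dispose of the purely linear part: since the symbol satisfies $l|_\ka = \id_\ka$ and $\ka_x \subseteq \gg_x$, the restriction $l_x := l|_{\gg_x}\colon \gg_x \to \ka_x$ is a projection onto $\ka_x$, so setting $\mathfrak{l}_x := \gg_x \cap \ker l$ gives a vector-space decomposition $\gg_x = \ka_x \oplus \mathfrak{l}_x$. That $\ka_x$ is an ideal of $\gg_x$ is immediate from the bundle-of-ideals condition (restrict the representation \eqref{eq:representation:algbrd} to $x$). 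The entire content is therefore to show that $\mathfrak{l}_x$ is also an ideal; once both summands are ideals the decomposition is automatically a direct sum of ideals, since then $[\ka_x,\mathfrak{l}_x] \subseteq \ka_x \cap \mathfrak{l}_x = 0$.

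The key step is to evaluate the relevant compatibility equation of \eqref{eq:compatibility:IM:E:form} on sections taking values in $\ker\rho_A$ at $x$. Concretely, given $u,v \in \gg_x$ I would choose extensions $\alpha,\beta \in \Gamma(A)$ with $\alpha(x)=u$, $\beta(x)=v$ and read off the equation at $x$. Because $\rho_A(\alpha)|_x = \rho_A(\beta)|_x = 0$, every anchor-dependent term (each of the form $i_{\rho_A(\cdot)}L(\cdot)$) vanishes at $x$, and the canonical connection term reduces, via \eqref{eq:representation:algbrd}, to the isotropy bracket: $\nabla_\alpha l(\beta)|_x = [\alpha, l(\beta)]_A(x) = [u, l_x(v)]_{\gg_x}$. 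All brackets here are the well-defined isotropy bracket, independent of the chosen extensions precisely because $\rho_A$ vanishes at $x$ on the sections involved. What survives is the intertwining identity
\[ l_x([u,v]_{\gg_x}) = [u, l_x(v)]_{\gg_x}, \qquad u,v \in \gg_x. \]
From this the conclusion is immediate: if $v \in \mathfrak{l}_x$, i.e.\ $l_x(v)=0$, then $l_x([u,v]) = [u,0] = 0$ for every $u \in \gg_x$, so $[u,v] \in \mathfrak{l}_x$ and $\mathfrak{l}_x$ is an ideal. Combined with the first paragraph this gives $\gg_x = \ka_x \oplus \mathfrak{l}_x$ as a direct sum of ideals.

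The main obstacle I anticipate is bookkeeping at the level of the IM equations: one must single out the correct compatibility equation in \eqref{eq:compatibility:IM:E:form} and verify that, restricted to the isotropy, it genuinely collapses to the asymmetric intertwining identity above rather than to the antisymmetric cocycle relation $l_x([u,v]) = [u,l_x(v)] - [v,l_x(u)]$. The distinction is essential, since the latter, together with $l_x|_{\ka_x} = \id$, would wrongly force $\ka_x$ to be abelian (take $v \in \ka_x$, so that $l_x([u,v]) = [u,v]$ and one is left with $[v,l_x(u)] = 0$ for all $u$). Checking that the anchor terms are exactly the terms that drop out, and that no symmetric second $\nabla$-term appears, is where the argument must be carried out with care, most likely by tracking the identity through the correspondence of Corollary \ref{corollary:Lie:alg:bundle}.

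As a consistency check, this is precisely the infinitesimal shadow of the groupoid argument in Corollary \ref{corollary:complement:groupoid}: restricting the IM connection $1$-form to $\gg_x$ produces an IM connection $1$-form for the bundle of ideals $\ka_x \subset \gg_x$, and upon integrating the (always integrable) Lie algebra $\gg_x$ to its simply connected group and invoking Theorem \ref{thm:Lie:functor:connections}, the statement reduces to the Lie group computation of Subsection \ref{ex:Lie:groups}, where $\ka_x$ is partially split exactly when it admits a complementary ideal, namely the kernel of the symbol. I would present the direct computation as the main proof and relegate this integration argument to a concluding remark.
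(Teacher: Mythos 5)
Your proof is correct and takes essentially the same route as the paper's: the paper also evaluates the third equation of \eqref{eq:compatibility:IM:E:form} at a point where the anchors vanish, taking $\be\in\Gamma(\ker l)$ directly so that the term $\Lie_\al l(\be)$ vanishes identically and $i_{\rho(\be)}L(\al)$ vanishes at $x$, which immediately gives that $\gg_x\cap\ker l$ is an ideal. Your intermediate intertwining identity $l_x([u,v])=[u,l_x(v)]$ is just a slightly more general packaging of the same one-line computation, and your concern about the asymmetric versus antisymmetric form of the equation is resolved exactly as you anticipated.
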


\begin{proof}
If $\al\in\Gamma(A)$ and $\be\in\Gamma(\ker l)$ are such that $\al_x,\be_x\in\gg_x$, then the third equation in the IM condition (\ref{eq:compatibility:IM:E:form}) shows that 
\[ [\al_x,\be_x]=[\al,\be]|_x\in\ker l.\]
This proves that $\gg_x\cap \ker l$ is an ideal in $\gg_x$, so the corollary follows.
\end{proof}

An interesting characterization of the partial split condition can be obtained as follows. Given a (usual) connection $\nabla$ on a Lie algebroid $A\Ato M$ one has the following associated $A$-connections on $A$ and $TM$:
\begin{align*}
\overline{\nabla}_\al\be&:=\nabla_{\rho(\be)}\al+[\al,\be], \\
\overline{\nabla}_\al X&:=\rho(\nabla_X\al)+[\rho(\al),X]
\end{align*}
They satisfy 
\[ \rho(\overline{\nabla}_\al\be)=\overline{\nabla}_\al\rho(\be). \]
One defines the {\bf basic curvature} of $\nabla$ to be the tensor
\begin{equation}
\label{eq:basic:curvature:alg} 
R^{\bas}_\nabla(\al,\be)(X):=\nabla_X([\al,\be])-[\nabla_X\al,\be]-[\al,\nabla_X\be]-\nabla_{\overline{\nabla}_\be X}\al+\nabla_{\overline{\nabla}_\al X}\be,
\end{equation}
where $X\in\X(M)$ and $\al,\be\in\Gamma(A)$. When $R^{\bas}_\nabla\equiv 0$ one calls $\nabla$ a Cartan connection and this is precisely the infinitesimal version of a Cartan connection on a Lie groupoid (see, e.g., \cite{AC13,Blaom06,Blaom12}).

\begin{proposition}
\label{prop:Cartan:connection:splitting}
A bundle of ideals $\ka\subset A$ is partially split if and only if there is a vector bundle splitting $l:A\to \ka$ of the associated short exact sequence 
\[ 
\xymatrix{0\ar[r] & \ka \ar[r] & A\ar[r]^{\phi} & B\ar[r] & 0}
\]
and a (usual) connection $\nabla$ on $A$ such that:
\begin{equation}
    \label{eq:partial:splitting:basic:curvature}
    \overline{\nabla}l=0,\qquad l(R^{\bas}_\nabla)=0.
\end{equation} 
In this case, the pair $(l,\nabla)$ determines a IM connection 1-form $(L,l)\in\Omega^1_\imult(A;\ka)$ by setting:
\[ 
L:\Gamma(A)\to \Omega^1(M;\ka), \quad i_X L(\al):=l(\nabla_X\al). 
\]
Moreover, every IM connection 1-form $(L,l)$ takes this form for some pair $(l,\nabla)$ satisfying \eqref{eq:partial:splitting:basic:curvature}.
\end{proposition}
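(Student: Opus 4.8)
The plan is to work entirely at the level of the IM connection $1$-form and to translate the two tensorial conditions in \eqref{eq:partial:splitting:basic:curvature} into the IM compatibility equations \eqref{eq:compatibility:IM:E:form}. By Proposition \ref{prop:partially:split:algbrd}(iii), $\ka$ is partially split if and only if there is an IM $1$-form $(L,l)\in\Omega^1_\imult(A;\ka)$ with $l|_\ka=\id$, so it suffices to set up a bijection between such $(L,l)$ and pairs $(l,\nabla)$ satisfying \eqref{eq:partial:splitting:basic:curvature}, compatible with $i_XL(\al)=l(\nabla_X\al)$. Throughout I regard $l:A\to\ka$ as a projection with complement $\ker l\subset A$ and $\iota:\ka\hookrightarrow A$ the inclusion, and I will use repeatedly that $\ka$ is an ideal, so $[\al,\gamma]_A\in\Gamma(\ka)$ and $l[\al,\gamma]_A=[\al,\gamma]_A=\nabla^\ka_\al\gamma$ whenever $\gamma\in\Gamma(\ka)$.

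For the direction ``$(l,\nabla)$ gives $(L,l)$'', I would define $L$ by $i_XL(\al):=l(\nabla_X\al)$ and verify that $(L,l)$ is an IM $1$-form. The Leibniz/symbol equation \eqref{eq:symbol:IM:form} is immediate from the Leibniz rule for $\nabla$ and $l|_\ka=\id$. The heart is to show that $\overline\nabla l=0$ is equivalent to the anchor-compatibility part of \eqref{eq:compatibility:IM:E:form}, and that $l(R^{\bas}_\nabla)=0$ is equivalent to the bracket-compatibility part. I would expand $(\overline\nabla_\al l)(\be)=\overline\nabla_\al(l\be)-l(\overline\nabla_\al\be)$: since $l\be\in\Gamma(\ka)\subset\ker\rho$ one gets $\overline\nabla_\al(l\be)=[\al,l\be]_A=\nabla^\ka_\al(l\be)$, while $l(\overline\nabla_\al\be)=l(\nabla_{\rho(\be)}\al)+l[\al,\be]_A=i_{\rho(\be)}L(\al)+l[\al,\be]_A$, so that $\overline\nabla l=0$ reads $l[\al,\be]_A=\nabla^\ka_\al(l\be)-i_{\rho(\be)}L(\al)$, which is exactly the corresponding IM equation. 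The remaining IM equation should come out, after substituting $i_XL=l\circ\nabla_X$ into the five terms of \eqref{eq:basic:curvature:alg} and applying $l$, as the vanishing $l(R^{\bas}_\nabla)=0$.

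For the converse together with the ``moreover'' claim, starting from an IM $1$-form $(L,l)$ I would take $l$ as the splitting and build a connection $\nabla$ on $A$ with $l\circ\nabla_X=i_XL$. Such a $\nabla$ is not determined by $L$, only its $\ka$-component is, so I choose any auxiliary connection $\widehat\nabla$ on $\ker l$ and set $\nabla_X\al:=\iota(i_XL(\al))+\widehat\nabla_X(\al-\iota l\al)$. The symbol equation \eqref{eq:symbol:IM:form} guarantees this is a genuine connection, and by construction $l(\nabla_X\al)=i_XL(\al)$. It then remains to check \eqref{eq:partial:splitting:basic:curvature}. The condition $\overline\nabla l=0$ is precisely the IM equation identified above, hence holds for any choice of $\widehat\nabla$. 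For $l(R^{\bas}_\nabla)=0$ I would first argue that, once $\overline\nabla l=0$ holds, $l(R^{\bas}_\nabla)$ is independent of $\widehat\nabla$: changing $\widehat\nabla$ alters only the $\ker l$-components $P=(\nabla_X\al)_{\ker l}$ and $Q=(\nabla_X\be)_{\ker l}$, and the resulting variation of $l(R^{\bas}_\nabla)$ collects into the two expressions $-l[P,\be]_A+i_{\rho(P)}L(\be)$ and $-l[\al,Q]_A-i_{\rho(Q)}L(\al)$, each of which vanishes because $P,Q\in\ker l$ and $\overline\nabla l=0$ gives $l[P,\be]_A=i_{\rho(P)}L(\be)$ and $l[\al,Q]_A=-i_{\rho(Q)}L(\al)$. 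Consequently $l(R^{\bas}_\nabla)$ equals the bracket-compatibility expression of $(L,l)$, which vanishes because $(L,l)$ is an IM form.

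The main obstacle is this last computation: identifying, with the correct signs and tensoriality checks, the image under $l$ of the five-term basic curvature \eqref{eq:basic:curvature:alg} with the bracket-compatibility component of the IM conditions \eqref{eq:compatibility:IM:E:form}, and confirming the cancellation that makes $l(R^{\bas}_\nabla)$ independent of $\widehat\nabla$. This is a direct but somewhat lengthy tensor calculation in which the two auxiliary $A$-connections $\overline\nabla$ on $A$ and on $TM$ must be expanded carefully, using $\rho\circ\iota=0$ on $\ka$ at each step. Once this is in place, the explicit formula $i_XL(\al)=l(\nabla_X\al)$ realizes the asserted bijection, and the ``moreover'' statement follows since every $(L,l)$ arises from the pair $(l,\nabla)$ just constructed.
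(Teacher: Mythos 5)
Your proposal is correct and takes essentially the same route as the paper: both translate the conditions \eqref{eq:partial:splitting:basic:curvature} into the IM equations \eqref{eq:compatibility:IM:E:form} via the formula $i_XL(\al)=l(\nabla_X\al)$, identifying $\overline{\nabla}l=0$ with the third IM equation and, given that, $l(R^{\bas}_\nabla)=0$ with the second. The only differences are cosmetic --- the paper produces $\nabla$ by correcting an arbitrary connection $\nabla'$ by the tensor $D(\al,X)=i_XL(\al)-l(\nabla'_X\al)$ rather than via your $\ker l$-splitting construction, and your independence-of-$\widehat{\nabla}$ argument is superfluous, since the identity $i_XL([\al,\be])-l(R^{\bas}_\nabla(\al,\be)(X))=i_X\big(\Lie_\al L(\be)-\Lie_\be L(\al)\big)$ holds for \emph{every} $\nabla$ with $l\circ\nabla_X=i_XL$ and $\overline{\nabla}l=0$, so the second IM equation directly forces $l(R^{\bas}_\nabla)=0$.
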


\begin{proof}
Let $l:A\to \ka$ be any splitting of the short exact sequence. Given a map $L:\Gamma(A)\to \Omega^1(M;\ka)$, we claim that the pair $(L,l)$ satisfies the IM-condition \eqref{eq:symbol:IM:form}:
\[ L(f\al)=f L(\al)+\d f\wedge l(\al) \]
if and only if 
\[ i_X L(\al)=l(\nabla_X\al)\]
for some connection $\nabla$ in $A$. Clearly, if $L$ takes this form then \eqref{eq:symbol:IM:form} holds. Conversely, given $(L,l)$ satisfying \eqref{eq:symbol:IM:form} choose any connection $\nabla'$ on $A$ and consider the difference:
\[ D(\al,X):=i_X L(\al)-l(\nabla'_X\al). \]
This is $C^\infty$-linear in both entries and takes values in $\ka\subset A$. Therefore, we can correct the connection $\nabla'$ by setting:
\[ \nabla_X\al:=\nabla'_X\al+D(\al,X), \]
so that $(L,l)$ takes the desired form.

It remains to check that a pair $(L,l)$ defined by $(l,\nabla)$ satisfies the IM-conditions \eqref{eq:compatibility:IM:E:form} if and only if \eqref{eq:partial:splitting:basic:curvature} hold. Applying the definition of $\overline\nabla$, we obtain:
\begin{align*} 
(\overline\nabla_\al l)(\be)&=\overline\nabla_\al (l(\be))-l(\overline\nabla_\al\be)\\
&=[\al,l(\be)]-l(\nabla_{\rho(\be)}\al+[\al,\be])\\
&=\Lie_\al l(\be)-i_{\rho(\be)}L(\al)-l([\al,\be]),
\end{align*}
for any $\al,\be\in\Gamma(A)$. This shows that the condition $\overline\nabla l=0$ amounts to the last IM condition in \eqref{eq:compatibility:IM:E:form}. It also shows that this condition can be written as:
\[ l([\al,\be])=[\al,l(\be)]+l(\nabla_{\rho(\be)}\al), \]
for any $\al,\be\in\Gamma(A)$. Using this last identity, the definition of $L$ and the expression for the basic curvature, we now find:
\begin{align*} 
i_X L([\al,&\be])-l(R^{\bas}_\nabla(\al,\be)(X))=\\
&=l(\nabla_X[\al,\be])-l(R^{\bas}_\nabla(\al,\be)(X))\\
&=l([\nabla_X\al,\be])+l([\al,\nabla_X\be])+l(\nabla_{\overline{\nabla}_\be X}\al)-l(\nabla_{\overline{\nabla}_\al X}\be)\\
&=l([\nabla_X\al,\be])+l([\al,\nabla_X\be])+l(\nabla_{\rho(\nabla_X\be)+[\rho(\be),X]}\al)-l(\nabla_{\rho(\nabla_X \al)+[\rho(\al),X]}\be)\\
&=[\al,l(\nabla_X\be)]-l(\nabla_{[\rho(\al),X]}\be)
-[\be,l(\nabla_X\al)]+l(\nabla_{[\rho(\be),X]}\al)\\
&=i_X\big(\Lie_\al L(\be)-\Lie_\be L(\al)\big).
\end{align*}
So the second IM condition in \eqref{eq:compatibility:IM:E:form} holds if and only if $l(R^{\bas}_\nabla)=0$.
\end{proof}

\begin{remark}
The connection $\nabla$ in the previous proposition is not uniquely determined by the partial IM connection 1-form $(L,l)\in\Omega^1_\imult(A;\ka)$.
\end{remark}

\subsection{Couplings}
\label{sec:obstructions:partial:split}

In this section we give a coupling description of algebroids with a partially split bundle of ideals. This is inspired by (and in fact generalizes) the classical coupling description of symplectic fibrations \cite{McDuffSalamon17} and horizontally non-degenerate Poisson structures \cite{Vorobjev01,Vorobjev05}.

To state the main result in this section we consider a partially split bundle of ideals $\ka\subset A$ with a fixed choice of IM connection 1-form $(L,l)\in \Omega^1_\imult(A,\ka)$. The base map gives a vector bundle splitting of the associated short exact sequence
\[ \xymatrix{0\ar[r]& \ka\ar[r]& A\ar@/^/@{-->}[l]^{l}\ar[r]^{\phi}& B\ar[r]& 0}\]
which we use to identify $A\simeq B\oplus \ka$. Then we associate the following data:
\begin{enumerate}[(i)]
    \item A linear connection $\nabla^L$ on the vector bundle $\ka\to M$, by setting:
    \[ \nabla^L_X\xi:=i_XL(\xi). \]
    \item A tensor $U\in \Gamma(B^*\otimes T^*M\otimes \ka)$ given by:
    \[ U(\alpha,X):=-i_XL(\alpha). \]
\end{enumerate}
That this is a connection and a tensor follows from the symbol equation \eqref{eq:symbol:IM:form} and the fact that $l|_{\ka}=\id$. Notice that the connection $\nabla^L$ has already appeared in Corollary \ref{corollary:Lie:alg:bundle}.

\begin{proposition}
\label{prop:structure:eqs}
The data $(\nabla^L,U)$ associated with a IM connection 1-form $(L,l)\in \Omega^1_\imult(A,\ka)$ satisfies the structure equations:
\begin{enumerate}
\item[(S1)] the connection $\nabla^{L}$ preserves the Lie bracket $[\cdot,\cdot]_{\ka}$, i.e.,
\[\nabla^{L}_{X}[\xi,\eta]_{\ka}=[\nabla^{L}_{X}\xi,\eta]_{\ka}+[\xi,\nabla^{L}_{X}\eta]_{\ka};\]
\item[(S2)] the curvature of $\nabla^{L}$ is related to $\mathrm{ad}_{U}$ as follows:
\[\nabla^{L}_{\rho_B(\alpha)}\nabla^{L}_{X}-\nabla^{L}_{X}\nabla^{L}_{\rho_B(\alpha)}-\nabla^{L}_{[\rho_B(\alpha),X]}=[U(\alpha,X),\cdot ]_{\ka};\]
\item[(S3)] $U$ satisfies the ``mixed'' cocyle-type equation:
\begin{align*}
\nabla^{L}_{\rho_B(\alpha)}U(\beta,X)&-\nabla^L_{\rho_B(\beta)}U(\alpha,X)+\nabla^L_{X}U(\alpha,\rho_B(\beta))\\
&+U(\alpha,[\rho_B(\beta),X])-U(\beta,[\rho_B(\alpha),X])=U([\alpha,\beta]_{B},X),
\end{align*}
\end{enumerate}
for all $X\in \X^1(M)$, $\alpha,\beta\in \Gamma(B)$, $\xi,\eta\in \Gamma(\ka)$. 
\end{proposition}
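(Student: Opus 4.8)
The plan is to derive all three structure equations directly from the defining IM conditions \eqref{eq:symbol:IM:form} and \eqref{eq:compatibility:IM:E:form} for the connection $1$-form $(L,l)$, by feeding them pairs of sections chosen in the two summands of the splitting $A\simeq B\oplus\ka$ determined by $l$ (so that $B$ is identified with $\ker l\subset A$, and $l|_\ka=\id$). Recall that on the complement $\ker l$ the form $L$ encodes the tensor $U$ via $i_XL(\al)=-U(\al,X)$, while on $\ka$ it encodes the connection through $i_XL(\xi)=\nabla^L_X\xi$; thus $\nabla^L$ and $U$ are literally the two components of $L$ relative to the splitting, and each structure equation will emerge as the corresponding component of one IM axiom. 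Throughout I will use that $\rho_B(\al)=\rho_A(\al)$ for $\al\in\Gamma(\ker l)$, that $\ka\subset\ker\rho_A$, and that the relevant Lie derivative is the $A$-Lie derivative built from the representation \eqref{eq:representation:algbrd}, namely $(\Lie_\al\omega)(X)=\nabla^\ka_\al(\omega(X))-\omega([\rho_A(\al),X])$.

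The crucial preliminary step is to extract two identities from the symbol-compatibility condition (the last equation in \eqref{eq:compatibility:IM:E:form}), $l([\al,\be]_A)=\nabla^\ka_\al l(\be)-i_{\rho_A(\be)}L(\al)$. Taking $\al\in\Gamma(\ker l)$ and $\be=\xi\in\Gamma(\ka)$ (so $l(\xi)=\xi$ and $\rho_A(\xi)=0$) yields the bridge relation
\[ [\al,\xi]_A=\nabla^\ka_\al\xi=\nabla^L_{\rho_B(\al)}\xi,\qquad \al\in\Gamma(\ker l),\ \xi\in\Gamma(\ka),\]
which says that the $A$-action of a horizontal section on $\ka$ is computed by the ordinary connection $\nabla^L$ along its anchor; this is precisely what lets me convert $\nabla^\ka$ into $\nabla^L$ wherever needed. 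Taking instead $\al,\be\in\Gamma(\ker l)$ (so $l(\be)=0$) gives the vertical part of a horizontal bracket, $l([\al,\be]_A)=U(\al,\rho_B(\be))$, which I will use to decompose $[\al,\be]_A$ into its $\ker l$ and $\ka$ components.

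With these in hand, each structure equation follows by contracting the bracket IM condition $L([\al,\be]_A)=\Lie_\al L(\be)-\Lie_\be L(\al)$ with an arbitrary $X\in\X(M)$ and choosing the arguments appropriately. For (S1) one takes both arguments in $\ka$; since $\rho_A$ annihilates $\ka$ the Lie-derivative correction terms drop out and one recovers exactly that $\nabla^L$ preserves $[\cdot,\cdot]_\ka$ — this is already recorded in Corollary \ref{corollary:Lie:alg:bundle}, which I will simply cite. For (S2) one takes $\al\in\Gamma(\ker l)$ and $\xi\in\Gamma(\ka)$; expanding the two Lie derivatives, using the bridge relation to rewrite every occurrence of $\nabla^\ka_\al$ on $\ka$ as $\nabla^L_{\rho_B(\al)}$, and evaluating the term $i_X\Lie_\xi L(\al)=[U(\al,X),\xi]_\ka$, the identity reorganizes precisely into the curvature equation $R^{\nabla^L}(\rho_B(\al),X)\xi=[U(\al,X),\xi]_\ka$. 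For (S3) one takes both arguments in $\ker l$; the left-hand side $i_XL([\al,\be]_A)$ is expanded through the decomposition $[\al,\be]_A=[\al,\be]_B+U(\al,\rho_B(\be))$, producing the terms $-U([\al,\be]_B,X)$ and $\nabla^L_XU(\al,\rho_B(\be))$, while the right-hand side produces the four remaining terms, again after converting $\nabla^\ka$ to $\nabla^L$ on the $\ka$-valued quantities $U(\be,X)$ and $U(\al,X)$.

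The computations of the last paragraph are routine bilinear bookkeeping once the correct substitutions are in place; the genuine content, and the only point requiring real care, is the bridge relation together with the consistent distinction between the $A$-representation $\nabla^\ka$ and the ordinary connection $\nabla^L$. The main potential pitfall is sign discipline in the two Lie-derivative terms and in the identification $i_XL(\al)=-U(\al,X)$, since a slip there would spoil the antisymmetric combination that produces the cocycle equation (S3). I therefore expect to organize the argument around the two preliminary identities above and then verify each of (S2) and (S3) as a single chain of equalities.
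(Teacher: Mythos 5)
Your proposal is, in substance, exactly the proof the paper intends: the paper's own proof consists of the single sentence that the statement ``follows immediately from the IM conditions \eqref{eq:compatibility:IM:E:form} and is left to the reader,'' and your computation --- feeding the bracket equation $L([\al,\be])=\Lie_\al L(\be)-\Lie_\be L(\al)$ pairs of sections taken from $\ka$ and from $\ker l$, after converting $\nabla^\ka$ into $\nabla^L$ via the bridge relation --- is precisely that computation. Your sign conventions check out where they matter: with $\al\in\Gamma(\ker l)$ and $\xi\in\Gamma(\ka)$ one indeed gets $i_X\Lie_\xi L(\al)=[U(\al,X),\xi]_\ka$ and the identity reorganizes into (S2), and with both arguments in $\ker l$ the decomposition $[\al,\be]_A=[\al,\be]_B+U(\al,\rho_B(\be))$ produces exactly the six terms of (S3).

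One bookkeeping error needs fixing, in your derivation of the bridge relation. With your stated choice --- $\al\in\Gamma(\ker l)$ in the \emph{first} slot and $\be=\xi\in\Gamma(\ka)$ in the \emph{second} --- the third IM equation reads $l([\al,\xi]_A)=\Lie_\al l(\xi)-i_{\rho(\xi)}L(\al)=\nabla^\ka_\al\xi$; but since $\ka$ is an ideal, the left-hand side is $[\al,\xi]_A$, which equals $\nabla^\ka_\al\xi$ by the very definition of the representation \eqref{eq:representation:algbrd}, so this instance of the equation is a tautology and does not produce the link with $\nabla^L$. The bridge relation comes from the opposite ordering: take $\xi\in\Gamma(\ka)$ in the first slot and $\al\in\Gamma(\ker l)$ in the second, so that $l(\al)=0$ kills the Lie-derivative term and what remains is $l([\xi,\al]_A)=-i_{\rho_B(\al)}L(\xi)=-\nabla^L_{\rho_B(\al)}\xi$, i.e.\ $[\al,\xi]_A=\nabla^L_{\rho_B(\al)}\xi$. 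Because the relation you state is correct, everything downstream --- every conversion of $\nabla^\ka_\al$ into $\nabla^L_{\rho_B(\al)}$ in (S2) and (S3) --- is unaffected; only this one derivation needs the slots swapped, and the asymmetry of the third IM equation in its two arguments is exactly why the order matters.
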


The proof of this proposition follows immediately from the IM conditions (\ref{eq:compatibility:IM:E:form}) and is left to the reader.

\begin{definition}
Let $ B\Ato M$ be a Lie algebroid and $(\ka,[\cdot,\cdot]_{\ka})$ a Lie algebra bundle over $M$. A \textbf{coupling data} is a pair $(\nabla^L,U)$, where $\nabla^L$ is a connection on $\ka$ and $U\in \Gamma(B^*\otimes T^*M\otimes \ka)$ is a tensor field satisfying the structure equations (S1), (S2) and (S3).
\end{definition}

The following result shows that the coupling data gives a way of codifying algebroids with a partially split bundle of ideals. 

\begin{proposition}\label{prop:operators:splitting}
Let  $B\Ato M$ be a Lie algebroid, $(\ka,[\cdot,\cdot]_{\ka})$ a Lie algebra bundle over $M$ and $(U,\nabla^L)$ coupling data. Then $A:=B\oplus \ka$ is a Lie algebroid, with anchor $\rho_{A}(\alpha,\xi):=\rho_B(\alpha)$ and Lie bracket:
\begin{equation}\label{equation:bracket:splitting}
[(\alpha,\xi),(\beta,\eta)]_{A}:=([\alpha,\beta]_{B},U(\alpha,\rho_B(\beta))+\nabla^{L}_{\rho_B(\alpha)}\eta-\nabla^{L}_{\rho_B(\beta)}\xi+[\xi,\eta]_{\ka}).
\end{equation}
Then $\ka\subset A$ is partially split, with IM connection 1-form $(L,l)$ given by:
\begin{equation}\label{eq:L:nabla:U}
l=\mathrm{pr}_{\ka}, \quad i_XL(\alpha,\xi)=\nabla^L_{X}\xi-U(\alpha,X).
\end{equation}

Moreover, any Lie algebroid $A\Ato M$ together with a partially split bundle of ideals $\ka\subset A$ is isomorphic to one of this type.
\end{proposition}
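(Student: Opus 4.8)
The plan is to establish the two halves separately: first that the bracket \eqref{equation:bracket:splitting} turns $A=B\oplus\ka$ into a Lie algebroid in which $\ka$ is partially split with IM connection 1-form \eqref{eq:L:nabla:U}, and then that every partially split bundle of ideals is isomorphic to such a model. For the forward direction I would begin with the elementary axioms. Skew-symmetry of \eqref{equation:bracket:splitting} reduces, after cancelling the manifestly antisymmetric $\nabla^L$- and $[\cdot,\cdot]_\ka$-terms, to the antisymmetry $U(\alpha,\rho_B(\beta))=-U(\beta,\rho_B(\alpha))$ of the coupling term, which is part of the coupling data. The Leibniz rule follows from a one-line computation: in $[(\alpha,\xi),f(\beta,\eta)]_A$ the derivative $(\rho_B(\alpha)f)$ is produced jointly by $[\alpha,f\beta]_B$ and $\nabla^L_{\rho_B(\alpha)}(f\eta)$, giving exactly $(\rho_A(\alpha,\xi)f)(\beta,\eta)$, while $U$ is tensorial and $[\cdot,\cdot]_\ka$ is $C^\infty(M)$-bilinear.

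The crux is the Jacobi identity, and the key observation is that the Jacobiator is a tensor that can be sorted by its polynomial degree in the three $\ka$-slots $\xi,\eta,\zeta$. Its $B$-component vanishes by the Jacobi identity of $B$ together with $\rho_B([\alpha,\beta]_B)=[\rho_B(\alpha),\rho_B(\beta)]$. For the $\ka$-component, substituting the second component of \eqref{equation:bracket:splitting} into the outer bracket and summing cyclically, the terms fall into four groups: the degree-$3$ part is $\sum_{\mathrm{cyc}}[[\xi,\eta]_\ka,\zeta]_\ka$ and vanishes by the fiberwise Jacobi identity of $\ka$; the degree-$2$ part balances terms of type $[\nabla^L_{\rho_B(\alpha)}\eta,\zeta]_\ka$ against $\nabla^L_{\rho_B(\gamma)}[\xi,\eta]_\ka$ and vanishes by (S1); the degree-$1$ part assembles the curvature $\nabla^L_{\rho_B(\alpha)}\nabla^L_{\rho_B(\beta)}-\nabla^L_{\rho_B(\beta)}\nabla^L_{\rho_B(\alpha)}-\nabla^L_{[\rho_B(\alpha),\rho_B(\beta)]}$ against $[U(\alpha,\rho_B(\beta)),\cdot]_\ka$ and vanishes by (S2); and the degree-$0$ part recombines $U([\alpha,\beta]_B,\rho_B(\gamma))$ with $\nabla^L_{\rho_B(\gamma)}U(\alpha,\rho_B(\beta))$ and vanishes by the cyclic sum of (S3). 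Matching the degree-$0$ and degree-$1$ cyclic sums to (S3) and (S2) while correctly tracking all anchor terms is the main obstacle; the other two groups are immediate.

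With $A$ a Lie algebroid, I would check that $\ka=0\oplus\ka$ is a bundle of ideals: $\rho_A(0,\eta)=0$ and $[(\alpha,\xi),(0,\eta)]_A=(0,\nabla^L_{\rho_B(\alpha)}\eta+[\xi,\eta]_\ka)\in\Gamma(\ka)$. To see $\ka$ is partially split it then suffices, by Proposition \ref{prop:partially:split:algbrd}(iii), to verify that the pair $(L,l)$ of \eqref{eq:L:nabla:U}, with $l=\pr_\ka$ and $i_XL(\alpha,\xi)=\nabla^L_X\xi-U(\alpha,X)$, is an IM connection 1-form with $l|_\ka=\id$; the latter is immediate. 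The symbol equation for $(L,l)$ holds because $\nabla^L$ is a connection and $U$ is tensorial, and the remaining IM compatibility conditions become, after inserting the definitions of $\nabla^L$ and $U$ and the bracket \eqref{equation:bracket:splitting}, precisely the equations (S1), (S2), (S3); this is the computation of Proposition \ref{prop:structure:eqs} read in reverse, so no new work is needed.

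For the converse, given $A$ with a partially split bundle of ideals $\ka$, I would pick an IM connection 1-form $(L,l)\in\Omega^1_\imult(A;\ka)$ (Proposition \ref{prop:partially:split:algbrd}) and use its symbol $l$, which satisfies $l|_\ka=\id$, to split $A=\sigma(B)\oplus\ka$, where $B=A/\ka$ and $\sigma=(\phi|_{\ker l})^{-1}$. Setting $\nabla^L_X\xi:=i_XL(\xi)$ and $U(\alpha,X):=-i_XL(\sigma(\alpha))$ produces coupling data by Proposition \ref{prop:structure:eqs}. It then remains to expand $[\sigma(\alpha)+\xi,\sigma(\beta)+\eta]_A$ in this decomposition. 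The $B$-component is $[\alpha,\beta]_B$ since $\phi$ is a morphism; and using the IM identities $i_XL(\gamma)=l(\nabla_X\gamma)$ and $l([\gamma,\delta]_A)=[\gamma,l(\delta)]_A-i_{\rho(\delta)}L(\gamma)$ from the proof of Proposition \ref{prop:Cartan:connection:splitting}, one finds $l([\sigma(\alpha),\sigma(\beta)]_A)=U(\alpha,\rho_B(\beta))$, $l([\sigma(\alpha),\eta]_A)=\nabla^L_{\rho_B(\alpha)}\eta$, and $l([\xi,\eta]_A)=[\xi,\eta]_\ka$ (the last by Corollary \ref{corollary:Lie:alg:bundle}). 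Assembling these recovers exactly \eqref{equation:bracket:splitting}, so $(\alpha,\xi)\mapsto\sigma(\alpha)+\xi$ is an isomorphism of $A$ onto a Lie algebroid of the constructed type, and the forward direction shows the recovered IM 1-form is again \eqref{eq:L:nabla:U}, closing the correspondence.
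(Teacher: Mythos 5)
Your proposal is correct in substance and follows essentially the same route as the paper's proof: the Jacobiator is analyzed componentwise (your sorting by polynomial degree in the $\ka$-slots is exactly the paper's evaluation on the four types of triples built from $(\al,0)$ and $(0,\xi)$), and each piece is killed by (S1), (S2) or (S3) evaluated at $X=\rho_B(\cdot)$; the IM axioms for the pair \eqref{eq:L:nabla:U} are matched with (S1)--(S3) by reading Proposition \ref{prop:structure:eqs} in reverse; and the converse splits $A$ along $\ker l$ and recovers the three components of the bracket from the third IM identity, exactly as the paper does.

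The one slip is at the start: you assert that the antisymmetry $U(\al,\rho_B(\be))=-U(\be,\rho_B(\al))$, which is precisely what makes \eqref{equation:bracket:splitting} skew-symmetric, is ``part of the coupling data''. It is not: the definition of coupling data imposes only the structure equations (S1)--(S3), and this antisymmetry must be derived. The paper does so as the first step of its proof (equation \eqref{eq:U:skew}): skew-symmetrizing (S3) in $\al,\be$ yields
\[
\nabla^L_X\bigl(U(\al,\rho_B(\be))+U(\be,\rho_B(\al))\bigr)=0 \quad\text{for all } X\in\X(M),
\]
and since the symmetric expression is $C^\infty(M)$-bilinear in $(\al,\be)$, replacing $\al$ by $f\al$ gives $X(f)\bigl(U(\al,\rho_B(\be))+U(\be,\rho_B(\al))\bigr)=0$, forcing it to vanish. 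This is a two-line patch, but as written your skew-symmetry step rests on a premise that is not in the definition, so it should be flagged and filled.
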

\begin{proof}
Assume we are given coupling data $(U,\nabla^L)$. The skew-symmetrization with respect to $\alpha$ and $\beta$ of (S3) gives:
\[\nabla^L_X\big(U(\alpha,\rho_B(\beta))+ U(\beta,\rho_B(\alpha))\big)=0,\]
which yields:
\begin{equation}\label{eq:U:skew} 
U(\alpha,\rho_B(\beta))=- U(\beta,\rho_B(\alpha)).
\end{equation}
Hence \eqref{equation:bracket:splitting} defines a skew-symmetric bracket. It is immediate to check that it satisfies the Leibniz identity and that its Jacobiator is given by:
\begin{align*}
     J((0,\xi),(0,\eta),(0,\nu))&=(0,J_{\ka}(\al,\eta,\nu))\\
     J((\al,0),(0,\xi),(0,\eta))&=(0,C_1(\al,\xi,\eta))\\
     J((\al,0),(\be,0),(0,\xi))&=(0,C_2(\al,\be,\xi)\\
     J((\al,0),(\be,0),(\ga,0))&=(J_B(\al,\be,\ga),C_3(\al,\be,\ga))
\end{align*}
where $J_\ka$ and $J_B$ are the Jacobiators of the brackets $[\cdot,\cdot]_\ka$ and $[\cdot,\cdot]_B$ and
\begin{align*}
    C_1(\al,\xi,\eta)&:=\nabla^{L}_{\rho_B(\al)}[\xi,\eta]_{\ka}-[\nabla^{L}_{\rho_B(\al)}\xi,\eta]_{\ka}-[\xi,\nabla^{L}_{\rho_B(\al)}\eta]_{\ka}\\
    C_2(\al,\be,\xi)&:=\nabla^{L}_{\rho_B(\alpha)}\nabla^{L}_{\rho_B(\be)}\xi-\nabla^{L}_{\rho_B(\be)}\nabla^{L}_{\rho_B(\alpha)}\xi-\nabla^{L}_{[\rho_B(\al),\rho_B(\be)]}\xi-[U(\alpha,\rho_B(\be)),\xi]_{\ka}\\
    C_3(\al,\be,\ga)&:=\nabla^{L}_{\rho_B(\alpha)}U(\beta,\rho_B(\gamma))+U(\alpha,[\rho_B(\beta),\rho_B(\gamma)])
    -\nabla^L_{\rho_B(\beta)}U(\alpha,\rho_B(\gamma))\\ &\qquad\quad -U(\beta,[\rho_B(\alpha),\rho_B(\gamma)])+\nabla^L_{\rho_B(\gamma)}U(\alpha,\rho_B(\beta))-U([\alpha,\beta]_{B},\rho_B(\gamma))
\end{align*}
If in the structure equations (S1), (S2) and (S3) we replace $X$ by $\rho_B(\al)$, $\rho_B(\be)$ and $\rho_B(\ga)$, respectively, we obtain that these 3 expressions vanish. Hence, the bracket \eqref{eq:L:nabla:U} satisfies the Jacobi identity, so we obtain a Lie algebroid.
If we now define $l:A\to\ka$ and $L:\Gamma(A)\to \Omega^1(M,\ka)$ by \eqref{eq:L:nabla:U}, one checks that the symbol equation \eqref{eq:symbol:IM:form} holds and that the IM conditions (\ref{eq:compatibility:IM:E:form}) are equivalent to (S1)-(S3).

Finally, if we start with a partially split bundle of ideals $\ka\subset A$, as we saw above, a choice of IM connection 1-form $(L,l)\in \Omega^1_\imult(A,\ka)$ determines coupling data satisfying the structure equations and a vector bundle isomorphism $A\simeq B\oplus\ka$ where $B\simeq\ker l$. The anchor of $A$ is then given by $\rho_{A}(\alpha,\xi):=\rho_B(\alpha)$ and all we have to check is that, under this isomorphism, the Lie bracket becomes \eqref{eq:L:nabla:U}. For this we use the third equation in the IM condition (\ref{eq:compatibility:IM:E:form}) for $(L,l)$ and the definitions of $\nabla^L$ and $U$ to conclude that:
\begin{itemize}
    \item[\tiny$\bullet$] If $\al\in\Gamma(B)\simeq\Gamma(\ker l)$ and $\xi\in\Gamma(\ka)$ then: \[ [\al,\xi]_{A}=\nabla^L_{\rho_B(\al)}\xi;\]
    \item[\tiny$\bullet$] If $\al,\be\in\Gamma(B)\simeq\Gamma(\ker l)$ then:
    \[ [\al,\be]_{A}=[\al,\be]_{B}+U(\al,\rho_B(\be)). \]
\end{itemize}
Formula \eqref{eq:L:nabla:U} for the bracket follows from these since $\ka$ is a bundle of ideals.
\end{proof}

\subsection{Flat IM Ehresmann connections}
\label{sec:flat}

Let $\ka\subset A$ be a bundle of ideals and let $B=A/\ka$. As we saw in Section \ref{sec:obstructions:partial:split}, a choice of IM connection 1-form $(L,l)\in \Omega^1_\imult(A,\ka)$ gives rise to the  coupling data $(\nabla^L,U)$, where $U\in\Gamma(B^*\otimes T^*M\otimes \ka)$ is a tensor field and $\nabla^L$ is a usual connection on $\ka$. Using Proposition \ref{prop:operators:splitting}, it is not hard to check that we have a $\ka$-valued IM 2-form
$(\tilde{L},\tilde{l})\in\Omega^2_\imult(A,\ka)$, where:
\begin{equation}
    \label{eq:IM:curvature}
    \tilde{L}(\al,\xi)=R^{\nabla^L}\cdot\xi-\d^{\nabla^L}U(\al),\qquad \tilde{l}(\al,\xi)=-U(\al),
\end{equation}
for $\al\in\Gamma(B)$ and $\xi\in\Gamma(\ka)$. This is the infinitesimal version of the curvature of the multiplicative Ehresmann connection from Proposition \ref{prop:curvature:multiplicative}. Namely, in the case where $\phi:A\to B$ is induced by a groupoid morphism $\Phi:\G\to \H$ and the IM Ehresmann connection is induced by a multiplicative Ehresmann $E$ connection, \eqref{eq:IM:curvature} is the IM form corresponding to curvature 2-form $\Omega$ of $E$. 

\begin{definition}
An IM Ehresmann connection $(L,l)\in\Omega^1_\imult(A;\ka)$ with associated coupling data $(\nabla^L,U)$ is called:
\begin{enumerate}[(i)]
    \item {\bf totally flat} if $R^{\nabla^L}=0$ and $U=0$;
\item {\bf leafwise flat} if the splitting $B\to A$, $\al\mapsto (\al,0)$, is a Lie algebroid map, i.e., $U(\al,\rho_B(\be))=0$;
        \item {\bf kernel flat} if the associated linear connection $\nabla^L$ is flat: $R^{\nabla^{L}}=0$.
\end{enumerate}
\end{definition}

The first condition is equivalent to involutivity of the distribution $E\subset TA$ corresponding to the IM Ehresmann connection. This condition is very restrictive, as the following infinitesimal analogue of Proposition \ref{prop:totally:flat:grpd} shows: 

\begin{proposition}
If a bundle of ideals $\ka\subset A$ admits a totally flat IM Ehresmann connection, then there is a covering space of the base $p:\widetilde{M}\to M$ such that the pullback of $A$ to $\widetilde{M}$ is isomorphic to a product $p^*A\simeq p^*B\times \gg$, where $p^*\ka\simeq \widetilde{M}\times \gg$.
\end{proposition}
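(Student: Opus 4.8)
The plan is to read off the Lie bracket of $A$ from the coupling description of Proposition~\ref{prop:operators:splitting} and then trivialize the remaining curvature data on a suitable cover. Concretely, I would fix an IM connection $1$-form $(L,l)\in\Omega^1_\imult(A,\ka)$ and let $(\nabla^L,U)$ be the associated coupling data, so that under the identification $A\simeq B\oplus\ka$ the bracket is given by \eqref{equation:bracket:splitting}. Total flatness means $U=0$ and $R^{\nabla^L}=0$, so this bracket collapses to
\[
[(\al,\xi),(\be,\eta)]_A=\big([\al,\be]_B,\ \nabla^L_{\rho_B(\al)}\eta-\nabla^L_{\rho_B(\be)}\xi+[\xi,\eta]_\ka\big).
\]
By structure equation (S1) the connection $\nabla^L$ preserves the fiberwise bracket of $\ka$, and by kernel flatness it is flat; hence $(\ka,[\cdot,\cdot]_\ka,\nabla^L)$ is a \emph{flat bundle of Lie algebras}.

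I would assume $M$ connected (otherwise argue component by component, which is legitimate since $\ka$ is locally trivial by Corollary~\ref{corollary:Lie:alg:bundle}) and fix $x_0\in M$, $\gg:=\ka_{x_0}$. Because $\nabla^L$ is flat and bracket-preserving, its parallel transport along loops based at $x_0$ defines a holonomy representation $\mathrm{hol}\colon\pi_1(M,x_0)\to\Aut(\gg)$ landing in the \emph{Lie algebra} automorphisms of $\gg$. I would then take $p\colon\widetilde{M}\to M$ to be the covering space associated with the subgroup $\ker(\mathrm{hol})\subset\pi_1(M,x_0)$, the holonomy cover of $\nabla^L$. On $\widetilde{M}$ the pulled-back connection $p^*\nabla^L$ has trivial holonomy, so path-independent parallel transport provides a global isomorphism of bundles of Lie algebras $p^*\ka\simeq\widetilde{M}\times\gg$ under which $p^*\nabla^L$ becomes the canonical flat connection $\nabla_X\xi=\Lie_X\xi$ (differentiation of $\gg$-valued functions).

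Finally I would pull the Lie algebroid back along the local diffeomorphism $p$. Since $p$ is \'etale, $p^*A$ is the vector bundle pullback equipped with the locally transported bracket, and the coupling data of $p^*A$ relative to the pulled-back splitting is simply $(p^*\nabla^L,p^*U)=(p^*\nabla^L,0)$. Writing sections of $p^*\ka\simeq\widetilde{M}\times\gg$ as $\gg$-valued functions and using the trivialization above, the displayed bracket becomes
\[
[(\al,\xi),(\be,\eta)]=\big([\al,\be]_{p^*B},\ \Lie_{\rho_B(\al)}\eta-\Lie_{\rho_B(\be)}\xi+[\xi,\eta]_\gg\big),
\]
which is exactly the bracket of the product Lie algebroid $p^*B\times\gg$ (the product of the pullback algebroid $p^*B\Ato\widetilde{M}$ with the Lie algebra $\gg$), with $p^*\ka\simeq\widetilde{M}\times\gg$ as claimed.

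The routine parts are the bracket bookkeeping and the functoriality of the pullback along an \'etale map. The main point requiring care is the holonomy step: one must check that $\nabla^L$ really preserves the bracket (so that $\mathrm{hol}$ lands in $\Aut(\gg)$ and the trivialization of $p^*\ka$ is by Lie algebra isomorphisms), and that on the holonomy cover the flat connection becomes the tautological one, so that the mixed term $\nabla^L_{\rho_B(\al)}\eta$ turns precisely into the Lie derivative term of the product algebroid.
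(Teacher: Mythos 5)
Your proposal is correct and follows essentially the same route as the paper's proof: pass to the coupling data $(\nabla^L, U=0)$, take the holonomy cover of the flat bracket-preserving connection $\nabla^L$, trivialize $p^*\ka\simeq\widetilde{M}\times\gg$ by parallel transport, and read off from \eqref{equation:bracket:splitting} that $p^*A$ is the product $p^*B\times\gg$. Your write-up merely makes explicit the points the paper leaves implicit (that the holonomy lands in $\Aut(\gg)$ by (S1), and that the mixed term becomes the Lie derivative in the trivialization).
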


\begin{proof}
Let $(\nabla^L,U=0)$ be a totally flat coupling data. Let $p:\widetilde{M}\to M$ denote the holonomy cover of $\nabla^L$. Parallel transport along $\nabla^L$ induces a trivialization of the bundle of Lie algebras $p^*\ka\simeq \widetilde{M}\times \gg$, where $\gg=\ka_x$, for some $x\in M$. Since $\nabla^L$ becomes the trivial connection on $\widetilde{M}\times \gg$, formulas \eqref{equation:bracket:splitting} show that the algebroid $p^*A$ is a product (see also the example from Subsection \ref{ex:product}).
\end{proof}

The second flatness condition can be understood as a leafwise version of the first:

\begin{proposition}
Given a bundle of ideals $\ka\subset A$, an IM Ehresmann connection $(L,l)\in\Omega^1_\imult(A;\ka)$ is leafwise flat if and only if, for any leaf $\O\subset M$ of the Lie algebroid $A$, the pullback IM Ehresmann connection $(L,l)|_{\O}\in \Omega^1_\imult(A|_{\O};\ka|_\O)$ is totally flat.
\end{proposition}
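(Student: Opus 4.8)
The plan is to phrase everything through the coupling data $(\nabla^L,U)$ attached to $(L,l)$ by Proposition \ref{prop:structure:eqs}, and to exploit that on a leaf the anchor is surjective onto the tangent space. Recall that leafwise flatness means $U(\al,\rho_B(\be))=0$ for all $\al,\be\in\Gamma(B)$, while total flatness of an IM connection means $R^{\nabla^L}=0$ together with $U=0$. The restricted connection $(L,l)|_\O\in\Omega^1_\imult(A|_\O;\ka|_\O)$ has coupling data obtained simply by restriction: its connection is $\nabla^L|_\O$ (only $X$ tangent to $\O$ is allowed), and its tensor is $U|_\O$, meaning $U|_\O(\al,X)=U(\al,X)$ for $X\in T_p\O$.

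The first step is the geometric observation that, since $\ka\subset\ker\rho_A$, the anchor descends to $\rho_B\colon B\to TM$, and for a leaf $\O$ one has $T_p\O=\im(\rho_A|_p)=\im(\rho_B|_p)$ at every $p\in\O$. Consequently $U|_\O=0$ is equivalent, pointwise on $\O$, to the vanishing of $U(\al,\rho_B(\be))$ for all $\al,\be$. Ranging over all leaves, and using that the leaves cover $M$, I would conclude that the single global condition $U(\al,\rho_B(\be))=0$ (that is, leafwise flatness of $(L,l)$) holds if and only if $U|_\O=0$ for every leaf $\O$.

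The remaining point is that, on a leaf, the condition $U|_\O=0$ already forces $R^{\nabla^L|_\O}=0$, so no extra hypothesis is needed to upgrade $U|_\O=0$ to total flatness of $(L,l)|_\O$. This is where structure equation (S2) enters: substituting $X=\rho_B(\be)$ into
\[
\nabla^{L}_{\rho_B(\alpha)}\nabla^{L}_{X}-\nabla^{L}_{X}\nabla^{L}_{\rho_B(\alpha)}-\nabla^{L}_{[\rho_B(\alpha),X]}=[U(\alpha,X),\cdot]_{\ka}
\]
turns the left-hand side into exactly the curvature $R^{\nabla^L}(\rho_B(\al),\rho_B(\be))$, yielding the identity $R^{\nabla^L}(\rho_B(\al),\rho_B(\be))=[\,U(\al,\rho_B(\be)),\cdot\,]_{\ka}$. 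Since on $\O$ every pair of tangent vectors is of the form $(\rho_B(\al),\rho_B(\be))$, the curvature of $\nabla^L|_\O$ is governed entirely by $U|_\O$; thus $U|_\O=0$ gives $R^{\nabla^L|_\O}=0$, and $(L,l)|_\O$ is totally flat. The converse direction of this sub-step is immediate, as total flatness of $(L,l)|_\O$ includes $U|_\O=0$ by definition.

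Putting these together gives the chain of equivalences: $(L,l)$ is leafwise flat $\iff U(\al,\rho_B(\be))=0$ globally $\iff U|_\O=0$ for every leaf $\O$ $\iff (L,l)|_\O$ is totally flat for every leaf $\O$. I expect the only genuine subtlety to be the bookkeeping in the first step, namely confirming that the coupling data of the pullback connection is literally the restriction of $(\nabla^L,U)$ and that $\im\rho_B|_p=T_p\O$; the analytic content is entirely contained in reading the leafwise curvature off equation (S2), which is a one-line substitution.
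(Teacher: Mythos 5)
Your proposal is correct and follows essentially the same route as the paper: identify the coupling data of the restricted connection as the restriction $(\nabla^L|_\O,U|_\O)$, use that the anchor surjects onto $T\O$ to equate leafwise flatness with $U|_\O=0$ on every leaf, and read the leafwise curvature off structure equation (S2) to upgrade $U|_\O=0$ to total flatness. The identity $R^{\nabla^L}(\rho_B(\al),\rho_B(\be))=[U(\al,\rho_B(\be)),\cdot]_\ka$ you extract from (S2) is exactly the paper's use of transitivity of $A|_\O$ together with (S2).
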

\begin{proof}
Given a leaf $\O$ of $A$, the coupling data associated to the pullback of $(L,l)$ to $A|_\O$ is $(\nabla^\O,U^\O)$, where $\nabla^\O$ is the pullback of the connection $\nabla^L$ to $\ka|_\O$ and $U^\O$ is the restriction of $U$ to $B|_\O\otimes T\O$. Now, $(L,l)$ being leafwise flat, i.e., \[U(\alpha,\rho_B(\beta))=0,  \quad \textrm{for all}\ \alpha,\beta\in B,\]
is clearly equivalent to $U^\O=0$ for all leaves $\O\subset M$. On the other hand, using that $A|_\O$ is transitive and (S2), we see that $U^\O=0$ implies that $\nabla^\O$ is flat. 
\end{proof}

Note that a leafwise flat IM Ehresmann connection turns $A$ into a semi-direct product, in the sense of the following. 

Let $B\Ato M$ be a Lie algebroid which acts on a bundle of Lie algebras $(\ka,[\cdot,\cdot]_{\ka})\to M$ by infinitesimal Lie algebra automorphisms. The \textbf{semi-direct product} is the Lie algebroid $A:=B\times_M\ka\Ato M$ with Lie bracket: 
\[ [(\al,\xi),(\be,\eta)]=([\al,\be],\nabla^{\ka}_{\al}\eta-\nabla^{\ka}_{\beta}\xi+[\xi,\eta]_{\ka})\]
and anchor $\rho_B\circ \pr_B$. Note that the short exact sequence 
\[
\begin{tikzcd}
\ka\ar[r, hook] & B\times_M\ka \ar[r, two heads, "\pr_1"]& B
\end{tikzcd}
\]
is canonically split by a Lie algebroid morphism:
\[ \sigma:B\longrightarrow B\times_M \ka, \quad \al \mapsto (\al,0).\]

Conversely, given a surjective Lie algebroid map $\phi:A\to B$ covering the identity, which admits a \textbf{Lie algebroid splitting}:
\[
\begin{tikzcd}
\ka\ar[r, hook] & A \ar[r, two heads, "\phi"] & B\ar[l, dashed, bend left, "\sigma"]
\end{tikzcd}
\]
we obtain an action of $B\Ato M$ on the bundle of Lie algebras $(\ka,[\cdot,\cdot]_{\ka})\to M$:
\[ \nabla_{\al}^{\ka}\xi:=[\sigma(\al),\xi]_{A},\]
preserving the Lie algebra structure, and a Lie algebroid isomorphism:
\[ B\times_M \ka \simeq A,\quad (\al,\xi)\mapsto \sigma(\al)+\xi.\]
Notice that the Lie algebroid splitting $\sigma:B\to A$ allows to pull back $\ka$-valued IM forms on $A$ to IM forms on $B$:
\[\sigma^*:\Omega_{\imult}^{\bullet}(A;\ka)\rmap \Omega_{\imult}^{\bullet}(B;\ka).\]
Therefore we conclude the following:

\begin{corollary}
Given a bundle of ideals $\ka\subset A$, with an IM Ehresmann connection $(L,l)\in\Omega^1_\imult(A;\ka)$ that is leafwise flat, we obtain a IM 1-form $(U,0)\in \Omega_{\imult}^1(B;\ka)$.
\end{corollary}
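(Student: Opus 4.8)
The plan is to obtain the asserted IM $1$-form by simply pulling back the IM connection $1$-form $(L,l)$ along the canonical splitting, so that the entire proof reduces to identifying this pullback explicitly. First I would make precise what leafwise flatness provides. By definition it means $U(\al,\rho_B(\be))=0$, which is exactly the condition for the bundle map $\sigma\colon B\to A$, $\al\mapsto(\al,0)$, under the identification $A\simeq B\oplus\ka$ determined by $l$, to be a Lie algebroid morphism. Thus $A$ is the semi-direct product $B\times_M\ka$, and $\ka$ carries the $B$-representation $\nabla^{\ka}_\al\xi:=[\sigma(\al),\xi]_A=\nabla^L_{\rho_B(\al)}\xi$. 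The essential point is that this $B$-representation is precisely the restriction along $\sigma$ of the canonical $A$-representation on $\ka$, so the two are compatible and the pullback of $\ka$-valued IM forms from $A$ to $B$, displayed just before the statement, is available.

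Next I would apply the pullback operator $\sigma^*\colon\Omega^1_\imult(A;\ka)\to\Omega^1_\imult(B;\ka)$ to $(L,l)$. On symbol and coefficient data $\sigma^*$ is precomposition with $\sigma$, i.e. $\sigma^*(L,l)=(L\circ\sigma,\,l\circ\sigma)$. Granting that $\sigma^*$ indeed lands in IM forms on $B$, it remains only to compute the two components using the explicit formulas $l=\mathrm{pr}_{\ka}$ and $i_XL(\al,\xi)=\nabla^L_X\xi-U(\al,X)$. For the symbol,
\[
(l\circ\sigma)(\al)=l(\al,0)=\mathrm{pr}_{\ka}(\al,0)=0,
\]
so the pullback has vanishing symbol. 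For the coefficient map, taking $\xi=0$,
\[
i_X\big((L\circ\sigma)(\al)\big)=i_XL(\al,0)=-U(\al,X),
\]
so $L\circ\sigma$ coincides, up to the sign built into the definition $U(\al,X)=-i_XL(\al)$, with the tensor $U$ viewed as a map $\Gamma(B)\to\Omega^1(M;\ka)$. Hence $\sigma^*(L,l)=(U,0)\in\Omega^1_\imult(B;\ka)$, as claimed.

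The only nontrivial ingredient, and the step I expect to require genuine care, is the well-definedness of $\sigma^*$ on IM forms: one must check that precomposition with $\sigma$ preserves both the symbol/Leibniz identity \eqref{eq:symbol:IM:form} and the IM compatibility conditions \eqref{eq:compatibility:IM:E:form}, now read with respect to the pulled-back representation $\nabla^{\ka}$ on $\ka$. This is exactly where leafwise flatness is used, since it is what upgrades $\sigma$ from a mere vector bundle splitting to a Lie algebroid morphism; without the morphism property the compatibility conditions would not transport. Once this functoriality is in hand, the corollary follows immediately from the two displayed substitutions above.
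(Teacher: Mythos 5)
Your proposal is correct and follows essentially the same route as the paper: leafwise flatness makes the splitting $\sigma:B\to A$, $\al\mapsto(\al,0)$, a Lie algebroid morphism, one pulls back $(L,l)$ along $\sigma$ (with the $B$-representation $\nabla^\ka_\al=\nabla^L_{\rho_B(\al)}$ obtained by restricting the $A$-representation), and the components compute to $l\circ\sigma=0$ and $i_X(L\circ\sigma)(\al)=-U(\al,X)$, giving the asserted IM $1$-form up to the harmless sign. The paper itself merely asserts the functoriality of $\sigma^*$ on IM forms (the step you flag as needing care, which is a routine check using $\rho_A\circ\sigma=\rho_B$ and $\sigma([\al,\be]_B)=[\sigma(\al),\sigma(\be)]_A$), so your treatment matches, and in the explicit computation slightly exceeds, the paper's level of detail.
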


The following infinitesimal version of Proposition \ref{prop:invariant:connection} gives yet another notion of flatness, weaker than totally flat and stronger than leafwise flat: 

\begin{proposition}\label{prop:invarinat:connection:inf}
Let $\ka\subset A$ be a bundle of ideals with quotient $B:=A/\ka$. The following are equivalent:
\begin{enumerate}[(i)]
    \item There is an IM Ehresmann connection $(L,l)\in\Omega^1(A;\ka)$ on $A$ and $\pr:A\to B$ whose associated splitting $\sigma:B\to A$ is a horizontal algebroid splitting: i.e., $\sigma^*(L,l)=0$;
    \item There exists an IM Ehresmann connection $(L,l)\in\Omega^1_\imult(A;\ka)$ with coupling data $(\nabla^L,U=0)$; 
    \item $A$ is isomorphic to a semi-direct product $A\simeq B\times_M\ka$ and $\ka$ admits an $B$-invariant connection $\nabla^L$ preserving the bracket $[\cdot,\cdot]_{\ka}$ (see Definition \ref{def:A-inv:conn}).
\end{enumerate}
\end{proposition}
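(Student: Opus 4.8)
The plan is to reduce all three statements to the single condition $U=0$ on the coupling data, using the dictionary between partially split bundles of ideals and coupling data established in Propositions \ref{prop:structure:eqs} and \ref{prop:operators:splitting}. Throughout I would use the vector bundle splitting $A\simeq B\oplus\ka$ determined by the symbol $l$ (with $B\simeq\ker l$) together with the associated splitting $\sigma:B\to A$, $\sigma(\al)=(\al,0)$; recall that the coupling data attached to an IM connection 1-form $(L,l)$ is $\nabla^L_X\xi=i_XL(\xi)$ and $U(\al,X)=-i_XL(\al)$.

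First I would prove the equivalence (i) $\Leftrightarrow$ (ii). Since $\im\sigma=\ker l$, the symbol part $\sigma^*l=l\circ\sigma$ vanishes automatically, so $\sigma^*(L,l)=0$ is equivalent to $\sigma^*L=0$. But $i_X(\sigma^*L)(\al)=i_XL(\sigma(\al))=i_XL(\al,0)=-U(\al,X)$, so $\sigma^*L=0$ holds precisely when $U=0$. Moreover, reading off the bracket of $A$ from Proposition \ref{prop:operators:splitting} one sees $[\sigma(\al),\sigma(\be)]_A=\sigma([\al,\be]_B)+(0,U(\al,\rho_B(\be)))$, so as soon as $U=0$ the map $\sigma$ is automatically a Lie algebroid splitting; this is what justifies calling it a \emph{horizontal algebroid splitting}. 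Hence (i) and (ii) both assert the existence of an IM Ehresmann connection with $U=0$.

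Next I would prove (ii) $\Leftrightarrow$ (iii). Given an IM connection with coupling data $(\nabla^L,U=0)$, Proposition \ref{prop:operators:splitting} identifies $A\simeq B\oplus\ka$ with the bracket
\[
[(\al,\xi),(\be,\eta)]_A=\big([\al,\be]_B,\ \nabla^L_{\rho_B(\al)}\eta-\nabla^L_{\rho_B(\be)}\xi+[\xi,\eta]_\ka\big),
\]
which is precisely the semi-direct product bracket for the $B$-action $\nabla^\ka_\al:=\nabla^L_{\rho_B(\al)}$. Structure equation (S1) says $\nabla^L$ preserves $[\cdot,\cdot]_\ka$, so $\nabla^\ka$ acts by infinitesimal automorphisms; setting $X=\rho_B(\be)$ in (S2) with $U=0$ and using $\rho_B([\al,\be]_B)=[\rho_B(\al),\rho_B(\be)]$ gives the flatness of $\nabla^\ka$, so $\nabla^\ka$ is a genuine representation and $A\simeq B\times_M\ka$ is a semi-direct product. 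The full equation (S2) with $U=0$, namely $R^{\nabla^L}(\rho_B(\al),X)=0$ together with the identity $\nabla^\ka_\al=\nabla^L_{\rho_B(\al)}$, is exactly the assertion that $\nabla^L$ is $B$-invariant in the sense of Definition \ref{def:A-inv:conn}, while (S3) is vacuous when $U=0$. This yields (iii). Conversely, from a semi-direct product $A\simeq B\times_M\ka$ with representation $\nabla^\ka$ and a $B$-invariant connection $\nabla^L$ preserving the bracket, I would set $l=\pr_{\ka}$, $U=0$ and $i_XL(\al,\xi)=\nabla^L_X\xi$; the $B$-invariance guarantees that $(\nabla^L,0)$ satisfies (S1)--(S3), and the relation $\nabla^\ka_\al=\nabla^L_{\rho_B(\al)}$ ensures that the Lie algebroid reconstructed by Proposition \ref{prop:operators:splitting} is the given $A$. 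By the computation in the previous paragraph the resulting $\sigma$ is horizontal, so (i) follows as well.

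The main obstacle, I expect, is the precise bookkeeping in matching (S2) restricted to $U=0$ with Definition \ref{def:A-inv:conn} of a $B$-invariant connection, and in particular in verifying the consistency condition $\nabla^\ka_\al=\nabla^L_{\rho_B(\al)}$ relating the representation that defines the semi-direct product to the invariant connection; this is exactly what makes the reconstruction in the converse direction land on the correct Lie algebroid rather than a different one. All the remaining steps are formal manipulations of the coupling data, and the overall argument is the infinitesimal mirror of the groupoid statement in Proposition \ref{prop:invariant:connection}, with the horizontal-path and multiplicativity arguments there replaced by the algebraic structure equations (S1)--(S3) here.
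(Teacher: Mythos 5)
Your proposal is correct and takes essentially the same route as the paper: both reduce all three conditions to the vanishing of the tensor $U$ in the coupling data, using $\sigma^*(L,l)=(\pm U,0)$ for (i)$\Leftrightarrow$(ii), and the bracket formula of Proposition \ref{prop:operators:splitting} together with the structure equations (S1)--(S2) at $U=0$ for (ii)$\Leftrightarrow$(iii). The only difference is one of detail: you spell out the verifications (flatness of $\nabla^\ka$, the matching of (S2) with Definition \ref{def:A-inv:conn}, and the vacuousness of (S3)) that the paper's terse proof leaves implicit, plus an immaterial sign discrepancy in $\sigma^*L$.
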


\begin{proof}
Note that $\sigma^*(L,l)=(U,0)$, so (i) and (ii) are equivalent.

If $U=0$, then the formula for the Lie bracket on $A$ \eqref{equation:bracket:splitting} together with (S1), (S2) are equivalent to $\nabla^L$ being a $B$ invariant connection on $\ka$ which preserves $[\cdot,\cdot]_{\ka}$.

Conversely, if $A\simeq B\times_M\ka$, and the representation of $B$ on $\ka$ admits a $B$-invariant connection $\nabla^L$ which preserves $[\cdot,\cdot]_{\ka}$, then $(\nabla^L,U=0)$ is a coupling data for an IM Ehresmann connection.
\end{proof}

Finally, we look at the kernel flat IM Ehresmann connections. For this, observe that the representation $\nabla^\ka$ of the Lie algebroid $A$ restricts to an $A$-representation on the center $z(\ka)$. While the representation of $A$ on the whole $\ka$, in general, does not factor to a representation of $B$, its restriction to the center $z(\ka)$ does factor. We still denote this representation by $\nabla^\ka$.
In terms of the coupling the representation of $B$ on the center $z(\ka)$ is given by:
\[ \nabla^{\ka}_{\al}\xi=\nabla^L_{\rho_B(\al)}\xi, \]
for all $\al\in\Gamma(B)$, $\xi\in\Gamma(z(\ka))$. This follows from the expression of the bracket given in Proposition \ref{prop:operators:splitting}.

In the kernel flat case, the structure equations (S1)-(S3) of Proposition \ref{prop:structure:eqs} simplify to:

\begin{proposition}
\label{prop:flat:jets}
Let $\ka\subset A$ be a partially split bundle of ideals with coupling data $(\nabla^L,U)$. If $\nabla^L$ is flat, then:
\begin{itemize}
    \item[(S1')] The connection $\nabla^L$ preserves the bracket $[\cdot,\cdot]_\ka$;
    \item[(S2')] The tensor $U$ takes values in the center of $\ka$;
    \item[(S3')] The pair $(\d^{\nabla^L} U,U)$ is a $z(\ka)$-valued IM 2-form on $B$.
\end{itemize}
Conversely, given a Lie algebroid $B\Ato M$, a Lie algebra bundle $\ka\to M$ with a flat connection $\nabla^L$, and a tensor field $U\in\Gamma(B^*\otimes T^*M\otimes \ka)$ satisfying (S1')-(S3'),
then $\ka$ is a partially split bundle of ideals in Lie algebroid $A=B\oplus\ka$ with kernel flat coupling data $(\nabla^L,U)$.
\end{proposition}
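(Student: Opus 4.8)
The plan is to prove both implications in parallel, since under the standing flatness hypothesis $R^{\nabla^L}=0$ the equations (S1), (S2), (S3) of Proposition \ref{prop:structure:eqs} and the conditions (S1'), (S2'), (S3') match up almost equation by equation, the only substantive point being the equivalence of (S3) with the assertion that $(\d^{\nabla^L}U,U)$ is an IM $2$-form. First I would dispose of the easy equations. Condition (S1') is literally (S1), so it transfers in both directions. For (S2'), note that the left-hand side of (S2) is exactly the curvature operator $R^{\nabla^L}(\rho_B(\al),X)$ acting on $\ka$; flatness makes it vanish, forcing $[U(\al,X),\cdot]_\ka=0$ for all $\al\in\Gamma(B)$, $X\in\X(M)$, i.e. $U$ is $z(\ka)$-valued. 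Conversely, if $U$ is central and $\nabla^L$ is flat, both sides of (S2) vanish identically, so (S2) holds for free. Thus the two directions are symmetric apart from the correspondence (S3)$\leftrightarrow$(S3').

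The core is therefore to analyze the IM conditions for the pair $(\mathcal D,\ell):=(\d^{\nabla^L}U,U)$, viewed as a candidate $2$-form on $B$ with coefficients in $z(\ka)$. Here the coefficients make sense because (S1) forces $\nabla^L$ to preserve $z(\ka)$, and because $R^{\nabla^L}=0$ and $\rho_B([\al,\be]_B)=[\rho_B(\al),\rho_B(\be)]$ show that $\nabla^{\ka}_\al\xi:=\nabla^L_{\rho_B(\al)}\xi$ is a \emph{flat} $B$-representation on $z(\ka)$. The symbol/Leibniz conditions are automatic: since $U$ is $C^\infty(M)$-linear in its $B$-slot and $\d^{\nabla^L}$ is a derivation, one gets $\mathcal D(f\al)=f\,\mathcal D(\al)+\d f\wedge\ell(\al)$ and $\ell(f\al)=f\,\ell(\al)$ at once. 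What remains are the two bracket-type IM conditions, and these carry the entire content of the claim.

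The decisive computation is the symbol bracket condition
\[ \ell([\al,\be]_B)=\Lie^{\nabla}_\al\ell(\be)-i_{\rho_B(\be)}\mathcal D(\al). \]
I would expand this using $(\d^{\nabla^L}\omega)(Y,Z)=\nabla^L_Y\omega(Z)-\nabla^L_Z\omega(Y)-\omega([Y,Z])$ together with the Cartan identity $\Lie^{\nabla}_\al=i_{\rho_B(\al)}\d^{\nabla^L}+\d^{\nabla^L}i_{\rho_B(\al)}$, and evaluate on $X\in\X(M)$; the two terms $\pm\nabla^L_X U(\be,\rho_B(\al))$ cancel, and the identity collapses to precisely the mixed cocycle equation (S3). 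This single calculation is where all the sign- and index-bookkeeping lives and is the step I expect to be the main obstacle, chiefly because one must match the appendix conventions for $\d^{\nabla^L}$, $\Lie^{\nabla}$, and for the IM-form compatibility relations \eqref{eq:symbol:IM:form}–\eqref{eq:compatibility:IM:E:form} exactly. The remaining $2$-form bracket condition $\mathcal D([\al,\be]_B)=\Lie^{\nabla}_\al\mathcal D(\be)-\Lie^{\nabla}_\be\mathcal D(\al)$ is then redundant: since $R^{\nabla^L}=0$, $\d^{\nabla^L}$ is a genuine differential commuting with every $\Lie^{\nabla}_\al$, so applying $\d^{\nabla^L}$ to the symbol condition just verified and using $\mathcal D=\d^{\nabla^L}\!\circ\ell$ and $(\d^{\nabla^L})^2=0$ produces it automatically.

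This settles the forward direction: (S1)–(S3) with flatness yield (S1')–(S3'). For the converse I would run the same equivalences backwards — (S1)$=$(S1'), (S2) holding trivially because both sides vanish, and (S3) extracted as the symbol IM condition contained in (S3') — so that $(\nabla^L,U)$ is genuine coupling data. Proposition \ref{prop:operators:splitting} then assembles $A:=B\oplus\ka$ as a Lie algebroid in which $\ka$ is a partially split bundle of ideals with coupling data $(\nabla^L,U)$, and the hypothesis $R^{\nabla^L}=0$ is by definition the statement that this data is kernel flat. As a consistency check one may observe that the general curvature IM $2$-form \eqref{eq:IM:curvature} degenerates, under $R^{\nabla^L}=0$ and $U$ central, to the pullback along $\phi$ of $(\d^{\nabla^L}U,U)$, which is exactly the $z(\ka)$-valued IM $2$-form produced above.
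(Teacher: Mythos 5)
Your proposal follows essentially the same route as the paper's proof: the observation that (S1)--(S2) are equivalent to (S1')--(S2') under flatness, the identification of (S3) with the third (symbol--bracket) IM equation for the pair $(\d^{\nabla^L}U,U)$, the derivation of the second IM equation by applying $\d^{\nabla^L}$ to the third and using $(\d^{\nabla^L})^2=0$ together with Cartan's formula and the commutation of $\d^{\nabla^L}$ with $\Lie^{\nabla^L}$, and finally the appeal to Proposition \ref{prop:operators:splitting} to run the converse and assemble $A=B\oplus\ka$.

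One omission, however: an IM $2$-form must also satisfy the \emph{first} equation of \eqref{eq:compatibility:IM:E:form}, namely the skew-symmetry
\[
i_{\rho_B(\al)}U(\be)=-i_{\rho_B(\be)}U(\al),
\]
which you never verify --- you list only ``the two bracket-type IM conditions'' as what remains after the Leibniz/symbol check, whereas there are three compatibility equations. This is not fatal: skew-symmetrizing (S3) in $\al,\be$ gives $\nabla^L_X\bigl(U(\al,\rho_B(\be))+U(\be,\rho_B(\al))\bigr)=0$ for all $X$, which by $C^\infty$-bilinearity of $U$ forces \eqref{eq:U:skew}; this is exactly how the paper disposes of the condition, and it costs one line. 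In the converse direction nothing is lost, since there the skew-symmetry is part of the hypothesis that $(\d^{\nabla^L}U,U)$ is an IM $2$-form, and your extraction of (S3) from the third IM equation then goes through as written.
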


\begin{proof}
If $\nabla^L$ is flat, it is obvious that (S1)-(S2) are equivalent to (S1')-(S2').

Now consider the pair $(\d^{\nabla^L} U,U)$. The properties of $\d^{\nabla^L}$ give immediately that:
 \[ \d^{\nabla^L} U(f\al)=f\d^{\nabla^L} U(\al)+\d f\wedge U(\al), \]
 so $U:B\to T^*M\otimes \ka$ is the symbol of $\d^{\nabla^L} U:\Gamma(B)\to\Omega^2(M,\ka)$, i.e., \eqref{eq:symbol:IM:form} holds.
 The skew-symmetry of $U$, given by \eqref{eq:U:skew}, can be written as:
 \[ i_{\rho_B(\be)}U(\al)=-i_{\rho_B(\al)}U(\be), \]
 which is the first equation in the IM conditions \eqref{eq:Lie:derivative:rep}. Then (S3) can be rewritten
using the connection $\nabla^L$ as:
\[ U([\al,\be]_{B})=\Lie^{\nabla^L}_{\rho_B(\al)}U(\be)-i_{\rho_B(\be)}\d^{\nabla^L}U(\al), \]
and since $\Lie_\al=\Lie^{\nabla^L}_{\rho_B(\al)}$ this gives the third equation in the IM conditions \eqref{eq:Lie:derivative:rep}. 

Finally, applying $\d^\nabla$ to this last equation, using that its square is zero and ``Cartan's magic formula'' holds for $\Lie^{\nabla^L}$, we obtain:
\begin{align*}
    \d^{\nabla^L}U([\al,\be]_{B})&=\d^{\nabla^L}\Lie_{\al}U(\be)-\d^{\nabla^L}i_{\rho_B(\be)}\d^{\nabla^L}U(\al)\\
    &=\Lie_{\al}\d^{\nabla^L}U(\be)-\Lie^{\nabla^L}_{\rho_B(\be)}\d^{\nabla^L}U(\al)\\
    &=\Lie_{\al}\d^{\nabla^L}U(\be)-\Lie_{\be}\d^{\nabla^L}U(\al),
\end{align*}
which is the second equation in the IM conditions \eqref{eq:Lie:derivative:rep}. Hence, $(\d^{\nabla^L} U,U)$ is a $z(\ka)$-valued IM 2-form on $B$.

It should clear from this that the converse also holds: a pair $(\nabla^L,U)$, with $\nabla^L$ flat and satisfying (S1')-(S3'), also satisfies (S1)-(S3). Hence, the last part follows from Proposition \ref{prop:operators:splitting}. 
\end{proof}

\begin{remark}\label{rem:cocycle:not:invariant}
For a kernel flat partial splitting of $\ka\subset A$, the proposition yields the IM valued 2-form:
\begin{equation}\label{eq:IM:2:form:for:kernel:flat}
(\d^{\nabla^L} U,U)\in \Omega^2_{\imult}(B;z(\ka)),
\end{equation}
which is the infinitesimal analogue of the class from Proposition \ref{prop:curvature:basic}. Namely, the IM form corresponding the multiplicative form $\underline{\Omega}$ is $(-\d^{\nabla^L}U,-U)$ -- see \eqref{eq:IM:curvature}.

Since $\nabla^L$ is flat, by Proposition \ref{prop:differ:IM:forms} we have an induced differential:
\[\d^{\nabla^L}_{\imult}:  \Omega^{\bullet}_{\imult}(B;z(\ka))\rmap 
\Omega^{\bullet+1}_{\imult}(B;z(\ka)),\]
for which the element \eqref{eq:IM:2:form:for:kernel:flat} is clearly closed. Applying the chain map from Lemma \ref{lemma:chain:map:IM:Lie:alg} we obtain a $B$-cocycle with values in $z(\ka)$:
\[\lambda\in \Omega^2(B;z(\ka)),\quad \lambda(\al,\be):=U(\al,\rho_B(\be)).\]
That $\lambda$ is a cocycle can be seen directly also from (S3). Note that the classes 
\[[(\d^{\nabla^L}U,U)]\in H^2_{\imult}(B;z(\ka))\quad \textrm{and} \quad [\lambda]\in H^2(B;z(\ka))\]
depend on the choice of the IM connection.

\end{remark}

The remark can be developed into a criterion for the existence of kernel flat IM connection for \textbf{abelian bundle of ideals} $\ka\subset A$. In this case, the canonical action of $B:=A/\ka$ is on $\ka=z(\ka)$ and, after choosing a splitting $A\simeq B\oplus \ka$, the Lie bracket takes the form:
\[ 
[(\alpha,\xi),(\beta,\eta)]_{A}:=([\alpha,\beta]_{B},\lambda(\alpha,\beta)+\nabla^\ka_{\alpha}\eta-\nabla^\ka_{\beta}\xi).
\]
Then $\lambda\in\Omega^2(B;\ka)$ is a 2-cocycle, whose cohomology class:\[c_2(A):=[\lambda]\in H^2(B;\ka)\]
is independent of the splitting, and it determines the extension $\ka\to A\to B$ up to isomorphism. We have the following: 

\begin{proposition}
\label{corollary:abelian:flat:type}
An abelian bundle of ideals $\ka\subset A$, with $B:=A/\ka$, admits a kernel flat partial splitting if and only if $\ka$ admits a flat connection $\nabla$ inducing $\nabla^\ka$ and the class $c_2(A)$ is in the image of the map (from Lemma \ref{lemma:chain:map:IM:Lie:alg}):
\[ H_{\imult}^2(B;\ka) \to H^2(B;\ka). \]
\end{proposition}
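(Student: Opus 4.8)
The plan is to work throughout with the identification $A\simeq B\oplus\ka$ coming from a vector bundle splitting, under which the bracket is determined by the $\nabla^\ka$-cocycle $\lambda_0\in\Omega^2(B;\ka)$ with $c_2(A)=[\lambda_0]$. The key observation, which I would isolate first, is that in the abelian case a kernel flat partial splitting is the same datum as a pair $(\nabla^L,U)$ consisting of a flat connection $\nabla^L$ on $\ka$ and a tensor $U\in\Gamma(B^*\otimes T^*M\otimes\ka)$ satisfying the single structure equation (S3); here (S1') and (S2') of Proposition \ref{prop:flat:jets} are vacuous because $\ka$ is abelian. Moreover such a connection necessarily induces $\nabla^\ka$, since by the formula preceding Proposition \ref{prop:flat:jets} one has $\nabla^\ka_\al=\nabla^L_{\rho_B(\al)}$, and by Proposition \ref{prop:flat:jets} the pair $(\d^{\nabla^L}U,U)$ is then a closed $\ka$-valued IM $2$-form on $B$, cf.\ \eqref{eq:IM:2:form:for:kernel:flat}. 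Finally, by Remark \ref{rem:cocycle:not:invariant} the chain map of Lemma \ref{lemma:chain:map:IM:Lie:alg} sends this IM $2$-form to the ordinary $B$-cocycle $\lambda_U(\al,\be)=U(\al,\rho_B(\be))$, and the extension built from $(\nabla^L,U)$ via Proposition \ref{prop:operators:splitting} has class exactly $[\lambda_U]$.

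For the forward implication I would argue as follows. Given a kernel flat partial splitting, Proposition \ref{prop:flat:jets} produces $(\nabla^L,U)$ as above. Then $\nabla^L$ is a flat connection inducing $\nabla^\ka$, which is the first half of the conclusion. For the second half, the isomorphism $A\simeq B\oplus\ka$ furnished by $\ker l\simeq B$ realizes the extension class as $c_2(A)=[\lambda_U]$, and by the last sentence of the previous paragraph $\lambda_U$ is the image of the closed IM $2$-form $(\d^{\nabla^L}U,U)$ under the chain map. Hence $c_2(A)$ lies in the image of $H^2_\imult(B;\ka)\to H^2(B;\ka)$, as required.

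For the converse I would start from a flat connection $\nabla$ inducing $\nabla^\ka$ and a closed IM $2$-form $(\tilde L,\tilde l)\in\Omega^2_\imult(B;\ka)$ whose chain-map image represents $c_2(A)$. The crucial point is that the chain map of Lemma \ref{lemma:chain:map:IM:Lie:alg} depends on $(\tilde L,\tilde l)$ only through its symbol $\tilde l$, sending it to the cocycle $(\al,\be)\mapsto i_{\rho_B(\be)}\tilde l(\al)$. Setting $U:=\tilde l$ (up to the sign in \eqref{eq:IM:curvature}), the IM compatibility and closedness conditions for $(\tilde L,\tilde l)$ translate, via the computation in the proof of Proposition \ref{prop:flat:jets}, into precisely the skew-symmetry \eqref{eq:U:skew} and the structure equation (S3) for $U$. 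Thus $(\nabla,U)$ is kernel flat coupling data, and by Proposition \ref{prop:operators:splitting} it equips $B\oplus\ka$ with the structure of a Lie algebroid $A'$ carrying a kernel flat partial splitting and having extension class $[\lambda_U]=[\,i_{\rho_B(\cdot)}U(\cdot)\,]=c_2(A)$. Since abelian extensions inducing the same representation $\nabla^\ka$ are classified up to isomorphism by their class in $H^2(B;\ka)$, there is a Lie algebroid isomorphism $A'\diffto A$ restricting to the identity on $\ka$ and inducing the identity on $B$; transporting the partial splitting of $A'$ along this isomorphism yields the desired kernel flat partial splitting of $\ka\subset A$.

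I expect the main obstacle to be the converse direction, specifically the claim that the image of the chain map consists exactly of the classes realizable by kernel flat data. This rests on two facts that deserve care: first, that the chain map factors through the symbol of the IM $2$-form, so that replacing a given closed IM $2$-form by $(\d^{\nabla}U,U)$ with $U=\tilde l$ does not change its cohomological image while exhibiting it as coming from genuine coupling data; and second, that a representative cocycle cohomologous to $\lambda_0$ can be promoted to an isomorphism of extensions compatible with the splitting datum. The bookkeeping of signs between \eqref{eq:IM:curvature}, Remark \ref{rem:cocycle:not:invariant} and the cocycle conventions in the paragraph preceding the statement must be tracked carefully, but is otherwise routine.
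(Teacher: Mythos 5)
Your proposal is correct and takes essentially the same route as the paper's proof: both directions reduce to the coupling-data characterization of kernel flat splittings (Propositions \ref{prop:flat:jets} and \ref{prop:operators:splitting}), with closed IM $2$-forms identified as pairs $(\d^{\nabla}U,U)$ and the extension cocycle matched to $\lambda_U(\al,\be)=U(\al,\rho_B(\be))$. The only cosmetic difference is the last step of the converse: the paper directly re-chooses the splitting of $A$ so that its induced cocycle equals $\lambda_U$, whereas you construct an auxiliary algebroid $A'$ from the coupling data and invoke the classification of abelian extensions by $H^2(B;\ka)$ to transport the splitting --- the same argument in substance, since that classification is precisely the statement that cohomologous cocycles differ by a change of splitting.
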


\begin{proof}
Given a flat coupling data $(\nabla^L,U)$, the connection $\nabla^L$ induces $\nabla^\ka$, and, as remarked, $c_2(A)$ is the image of the class $[(\d^{\nabla^L}U,U)]\in H_{\imult}^2(B;\ka)$.

Conversely, assume one is given a flat connection $\nabla^L$ inducing $\nabla^\ka$ and that $c_2(A)$ is the image of $c\in H^2_{\imult}(B;\ka)$. Note that any representative of $c$ if of the form $(\d^{\nabla^L}U,U)$. Choose a splitting inducing
\[ \lambda(\al,\be)=U(\al,\rho_B(\be)).\]
Since the pair $(\nabla^L,U)$ satisfies (S1')-(S3'), it follows from Proposition \ref{prop:operators:splitting} that it is the coupling data of a kernel flat partial splitting for $A$.
\end{proof}

We also have the infinitesimal version of Corollary \ref{corollary:partial:split:for:semi-direct}:

\begin{corollary}
\label{corollary:abelian:semi-direct product}
A semi-direct product $A=B\times_M \ka$, with $\ka$ abelian is partially split if and only if the representation of $B$ on $\ka$ admits a $B$-invariant connection (in the sense of Definition \ref{def:A-inv:conn}).
\end{corollary}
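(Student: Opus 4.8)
The plan is to mirror, at the infinitesimal level, the proof of the groupoid statement in Corollary~\ref{corollary:semi-direct:abelian:kernel}, reducing everything to Proposition~\ref{prop:invarinat:connection:inf}. Write $\phi=\pr_B:A\to B$ for the projection and $\sigma:B\to A$, $\al\mapsto(\al,0)$, for the canonical Lie algebroid splitting, so that $\phi\circ\sigma=\id_B$. The converse direction is immediate: if $\ka$ carries a $B$-invariant connection, then since $\ka$ is abelian the requirement in Proposition~\ref{prop:invarinat:connection:inf}(iii) that $\nabla^L$ preserve $[\cdot,\cdot]_\ka$ is vacuous, so (iii) holds and hence so does (ii), producing an IM Ehresmann connection and witnessing that $\ka$ is partially split. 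The content is therefore entirely in the forward direction.

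For the forward direction, I would suppose $\ka$ is partially split and fix any IM connection $1$-form $(L,l)\in\Omega^1_\imult(A;\ka)$, so $l|_\ka=\id$. As recorded in the proof of Proposition~\ref{prop:invarinat:connection:inf}, its pullback along the splitting is $\sigma^*(L,l)=(U,0)\in\Omega^1_\imult(B;\ka)$, where $U$ is the tensor of the associated coupling data. I would then ``correct'' the connection exactly as in the abelian groupoid case, setting
\[ (\widetilde{L},\widetilde{l}):=(L,l)-\phi^*\sigma^*(L,l)\in\Omega^1_\imult(A;\ka). \]
Three checks make this work. First, $(\widetilde{L},\widetilde{l})$ is again IM: the only nontrivial point is that $\phi^*\sigma^*(L,l)=\phi^*(U,0)$ is an IM form on $A$, which holds because $\ka$ being abelian forces the $A$-representation $\nabla^\ka$ on $\ka$ to factor through $\phi$ (it is the pullback of the $B$-representation), so pulling back the $\ka$-valued IM form $(U,0)$ on $B$ along the Lie algebroid morphism $\phi$ is legitimate and preserves the IM condition. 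This is the precise analogue of the observation in Corollary~\ref{corollary:semi-direct:abelian:kernel} that ``the action of $\H\times_M A$ on $A$ is the pullback of the action of $\H$''. Second, the symbol is unchanged on $\ka$: the symbol of $(U,0)$ is zero, hence so is the symbol of its $\phi$-pullback, and therefore $\widetilde{l}=l$ and in particular $\widetilde{l}|_\ka=\id$, so $(\widetilde{L},\widetilde{l})$ is still an IM connection $1$-form. Third, $\sigma^*(\widetilde{L},\widetilde{l})=\sigma^*(L,l)-(\phi\circ\sigma)^*\sigma^*(L,l)=\sigma^*(L,l)-\sigma^*(L,l)=0$, using $\phi\circ\sigma=\id_B$.

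Thus $(\widetilde{L},\widetilde{l})$ is an IM Ehresmann connection whose associated splitting $\sigma$ is horizontal, i.e.\ condition (i) of Proposition~\ref{prop:invarinat:connection:inf} holds; equivalently its coupling data has $\widetilde{U}=0$, so it is leafwise flat. Invoking the equivalence (i)$\Leftrightarrow$(iii) of that proposition, $\ka$ admits a $B$-invariant connection preserving $[\cdot,\cdot]_\ka$, which for abelian $\ka$ is simply a $B$-invariant connection in the sense of Definition~\ref{def:A-inv:conn}, as desired. I expect the only genuinely delicate step to be the verification that the corrected form remains IM; this is where abelianness is indispensable, since without it the pullback $\phi^*\sigma^*(L,l)$ would not carry the correct coefficient representation and the IM property could fail.
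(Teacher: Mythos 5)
Your proof is correct and follows essentially the same route as the paper's: both directions reduce to Proposition~\ref{prop:invarinat:connection:inf}, and the forward direction uses exactly the paper's correction $(L,l)-\phi^*\sigma^*(L,l)$, with abelianness of $\ka$ justifying (as in the paper, where it is phrased as $\phi$ and $\sigma$ ``intertwining the actions'') that the pullback $\phi^*(U,0)$ is a legitimate IM form. Your additional checks of the symbol condition and of $\sigma^*(\widetilde{L},\widetilde{l})=0$ merely spell out steps the paper leaves implicit.
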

\begin{proof}
If $\ka$ admits a $B$-invariant connection $\nabla^L$, then $(\nabla^L,U=0)$ is the coupling data for an IM connection on $B\times_M \ka$. 

Conversely, let $(L,l)\in \Omega^1(B\times_M \ka;\ka)$ be an IM Ehresmann connection. Both the projection $\phi:B\times_M \ka \to B$ and the inclusion $\sigma:B\to B\times_M \ka$ are Lie algebroid morphisms which intertwine the actions on $\ka$. Therefore, 
$\phi^*\circ \sigma^*(L,l)$ is an IM 1-form. By subtracting it from $(L,l)$, we may assume that 
$\sigma^*(L,l)=0$. By Proposition \ref{prop:invarinat:connection:inf}, this is equivalent to $(L,l)$ having coupling data $(\nabla^L,U=0)$, with $\nabla^L$ a $B$-invariant connection on $\ka$.
\end{proof}

\section{Examples and applications (algebroids)}
\label{sec:examples}

Using the correspondence from Theorem \ref{thm:Lie:functor:connections}, many of the examples that follow can be seen as infinitesimal counterparts of the examples discussed in Section \ref{examples:groupoids}. Note, however, that we will make no integrability assumption, so often we obtain more general classes of examples. 


\subsection{Lie algebra bundles}
Let $A\to M$ be a Lie algebra bundle. A bundle of ideals $\ka\subset A$ is not necessarily partially split. The obstructions  are precisely those found in Corollaries \ref{corollary:Lie:alg:bundle} and \ref{corollary:isotropy:splits}:

\begin{proposition}\label{prop:Lie:alegebra:bundle}
A bundle of ideals $\ka\subset A$ in a Lie algebra bundle
is partially split if and only if the following two conditions hold: 
\begin{enumerate}[(i)]
\item $\ka$ is a Lie algebra bundle;
\item there exists a splitting $A\simeq B\oplus \ka$, for which the fiberwise Lie bracket is a direct product.
\end{enumerate}
\end{proposition}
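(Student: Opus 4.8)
The plan is to read off both implications from the coupling-data machinery, using the fact that a Lie algebra bundle has vanishing anchor, $\rho_A=0$, so that $\ker\rho_A=A$ and the isotropy Lie algebra $\gg_x$ equals the whole fibre $A_x$ for every $x$. The bracket is then $C^\infty(M)$-bilinear, i.e.\ fibrewise, and a bundle of ideals $\ka\subset A$ is simply a subbundle that is a fibrewise ideal.

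For the forward implication, suppose $\ka$ is partially split and fix an IM connection $1$-form $(L,l)\in\Omega^1_\imult(A;\ka)$. Condition (i) is immediate from Corollary \ref{corollary:Lie:alg:bundle}, which asserts that a partially split bundle of ideals is a locally trivial bundle of Lie algebras. For (ii) I would set $B:=\ker l$; since $l|_\ka=\id_\ka$, the symbol $l$ is a fibrewise projection onto $\ka$, so $B$ is a subbundle complementary to $\ka$ and $A\simeq B\oplus\ka$. Corollary \ref{corollary:isotropy:splits}, applied with $\gg_x=A_x$, then gives for each $x\in M$ a decomposition $A_x=B_x\oplus\ka_x$ into a direct sum of \emph{ideals}. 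As both summands are ideals meeting only in $0$, one has $[B_x,\ka_x]\subset B_x\cap\ka_x=0$, so each fibre is the direct product of the Lie algebras $B_x$ and $\ka_x$; this is exactly (ii).

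For the converse, assume (i) and (ii) and use the splitting of (ii) to identify $A=B\oplus\ka$, where $B:=A/\ka$, with $\rho_A=0$ and fibrewise bracket $[(\alpha,\xi),(\beta,\eta)]=([\alpha,\beta]_B,[\xi,\eta]_\ka)$. The idea is to exhibit $\ka$ as partially split by producing coupling data in the sense of Proposition \ref{prop:operators:splitting}. I would take $U=0$ together with a connection $\nabla^L$ on $\ka$ preserving the fibrewise bracket. With $\rho_B=0$ and $U=0$, the structure equations (S1)--(S3) of Proposition \ref{prop:structure:eqs} reduce to the single requirement (S1) that $\nabla^L$ preserve $[\cdot,\cdot]_\ka$, since every term on both sides of (S2) and (S3) vanishes. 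Feeding $(\nabla^L,0)$ into Proposition \ref{prop:operators:splitting}, the bracket \eqref{equation:bracket:splitting} specialises (again using $\rho_B=0$, $U=0$) to $([\alpha,\beta]_B,[\xi,\eta]_\ka)$, which is precisely the bracket of $A$; hence the Lie algebroid produced is $A$ itself, and $\ka\subset A$ is partially split, with IM connection $1$-form given by \eqref{eq:L:nabla:U}, namely $l=\pr_{\ka}$ and $i_XL(\alpha,\xi)=\nabla^L_X\xi$.

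The one point that is not purely formal, and which I expect to be the crux, is the existence of a bracket-preserving connection $\nabla^L$ on $\ka$; this is exactly where hypothesis (i), local triviality as a Lie algebra bundle, is used. I would construct it by choosing local trivializations of $\ka$ as a bundle of Lie algebras, taking the associated flat bracket-preserving connections, and gluing them with a partition of unity: the difference of two bracket-preserving connections is a $1$-form valued in fibrewise derivations of $\ka$, and since the derivations of a fixed Lie algebra form a vector space, any convex combination again preserves the bracket. Equivalently, one may realise $\ka$ as associated to an $\Aut(\gg)$-principal bundle and induce $\nabla^L$ from a principal connection.
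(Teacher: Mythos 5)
Your proof is correct and follows essentially the same route as the paper's: the forward direction via Corollaries \ref{corollary:Lie:alg:bundle} and \ref{corollary:isotropy:splits}, and the converse by feeding the coupling data $(\nabla^L,U=0)$ into Proposition \ref{prop:operators:splitting}. You merely make explicit two details the paper leaves implicit, namely that complementary ideals commute and that a bracket-preserving connection on a locally trivial Lie algebra bundle can be built by a partition-of-unity argument.
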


\begin{proof}
By Corollaries \ref{corollary:Lie:alg:bundle} and \ref{corollary:isotropy:splits}, a partial splitting for $\ka\subset A$ yields a (usual) connection $\nabla^L$ on $\ka$ which preserves the Lie bracket and a splitting $A\simeq B\oplus \ka$, for which the fiberwise Lie bracket is a direct product. 

Conversely, fix a connection $\nabla^L$ on $\ka$ which preserves the Lie bracket and a splitting $A\simeq B\oplus \ka$ for which the Lie bracket is a direct product. Then we can obtain a partial splitting from Proposition \ref{prop:structure:eqs} by setting $U:=0$.
\end{proof}

\begin{remark}
\label{rem:h2:algebra:bundle}
If $M$ is connected, then for a bundle of Lie algebras $(\ka,[\cdot,\cdot]_{\ka})\to M$ to be a Lie algebra bundle, it suffices that: 
\[H^2(\ka_x,\ka_x)=0,\quad \forall\, x\in M.\]
Indeed, this condition implies that each $\ka_x$ is a \textbf{rigid} Lie algebra: any small deformation of the Lie bracket on $\ka_x$ is isomorphic to $\ka_x$ (see, e.g., \cite{CrSchStr}). Therefore, all Lie algebras in $\ka$ are isomorphic to each other and it follows that $\ka$ is a bundle of Lie algebras. 
\end{remark}

\subsection{Products}
\label{ex:family with no limit}
\label{ex:product}

Consider the Lie algebroid $A\Ato M$ obtained as the product of a Lie algebroid $B\Ato M$ and a Lie algebra $\gg$:
\[ A=B\times \gg. \] 
 Then $\ka:=M\times \gg\subset A$ is a bundle of Lie algebras which is always partially split with a canonical, totally flat partial splitting. This follows by observing that we have canonical isomorphisms:
\[ TA=TB\times (\gg\ltimes \gg), \quad A\times_M\ka=B\times(\gg\ltimes \gg),\]
in terms of which the inclusion \eqref{inclusion:VB:subalgebroid} is
\[ B\times(\gg\ltimes \gg)\to TB\times (\gg\ltimes \gg),\quad (b,v,w)\mapsto (0_b,v,w). \]
Hence, we have the canonical IM Ehresmann connection $E\subset TA$ given by:
\[ E_{(b,v)}:=\{(X,v,0): X\in T_b(B_x), v\in\gg\}. \]
It is also easy to determine the alternative data of specifying this connection given by Proposition \ref{prop:partially:split:algbrd}, namely:
\begin{itemize}
    \item[\tiny$\bullet$] The canonical splitting $\theta:A\ltimes\ka^*\to T^*A$:
    \[ \theta:B\times (\gg\ltimes \gg^*)\to T^*(B)\times (\gg\ltimes \gg^*), \quad (b,v,\xi)\mapsto (0_b,v,\xi). \]
    \item[\tiny$\bullet$] The IM connection 1-form $(L,l)\in\Omega^1_\imult(A,\ka)$:
    \begin{align*}
        &l:B\times\gg\to M\times \gg, \quad L(b_x,v)=(x,v),\\
        &L:\Gamma(B)\times C^\infty(M;\gg)\to \Omega^1(M,\gg), \quad L(\al,f)=\d f;
    \end{align*} 
    \item[\tiny$\bullet$] The linear, closed, IM 2-form $\mu\in\Omega^2_\imult(A\ltimes\ka^*)$:
    \[ \mu:B\times (\gg\ltimes \gg^*)\to T^*(M\times \gg^*)=T^*M\times (\gg\ltimes \gg^*),\quad  (\al,v,\xi)\mapsto (0,v,\xi). \]
\end{itemize}

\subsection{Transitive algebroids}
\label{ex:transitive}
Let $A\Ato M$ be a transitive Lie algebroid. Its isotropy $\ka=\ker\rho$ is a bundle of ideals. We claim that 
this is always partially split, and there is a 1-to-1 correspondence between splittings of the anchor:
\[ \xymatrix{
0\ar[r] & \ka \ar[r] & A \ar[r]^{\rho} \ar@/^/@{-->}[l]^l & TM\ar[r]  \ar@/^/@{-->}[l]^{\tau} & 0}
\]
and IM connection 1-forms $(L,l)\in\Omega^1_\imult(A,\ka)$. 

Indeed, given a splitting $l$, we can define a linear operator $L:\Gamma(A)\to\Omega^1(M,\ka)$ by setting:
\begin{equation}\label{eq:L:intermsof:l}
 i_XL(\al):=l([\tau(X),\al]).    
\end{equation}
One checks easily that the pair $(L,l)$ satisfies \eqref{eq:compatibility:IM:E:form}, so it is a $\ka$-valued, IM 1-form, with $l|_\ka=\id$. 


Conversely, given any IM connection 1-form $(L',l)\in\Omega^1_\imult(A,\ka)$, the bundle map $l:A\to \ka$ determines a splitting of the anchor. We claim that $L'=L$, where $L$ is given by \eqref{eq:L:intermsof:l}. Note that the difference $L-L'$ is a $\ka$-valued, IM 1-form whose symbol vanishes. It follows from the last equation in \eqref{eq:compatibility:IM:E:form} that:
\[ i_{\rho(\be)}(L(\al)-L'(\al))=0, \quad \forall \be\in A. \]
Since $\rho$ is surjective, this means that $L=L'$.

\subsection{Cartan connections}\label{sec:Cartan}
Recall that a \emph{Cartan connection} is a connection $\nabla$ on a Lie algebroid $A$ whose basic curvature vanishes identically: 
\[ R^{\bas}_\nabla\equiv 0. \] 
Hence, as an immediate consequence of Proposition \ref{prop:Cartan:connection:splitting}, we obtain:

\begin{corollary}
\label{cor:Cartan:connection}
Let $\ka\subset A$ be a bundle of ideals. If $A$ admits a Cartan connection $\nabla$ and a splitting $l:A\to \ka$ which is $\overline{\nabla}$-invariant, then $\ka$ is partially split. In particular, this holds when $A$ admits a fiberwise metric that is $\overline{\nabla}$-invariant.
\end{corollary}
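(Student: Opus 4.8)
The plan is to read both statements off Proposition \ref{prop:Cartan:connection:splitting}, which says that $\ka$ is partially split precisely when there exist a vector bundle splitting $l:A\to\ka$ and a connection $\nabla$ on $A$ with $\overline\nabla l=0$ and $l(R^{\bas}_\nabla)=0$.

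For the first assertion the hypotheses match these two requirements on the nose. Since $\nabla$ is a Cartan connection one has $R^{\bas}_\nabla\equiv 0$, so certainly $l(R^{\bas}_\nabla)=0$; and the assumption that the splitting $l$ is $\overline\nabla$-invariant is exactly the condition $\overline\nabla l=0$. Hence the pair $(l,\nabla)$ satisfies \eqref{eq:partial:splitting:basic:curvature}, and Proposition \ref{prop:Cartan:connection:splitting} yields an IM connection 1-form $(L,l)$, so $\ka$ is partially split.

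For the \emph{in particular} clause I would manufacture a $\overline\nabla$-invariant splitting from the metric and then invoke the first part. The preliminary point is that the $A$-connection $\overline\nabla$ preserves the subbundle $\ka$: for $\be\in\Gamma(\ka)$ one has $\rho(\be)=0$ because $\ka\subset\ker\rho$, whence $\overline\nabla_\al\be=\nabla_{\rho(\be)}\al+[\al,\be]=[\al,\be]\in\Gamma(\ka)$, using that $\ka$ is a bundle of ideals. Given that $\overline\nabla$ preserves both $\ka$ and the fiberwise metric $g$, a standard computation with $\overline\nabla g=0$ shows it also preserves the orthogonal complement $\ka^\perp$, producing a $\overline\nabla$-invariant decomposition $A=\ka\oplus\ka^\perp$. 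Taking $l:A\to\ka$ to be the orthogonal projection along $\ka^\perp$, the identity $(\overline\nabla_\al l)(\be)=\overline\nabla_\al(l(\be))-l(\overline\nabla_\al\be)$ vanishes after decomposing $\be=\be_\ka+\be_\perp$, since each of $\overline\nabla_\al\be_\ka$ and $\overline\nabla_\al\be_\perp$ stays in its own summand. Thus $\overline\nabla l=0$, and the first part applies.

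There is no substantial obstacle here: the content is entirely packaged in Proposition \ref{prop:Cartan:connection:splitting}, and the only things to verify are that $\overline\nabla$ restricts to $\ka$ and that invariance of the metric passes to $\ka^\perp$, both of which are short and formal.
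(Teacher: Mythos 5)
Your proposal is correct and follows the paper's own route exactly: the first assertion is read off Proposition \ref{prop:Cartan:connection:splitting} by noting that $R^{\bas}_\nabla\equiv 0$ and $\overline{\nabla}l=0$ are precisely conditions \eqref{eq:partial:splitting:basic:curvature}, and the \emph{in particular} clause is handled by taking $l$ to be the orthogonal projection onto $\ka$ relative to the invariant metric, which is what the paper does in one line. Your extra verification that $\overline{\nabla}$ preserves $\ka$ (since $\rho|_\ka=0$ and $\ka$ is an ideal) and hence also $\ka^\perp$, so that the orthogonal projection satisfies $\overline{\nabla}l=0$, is exactly the detail the paper leaves implicit, and it is carried out correctly.
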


\begin{proof}
The second part of the statement follows by choosing the splitting $l:A\to \ka$ to be the orthogonal projection relative to the invariant metric.
\end{proof}

In Subsection \ref{ex:bi:invariant:metrics} we saw that, for a Lie groupoid admitting a bi-invariant metric, every bundle of ideals is partially split. At the Lie algebroid level, the corresponding notion of a bi-invariant metric (see \cite{KotovStrobl18}) is given by a Cartan connection and a pair of metrics $(\eta^A,\eta^M)$ on $A$ and $M$ satisfying:
\[ \overline{\nabla}\eta^A=0,\quad \overline{\nabla}\eta^M=0. \]
The previous corollary shows that for a Lie algebroid $A$ admitting a bi-invariant metric any bundle of ideals $\ka\subset A$ is partially split. Notice that the metric $\eta^M$ plays here no role.

\subsection{Action Lie algebroids}
Let $A=\gg\ltimes M\Ato M$ be the action Lie algebroid associated with a Lie algebra action $\rho:\gg\to\X(M)$. The canonical flat connection $\nabla$ on $A$ has vanishing basic curvature, hence it is a Cartan connection.
Given a bundle of ideals $\ka\subset A$ a splitting $l:A\to \ka$ is $\overline{\nabla}$-invariant if and only if it is $\gg$-equivariant:
\[ l([v,w]_\gg)=[v,l(w)]_{\gg\ltimes M}, \]
for all $v,w\in\gg$ (here we identify elements of $\gg$ with constant sections of $A$). Hence, we deduce from Corollary \ref{cor:Cartan:connection} the infinitesimal version of Proposition \ref{prop:action}: 

\begin{corollary}
A bundle of ideals on an action algebroid $\gg\ltimes M\Ato M$ admitting a $\gg$-equivariant splitting is partially split.
\end{corollary}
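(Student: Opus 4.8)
The plan is to reduce the statement directly to Corollary \ref{cor:Cartan:connection}. First I would recall that the action algebroid $A=\gg\ltimes M$ carries its canonical flat connection $\nabla$, determined by declaring the constant sections $v\in\gg$ (viewed as $\gg$-valued constant maps on $M$) to be parallel: $\nabla_X v=0$ for all $X\in\X(M)$. The key preliminary observation is that this $\nabla$ is a Cartan connection, i.e.\ its basic curvature \eqref{eq:basic:curvature:alg} vanishes identically. I would verify $R^{\bas}_\nabla(v,w)=0$ on constant sections $v,w\in\gg$: there $\nabla_X v=\nabla_X w=0$, the bracket $[v,w]=[v,w]_\gg$ is again a constant section so $\nabla_X[v,w]=0$, and the associated $A$-connection on $TM$ satisfies $\overline{\nabla}_v X=\rho(\nabla_X v)+[\rho(v),X]=[\rho(v),X]$; substituting into \eqref{eq:basic:curvature:alg} shows the five terms cancel. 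Since $R^{\bas}_\nabla$ is tensorial, vanishing on constant sections forces $R^{\bas}_\nabla\equiv 0$.

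The second step is to recognize that, for this particular $\nabla$, the hypothesis of Corollary \ref{cor:Cartan:connection} — that the given splitting $l\colon A\to\ka$ be $\overline{\nabla}$-invariant — is exactly the assumed $\gg$-equivariance. I would compute $(\overline{\nabla}_\al l)(\be)=\overline{\nabla}_\al(l(\be))-l(\overline{\nabla}_\al\be)$, exploiting early that $l(\be)\in\ka\subset\ker\rho$, so that $\overline{\nabla}_\al(l(\be))=\nabla_{\rho(l(\be))}\al+[\al,l(\be)]=[\al,l(\be)]$, while $\overline{\nabla}_\al\be=\nabla_{\rho(\be)}\al+[\al,\be]$. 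Evaluating on constant sections $\al=v$, $\be=w$ and using $\nabla_{\rho(w)}v=0$, the condition $\overline{\nabla}l=0$ collapses to $[v,l(w)]_{\gg\ltimes M}=l([v,w]_\gg)$, which is precisely the equivariance condition. Tensoriality of $\overline{\nabla}l$ in both arguments then shows that testing on constant sections is enough, so $\overline{\nabla}$-invariance and $\gg$-equivariance coincide.

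With these two points in place, Corollary \ref{cor:Cartan:connection} applies verbatim: $A$ admits the Cartan connection $\nabla$ together with a $\overline{\nabla}$-invariant splitting $l$, whence $\ka$ is partially split, with IM connection $1$-form built from the pair $(l,\nabla)$ as in Proposition \ref{prop:Cartan:connection:splitting}. I do not expect a serious obstacle here: both ingredients are short tensorial verifications, and the statement is the infinitesimal shadow of Proposition \ref{prop:action}. The only point requiring mild care is the bookkeeping in the second step, to confirm that $\overline{\nabla}$-invariance reduces to $\gg$-equivariance rather than to some twisted variant; the cleanest route is to use $\rho|_\ka=0$ to kill the $\nabla_{\rho(l(\be))}$ term immediately and then pass to constant sections, where $\nabla$ is trivial.
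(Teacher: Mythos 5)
Your proof is correct and follows exactly the paper's route: the paper likewise observes that the canonical flat connection on $\gg\ltimes M$ is a Cartan connection, identifies $\overline{\nabla}$-invariance of the splitting $l$ with $\gg$-equivariance on constant sections, and then invokes Corollary \ref{cor:Cartan:connection}. Your write-up merely makes explicit the tensoriality arguments that the paper leaves implicit.
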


For example, if $\gg$ is a Lie algebra of compact type then it admits an $\ad$-invariant scalar product $\langle\cdot,\cdot\rangle$. Such an inner product defines a fiberwise metric $\eta$ on $A$ which is $\overline{\nabla}$-invariant. Hence, in this case, any bundle of ideals on $A=\gg\ltimes M$ admits a $\gg$-equivariant splitting, so it is partially split.

\subsection{Infinitesimal principal type}
\label{ex:principal:type:bundle:local:model}
\label{ex:principal:type:Lie:algebroids}

Let $A'\Ato M$ be a transitive Lie algebroid and $B\Ato M$ any Lie algebroid. Consider the Lie algebroid fibre product:
\[A:=A'\times_{TM}B:=\{(\alpha,\beta)\, :\, \rho_{A'}(\alpha)=\rho_B(\beta)\}, \]
where the structure is such that the inclusion in the product $A\hookrightarrow  A'\times B$ is a Lie algebroid morphism. The projection $\phi:=\pr_{B}:A\to B$ is a Lie algebroid morphism, which is surjective  because we assume $B$ to be transitive. The kernel $\ka:=\ker\phi$ is then a bundle of ideals which can be identified with $\ker\rho_{A'}$ via $\pr_{A'}$. A bundle of ideals $\ka\subset A$ obtained via this construction will be called of \textbf{principal type}. This is of course the infinitesimal counterpart of groupoids with bundle of ideals of principal type, discussed in Subsection \ref{ex:princ:type:part:split}.

Since $A' \Ato M$ is transitive, as discussed in Subsection \ref{ex:transitive}, a splitting $l_{A'}:A'\to\ka$ of $\rho_{A'}$ determines an IM connection 1-form $(L_{A'},l_{A'})\in\Omega^1_\imult({A'},\ka)$. Pulling back this connection 1-form to $A$, we obtain a multiplicative $\ka$-valued 1-form 
\[(L:=L_{A'}\circ\pr_{A'}, l:=l_{A'}\circ \mathrm{pr}_{A'})\in \Omega^1_{\imult}(A,\ka),\]
which satisfies $l|_{\ka}=\id_{\ka}$. So bundles of ideals of principal type are partially split. 

The IM connection 1-form $(L,l)\in\Omega^1_\imult(A,\ka)$ can also be described using the fact that the splitting $l_{A'}$ gives an identification ${A'}\simeq TM\oplus \ka$, where the anchor becomes $\pr_{TM}$ and the bracket is given by:
\begin{equation}\label{eq:bracket:transitive}
[(X,\xi),(Y,\eta)]_{{A'}}=([X,Y],\Omega(X,Y)+\nabla_{X}\eta-\nabla_{Y}\xi+[\xi,\eta]_{\ka}),
\end{equation}
for all $X,Y\in \X^1(M)$, $\xi,\eta\in \Gamma(\ka)$. Here: 
\begin{enumerate}
\item[-] $\Omega$ is $C^{\infty}(M)$-bilinear, so that $\Omega\in \Omega^2(M;\ka)$;
\item[-] $\nabla$ is a connection on $\ka$ preserving $[\cdot,\cdot]_\ka$ with curvature $R^{\nabla}=\ad(\Omega)$.
\end{enumerate}
Then, one finds that, for $\al\in \Gamma(B)$ and $\xi\in \Gamma(\ka)$:
\[i_XL(\alpha,\xi)=\nabla_X(\xi)+\Omega(X,\rho_{B}(\alpha)).\]
In the notation of Proposition \ref{prop:operators:splitting}, we have that 
\[
U(\alpha,X)=\Omega(\rho_{B}(\alpha),X),\qquad \nabla^{L}=\nabla.
\]

Here is a class of bundle of ideals which are of principal type:

\begin{proposition}\label{prop:isotropy:rigid:implies:linearization}
Let $\ka\subset A$ be a bundle of ideals such that:
\begin{enumerate}[(i)]
\item $\ka$ is a locally trivial Lie algebra bundle;
\item the typical fiber $(\gg,[\cdot,\cdot])$ of $\ka$ satisfies:
\[H^0(\gg,\gg)=H^1(\gg,\gg)=0.\]
\end{enumerate}
Then $\ka\subset A$ is of principal type, and in particular it is partially split. 
\end{proposition}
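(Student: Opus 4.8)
The plan is to realize $A$ explicitly as a fibre product $A'\times_{TM}B$ of the type defining principal-type ideals, taking $B:=A/\ka$ and $A'$ to be the transitive Lie algebroid $\DD(\ka)$ of derivations of the Lie algebra bundle $\ka$. Recall that $\DD(\ka)$ sits in an exact sequence $0\to\Der(\ka)\to\DD(\ka)\to TM\to 0$, where $\Der(\ka)$ is the vector bundle of fibrewise derivations of $\ka$; it is transitive since, over any open set trivializing $\ka$, the flat connection already supplies bracket-preserving derivations covering arbitrary vector fields. The key point, where hypotheses (i)--(ii) enter, is that the fibrewise adjoint map $\ad\colon\ka\to\Der(\ka)$, $\xi\mapsto[\xi,\cdot]_\ka$, is an isomorphism of Lie algebra bundles: its kernel is the bundle of centres $z(\ka)$, which vanishes because $H^0(\gg,\gg)=z(\gg)=0$, and it is fibrewise surjective because $H^1(\gg,\gg)=\Der(\gg)/\mathrm{Inn}(\gg)=0$; local triviality of $\ka$ guarantees these pointwise statements assemble into a smooth bundle isomorphism and that $\Der(\ka)$ has constant rank. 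Consequently $A':=\DD(\ka)$ is transitive with isotropy $\ker\rho_{A'}=\Der(\ka)\cong\ka$.

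Next I would package the two natural morphisms out of $A$. The conjugation representation $\nabla^\ka_\al=[\al,\cdot]_A$ is $C^\infty(M)$-linear in $\al$ (because $\ka\subset\ker\rho_A$, so $[f\al,\xi]_A=f[\al,\xi]_A$ for $\xi\in\Gamma(\ka)$) and preserves $[\cdot,\cdot]_\ka$, while flatness $[\nabla^\ka_\al,\nabla^\ka_\be]=\nabla^\ka_{[\al,\be]_A}$ is exactly the Jacobi identity in $A$; hence it defines a Lie algebroid morphism $j\colon A\to\DD(\ka)$, $\al\mapsto[\al,\cdot]_A$, covering $\id_M$ and satisfying $\rho_{A'}\circ j=\rho_A$. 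Together with the quotient $\phi\colon A\to B=A/\ka$, this produces a Lie algebroid morphism
\[ \Psi:=(j,\phi)\colon A\longrightarrow A'\times_{TM}B, \]
well defined into the fibre product because $\rho_{A'}(j(\al))=\rho_A(\al)=\rho_B(\phi(\al))$.

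Then I would show $\Psi$ is an isomorphism by a kernel/rank argument. Its kernel is $\ker j\cap\ker\phi=\ker j\cap\ka$; on $\ka$ the map $j$ restricts to $\ad\colon\ka\diffto\Der(\ka)$, which is injective, so $\ker\Psi=0$ and $\Psi$ is fibrewise injective. For ranks, $\rank(A'\times_{TM}B)=\rank\Der(\ka)+\rank B=\rank\ka+\rank B=\rank A$, using transitivity of $A'$ for the first equality and the exact sequence $0\to\ka\to A\to B\to 0$ for the last. An injective vector bundle morphism covering $\id_M$ between bundles of equal rank is an isomorphism, so $\Psi$ is a Lie algebroid isomorphism. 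Under $\Psi$ the projection to $B$ is the quotient $A\to A/\ka$ and $\ka$ is carried onto the isotropy $\ker(\pr_B)\cong\Der(\ka)$ via $\ad$; this is precisely the principal-type presentation, so $\ka\subset A$ is of principal type and hence, by the discussion in Subsection \ref{ex:principal:type:Lie:algebroids}, partially split.

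The main obstacle is the identification $\ad\colon\ka\diffto\Der(\ka)$: one must check that the two cohomological vanishing conditions are exactly what force injectivity and surjectivity fibrewise, and that local triviality (i) upgrades the pointwise isomorphisms to a smooth bundle isomorphism while ensuring $\Der(\ka)$, and therefore $\DD(\ka)$, is a genuine transitive Lie algebroid of the expected rank. Everything else---the $C^\infty$-linearity of $j$, its being a morphism, the well-definedness of $\Psi$, and the final rank count---is routine once this identification is in place.
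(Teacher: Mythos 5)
Your proposal is correct and follows essentially the same route as the paper's proof: you form the transitive derivation algebroid $A'$ of the Lie algebra bundle $\ka$ (the paper's $A'\subset\gl(\ka)$ with $\Gamma(A')=\mathrm{Der}(\ka,[\cdot,\cdot]_\ka)$), use the cohomological hypotheses to identify $\ad:\ka\to\Der(\ka)$ as a bundle isomorphism, and show that $\al\mapsto(\nabla^\ka_\al,[\al])$ is a Lie algebroid isomorphism onto the fiber product $A'\times_{TM}B$. The only cosmetic difference is that you deduce surjectivity from injectivity plus a rank count, whereas the paper deduces it directly from surjectivity of $\ad$; these are interchangeable.
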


\begin{corollary}
If $\ka\subset A$ is a bundle of semi-simple Lie algebras, then it is of principal type.  
\end{corollary}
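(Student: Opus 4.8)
The plan is to deduce the corollary directly from Proposition~\ref{prop:isotropy:rigid:implies:linearization} by verifying that a bundle of semi-simple Lie algebras satisfies its two hypotheses. Both checks reduce to the classical Whitehead lemmas, which assert that for a semi-simple Lie algebra $\gg$ over a field of characteristic zero one has $H^k(\gg,\gg)=0$ for $k=0,1,2$. Here $H^0(\gg,\gg)=\za(\gg)$ is the center, $H^1(\gg,\gg)=\Der(\gg)/\ad(\gg)$ measures outer derivations, and $H^2(\gg,\gg)$ governs infinitesimal deformations of the bracket.

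First I would verify hypothesis (i), that $\ka$ is a locally trivial Lie algebra bundle. By assumption each fibre $\ka_x$ is semi-simple, so Whitehead's second lemma gives $H^2(\ka_x,\ka_x)=0$ for every $x\in M$. Working over each connected component of $M$, Remark~\ref{rem:h2:algebra:bundle} then applies: the vanishing of $H^2(\ka_x,\ka_x)$ makes each fibre a rigid Lie algebra, forcing the fibres to be locally isomorphic and the bundle to be locally trivial as a bundle of Lie algebras. This yields (i).

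Once local triviality is established, the typical fibre $\gg$ of $\ka$ is a fixed semi-simple Lie algebra, and hypothesis (ii) is immediate from the remaining Whitehead vanishings: semi-simplicity gives $\za(\gg)=0$, hence $H^0(\gg,\gg)=0$, while Whitehead's first lemma gives $H^1(\gg,\gg)=0$. With both hypotheses in hand, Proposition~\ref{prop:isotropy:rigid:implies:linearization} shows that $\ka\subset A$ is of principal type, and in particular partially split, completing the proof.

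The argument is essentially bookkeeping against the cited proposition; the only point requiring care is hypothesis (i), where I rely on the non-trivial input recorded in Remark~\ref{rem:h2:algebra:bundle}, and ultimately on the rigidity of semi-simple Lie algebras, that the vanishing of $H^2(\ka_x,\ka_x)$ upgrades a bundle of abstractly isomorphic semi-simple Lie algebras to a genuinely locally trivial one. This is the step I expect to be the main obstacle, since it is where smoothness of the local trivializing isomorphisms, rather than mere fibrewise isomorphism, must be invoked.
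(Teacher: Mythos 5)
Your proof is correct and follows essentially the same route as the paper's: both invoke the Whitehead lemmas to get $H^i(\gg,\gg)=0$, use Remark~\ref{rem:h2:algebra:bundle} to upgrade the bundle of semi-simple Lie algebras to a locally trivial Lie algebra bundle, and then apply Proposition~\ref{prop:isotropy:rigid:implies:linearization}. Your extra caution about the smoothness of local trivializations is exactly the content already packaged into that remark (via rigidity), so there is no gap.
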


\begin{proof}
For a semi-simple Lie algebra $\gg$, we have that $H^i(\gg,\gg)=0$, for all $i$. Also, $\ka$ is automatically a locally trivial Lie algebra (see Remark \ref{rem:h2:algebra:bundle}), hence Proposition \ref{prop:isotropy:rigid:implies:linearization} applies.
\end{proof}

\begin{proof}[Proof of Proposition \ref{prop:isotropy:rigid:implies:linearization}]
Consider the Lie algebroid $\gl(\ka)$ whose sections are the derivations of the vector bundle $\ka$, i.e., the bundle maps $D_X:\Gamma(\ka)\to\Gamma(\ka)$ satisfying:
\[ D_X(f\xi)=f D_X(\xi)+X(f)\xi, \]
for all $f\in C^\infty(M)$. The anchor of $\gl(\ka)$ assigns to the section $D_X$ its symbol $X\in\X(M)$, and the Lie bracket is the commutator of derivations. We let $A'\subset \gl(\ka)$ be the Lie subalgebroid whose sections are the derivations of the bracket $[\cdot,\cdot]_{\ka}$
\[ \Gamma(A')=\mathrm{Der}(\ka,[\cdot,\cdot]_{\ka}). \] 
The fact that $A'$ is a Lie subalgebroid and transitive, follows from (i). 

Now, to prove the proposition, let $B=A/\ka$ and consider the map
\[ \psi:A\to A'\times B, \quad \al\mapsto (\nabla^{\ka}_{\alpha}, [\al]). \]
We claim that this is an injective Lie algebroid morphism, with image the fiber product $A'\times_{TM} B$. That $\psi$ is a Lie algebroid morphism follows the fact that  both components $\nabla^\ka:A\to A'$ and $\pr:A\to B$ are Lie algebroid maps. On the other hand, if $\al\in A$ is such that $\psi(\al)=0$, then $\al\in\ka_x$ satisfies: 
\[ \nabla^\ka_\al=[\al,-]_{\ka_x}=0. \] 
But (ii) is equivalent to $\ad:\ka_x\to \Der(\ka_x)$ being an isomorphism of Lie algebras. Injectivity of this maps shows that $\al=0$ and so $\psi$ is injective. Finally, surjectivity of this map implies that the map $\psi$ is onto $A'\times_{TM} B$.
\end{proof}


\begin{remark}
As observed in the proof, condition (ii) of Proposition \ref{prop:isotropy:rigid:implies:linearization} is equivalent to $\ad:\gg\to \Der(\gg)$ being a Lie algebra isomorphism. Therefore, $G:=\Aut(\gg)$ is a Lie group integrating $\gg$ and we have the principal $G$-bundle of $\gg$-frames:
\[P:=\{\varphi: \gg \to \ka_x\, :\, x\in M,\, \varphi \, \textrm{is a Lie algebra isomorphism}\}.\]
The transitive Lie algebroid $A'\to M$ in the proof is isomorphic to the Atiyah Lie algebroid of $P$ and hence is integrable.  
\end{remark}

We also have a global version of Proposition \ref{prop:isotropy:rigid:implies:linearization}.

\begin{proposition}
Let $\Phi:\G\to \H$ be a groupoid map covering the identity which is a surjective submersion, and let $\phi:A\to B$ be the induced Lie algebroid map. If $\ka:=\ker \phi$ satisfies the conditions of Proposition \ref{prop:isotropy:rigid:implies:linearization} then $\Phi$ is partially split. 
\end{proposition}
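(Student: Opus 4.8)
The plan is to globalize the argument of Proposition \ref{prop:isotropy:rigid:implies:linearization} by mapping $\G$, together with its bundle of ideals $\ka=\ker\phi$, into a groupoid of principal type, and then pulling back the multiplicative connection that such groupoids always carry. Condition (ii), $H^0(\gg,\gg)=H^1(\gg,\gg)=0$, says that $\ad:\gg\to\Der(\gg)$ is an isomorphism, so $G:=\Aut(\gg)$ is a Lie group integrating $\gg$. Condition (i) says that $\ka$ is a locally trivial Lie algebra bundle with typical fibre $\gg$, so its bundle of $\gg$-frames $P\to M$, with fibre $P_x=\{\varphi:\gg\diffto\ka_x\}$, is a principal $G$-bundle whose gauge groupoid $\G(P)=P\times_G P$ is canonically the isomorphism groupoid $\mathrm{Iso}(\ka)\tto M$ of Lie algebra isomorphisms between the fibres of $\ka$.

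First I would form the principal type groupoid $\widetilde{\G}:=\{(a,h)\in\G(P)\times\H : (\t,\s)(a)=(\t,\s)(h)\}$ as in Subsection \ref{ex:princ:type:part:split}, with $\widetilde{\Phi}:=\pr_2:\widetilde{\G}\to\H$; choosing a principal connection on $P$ then produces, by that subsection, a multiplicative connection $1$-form $\widetilde{\al}\in\Omega^1_\mult(\widetilde{\G};\widetilde{\ka})$, where $\widetilde{\ka}=\ker\d\widetilde{\Phi}|_M$ is the isotropy bundle of $A(P)$, whose fibre at $x$ is $\Der(\ka_x)$. Next I would use the conjugation action of $\G$ on $\ka$ — which is fibrewise by Lie algebra automorphisms, since $\ka$ is a bundle of ideals — to define the smooth groupoid morphism
\[ \Psi:\G\to\widetilde{\G},\qquad g\mapsto(\Ad_g,\Phi(g)), \]
where $\Ad_g:\ka_{\s(g)}\diffto\ka_{\t(g)}$ is regarded as an arrow of $\G(P)=\mathrm{Iso}(\ka)$. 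One checks directly that $\Psi$ covers $\id_M$, lands in the fibre product $\widetilde{\G}$ (both components have source $\s(g)$ and target $\t(g)$), and satisfies $\Phi=\widetilde{\Phi}\circ\Psi$. Its induced Lie algebroid morphism is exactly the map $\psi(\al)=(\nabla^{\ka}_\al,[\al])$ from the proof of Proposition \ref{prop:isotropy:rigid:implies:linearization}, whose restriction $\psi|_\ka=\ad:\ka_x\to\Der(\ka_x)=\widetilde{\ka}_x$ is an isomorphism by condition (ii).

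Finally, I would transport $\widetilde{\al}$ back to $\G$. Because $\Psi$ is a groupoid morphism, it commutes with the fibrewise exponentials, so $\psi|_\ka:\ka\to\widetilde{\ka}$ intertwines the conjugation representation of $\G$ on $\ka$ with the pullback along $\Psi$ of the representation of $\widetilde{\G}$ on $\widetilde{\ka}$; that is, $\psi|_\ka$ is an isomorphism of $\G$-representations. Consequently the pullback $\Psi^*\widetilde{\al}$ becomes honestly $\ka$-valued after the identification $\psi|_\ka$, giving
\[ \al:=(\psi|_\ka)^{-1}\circ\Psi^*\widetilde{\al}\in\Omega^1_\mult(\G;\ka). \]
Multiplicativity is preserved under pullback along the morphism $\Psi$, and for $\xi\in\Gamma(\ka)$ one computes $\al(\xi)=(\psi|_\ka)^{-1}\big(\widetilde{\al}(\psi(\xi))\big)=(\psi|_\ka)^{-1}(\psi(\xi))=\xi$, so $\al|_\ka=\id$. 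By Proposition \ref{prop:partially:split:grpd}(iii) this is a multiplicative connection $1$-form, whence $\Phi$ is partially split. The only genuinely delicate point is the bookkeeping in this last step: verifying that $\Psi^*\widetilde{\ka}\cong\ka$ as $\G$-representations via $\psi|_\ka$, so that the pulled-back form is legitimately $\ka$-valued and multiplicative; everything else is a routine check. I would emphasize that this route uses only the groupoid morphism $\Psi$ and imposes no connectivity hypothesis on $\G$, so Theorem \ref{thm:Lie:functor:connections} is not needed.
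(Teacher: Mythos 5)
Your proof is correct and is essentially the paper's own argument: both form the groupoid $\mathrm{Iso}(\ka)$ of fibrewise Lie algebra isomorphisms (your $\G(P)$), take its fiber product with $\H$ over $M\times M$ to get a groupoid of principal type, map $\G$ into it by $\Psi=(\Ad,\Phi)$, and transport the principal-type multiplicative connection back to $\G$. The only difference is the last step: the paper observes that $\Psi$ integrates the Lie algebroid isomorphism $\psi$ from Proposition \ref{prop:isotropy:rigid:implies:linearization}, hence is a local diffeomorphism, and takes the preimage $(\d\Psi)^{-1}(E)$ of the connection as a distribution, whereas you pull back the connection $1$-form along the morphism $\Psi$ and re-identify coefficients via $\psi|_\ka=\ad:\ka\diffto\widetilde{\ka}$ --- a packaging that needs only the kernel-level isomorphism of $\G$-representations rather than the full isomorphism $\psi$, and works equally well.
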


\begin{proof}
The assumption implies that 
\[\G':=\big\{\textrm{Lie algebra isomorphisms}\ \  \ka_x\diffto \ka_y, \ x,y\in M \big\}\tto M\]
is a transitive Lie groupoid. The map \eqref{eq:representation:grpd} $\Ad:\G\to \G'$ together with the map $\Phi$ induce a groupoid map into the fiber product: 
\[\Psi:=(\Ad,\Phi):\G\to \G'\times_{M\times M} \H.\]
Then $\Psi$ integrates the Lie algebroid map $\psi$ from the proof of Proposition \ref{prop:isotropy:rigid:implies:linearization}. Since $\psi$ is an isomorphism, it follows that $\Psi$ is a local diffeomorphism. On the other hand, since $\G'$ is transitive, it follows that $\pr_{2}:\G'\times_{M\times M} \H\to \H$ is of principal type, so by the example of Subsection \ref{ex:princ:type:part:split}, $\pr_{2}$ admits a multiplicative Ehresmann connection $E$. The preimage of $E$ under $\Psi$ is a multiplicative Ehresmann connection for $\Phi$.
\end{proof}

\subsection{Bundles of ideals of principal type with kernel flat IM connections}
\label{example:principal:type:flat}
Let $\ka\subset A$ be an ideal of principal type, for which the 2-form \eqref{eq:bracket:transitive} is center-valued:
\[\Omega\in  \Omega^2(M,z(\ka)).\]
Then the corresponding connection $\nabla$ is flat, so we are in the case of an IM Ehresmann connection which is kernel flat. This kind of partially split ideals can be explicitly described as follows. 

Let $B\Ato M$ be a Lie algebroid and let $\ka\to M$ be a Lie algebra bundle with a flat connection $\nabla^L$ preserving the bracket $[\cdot,\cdot]_{\ka}$. Given any center-valued, $\d^{\nabla^L}$-closed, 2-form $\Omega\in \Omega^2(M,z(\ka))$ one can define a center-valued tensor $U$ by:
\[ U(\al,X):=\Omega(\rho_B(\al),X). \]
One checks that $U$ satisfies (S3), so that (S1')-(S3') hold. Therefore, we obtain a partially split bundle of ideals $\ka\subset A$ with flat coupling data. Moreover, it is of principal type: we have $A\simeq A'\times_{TM} B$, where $A'=TM\oplus \ka$ is the transitive Lie algebroid
with Lie bracket:
\[[(X,\xi),(Y,\eta)]_B=([X,Y],\Omega(X,Y)+\nabla^L_X\eta-\nabla^L_Y\xi+[\xi,\eta]_{\ka}).\]

Note that in this case the $B$-cocycle $\lambda$ in Remark \ref{rem:cocycle:not:invariant} is given by $(\rho_B)^*(\Omega)$, so its class is the image of the map:
\[ (\rho_B)^*:H^2(M,z(\ka))\to H^2(B,z(\ka)). \]
When $\Omega=\d^{\nabla_L}\theta$, with $\theta\in \Omega^1(M,z(\ka))$ we have $[\lambda]=0$ and the resulting bundle of ideals $\ka\subset A$ is isomorphic to a ``trivial'' coupling data $(\nabla^L,0)$, via the map:
\[ A\to B\oplus\ka,\quad (\al,\xi)\mapsto (\al,\theta(\al)+\xi). \]

On the other hand, given any partially split bundle of ideals $\ka\subset A$ with flat coupling data $(\nabla^L,U)$ and a center-valued, $\d^{\nabla^L}$-closed, 2-form $\Omega\in \Omega^2(M,z(\ka))$, we obtain a ``gauge transformed" bundle of ideals with coupling data $(\nabla^L,e^{\Omega}U)$ where:
\[  e^{\Omega}U(\al,X):=U(\al,X)+\Omega(\rho_B(\al),X). \]
The bracket of the underlying algebroid $B$ changes by replacing the cocycle $\lambda$ with the cocycle $\lambda+(\rho_B)^*(\Omega)$.

\subsection{Bundles of ideals of rank one}
We treat in detail the case of bundle of ideals of rank one. In this setting the existence of specific types of IM connections have cohomological interpretations.

Let $\ka\subset A$ be a bundle of ideals of rank one, which, for simplicity, we assume to be orientable. As usual, we set $B=A/\ka$, and after choosing a splitting of the short exact sequence:
\[ \xymatrix{
0\ar[r] & \ka \ar[r] & A \ar[r] & B\ar[r] & 0}
\]
we obtain an identification
\[ A\simeq B\oplus \ka, \]
where the anchor and the Lie bracket on $A$ become: 
\begin{align*}
    &\rho_A(\al,\xi)=\rho_B(\al),\notag\\
    &[(\al,\xi),(\be,\eta)]_A=([\al,\be]_{B},\lambda(\al,\be)+\nabla^\ka_\al \eta-\nabla^\ka_\be \xi).
\end{align*} 
Notice that:
\begin{itemize}
   \item[\tiny$\bullet$] The flat $B$-connection $\nabla^{\ka}$ is independent of the choice of splitting of $A$;
    \item[\tiny$\bullet$] Up to isomorphism, the representation of $B$ on $\ka$ is determined by its characteristic class (see, e.g., Section 11.1 in \cite{CFM21}):
    \[ c_1(\ka)\in H^1(B).\]
    Explicitly, if we choose a trivialization $\ka\simeq M\times \R$, then
    \[\nabla^{\ka}_{\alpha}f=\Lie_{\rho_B(\alpha)}f+i_{V}(\alpha)f,\quad (f\in C^{\infty}(M))\]
    for a cocycle $V\in \Omega^1(B)$, which is a representative of $c_1(\ka)$. If we change the trivialization $\ka\simeq M\times \R$ by multiplying with a non-zero function $h$, then $V$ changes by $\rho_B^*(\d\log|h|)=\d_B\log|h|$.
 \item[\tiny$\bullet$] The $\ka$-valued 2-form $\lambda\in \Omega^2(B;\ka)$ is $\d_B$-closed, so it determines a class:
    \[ c_2(A):=[\lambda]\in H^2(B;\ka).\]
    This class is well-defined: a different splitting of $A$ is determined by an element $Z\in \Omega^1(B;\ka)$, and this has the effect of changing $\lambda$ by $\d_{B}Z$.
\end{itemize} 

The anchor $\rho_B:B\to TM$, being a Lie algebroid morphism, defines a map in cohomology
\[\rho_B^*:H^1(M)\rmap H^1(B). \]

The class $c_1(\ka)$ belongs to the image of this map if and only if there exists a flat connection $\nabla$ on $\ka$ such that, for all $\al\in \Gamma(B)$ and $\xi\in \Gamma(\ka)$:
\begin{equation}\label{eq:lift:of:connection}
\nabla^{\ka}_{\alpha}\xi=\nabla_{\rho_B(\al)}\xi.
\end{equation}
To see this, choose a trivialization $\ka\simeq M\times \R$, giving rise to the representative $V$ of $c_1(\ka)$. We have a one-to-one correspondence between flat connections $\nabla$ on $M\times \R$ and closed forms $\theta\in\Omega^1(M)$, given by:
\[\nabla_Xf=\Lie_Xf+\theta(X)f,\quad (f\in C^{\infty}(M))\]
Under this correspondence, we have that \eqref{eq:lift:of:connection} is equivalent to $V=\rho_B^*\theta$. The class
\[ 
c_1(\nabla):=[\theta]\in H^1(M).
\]
is just the characteristic class of the representation of $TM$ on $\ka$ given by $\nabla$, and we have by definition:
\[ \rho_B^*(c_1(\nabla))=c_1(\ka).\]
Below, whenever $c_1(\ka)$ belongs to the image of $\rho^*_B$, and we choose a flat connection satisfying \ref{eq:lift:of:connection}, we will say that ``$\nabla$ is a flat connection with $\rho_B^*\nabla=\nabla^{\ka}$". In particular, when this happens, $\nabla$ is a  $B$-invariant connection (see Definition \ref{def:A-inv:conn}). Also, we have a map in cohomology with coefficients in $\ka$, for the representations defined by $\nabla$ and $\nabla^\ka$:
\[
\rho_B^*:H^\bullet(M;\ka)\rmap H^\bullet(B;\ka),
\]
and another map in IM cohomology, for the differential $\d^\nabla_\imult$ (see Lemma \ref{lemma:chain:map:IM:Lie:alg}):
\[H^\bullet_{\imult}(B;\ka)\rmap H^\bullet(B;\ka). \]

After these preliminaries, we can now state a result about the various special IM Ehresmann connections one can have for a bundle of ideals of rank one:

\begin{proposition}
\label{characterizations:rank:one}
Let $\ka\subset A$ be a bundle of ideals of rank one, which is orientable. 
\begin{enumerate}[(i)]
    \item The Lie algebroid $A$ is isomorphic to a product, $A\simeq B\times \R$, with $\ka\simeq 0_M\times \R$ (see Subsection \ref{ex:product}), if and only if \[c_1(\ka)=0\quad \textrm{and}\quad c_2(A)=0.\]
    \item There exists a totally flat IM Ehresmann connection for $\ka\subset A$ if and only if
    \[c_1(\ka)\in\im(H^1(M)\to H^1(B))\quad \textrm{and}\quad c_2(A)=0.\]
    \item There exists a leafwise flat IM Ehresmann connection for $\ka\subset A$ if and only if $\ka$ admits a $B$-invariant connection $\nabla$ and $c_2(A)=0$.
    \item There exists a kernel flat IM Ehresmann connection for $\ka\subset A$ if and only if
    \[c_1(\ka)\in\im(H^1(M)\to H^1(B))\quad \textrm{and}\quad c_2(A)\in\im(H^2_{\imult}(B;\ka)\to H^2(B;\ka)),\]
    where the last map is for a flat connection $\nabla$ with $\rho_B^*\nabla=\nabla^{\ka}$.
    \item The Lie bundle of ideals $\ka\subset A$ is of principal type if and only if 
    \[c_1(\ka)\in\im(H^1(M)\to H^1(B))\quad \textrm{and}\quad c_2(A)\in\im(H^2(M;\ka)\to H^2(B;\ka)),\]
    where the last map is for a flat connection $\nabla$ with $\rho_B^*\nabla=\nabla^{\ka}$.
 \end{enumerate}
\end{proposition}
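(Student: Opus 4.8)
The plan is to carry out everything within the coupling description of Proposition~\ref{prop:operators:splitting}. Since $\ka$ has rank one and is orientable I would fix a global trivialization $\ka\simeq M\times\R$; then $[\cdot,\cdot]_\ka=0$, the flat $B$-representation $\nabla^\ka$ is encoded by a $\d_B$-closed form $V\in\Omega^1(B)$ with $c_1(\ka)=[V]$, and the extension is encoded by a $\d_B$-closed form $\lambda\in\Omega^2(B;\ka)$ with $c_2(A)=[\lambda]$. An IM connection $1$-form corresponds to coupling data $(\nabla^L,U)$ that reproduces the bracket of $A$, i.e. $\nabla^\ka=\rho_B^*\nabla^L$ and $\lambda=U(\cdot,\rho_B(\cdot))$, and that satisfies the structure equations of Proposition~\ref{prop:structure:eqs}; in the abelian rank-one case (S1) is vacuous, (S2) collapses to $i_{\rho_B(\alpha)}R^{\nabla^L}=0$, and (S3) is the mixed cocycle identity. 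The four flatness notions then read off at once: totally flat means $R^{\nabla^L}=0$ and $U=0$; leafwise flat means $\lambda=U(\cdot,\rho_B(\cdot))=0$; kernel flat means $R^{\nabla^L}=0$.

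The technical backbone is one elementary lemma: $c_1(\ka)\in\im(\rho_B^*\colon H^1(M)\to H^1(B))$ if and only if there is a flat connection $\nabla$ on $\ka$ with $\rho_B^*\nabla=\nabla^\ka$. This holds because $\d_B f=\rho_B^*(\d f)$, so every $B$-coboundary already lies in the cochain-level image of $\rho_B^*$; hence $[V]=\rho_B^*[\theta']$ forces $V=\rho_B^*\theta$ for some closed $\theta\in\Omega^1(M)$, and $\nabla_X:=\Lie_X+\theta(X)$ is the required flat connection, the converse being immediate. The same cochain-versus-cohomology bookkeeping governs the splitting: $c_2(A)=0$ allows one to modify the splitting by an element of $\Omega^1(B;\ka)$ so as to make $\lambda=0$, a modification independent of the trivialization used to normalize $V$.

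With these preliminaries I would treat the parts in order. For (i) the product $B\times\R$ is exactly the case $V=0$, $\lambda=0$, so $c_1(\ka)=0$ and $c_2(A)=0$ are necessary, and conversely these two vanishings let me normalize the trivialization and the splitting to recover the product bracket of Subsection~\ref{ex:product}. For (ii) a totally flat connection forces $U=0$, hence $\lambda=0$ and $c_2(A)=0$, together with a flat $\nabla^L$ inducing $\nabla^\ka$, hence $c_1(\ka)\in\im\rho_B^*$ by the lemma; conversely the lemma produces the flat $\nabla^L$, $c_2(A)=0$ lets me take $\lambda=U=0$, and the structure equations then hold trivially. Part (iii) is the same argument with full flatness of $\nabla^L$ replaced by the weaker $i_{\rho_B}R^{\nabla^L}=0$ coming from (S2) with $U=0$, so the first condition relaxes exactly to the existence of a $B$-invariant connection (Definition~\ref{def:A-inv:conn}). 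Parts (iv) and (v) I would reduce to already-established results: (iv) is Proposition~\ref{corollary:abelian:flat:type} verbatim once ``admits a flat connection inducing $\nabla^\ka$'' is rewritten via the lemma as $c_1(\ka)\in\im\rho_B^*$; and (v) is the construction of Subsection~\ref{example:principal:type:flat}, where a principal-type structure with abelian $\ka$ is precisely a flat connection $\nabla$ together with a $\d^{\nabla}$-closed $\Omega\in\Omega^2(M;\ka)$ producing $\lambda=\rho_B^*\Omega$, so that $c_2(A)=\rho_B^*[\Omega]\in\im(\rho_B^*\colon H^2(M;\ka)\to H^2(B;\ka))$.

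The main obstacle I anticipate is keeping the two $c_2$-conditions of (iv) and (v) cleanly separated and proving that each cohomological membership is the correct invariant. Concretely I must show that $c_2(A)\in\im\big(H^2_\imult(B;\ka)\to H^2(B;\ka)\big)$ is equivalent to the existence of a coupling tensor $U$ with flat $\nabla^L$ and $U(\cdot,\rho_B(\cdot))=\lambda$, while $c_2(A)\in\im(\rho_B^*\colon H^2(M;\ka)\to H^2(B;\ka))$ is equivalent to the stronger requirement $U=\rho_B^*\Omega$; this uses the chain map of Lemma~\ref{lemma:chain:map:IM:Lie:alg} and the fact that the off-anchor components of $U$ are unconstrained by the bracket of $A$ but constrained by (S3). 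Verifying the commuting square relating $H^2(M;\ka)$, $H^2_\imult(B;\ka)$ and $H^2(B;\ka)$ — and hence the inclusion of images corresponding to ``principal type $\Rightarrow$ kernel flat'' — is the step I expect to require the most care.
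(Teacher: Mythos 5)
Your proposal is correct and follows essentially the same route as the paper's proof: the coupling description of Proposition \ref{prop:operators:splitting}, the preliminary lemma that $c_1(\ka)\in\im\big(\rho_B^*:H^1(M)\to H^1(B)\big)$ is equivalent (by the cochain-level identity $\d_B f=\rho_B^*\d f$) to the existence of a flat connection inducing $\nabla^\ka$, and normalization of $V$ and $\lambda$ by changing the trivialization and the splitting for parts (i)--(iii). The only cosmetic difference is that for (iv) and (v) you cite Proposition \ref{corollary:abelian:flat:type} and Subsections \ref{ex:principal:type:Lie:algebroids}--\ref{example:principal:type:flat}, whereas the paper re-derives those same arguments inline (via Proposition \ref{prop:flat:jets}, Lemma \ref{lemma:chain:map:IM:Lie:alg}, and the splitting of the transitive factor $A'$, whose curvature $\ad(\Omega)$ vanishes in the abelian rank-one case); the mathematical content is identical.
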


Before we look at the proof, let us discuss the general question of existence of partial splittings for ideals of rank one: 

\begin{proposition}
\label{prop:partial:splittings:codim:1}
A rank one bundle of ideals $\ka\subset A$ admits an IM Ehresmann connection if and only if: 
\begin{enumerate}[(i)]
    \item $\ka$ admits a $B$-invariant connection $\nabla^L$, i.e., for all $\al\in B$:
    \[\nabla^{\ka}_{\al}=\nabla^{L}_{\rho_B(\al)}\quad \textrm{and}\quad  i_{\rho_B(\al)}R^{\nabla^L}=0,\]
    \item and $c_2(A)$ has a representative of the form 
    \[\lambda(\al,\be)=i_{\rho_B(\be)}U(\al),\]
    where $U:B\to T^*M\otimes \ka$ is a linear map satisfying the structure equation: \[ U([\al,\be]_{B})=\Lie^{\nabla^L}_{\rho_B(\al)}U(\be)-i_{\rho_B(\be)}\d^{\nabla^L}U(\al),\]
    for all $\al,\beta\in \Gamma(B)$.
\end{enumerate} 
\end{proposition}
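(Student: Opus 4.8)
The plan is to read the statement off the coupling-data description of partial splittings in Propositions~\ref{prop:structure:eqs} and~\ref{prop:operators:splitting}, exploiting the fact that a rank one Lie algebra is abelian. Since $\dim\ka_x=1$ we have $[\cdot,\cdot]_\ka\equiv 0$ and $z(\ka)=\ka$, so for any coupling data $(\nabla^L,U)$ the structure equations (S1)--(S3) degenerate: (S1) becomes vacuous, and (S2) reduces to $i_{\rho_B(\al)}R^{\nabla^L}=0$, because its right-hand side $[U(\al,X),\cdot]_\ka$ vanishes. First I would recall from Proposition~\ref{prop:operators:splitting} that a partial splitting of $\ka\subset A$ is equivalent to tensorial coupling data $(\nabla^L,U)$ whose bracket~\eqref{equation:bracket:splitting} reproduces that of $A$; matching against the rank one normal form forces $\nabla^\ka_\al=\nabla^L_{\rho_B(\al)}$ and $\lambda(\al,\be)=U(\al,\rho_B(\be))=i_{\rho_B(\be)}U(\al)$. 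Combined with (S2), the first identity is exactly the assertion that $\nabla^L$ is a $B$-invariant connection, i.e.\ condition (i); the second exhibits $c_2(A)=[\lambda]$ by a cocycle of the required shape, the first half of (ii).

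The only substantive point is to match the surviving equation (S3) with the displayed structure equation for $U$. Here I would reproduce the computation from the proof of Proposition~\ref{prop:flat:jets}: writing $\Lie^{\nabla^L}_{\rho_B(\al)}=i_{\rho_B(\al)}\d^{\nabla^L}+\d^{\nabla^L}i_{\rho_B(\al)}$ and expanding $\d^{\nabla^L}$ on the $\ka$-valued forms $U(\al),U(\be)\in\Omega^1(M;\ka)$, one obtains
\[
\big(\Lie^{\nabla^L}_{\rho_B(\al)}U(\be)-i_{\rho_B(\be)}\d^{\nabla^L}U(\al)\big)(X)=U([\al,\be]_B,X)\quad (X\in\X(M))
\]
as a pointwise reformulation of (S3). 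The subtle point --- and the step I expect to be the \textbf{main obstacle} --- is that in Proposition~\ref{prop:flat:jets} this rewriting was carried out under the hypothesis $R^{\nabla^L}=0$, whereas here I only have $i_{\rho_B}R^{\nabla^L}=0$. I would stress that the manipulation is purely formal: it uses only the definition of $\d^{\nabla^L}$ on $0$- and $1$-forms and never the vanishing of $(\d^{\nabla^L})^2$, so no flatness is required and the equivalence holds verbatim in the present generality, provided one checks that the abelian reduction has indeed eliminated every curvature term except the allowed $i_{\rho_B}R^{\nabla^L}$.

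Granting these translations, both implications follow at once. For the forward direction, given an IM Ehresmann connection I pass to its coupling data $(\nabla^L,U)$ via Proposition~\ref{prop:structure:eqs}; then (S1)--(S3) yield (i) and the structure equation, while the cocycle $\lambda(\al,\be)=i_{\rho_B(\be)}U(\al)$ represents $c_2(A)$ in the splitting $B=\ker l$, giving (ii). For the converse, starting from (i) and (ii) I would first use the freedom to change the splitting of $A$ --- which alters the defining cocycle by a $\d_B$-coboundary --- to arrange that $c_2(A)$ is represented exactly by $\lambda(\al,\be)=i_{\rho_B(\be)}U(\al)$. Then $(\nabla^L,U)$ satisfies (S1) trivially, (S2) by $B$-invariance together with abelianness, and (S3) by the structure equation, so it is genuine coupling data; moreover its bracket~\eqref{equation:bracket:splitting} coincides with that of $A$ in this splitting, since $\nabla^L_{\rho_B(\al)}=\nabla^\ka_\al$ and $U(\al,\rho_B(\be))=\lambda(\al,\be)$. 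Proposition~\ref{prop:operators:splitting} then produces an IM connection $1$-form for $\ka\subset A$, completing the proof.
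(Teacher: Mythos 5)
Your proposal is correct and follows essentially the same route as the paper's proof: reduce via Propositions \ref{prop:structure:eqs} and \ref{prop:operators:splitting} to coupling data $(\nabla^L,U)$ matching the rank-one normal form of the bracket, note that $[\cdot,\cdot]_\ka=0$ makes (S1) vacuous and turns (S2) into $i_{\rho_B(\al)}R^{\nabla^L}=0$, and identify (S3) with the structure equation in (ii). The step you flagged as the main obstacle is indeed a non-issue, for exactly the reason you give: rewriting (S3) as $U([\al,\be]_B)=\Lie^{\nabla^L}_{\rho_B(\al)}U(\be)-i_{\rho_B(\be)}\d^{\nabla^L}U(\al)$ is a purely formal unpacking of $\Lie^{\nabla^L}$ and $\d^{\nabla^L}$ on $0$- and $1$-forms that never invokes $(\d^{\nabla^L})^2=0$, which is also how the paper uses it (in the proof of Proposition \ref{prop:flat:jets} this rewriting precedes any appeal to flatness).
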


\begin{proof}
From the theory of couplings, discussed in Section \ref{sec:obstructions:partial:split}, $\ka\subset A$ is partially split if and only if
there exists a pair $(\nabla^L,U)$ and a splitting $A\simeq B\oplus \ka$ such that $\nabla^{\ka}_{\alpha}=\nabla^L_{\rho_B(\alpha)}$, $\lambda(\al,\be)=U(\al,\rho_B(\be))$, and the structure equations (S2) and (S3) hold. It follows easily that (S2) is equivalent to invariance of the connection, and that (S3) is equivalent to the equation in (ii) -- as was mentioned already in the proof of Proposition \ref{prop:flat:jets}.
\end{proof}

Assume then that we have an IM Ehresmann connection with coupling data $(\nabla^L,U)$, as in the previous proposition. After fixing a trivialization $\ka\simeq M\times \R$, we can write the connection as
\[\nabla^L_X=\Lie_X+\theta(X),\]
for a unique 1-form $\theta\in \Omega^1(M)$. Then $c_1(\ka)=[V]$ where $V\in \Omega^1(B)$ satisfies:
\[ V(\al)=i_{\rho_B(\al)}\theta.\]
Moreover, regarding $U$ as a map $U:B\to T^*M$, $c_2(A)=[\lambda]$ where $\lambda\in \Omega^2(B)$ satisfies:
\[ \lambda(\al,\be)=i_{\rho_B(\be)}U(\al).\]
The structure equations from (i) and (ii) become:
\begin{align}
    \label{eq:S2:abelian} \tag{S2''}
    & i_{\rho_B(\alpha)}\d\theta=0,\\
    \label{eq:S3:abelian} \tag{S3''}
    & U([\al,\be]_{B})=\Lie_{\rho_B(\al)}U(\be)-i_{\rho_B(\be)}\d U(\al)\\
    &\qquad \qquad \quad +\theta(\rho_B(\al))U(\beta)-i_{\rho_{B}(\beta)}\big(\theta\wedge U(\al)\big),\notag 
\end{align}
for all $\al,\be\in\Gamma(B)$. This follows from the relations: \[ R^{\nabla^L}(X,Y)=\d\theta(X,Y),\quad \d^{\nabla^L}\omega=\d\omega+ \theta\wedge\omega, \quad \Lie^{\nabla^L}_X\omega=\Lie_X\omega +\theta(X)\omega.\]

Recall that a class $c\in H^k(B;\Rep)$ is said to be \textbf{tangential} if it has some representative $P\in \Omega^k(B;\Rep)$ satisfying
\[ \rho_B(\al)=0\quad \Longrightarrow \quad i_{\al}P=0.\]
The previous discussion implies the following obstruction for existence of IM Ehresmann connection:

\begin{corollary}
If a rank one bundle of ideals $\ka\subset A$ admits an IM Ehresmann connection, then the classes $c_1(\ka)\in H^1(B)$ and $c_2(A)\in H^2(B;\ka)$ are tangential.
\end{corollary}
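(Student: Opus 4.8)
The plan is to feed the structure produced by Proposition~\ref{prop:partial:splittings:codim:1} directly into the definition of tangentiality. Assume $\ka\subset A$ carries an IM Ehresmann connection. By that proposition, after choosing a splitting $A\simeq B\oplus\ka$ there is coupling data $(\nabla^L,U)$ with $\nabla^L$ a $B$-invariant connection on $\ka$ and with $c_2(A)=[\lambda]$ represented by $\lambda(\al,\be)=i_{\rho_B(\be)}U(\al)$. Fixing a trivialization $\ka\simeq M\times\R$ (using orientability), write $\nabla^L_X=\Lie_X+\theta(X)$ for a unique $\theta\in\Omega^1(M)$; then, as recorded in the discussion preceding this corollary, $c_1(\ka)=[V]$ with $V(\al)=i_{\rho_B(\al)}\theta$, while $U$ may be regarded as a linear map $U\colon B\to T^*M$.

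The tangentiality of $c_1(\ka)$ is then immediate: since $V(\al)=\theta(\rho_B(\al))$, any $\al\in\Gamma(B)$ with $\rho_B(\al)=0$ satisfies $i_\al V=V(\al)=0$, so the representative $V$ already meets the tangentiality condition.

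For $c_2(A)$ I would exploit the skew-symmetry of $U$ recorded in~\eqref{eq:U:skew}, namely $U(\al,\rho_B(\be))=-U(\be,\rho_B(\al))$. Contracting $\lambda$ in its first slot gives, for $\al,\be\in\Gamma(B)$,
\[ (i_\al\lambda)(\be)=\lambda(\al,\be)=U(\al,\rho_B(\be))=-U(\be,\rho_B(\al)). \]
Hence, if $\rho_B(\al)=0$, then $i_\al\lambda=0$ for every $\be$, so the representative $\lambda$ witnesses the tangentiality of $c_2(A)$.

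There is essentially no hard step here: the entire content has been pushed into Proposition~\ref{prop:partial:splittings:codim:1}, which produces representatives of $c_1(\ka)$ and $c_2(A)$ built by contracting $\theta$ and $U$ with the anchor $\rho_B$. The only point requiring a moment's care is confirming that the representative $\lambda$ of $c_2(A)$ inherits the skew-symmetry~\eqref{eq:U:skew}; once this is in hand, both statements reduce to the trivial observation that a quantity of the form $(\,\cdot\,)(\rho_B(\al))$ vanishes when $\rho_B(\al)=0$. One could even dispense with the trivialization of $\ka$ in the argument for $c_2(A)$, since~\eqref{eq:U:skew} is fiberwise and $\ka$-valued; orientability is only needed to display $c_1(\ka)$ via a scalar $1$-form $\theta$.
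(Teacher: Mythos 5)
Your proposal is correct and follows essentially the same route as the paper: the corollary there is stated as an immediate consequence of the preceding discussion, which produces exactly your representatives $V(\al)=i_{\rho_B(\al)}\theta$ and $\lambda(\al,\be)=i_{\rho_B(\be)}U(\al)$ from the coupling data of Proposition~\ref{prop:partial:splittings:codim:1}, with tangentiality of $[\lambda]$ resting on the skew-symmetry \eqref{eq:U:skew} just as you argue. You have merely made explicit the two contractions the paper leaves implicit, including the one genuinely needed step (invoking \eqref{eq:U:skew} to move $\rho_B(\al)$ into the second slot), so there is nothing to correct.
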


\begin{proof}[Proof of Proposition \ref{characterizations:rank:one}]
(i) If $A$ is isomorphic to a product, then clearly $c_1(\ka)=0$ and $c_2(A)=0$. Conversely, $c_2(A)=0$ means that there exists a splitting $A\simeq B\times \ka$ such that $\lambda=0$, and $c_1(\ka)=0$ means that there exists a trivialization $\ka\simeq M\times \R$ such that $V=0$. Thus $A$ is a product.


(ii) Assume that $\ka\subset A$ admits a totally flat IM connection, with coupling data $(\nabla^L,U=0)$. Then, as remarked above $c_1(\ka)$ belongs to the image of $\rho_B^*$, and $\lambda=0$, so clearly $c_2(A)=0$. Conversely, under the assumptions, there exists a splitting $A\simeq B\oplus \ka$, under which $\lambda=0$ and there is a flat connection $\nabla^L$ on $\ka$ inducing $\nabla^{\ka}$. Then $(\nabla^L,U)$ is the coupling data of a totally flat IM connection for $\ka$.

(iii) Assume that $\ka\subset A$ admits a leafwise flat IM connection, with coupling data $(\nabla^L,U)$. This means that $\lambda=0$, so clearly, $c_2(A)=0$, and as remarked before $\nabla^L$ is $B$-invariant. Conversely, under the assumptions, there exists a splitting $A\simeq B\times \R$, under which $\lambda=0$, and there is a connection $\nabla^L$ which is $B$-invariant. Then the pair $(\nabla^L,U=0)$ is the coupling data of a leafwise flat IM connection for $\ka$.

(iv) Assume that $\ka\subset A$ admits a kernel flat IM connection, with coupling data $(\nabla^L,U)$. This means that $\nabla^L$ is flat, so $c_1(\ka)$ is in the image of $\rho_B^*$. By Proposition \ref{prop:flat:jets}, the pair $(\d^{\nabla^L}U,U)$ is an IM 2-form on $B$ with values in $\ka$, which is clearly closed for the differential $\d_{\imult}^{\nabla^L}$ from Proposition \ref{prop:differ:IM:forms}, and which is sent under the map from Lemma \ref{lemma:chain:map:IM:Lie:alg} to $\lambda$. So $c_2(A)$ it the image of $[\gamma]$. Conversely, the assumptions imply the existence of a flat connection $\nabla^L$ on $\ka$ which is $B$-invariant, and of a splitting $A\simeq B\oplus \ka $, under which $\lambda$ is in the image of $\Omega_{\imult}^2(B;\ka)\to \Omega^2(B;\ka)$ (see Lemma \ref{lemma:chain:map:IM:Lie:alg}) of an element $\gamma$ which is $\d^{\nabla^L}_{\imult}$-closed. This implies that $\gamma=(\d^{\nabla^L}_{\imult}U,U)$. The second part of Proposition \ref{prop:flat:jets} implies that $(\nabla^L,U)$ is the coupling data of a kernel flat IM connection.

(v) Assume that $\ka \subset A$ is of principal type, i.e., there exists a transitive Lie algebroid $\ka\hookrightarrow A' \twoheadrightarrow TM$ so that $A\simeq A'\times_{TM}B$. As discussed in Subsection \ref{ex:principal:type:Lie:algebroids}, after choosing a splitting $A'\simeq TM\oplus \ka$, the Lie bracket on $A'$ is given by
\begin{equation}\label{eq:bracket:on:transitive:Lie}
[(X,\xi),(Y,\eta)]_{{A'}}=([X,Y],\Omega(X,Y)+\nabla_{X}\eta-\nabla_{Y}\xi),
\end{equation}
where $\nabla$ is a flat connection on $\ka$ and $\Omega\in \Omega^2(M;\ka)$ is $\d^{\nabla}$-closed. Using the induced splitting $A\simeq B\oplus \ka$, we obtain that $\nabla^{\ka}_{\al}=\nabla_{\rho_B(\al)}$ and $\lambda=\rho_B^*(\Omega)$; so $c_1(\ka)$ and $c_2(A)$ are in the image of $\rho^*_B$. Conversely, if this happens, we have a flat connection $\nabla$ on $\ka$ such that $\nabla^{\ka}_{\al}=\nabla_{\rho_B(\al)}$, and a $\d^{\nabla}$-closed 2-form  $\Omega\in 
\Omega^2(M;\ka)$ such that $\lambda=\rho_B^*(\Omega)$, for some splitting. This means precisely that $A\simeq A'\times_{TM}B$, where $A'=TM\oplus \ka$ is the transitive Lie algebroid with bracket \eqref{eq:bracket:on:transitive:Lie}.
\end{proof}

\appendix
\section{Multiplicative and infinitesimal multiplicative forms}

We recall here a few results about multiplicative and infinitesimal multiplicative (IM) forms that are required throughout the paper. For us, it will be specially important forms with coefficients, since they appear as ``connection 1-forms'' for Ehresmann connections.  More details on the results mentioned in this section can be found in \cite{BuCa12,BCdH16, CaDr17, CrSaSt15,DrEg19} and we only provide proofs of the results that cannot be found in those references.

\subsection{Multiplicative forms and IM forms} 
\label{app:IM:forms} 
Let $\G\tto M$ be a Lie groupoid with source/target $\s,\t:\G\to M$ and multiplication $m:\G\timesst \G\to \G$. 

\begin{definition}
\label{def:mult:forms}
A differential form $\omega\in\Omega^k(\G)$ is called  {\bf multiplicative} if it lies in the kernel of the simplicial differential:
\[ \delta\omega:=\pr_1^*\omega-m^*\omega +\pr_2^*\omega=0 \]
where $\pr_i:\G\timesst \G\to \G$ are the projections on the factors.
\end{definition}
 
The differential of a multiplicative form is again a multiplicative form, so we have a complex of multiplicative differential forms $(\Omega^{\bullet}_\mult(\G),\d)$. A basic fact is that two multiplicative forms $\omega,\omega'\in \Omega^k_\mult(\G)$ such that $\omega|_M=\omega'|_M$ and $(\d\omega)|_M=(\d\omega')|_M$ actually coincide, assuming that $\G$ has connected target fibers.

Denote by $(A,[\cdot,\cdot],\rho)$ the Lie algebroid of $\G\tto M$. Given a multiplicative form $\omega\in\Omega^k_\mult(\G)$ one defines two vector bundle maps  $\mu:A\to \wedge^{k-1}T^*M$ and $\zeta:A\to \wedge^kT^*M$ by setting:
\begin{equation}
\label{eq:M:IM:forms}
 \mu(a)=i_a\omega|_{TM},\quad \zeta(a)=i_a\d\omega|_{TM}.
\end{equation}
The pair $(\mu,\zeta)$ satisfies the following set of equations for any $\al,\be\in\Gamma(A)$:
\begin{align}
\label{eq:mult:form}
i_{\rho(\be)} \mu(\al) &=-i_{\rho(\al)} \mu(\be),\notag\\
\mu([\al,\be])&=\Lie_{\rho(\al)}\mu(\be)-i_{\rho(\be)}\d \mu(\al)-i_{\rho(\be)}\zeta(\al),\\
\zeta([\al,\be])&=\Lie_{\rho(\al)}\zeta(\be)-i_{\rho(\be)}\d \zeta(\al).\notag
\end{align}
This leads to the notion of {\bf infinitesimal multiplicative form} on an arbitrary Lie algebroid $A\Ato M$, integrable or not:

\begin{definition}
An {\bf IM $k$-form} on a Lie algebroid $(A,[\cdot,\cdot],\rho)$ is a pair $(\mu,\zeta)$, where $\mu:A\to \wedge^{k-1}T^*M$ and $\zeta:A\to \wedge^kT^*M$ are bundle maps satisfying \eqref{eq:mult:form}. 
\end{definition}

The space of IM forms is denoted by $\Omega_\imult^k(A)$ and it becomes a cochain complex with differential given by
\[ \d_\imult:\Omega_\imult^k(A)\to \Omega_\imult^{k+1}(A), \quad \d_\imult(\mu,\zeta):=(\zeta,0). \]
For a \emph{target 1-connected} Lie groupoid $\G\tto M$ with Lie algebroid $A\Ato M$ the assignment $\omega\mapsto (\mu,\zeta)$ given by \eqref{eq:M:IM:forms} is an isomorphism of complexes:
\[ (\Omega^\bullet_\mult(\G),\d)\simeq (\Omega^\bullet_\imult(A),\d_\imult). \]

\subsection{Multiplicative forms and IM forms with coefficients} 
This subsection follows \cite{CrSaSt15}, where all claims are proven. Let $\G$ be a Lie groupoid and $\Rep$ a $\G$-representation. We will work with differential forms on $\G$ with coefficients in $\s^*\Rep$ (instead of $\t^*\Rep$, as in \cite{CrSaSt15}), which we denote by
\[\Omega^{\bullet}(\G;\Rep):=\Omega^{\bullet}(\G;\s^*\Rep).\]
Similarly, denote by $\Omega^{\bullet}(\G^{(k)};\Rep)$ the space of differential forms on the manifold $\G^{(k)}$ of composable $k$-strings of arrows, with values in the vector bundle $(\s\circ \pr_k)^*(\Rep)\to \G^{(k)}$, where $\pr_i:\G^{(k)}\to \G$ is the projection onto the $i$-th component.

\begin{definition}
\label{def:mult:forms:coeff}
A form $\omega\in\Omega^{\bullet}(\G;\Rep)$ is called {\bf multiplicative} if
\[ \delta\omega=0,\]
where $\delta:\Omega^{\bullet}(\G;\Rep)\to \Omega^{\bullet}(\G^{(2)};\Rep)$
is the simplicial differential defined by:
\begin{equation}
\label{eq:simplicial:E:forms:1} 
\delta\omega|_{(g_1,g_2)}=g_2^{-1}\cdot \pr_1^*\omega|_{g_1}-m^*\omega|_{g_1g_2}+\pr_2^*\omega|_{g_2}.
\end{equation}
We denote by $\Omega^\bullet_\mult(\G;\Rep)$ the space of $\Rep$-valued multiplicative $k$-forms. 
\end{definition}


A simple way to produce multiplicative forms is by using the simplicial differential in degree zero $\delta:\Omega^{\bullet}(M; \Rep)\to \Omega^{\bullet}_\mult(\G;\Rep)$ given by:
\begin{equation}
\label{eq:simplicial:E:forms:0} 
\delta\omega|_g:=g^{-1}\cdot \t^*\omega - \s^*\omega,\quad g\in \G.
\end{equation}
We will see later other examples. 

Let us pass to the infinitesimal level, so denote by $A\Ato M$ the Lie algebroid of $\G\tto M$. Given a representation $p:\Rep\to M$ of $\G$, at the algebroid level we obtain a \textbf{flat $A$-connection} on $\Rep$:
\[ \nabla:\Gamma(A)\times \Gamma(\Rep)\to \Gamma(\Rep), \quad (\al,s)\mapsto \nabla_\al s.  \]
This means that for all $\al,\be\in\Gamma(A)$, $s\in\Gamma(\Rep)$, and $f\in C^\infty(M)$, we have:
%
\[\nabla_{f\al}s=f\nabla_\al s, \ \ \ 
\nabla_\al(fs)=f\nabla_\al s+(\Lie_{\rho(\al)} f) s,\ \ \ 
\nabla_{[\al,\be]}=[\nabla_{\al},\nabla_{\be}]. 
\]
We also call $(\Rep,\nabla)$ a \textbf{representation} of the Lie algebroid $A$.

For a $\Rep$-valued multiplicative $k$-form $\omega\in\Omega^k_\mult(\G;\Rep)$, define a vector bundle map $l:A\to \wedge^{k-1}T^*M\otimes \Rep$ by setting
\begin{equation}
\label{eq:M:IM:E:forms}
l(a):=(i_{a}\omega)|_{TM},
\end{equation}
and a linear operator $L:\Gamma(A)\to \Omega^k(M; \Rep)$ by letting
\begin{equation}
\label{eq:M:IM:E:forms:2}
L(\al)_x(v_1,\dots,v_k):=\frac{\d}{\d \epsilon}\Big|_{\epsilon=0} \phi_{\al^{L}}^{\epsilon}(x)\cdot \omega(\d_x \phi_{\al^{L}}^{\epsilon}(v_1),\dots,\d_x \phi_{\al^{L}}^{\epsilon}(v_k))
\end{equation}
where $\phi_{\al^{L}}^{\epsilon}$ denotes the flow of the left-invariant vector field $\al^L$.
This operator is a kind of differential operator with symbol $l$, in the sense that for any $f\in C^\infty(M)$ and $\al\in\Gamma(A)$ it satisfies:
\begin{equation}
\label{eq:symbol:IM:form}
L(f\al)=fL(\al)+\d f\wedge l(\al).
\end{equation}
Furthermore, the pair $(L,l)$ satisfies the following set of equations:
\begin{align}
\label{eq:compatibility:IM:E:form}
i_{\rho(\al)}l(\be)&=-i_{\rho(\be)} l(\al),\notag \\
L([\al,\be])&=\Lie_{\al} L(\be)-\Lie_{\be} L(\al),\\
l([\al,\be])&=\Lie_{\al} l(\be)-i_{\rho(\be)}L(\al), \notag
\end{align}
where, for $\al\in\Gamma(A)$ and $\gamma\in\Omega^k(M; \Rep)$, we denoted: 
\begin{equation}
    \label{eq:Lie:derivative:rep}
    \Lie_\al\gamma(X_1,\dots,X_k):=\nabla_\al(\gamma(X_1,\dots,X_k))-\sum_{i=1}^k\gamma(X_1,\dots,[\rho(\al),X_i],\dots,X_k).
\end{equation} 

Now observe that all this makes sense for an arbitrary algebroid $A\Ato M$, integrable or not, provided one has a representation $(\Rep,\nabla)$:

\begin{definition}
Let  $(\Rep,\nabla)$ be a representation of the Lie algebroid $A\Ato M$. An {\bf $\Rep$-valued IM $k$-form} is a pair $(L,l)$, where $L:\Gamma(A)\to \Omega^k(M,\Rep)$ is a linear map and $l:A\to \wedge^{k-1}T^*M\otimes \Rep$ is a vector bundle map satisfying \eqref{eq:symbol:IM:form} and \eqref{eq:compatibility:IM:E:form}. 
\end{definition}


We denote by $\Omega^\bullet_\imult(A;\Rep)$ the space of $\Rep$-valued IM $k$-forms. For historical reasons, these are also sometime called \emph{Spencer operators}. For a \emph{target 1-connected} Lie groupoid $\G\tto M$ with Lie algebroid $A\Ato M$ the assignment $\omega\mapsto (L,l)$ given by \eqref{eq:M:IM:E:forms} and \eqref{eq:M:IM:E:forms:2} gives an isomorphism:
\[ \Omega^\bullet_\mult(\G;\Rep)\simeq \Omega^\bullet_\imult(A;\Rep). \]

\begin{example}[$\Rep$-valued forms of degree 1]
\label{ex:degree1:E-valued:M:forms}
\label{ex:degree1:E-valued:IM:forms}
For us the $\Rep$-valued multiplicative forms of degree 1 will be important. Such a form $\omega\in\Omega^1_\mult(\G;\Rep)$ is the same thing as a morphism of VB groupoids $\omega:T\G\to \s^*\Rep=\G\timessp \Rep$ covering the identity morphism $\id:\G\to \G$. Let us explain this in more detail. 

First, the tangent bundle of a Lie groupoid $\G\tto M$ is a VB-groupoid:
\[
\xymatrix{
T\G \ar[r] \ar@<0.15pc>[d] \ar@<-0.15pc>[d] & \G \ar@<0.15pc>[d] \ar@<-0.15pc>[d]\\
TM\ar[r] & M
}
\]
Next, if $p:\Rep\to M$ is a representation of $\G$, then we have the groupoid extension $\G\times_M \Rep:=\G\timessp \Rep\tto M$, where composition of arrows is given by:
\[ (g,v)\cdot (h,w)=(gh,h^{-1}\cdot v+w). \]
It can also be viewed as a VB-groupoid:
\[
\xymatrix{
\G\times_M \Rep \ar[r] \ar@<0.15pc>[d] \ar@<-0.15pc>[d] & \G \ar@<0.15pc>[d] \ar@<-0.15pc>[d]\\
0_M\ar[r] & M
}
\]
Now an $\Rep$-valued multiplicative differential form $\omega\in\Omega^\bullet_\mult(\G;\Rep)$ is just a map:
\[ \omega:T\G\to \s^* \Rep=\G\times_M \Rep, \quad v_g\mapsto (g,\omega(v_g)), \]
which is a morphism of VB groupoids covering the identity. In other words, it is a groupoid morphism, which is a vector bundle map making the diagram commute:
\[
\xymatrix@C=15pt@R=15pt{
T\G \ar@<0.15pc>[dd] \ar@<-0.15pc>[dd] \ar[rr]^{\omega} \ar[dr]&  &  \G\times_M \Rep  \ar@<0.15pc>@{-}[d] \ar@<-0.15pc>@{-}[d] \ar[dr]\\
 & 
\G\ar[rr]^(.3){\id}  \ar@<0.15pc>[dd] \ar@<-0.15pc>[dd] & \ar@<0.15pc>[d] \ar@<-0.15pc>[d] &\G  \ar@<0.15pc>[dd] \ar@<-0.15pc>[dd] \\
TM \ar[dr] \ar@{-}[r] &\ar[r]  & 0_M \ar[dr]\\
 & M\ar[rr] &  &M
}
\]

A similar discussion holds at the infinitesimal level for $\Rep$-valued IM forms of degree 1: a 1-form $(L,l)\in\Omega^1_\imult(A; \Rep)$ is the same thing as a VB algebroid morphism $\theta^\vee:TA\to A\oplus \Rep$ covering the identity morphism $\id:A\to A$, as we now explain. 

First, the tangent bundle of a Lie algebroid $A\Ato  M$ is a VB-algebroid:
\[
\xymatrix{
TA \ar[r] \ar@{=>}[d] & A \ar@{=>}[d]\\
TM\ar[r] & M
}
\]
where the double arrow is used to distinguish the algebroid projections from the algebroid morphisms. Next, given a representation $(\Rep,\nabla)$ of $A$, then we have the algebroid extension $A\oplus \Rep$ with anchor $\rho\circ\pr_A$ and bracket:
\[ [(\al,s),(\be,t)]=([\al,\be],\nabla_\al  t-\nabla_\be s). \]
It can also be viewed as a VB-algebroid:
\[
\xymatrix{
A\oplus \Rep \ar[r] \ar@{=>}[d] & A \ar@{=>}[d]\\
0_M\ar[r] & M
}
\]

Now, given a morphism of VB algebroids $\theta^\vee:TA\to A\oplus \Rep$ covering the identity morphism $\id:A\to A$:
\[
\xymatrix@C=15pt@R=15pt{
TA \ar@{=>}[dd] \ar[rr]^{\theta^\vee} \ar[dr]&  &  A\oplus \Rep  \ar@{=}[d] \ar[dr]\\
 & 
A\ar[rr]^(.3){\id}  \ar@{=>}[dd] & \ar@{=>}[d] &A  \ar@{=>}[dd] \\
TM \ar[dr] \ar@{-}[r] &\ar[r]  & 0_M \ar[dr]\\
 & M\ar[rr] &  &M
}
\]
we can define an $\Rep$-valued IM form $(L,l)\in\Omega^1_\imult(A,\Rep)$ by setting for any section $\al\in\Gamma(A)$ and $v\in TM$:
\begin{align*}
L(\al)(v)&:=\pr_\Rep(\theta^\vee(\d\al(v))),\\
l(\al)&:=\pr_\Rep\theta^\vee(\hat{\al})|_M,
\end{align*}
where $\hat{\al}\in\X(A)$ is the vertical lift of the section $\al$. Conversely, given an $\Rep$-valued IM form $(L,l)\in\Omega^1_\imult(A;\Rep)$ there exists a unique VB algebroid morphism $\theta^\vee:TA\to A\oplus \Rep$ which covers the identity and corresponds to $(L,l)$ under the above assignment. 
\end{example}

\subsection{Differentiation of multiplicative forms with coefficients}
\label{sec:differentiation:forms:coefficients}
In this section we discuss differentiation of multiplicative forms with coefficients. The material here seems to be new, so we include detailed proofs.

On a manifold $M$, in order to be able to differentiate forms with coefficients in a vector bundle $p:\Rep\to M$ one needs to choose a connection $\nabla$. This gives a $\nabla$-de Rham operator $\d^\nabla:\Omega^k(M;\Rep)\to \Omega^{k+1}(M;\Rep)$ defined by
\begin{align*}
    \d^\nabla\omega(X_0,\dots,X_k)&:=\sum_i (-1)^i\nabla_{X_i}\omega(x_0,\dots,\widehat{X_i},\dots,X_k)+\\
    &\qquad+\sum_{i<j}(-1)^{i+j}\omega([X_i,X_j],X_0,\dots,\widehat{X_i},\dots,\widehat{X_j},\dots,X_k).
\end{align*}
Note that $\d^\nabla$ is a differential if and only if $\nabla$ is flat.

Now, consider a representation $p:\Rep\to M$ of a Lie groupoid $\G\tto M$, with Lie algebroid $A\Ato M$. If we fix a connection $\nabla$ on $\Rep$, we can take the pullback connection $\nabla^\s$ on $\s^*\Rep\to \G$ and use it to differentiate $\Rep$-valued forms. However, in general, given a multiplicative form $\omega\in\Omega^k_\mult(\G;\Rep)$, its differential $\d^{\nabla^{\s}}\omega$ will not be a multiplicative form. We need the connection $\nabla$ to be compatible with the representation.

\begin{definition}\label{definition:invariant:connection}
Let $\G\tto M$ be a Lie groupoid and $p:\Rep\to M$ a $\G$-representation. A (usual) connection $\nabla$ on $\Rep$ is called {\bf $\G$-invariant} if the isomorphism of vector bundles:
\[ \s^*\Rep\to \t^*\Rep,\quad (g,v)\mapsto (g,g\cdot v), \]
preserves the connections $\nabla^\s:=\s^*\nabla$ and $\nabla^\t:=\t^*\nabla$.
\end{definition}

The $\G$-invariance of a connection is equivalent to the commutation relation:
\[ \d^{\nabla^\s}\delta=\delta\,\d^{\nabla},\]
where $\delta:\Omega^{\bullet}(M; \Rep)\to \Omega^{\bullet}_\mult(\G;\Rep)$ is the degree zero simplicial differential \eqref{eq:simplicial:E:forms:0}. It turns out that this is equivalent to require this commutation relation to hold for any simplicial degree. We only need the case of degree 1:

\begin{proposition}
\label{prop:differ:mult:forms}
Let $\G\tto M$ be a Lie groupoid, let $p:\Rep\to M$ be a $\G$-representation, and let $\nabla$ be a connection on $\Rep$. We denote by $\nabla^\s$ both the pullback connections on $\s^*\Rep\to\G$ and on $(\s\circ\pr_2)^*\Rep\to \G^{(2)}$. The $\nabla^\s$-de Rham differential $\d^{\nabla^{\s}}$ and the simplicial differential \eqref{eq:simplicial:E:forms:1} commute:
\[ \d^{\nabla^\s}\delta=\delta\,\d^{\nabla^\s},\]
if and only if the connection $\nabla$ is $\G$-invariant. In particular, if this is the case, then $\d^{\nabla^\s}$ maps multiplicative forms to multiplicative forms.
\end{proposition}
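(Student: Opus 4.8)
The plan is to analyze the three terms of the simplicial differential \eqref{eq:simplicial:E:forms:1} separately, leaning on two standard facts about the covariant de Rham operator. First, \emph{naturality}: for a smooth map $f\colon N\to P$ and a connection $\nabla$ on a vector bundle $E\to P$, one has $\d^{f^*\nabla}\circ f^*=f^*\circ\d^{\nabla}$ on $E$-valued forms. Second, the \emph{behaviour under a bundle isomorphism}: if $\Theta\colon E\to F$ covers the identity of $N$ and $E,F$ carry connections, then, viewing $\Theta$ as a section of $\operatorname{Hom}(E,F)$ and using the Leibniz rule for the induced connection,
\[
\d^{\nabla^F}(\Theta\,\eta)=\Theta\big(\d^{\nabla^E}\eta\big)+(\nabla^{\operatorname{Hom}}\Theta)\wedge\eta ,
\]
so that $\d$ commutes with $\Theta$ \emph{if and only if} $\Theta$ is parallel. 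The whole proof is a bookkeeping of which coefficient bundle each term lives in, so that these two facts can be applied.

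Fix $\omega\in\Omega^k(\G;\s^*\Rep)$ and recall that on $\G^{(2)}$ the coefficient connection is, by convention, $(\s\circ\pr_2)^*\nabla$. Since $\s\circ m=\s\circ\pr_2$, we have $m^*\nabla^\s=(\s\circ m)^*\nabla=(\s\circ\pr_2)^*\nabla$ and likewise $\pr_2^*\nabla^\s=(\s\circ\pr_2)^*\nabla$; both the terms $m^*\omega$ and $\pr_2^*\omega$ are thus differentiated by the \emph{pullback} of $\nabla^\s$. By naturality these two terms commute with $\d^{\nabla^\s}$ with no correction:
\[
\d^{(\s\pr_2)^*\nabla}(m^*\omega)=m^*(\d^{\nabla^\s}\omega),\qquad
\d^{(\s\pr_2)^*\nabla}(\pr_2^*\omega)=\pr_2^*(\d^{\nabla^\s}\omega).
\]

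The only nontrivial term is the first. Writing $c\colon\s^*\Rep\to\t^*\Rep$, $c(g,v)=(g,g\cdot v)$, for the invariance isomorphism of Definition \ref{definition:invariant:connection}, the factor $g_2^{-1}\cdot$ is $c^{-1}$ evaluated at $g_2$, so the first term is $\Theta(\pr_1^*\omega)$ with $\Theta:=\pr_2^*(c^{-1})\colon(\t\circ\pr_2)^*\Rep\to(\s\circ\pr_2)^*\Rep$. Since $\s\circ\pr_1=\t\circ\pr_2$, naturality gives $\d^{(\t\pr_2)^*\nabla}(\pr_1^*\omega)=\pr_1^*(\d^{\nabla^\s}\omega)$, and then the bundle-map formula yields $\d^{(\s\pr_2)^*\nabla}(\Theta\pr_1^*\omega)=\Theta(\pr_1^*\d^{\nabla^\s}\omega)+(\nabla^{\operatorname{Hom}}\Theta)\wedge\pr_1^*\omega$. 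Combining the three computations, the total defect collapses to the single correction term
\[
\d^{\nabla^\s}\delta\omega-\delta\,\d^{\nabla^\s}\omega=(\nabla^{\operatorname{Hom}}\Theta)\wedge\pr_1^*\omega .
\]

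It remains to show this vanishes for all $\omega$ precisely when $\nabla$ is $\G$-invariant. Probing with degree-zero forms (sections $s$ of $\s^*\Rep$), whose value $(\pr_1^*s)(g_1,g_2)=s(g_1)$ can be prescribed to be any element of $\Rep_{\t(g_2)}$, one sees that the defect vanishes identically iff $\nabla^{\operatorname{Hom}}\Theta=0$. Because $\pr_2$ is a surjective submersion, $\pr_2^*$ is injective on $\operatorname{Hom}$-valued forms, and $\nabla^{\operatorname{Hom}}\Theta=\pr_2^*\big(\nabla^{\operatorname{Hom}}c^{-1}\big)$, so $\nabla^{\operatorname{Hom}}\Theta=0$ iff $c^{-1}$ (equivalently $c$) is parallel, which is exactly the $\G$-invariance of $\nabla$. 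Finally, the ``in particular'' clause is immediate: if $\nabla$ is $\G$-invariant and $\delta\omega=0$, then $\delta(\d^{\nabla^\s}\omega)=\d^{\nabla^\s}(\delta\omega)=0$. The main obstacle I anticipate is not conceptual but the careful identification of the coefficient bundles of each term, verifying that the correction is exactly $\nabla^{\operatorname{Hom}}\Theta$ (and not an additional curvature contribution), and confirming that arbitrary $\omega$ genuinely detects all of $\nabla^{\operatorname{Hom}}\Theta$.
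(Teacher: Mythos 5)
Your proof is correct and takes essentially the same route as the paper's: decompose $\delta$ into its three terms, use naturality of $\d^{\nabla}$ under pullbacks (via $\s\circ m=\s\circ\pr_2$ and $\s\circ\pr_1=\t\circ\pr_2$) to dispose of the $m^*$ and $\pr_2^*$ terms, and reduce the equivalence to parallelism of the bundle isomorphism $\Theta=\pr_2^*(c^{-1})$ (the paper's $\Phi$), which is exactly the $\G$-invariance of $\nabla$. The only difference is one of detail: where the paper simply asserts the two equivalences, you make them explicit through the Leibniz-rule defect $(\nabla^{\operatorname{Hom}}\Theta)\wedge\pr_1^*\omega$, the degree-zero probing argument, and the injectivity of $\pr_2^*$.
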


\begin{proof}
Let $E\to M$ be a vector bundle equipped with a linear connection $\nabla$. Given a map $\phi:N\to M$, denote by $\nabla^\phi$ the pullback connection on $\phi^*\Rep$. Then:
\[ \d^{\nabla^\phi}\phi^*=\phi^*\d^\nabla. \]

Now, we can write the simplicial differential with coefficients as:
\[ \delta=\Phi^*\pr_1^*-m^*+\pr_2^*, \]
where $\Phi$ is the vector bundle isomorphism covering the identity: 
\[ \Phi:(\s\circ\pr_1)^*\Rep=(\t\circ\pr_2)^*\Rep\to (\s\circ\pr_2)^*\Rep, \quad (g,h,v)\mapsto (g,h,h^{-1}v). \]
The condition that the connection $\nabla$ is  $\G$-invariant is easily seen to be equivalent to this map preserving the pullback connections:
\[ 
(\t\circ\pr_2)^*\nabla=\pr_2^*\nabla^\t, \quad (\s\circ\pr_2)^*\nabla=\pr_2^*\nabla^\s. 
\]
This condition, in turn, is equivalent to:
\[ \Phi^*\pr_1^*\d^\nabla=\d^{\nabla^\s}\Phi^*\pr_1^*. \]
Hence, we conclude that $\nabla$ is  $\G$-invariant if and only if $\d^{\nabla^\s}\delta=\delta\,\d^{\nabla^\s}$.
\end{proof}

We now pass to infinitesimal level.  First we deduce the infinitesimal analogue of the $\G$-invariance condition on a connection:

\begin{proposition}\label{prop:Invariant:connections}
Let $\G\tto M$ be an $\t$-connected Lie groupoid with Lie algebroid $A\Ato M$ and let $p:\Rep\to M$ be a $\G$-representation. Denote by $\nabla^A$ the corresponding flat $A$-connection on $\Rep$. A connection $\nabla$ is $\G$-invariant if and only if 
\begin{equation}
    \label{eq:connection:A-invariant}
    \nabla^A_a=\nabla_{\rho(a)}\quad \textrm{and}
    \quad R^\nabla(\rho(a),v)=0,
\end{equation}
for all $a\in A_x$, $v\in T_xM$, and $x\in M$.
\end{proposition}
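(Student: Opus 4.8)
The plan is to unwind the $\G$-invariance of Definition \ref{definition:invariant:connection} into the statement that the action isomorphism
\[ \Psi:\s^*\Rep\to\t^*\Rep,\qquad (g,v)\mapsto(g,g\cdot v), \]
is parallel with respect to the connection $\nabla^{\mathrm{Hom}}$ on $\mathrm{Hom}(\s^*\Rep,\t^*\Rep)$ induced by $\nabla^{\s}$ and $\nabla^{\t}$; equivalently, $\nabla^{\t}_X(\Psi\sigma)=\Psi(\nabla^{\s}_X\sigma)$ for every section $\sigma$ of $\s^*\Rep$ and every $X\in\X(\G)$. Both conditions in \eqref{eq:connection:A-invariant} will then be read off from this parallelism by restricting to the units $u:M\hookrightarrow\G$, where $\Psi$ is the identity and where $T_{1_x}\G=A_x\oplus\d u(T_xM)$ with $A_x=\ker\d\t_{1_x}$ and $\rho=\d\s|_A$.

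For the forward implication I would argue as follows. Recall that $\nabla^A$ is, by construction, the infinitesimal generator of the action $\Psi$ in the $A$-directions at the units. Choosing a path $g(t)$ with $g(0)=1_x$ and $\dot g(0)=a\in A_x$, and $\nabla$-parallel transporting a vector $v$ along $\s(g(t))$, the invariance identity $g(t)\cdot P^{\s}_t v=P^{\t}_t v$ (using $g(0)=1_x$), where $P^{\s},P^{\t}$ denote $\nabla$-parallel transport along $\s(g(t))$ and $\t(g(t))$, can be differentiated at $t=0$; since $\d\t(a)=0$ the transport $P^{\t}_t$ contributes nothing to first order, and one obtains $\nabla^A_a s=\nabla_{\rho(a)}s$, the first condition. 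For the second, a parallel bundle isomorphism preserves curvature, so $\t^*R^\nabla\circ\Psi=\Psi\circ\s^*R^\nabla$; evaluating at a unit $1_x$ (where $\Psi=\id$) on the pair $(a,\d u(w))$ with $a\in A_x$, $w\in T_xM$, and using $\d\t(a)=0$, $\d\s(a)=\rho(a)$, $\d\t(\d u(w))=\d\s(\d u(w))=w$, yields $R^\nabla(\rho(a),w)=R^\nabla(0,w)=0$, the second condition.

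For the converse I would assume \eqref{eq:connection:A-invariant} and show that $\Theta:=\nabla^{\mathrm{Hom}}\Psi$ vanishes identically, and this is where target-connectedness is essential. Running the computation above in reverse shows that $\Theta$ vanishes along the units: in base directions because $\Psi\equiv\id$ there, and in the $A$-directions precisely because $\nabla^A_a=\nabla_{\rho(a)}$. It then remains to propagate this vanishing over each (connected) target fiber. Here I would exploit the multiplicativity $\Psi_{gh}=\Psi_g\circ\Psi_h$ of the action, which forces $\Theta$ to obey a linear cocycle-type relation under multiplication, so that its vanishing spreads from $1_y$ to all of $\t^{-1}(y)$; the mixed-curvature condition $R^\nabla(\rho(a),\cdot)=0$ is exactly what guarantees that this propagation is consistent in the directions transverse to the $A$-orbits, where condition (a) alone gives no control. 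Since every arrow lies in a target fiber containing a unit, $\Theta$ then vanishes everywhere, proving $\G$-invariance. I expect this propagation step — integrating the pointwise-at-units data to global parallelism of $\Psi$ using only target-connectedness — to be the main obstacle, and the point of condition (b) is precisely to supply the transverse consistency that neither (a) nor the flatness of $\nabla^A$ (which only controls curvature on $\rho(A)\times\rho(A)$) can provide.
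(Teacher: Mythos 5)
Your forward implication is correct, and its curvature half is actually cleaner than the paper's own argument: the paper obtains $R^\nabla(\rho(a),v)=0$ by constructing a two-parameter family of target-fiber paths and differentiating a commutation relation between parallel transports, whereas you use the algebraic fact that a parallel bundle isomorphism intertwines curvatures, $R^{\nabla^\t}(X,Y)\circ\Psi=\Psi\circ R^{\nabla^\s}(X,Y)$, and evaluate at a unit on the pair $(a,\d u(w))$, where $\d\t(a)=0$ and $\Psi_{1_x}=\id$ kill one side. Your derivation of $\nabla^A_a=\nabla_{\rho(a)}$ by differentiating $g(t)\cdot P^{\s}_t v=P^{\t}_t v$ at a unit is essentially the paper's computation.

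The converse, however, has a genuine gap, exactly at the step you flag as the main obstacle. The cocycle identity you invoke is, explicitly, $\Theta_{\d m(X,Y)}=\Theta_X\circ\Psi_h+\Psi_g\circ\Theta_Y$ for composable $(X,Y)\in T_{(g,h)}\G^{(2)}$ (it holds without curvature corrections, because composition of homomorphisms is a parallel pairing). But this identity cannot, by itself, ``spread vanishing from $1_y$ to all of $\t^{-1}(y)$'': it transfers full vanishing only to products $gh$ of points at which full vanishing is already known, and products of units are units, so no new points are reached. What the cocycle identity does give you is vanishing of $\Theta$ on $\ker\d\t$ at \emph{every} point: write $\d L_g(a)=\d m(0_g,a)$ with $a\in A_{\s(g)}$ and use $\Theta_a=0$ at units, which is condition (a). The directions transverse to the orbits are genuinely not reached this way, and note that in your sketch condition (b) never actually gets used --- ``guarantees that the propagation is consistent'' is not an argument.

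To close the gap you need an honest integration step along target fibers, and this is where (b) enters. Either follow the paper: condition (a) identifies $g(\eps)\,\cdot$ with the inverse of $\nabla$-parallel transport along $\s(g(\eps))$ for any fiber path from a unit; writing an arbitrary path as $g(t)=g(t,1)$ for a family $g(t,\eps)$ of fiber paths (this uses $\t$-connectedness), the invariance diagram becomes the commutation of $\nabla$-parallel transports around the rectangle $\gamma(t,\eps)=\s(g(t,\eps))$, which holds because $\partial_\eps\gamma\in\rho(A)$ and (b) kills the relevant curvature. Or stay in your framework: the identity $(\d^{\nabla^{\mathrm{Hom}}})^2\Psi=R^{\mathrm{Hom}}\Psi$ gives, for a vector field $Y$ tangent to the target fibers and arbitrary $X$,
\begin{equation*}
\nabla^{\mathrm{Hom}}_Y\big(\Theta(X)\big)=-\,\Theta([X,Y])+\Psi\circ R^{\nabla}(\d\s X,\d\s Y)-R^{\nabla}(\d\t X,\d\t Y)\circ\Psi,
\end{equation*}
and the curvature terms vanish since $\d\t Y=0$ and $\d\s Y\in\rho(A)$ by (b). Hence $\Theta$, restricted to a frame of vector fields, satisfies a linear first-order ODE along each target fiber; uniqueness of solutions with the initial condition $\Theta|_{T_M\G}=0$ then forces $\Theta\equiv 0$ on the $\t$-connected groupoid. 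Either route completes your proof; without one of them, the converse is unproven.
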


\begin{proof}
We claim that $\G$-invariance of $\nabla$ is equivalent to the statement that for any path $g(t)\in\G$, one has a commutative diagram:
     \begin{equation}
         \label{diag:connection:G-invariant}
     \xymatrix{
     \Rep_{\s(g(0))} \ar[r]^{\tau^{\nabla}_{\s(g(t))}}\ar[d]_{g(0)} & \Rep_{\s(g(t))}\ar[d]^{g(t)}\\
     \Rep_{\t(g(0))} \ar[r]_{\tau^{\nabla}_{\t(g(t))}} & \Rep_{\t(g(t))}
     }
     \end{equation}
where $\tau^\nabla$ denotes parallel transport for the connection $\nabla$. To see this note that the $\G$-invariance condition on $\nabla$ is equivalent to require the map $\Phi:\s^*\Rep\to \t^*\Rep$, $(g,v)\mapsto (g,g\cdot v)$, to preserve parallel transport along any curve $g(t)$:
\[ \Phi\circ\tau^{\nabla^\s}_{g(t)}=\tau^{\nabla^\t}_{g(t)}. \]
The claim now follows by observing that since the connections are obtained by pulling back $\nabla$ along $\s$ and $\t$, we have:
\[ \tau^{\nabla^\s}_{g(t)}=(g(t),\tau^{\nabla}_{\s(g(t))}),\qquad 
\tau^{\nabla^\t}_{g(t)}=(g(t),\tau^{\nabla}_{\t(g(t))}).
\]

Fix $g\in \G$, and let $x:=\t(g)$. Since $\G$ is $\t$-connected we can choose a path $g(\eps)$ in the target fiber $\t^{-1}(x)$ with $g(0)=1_x$ and $g(1)=g$. Then we have the $A$-path $a:[0,1]\to T^*M$, $a(\eps):=g^{-1}(\eps) g'(\eps)$, and parallel transport along $a(\eps)$ for the $A$-connection $\nabla^A$ amounts to acting by the inverse of $g$:
\[ \tau_a^{\nabla^A}:\Rep_{\t(g)}\to \Rep_{\s(g)},\quad \tau_a^{\nabla^A}(v)= g^{-1}v. \]

We prove the proposition by showing the equivalence between $\G$-invariance in the form of the diagram \eqref{diag:connection:G-invariant} and conditions \eqref{eq:connection:A-invariant}. If we assume $\G$-invariance, then given any $a\in A_x$ we choose a path $g(\eps)$ in the target fiber $\t^{-1}(x)$ such that $g(0)=1_x$ and $g'(0)=a$. Then the $A$-path $a(\eps):=g^{-1}(\eps) g'(\eps)$ satisfies
\[ \tau^{\nabla^A}_{a}=\tau^{\nabla}_{\s\circ g}. \]
Since $\s\circ g$ is the base path of $a$, by differentiating at $t=0$, we obtain:
\[ \nabla^A_a=\nabla_{\rho(a)}.\]

Next, given $a\in A_x$ and $v\in T_x M$ we choose a smooth family of paths $g(t,\eps)$ where for each fixed $t$ the path $\eps\mapsto g(t,\eps)$ lies in the target fiber $\t^{-1}(x(t))$ and starts at an identity, $g(t,0)=1_{x(t)}$, and moreover:
\[ 
\frac{\d g}{\d \eps}(0,0)=a,\quad \frac{\d}{\d t}(\s\circ g)(0,0)=v. 
\]
It follows from diagram \eqref{diag:connection:G-invariant} that for the family $\gamma(t,\eps):=\s(g(t,\eps))$ the $\nabla$-parallel transports commute:
\[ 
\tau^{\nabla}_{\eps\mapsto \gamma(t,\eps)}
\circ\tau^{\nabla}_{t\mapsto \gamma(t,0)}
=\tau^{\nabla}_{t\mapsto \gamma(t,\eps)}\circ\tau^{\nabla}_{\eps\mapsto \gamma(0,\eps)}.
\]
By differentiation, this gives:
\[ R^{\nabla}(\rho(a),v)=R^{\nabla}\Big(\frac{\d \gamma}{\d \eps}(0,0),\frac{\d \gamma}{\d t}(0,0)\Big)=0. 
\]

This argument can be reversed, to prove the converse. If the curvature condition in \eqref{eq:connection:A-invariant} holds for all $x\in M$, $a\in A_x$ and $v\in T_x M$, then one obtains that for any smooth family of paths $g(t,\eps)$, where for each fixed $t$ the path $\eps\mapsto g(t,\eps)$ lies in a target fiber, the $\nabla$-parallel transports commute:
\[ 
\tau^{\nabla}_{\eps\mapsto \gamma(t,\eps)}
\circ\tau^{\nabla}_{t\mapsto \gamma(t,0)}
=\tau^{\nabla}_{t\mapsto \gamma(t,\eps)}\circ\tau^{\nabla}_{\eps\mapsto \gamma(0,\eps)},
\]
where $\gamma(t,\eps):=\s(g(t,\eps))$. Then if the relationship between connections in \eqref{eq:connection:A-invariant} holds, we conclude that \eqref{diag:connection:G-invariant} must hold for the path $g(t):=g(t,1)$. Since one can obtain any path $g(t)$ in $\G$ in this way, the result follows.
\end{proof}

This motivates the following:

\begin{definition}\label{def:A-inv:conn}
Let $A\Ato M$ be a Lie algebroid and let $(\Rep,\nabla^A)$ be an $A$-representation. We call a (usual) connection $\nabla$ on $\Rep$ {\bf $A$-invariant} if \eqref{eq:connection:A-invariant} holds. 
\end{definition}

In the presence of an invariant connection we have an operator on IM-forms, which is the infinitesimal version of the operator on multiplicative forms that we saw before:  

\begin{proposition}
\label{prop:differ:IM:forms}
Let $A\Ato M$ be a Lie algebroid and $(\Rep,\nabla^A)$ an $A$-representation. An $A$-invariant connection $\nabla$ on $\Rep$ induces operator on IM-forms, as follows:
\[ \d_\imult^{\nabla}:\Omega^k_{\imult}(A,\Rep)\to \Omega^{k+1}_{\imult}(A,\Rep), \quad (L,l)\mapsto (\d^\nabla L,L-\d^\nabla l).\]
If $\nabla$ is flat, this operator is a differential.
\end{proposition}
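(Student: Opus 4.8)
The plan is to check directly that the pair $(L',l'):=(\d^\nabla L,\,L-\d^\nabla l)$ satisfies the defining conditions \eqref{eq:symbol:IM:form} and \eqref{eq:compatibility:IM:E:form} of an IM $(k+1)$-form whenever $(L,l)\in\Omega^k_\imult(A;\Rep)$ does, and then to observe that the differential property in the flat case is immediate. Three tools drive the computation. First, the Cartan ``magic formula'' $\Lie^\nabla_X=\d^\nabla i_X+i_X\d^\nabla$ holds for the covariant exterior derivative of any connection (it is curvature-free; one checks it on decomposables $s\otimes\omega$). Second, the first relation in \eqref{eq:connection:A-invariant}, $\nabla^A_a=\nabla_{\rho(a)}$, identifies the operator $\Lie_\al$ of \eqref{eq:Lie:derivative:rep} with the ordinary covariant Lie derivative $\Lie^\nabla_{\rho(\al)}$. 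Third, combining the magic formula with $(\d^\nabla)^2=R^\nabla\wedge(\cdot)$ gives the commutator $[\Lie^\nabla_X,\d^\nabla]\eta=(i_XR^\nabla)\wedge\eta$, which by the second relation in \eqref{eq:connection:A-invariant}, $i_{\rho(\al)}R^\nabla=0$, vanishes for $X=\rho(\al)$.

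First I would dispose of the two ``algebraic'' conditions. The symbol equation for $L'$ follows purely from the Leibniz rule for $\d^\nabla$: expanding $\d^\nabla(L(f\al))=\d^\nabla(fL(\al)+\d f\wedge l(\al))$ produces $fL'(\al)+\d f\wedge(L(\al)-\d^\nabla l(\al))$, where the relevant term arises from $\d^\nabla(\d f\wedge l(\al))=-\d f\wedge\d^\nabla l(\al)$. The skew-symmetry condition $i_{\rho(\al)}l'(\be)=-i_{\rho(\be)}l'(\al)$ is obtained by substituting $l'=L-\d^\nabla l$, rewriting $i_{\rho(\al)}\d^\nabla l(\be)$ via the magic formula and the third line of \eqref{eq:compatibility:IM:E:form}, and using the first line of \eqref{eq:compatibility:IM:E:form}; the leftover discrepancy is exactly $-\d^\nabla\big(i_{\rho(\al)}l(\be)+i_{\rho(\be)}l(\al)\big)$, which vanishes by skew-symmetry of $l$.

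The two ``bracket'' conditions are where $A$-invariance is essential and where I expect the only real obstacle. For the $l'$-equation, expanding both sides with \eqref{eq:compatibility:IM:E:form} and the magic formula reduces the identity to the commutator $\big[\Lie^\nabla_{\rho(\al)},\d^\nabla\big]l(\be)=(i_{\rho(\al)}R^\nabla)\wedge l(\be)$; for the $L'$-equation it reduces symmetrically to $[\Lie^\nabla_{\rho(\al)},\d^\nabla]L(\be)$ and $[\Lie^\nabla_{\rho(\be)},\d^\nabla]L(\al)$. In each case the curvature condition $i_{\rho(\al)}R^\nabla=0$ makes the obstruction vanish, which is precisely what upgrades ``connection'' to ``$A$-invariant connection''. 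This confirms $(L',l')\in\Omega^{k+1}_\imult(A;\Rep)$.

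Finally, the differential property is a formal consequence: iterating gives $\d^\nabla_\imult\d^\nabla_\imult(L,l)=\big((\d^\nabla)^2L,\ \d^\nabla L-\d^\nabla(L-\d^\nabla l)\big)=\big((\d^\nabla)^2L,(\d^\nabla)^2l\big)$, and when $\nabla$ is flat $(\d^\nabla)^2=0$, so the square is zero. The main subtlety to flag is simply bookkeeping the signs in the Cartan formula and in $i_X(R^\nabla\wedge\eta)=(i_XR^\nabla)\wedge\eta+R^\nabla\wedge i_X\eta$; with those fixed, every obstruction is a contraction of $R^\nabla$ with $\rho(\al)$ and dies by invariance.
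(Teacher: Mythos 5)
Your proof is correct and follows essentially the same route as the paper: the heart of both arguments is the commutator identity $\d^\nabla\Lie_\al-\Lie_\al\d^\nabla=(\d^\nabla)^2 i_{\rho(\al)}-i_{\rho(\al)}(\d^\nabla)^2=-(i_{\rho(\al)}R^{\nabla})\wedge(\cdot)$, obtained from the Cartan formula, $\Lie_\al=\Lie^\nabla_{\rho(\al)}$, and $(\d^\nabla)^2=R^\nabla\wedge(\cdot)$, which vanishes by $A$-invariance. You merely spell out the verification of the symbol equation, the individual IM conditions, and the flat-case square-zero property, all of which the paper's proof leaves to the reader after establishing that commutator.
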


\begin{proof}
If $\nabla^A_a=\nabla_{\rho(a)}$ then we obtain:
\[ \Lie_\al=\Lie^\nabla_{\rho(\al)}. \]
On the other, this together with the condition $R^{\nabla}(\rho(a),v)=0$, gives:
\begin{align*}
    \d^\nabla\Lie_\al-\Lie_\al\d^\nabla&=\d^\nabla\Lie^\nabla_{\rho(\al)}-\Lie^\nabla_{\rho(\al)}\d^\nabla\\
    &=(\d^\nabla)^2i_{\rho(\al)}-i_{\rho(\al)}(\d^\nabla)^2\\
    &=R^{\nabla}\wedge (i_{\rho(\al)}\, \cdot)-i_{\rho(\al)}(R^{\nabla}\wedge \cdot)
    =-(i_{\rho(\al)}R^{\nabla})\wedge\cdot=0.
\end{align*}
It follows that if $(L,l)$ satisfies the IM conditions \eqref{eq:compatibility:IM:E:form}, so does $(\d^\nabla L,L-\d^\nabla l)$.
\end{proof}

There is also a natural map:
\begin{align}
\label{eq:map:complexes:IM}
    \Omega_{\imult}^{k}(A,\Rep)\to\, &\Omega^k(A,\Rep),\quad (L,l)\mapsto \omega_l\\
    &\text{where}\quad \omega_l(\al_1,\dots,\al_k):=l(\al_1)(\rho(\al_2),\dots,\rho(\al_k)).\notag
\end{align}
The corresponding operation on the Lie groupoid level is the restriction of multiplicative forms to the target fibers. It is immediate to check:

\begin{lemma}\label{lemma:chain:map:IM:Lie:alg}
Let $A\Ato M$ be a Lie algebroid and $(\Rep,\nabla^A)$ an $A$-representation. If $\nabla$ is an $A$-invariant connection then \eqref{eq:map:complexes:IM} intertwines the differentials:
\[ (\Omega_{\imult}^{\bullet}(A,\Rep), \d^{\nabla}_\imult)\to (\Omega^{\bullet}(A,\Rep),\d^{\nabla^A}). \]
Hence, when $\nabla$ is flat we obtain a map of complexes, and so a map in cohomology:
\begin{equation}
\label{eq:IM:Algebrd:chomology}
 H^{\bullet}_{\imult}(A,\Rep)\to H^{\bullet}(A,\Rep).
\end{equation} 
\end{lemma}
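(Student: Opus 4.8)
The whole content of the lemma is the single identity
\[ \d^{\nabla^A}\omega_l \;=\; \omega_{\,L-\d^\nabla l}, \qquad (L,l)\in\Omega^k_\imult(A;\Rep), \]
since the map \eqref{eq:map:complexes:IM} reads off only the symbol of an IM form and, by the definition of $\d^\nabla_\imult$ in Proposition \ref{prop:differ:IM:forms}, the symbol of $\d^\nabla_\imult(L,l)$ is $L-\d^\nabla l$. So the plan is to evaluate both sides on sections $\al_0,\dots,\al_k\in\Gamma(A)$, writing $X_i:=\rho(\al_i)$, and to match them. On the right one has simply
\[ \omega_{L-\d^\nabla l}(\al_0,\dots,\al_k)=L(\al_0)(X_1,\dots,X_k)-\d^\nabla\!\big(l(\al_0)\big)(X_1,\dots,X_k). \]

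First I would record the two structural facts that drive the match. The $A$-invariance hypothesis \eqref{eq:connection:A-invariant} gives $\nabla^A_\al=\nabla_{\rho(\al)}$, hence $\Lie_\al=\Lie^\nabla_{\rho(\al)}$ on $\Rep$-valued forms (compare \eqref{eq:Lie:derivative:rep} and the proof of Proposition \ref{prop:differ:IM:forms}), so every algebroid derivative $\nabla^A_{\al_i}$ appearing in the Koszul expansion of $\d^{\nabla^A}\omega_l$ becomes an ordinary covariant derivative along the anchor image $X_i$; moreover $\rho([\al_i,\al_j])=[X_i,X_j]$. The crucial input is then the third IM equation in \eqref{eq:compatibility:IM:E:form},
\[ l([\al,\be])=\Lie_\al l(\be)-i_{\rho(\be)}L(\al), \]
which I would use to rewrite every bracket term $\omega_l([\al_i,\al_j],\dots)=l([\al_i,\al_j])(\dots)$ produced by the Koszul formula, while the skew-symmetry $i_{\rho(\al)}l(\be)=-i_{\rho(\be)}l(\al)$ (first equation of \eqref{eq:compatibility:IM:E:form}) keeps $\omega_l$ alternating and lets me reorder the anchored slots. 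After substitution the contributions split into two families: the $\Lie_{\al_i}l(\al_j)$-pieces combine with the derivative terms $\nabla_{X_i}\!\big(l(\al_0)(\dots)\big)$ to reconstruct exactly the Koszul formula for $-\d^\nabla(l(\al_0))(X_1,\dots,X_k)$, while the $i_{\rho(\al_j)}L(\al_i)$-pieces reassemble into $L(\al_0)(X_1,\dots,X_k)$. The degree-one case already displays the mechanism transparently: there the identity collapses to $i_{X_1}L(\al_0)=\nabla^A_{\al_0}l(\al_1)-l([\al_0,\al_1])$, which is precisely the third IM equation.

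The main obstacle is entirely combinatorial rather than conceptual: the Koszul expansion attaches a derivative term and a bracket term to every index and every pair of indices, and because $\omega_l$ singles out its first slot the signs, the reordering of the anchored arguments $X_i$, and the pairwise cancellation of the spurious $L(\al_i)$-contributions with $i\ge 1$ must all be tracked with care. No structural input beyond the IM equations \eqref{eq:symbol:IM:form}--\eqref{eq:compatibility:IM:E:form} and the compatibility \eqref{eq:connection:A-invariant} (which, via Proposition \ref{prop:differ:IM:forms}, also guarantees that the right-hand side is a genuine IM form) is required. As a conceptual cross-check, valid when $A$ is integrable, one may instead realise $(L,l)$ as a multiplicative form $\omega$ on a target $1$-connected integration $\G$ under the isomorphism $\Omega^\bullet_\mult(\G;\Rep)\simeq\Omega^\bullet_\imult(A;\Rep)$: then \eqref{eq:map:complexes:IM} is restriction of $\omega$ to the target fibres, $\d^\nabla_\imult$ corresponds to $\d^{\nabla^\s}$ (Proposition \ref{prop:differ:mult:forms}), and the identity reduces to the fact that restriction to the fibres commutes with the fibrewise $\nabla$-de Rham differential; the general case then follows since the identity is $C^\infty(M)$-local and algebraic in the structure data.
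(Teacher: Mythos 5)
Your proposal is correct and coincides with the paper's treatment: the paper offers no argument beyond ``it is immediate to check,'' which is precisely the direct verification you outline --- reduce the lemma to the single identity $\d^{\nabla^A}\omega_l=\omega_{L-\d^\nabla l}$, expand $\d^{\nabla^A}\omega_l$ by the Koszul formula, and absorb the bracket terms using the first and third IM equations \eqref{eq:compatibility:IM:E:form} together with $\nabla^A_\al=\nabla_{\rho(\al)}$ and $\rho([\al,\be])=[\rho(\al),\rho(\be)]$ (and the computation does close up as you describe, the excess copies of $L(\al_0)$ produced by the $i=0$ bracket terms cancelling after the skew-symmetry reordering). The one caveat is your closing ``cross-check'': the passage from the integrable to the general case via ``locality'' is not justified as stated (restriction to small opens of $M$ need not produce an integrable algebroid; one would have to invoke local Lie groupoids), but since this is offered only as a consistency check and your primary computation is self-contained, it does not affect the correctness of the proof.
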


\begin{remark}
In the flat case we also obtain a alternative way of expressing IM-forms with coefficients in $\Rep$, which is similar to (in fact extends) the case of trivial coefficients. Namely, an $A$-invariant connection $\nabla$ gives a 1:1 correspondence between maps $L:\Gamma(A)\to\Omega^k(M;\Rep)$ with symbol $l:A\to \wedge^{k-1}T^*M\otimes \Rep$ and pairs of tensors $(\mu,\zeta)$, where $\mu:A\to \wedge^{k-1}T^*M\otimes \Rep$ and $\zeta:A\to \wedge^{k}T^*M\otimes \Rep$, by setting:
\[ \mu:=l,\quad \zeta:=L-\d^\nabla l. \]
When $\nabla$ is flat, one checks that $(L,l)$ is an IM-form if and only if the pair $(\mu,\zeta)$ satisfies the $\nabla$-analogue of equations \eqref{eq:mult:form}, namely:
\begin{align*}
i_{\rho(\be)} \mu(\al) &=-i_{\rho(\al)} \mu(\be),\\
\mu([\al,\be])&=\Lie^\nabla_{\rho(\al)}\mu(\be)-i_{\rho(\be)}\d^\nabla \mu(\al)-i_{\rho(\be)}\zeta(\al),\\
\zeta([\al,\be])&=\Lie^\nabla_{\rho(\al)}\zeta(\be)-i_{\rho(\be)}\d^\nabla \zeta(\al).
\end{align*}
\end{remark}

\subsection{Linear forms}

A differential form $\omega\in\Omega^k(F)$ on the total space of a vector bundle $F\to M$ is called \textbf{linear} if
\[  m_t^*\omega=t\,\omega,\quad \forall t>0. \]
where $m_t:F\to F$ denotes fiberwise multiplication by $t$. Let $\Omega_{\mathrm{lin}}^k(F)$ the space of linear $k$-forms on $F$.

To any vector bundle map $\Theta:F\to \wedge^{k}T^*M$ covering the identity, one can associate a linear $k$-form: 
\[\Theta^*(\alpha^k_{\can})\in \Omega^k_{\mathrm{lin}}(F),\]
where $\alpha^k_{\mathrm{can}}\in \Omega^k(\wedge^k T^*M)$ is the tautological $k$-form:
\[\alpha^k_{\mathrm{can}}|_{\xi}:=\pr_{\wedge^k T^*M}^*(\xi).\]
We have the following decomposition of linear forms:
\begin{lemma}\label{lemma:decomposition:linear:forms}
Any linear $k$-form $\omega\in\Omega^k_{\mathrm{lin}}(F)$ can be decomposed as:
\[\omega=\Xi^*(\alpha^k_{\can})+\d \Theta^*(\alpha^{k-1}_{\can}),\]
for unique linear maps 
\[\Xi:F\to \wedge^k T^*M, \quad \Theta:F\to \wedge^{k-1}T^*M.\]
Moreover, $\omega$ is closed if and only if $\omega=\d \Theta^*(\alpha^{k-1}_{\can})$, i.e., if and only if $\Xi\equiv 0$.
\end{lemma}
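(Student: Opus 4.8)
The plan is to use the fiberwise scaling action to reduce everything to Cartan's magic formula. Write $\mathcal{E}\in\X(F)$ for the Euler vector field, i.e.\ the infinitesimal generator of the flow $m_{e^s}$. Differentiating the defining relation $m_t^*\omega=t\,\omega$ at $t=1$ shows that $\omega\in\Omega^k(F)$ is linear if and only if $\Lie_{\mathcal{E}}\omega=\omega$, and conversely integrating the ODE $\tfrac{\d}{\d s}m_{e^s}^*\omega=m_{e^s}^*\Lie_{\mathcal{E}}\omega$ recovers the scaling relation. Applying Cartan's formula to $\omega=\Lie_{\mathcal{E}}\omega$ then gives the tentative decomposition
\[\omega=\d(i_{\mathcal{E}}\omega)+i_{\mathcal{E}}(\d\omega),\]
and the whole point will be to recognize the second summand as $\Xi^*(\alpha^k_{\can})$ and the primitive $i_{\mathcal{E}}\omega$ as $\Theta^*(\alpha^{k-1}_{\can})$.

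The key observation is a description of the image of the assignment $\Xi\mapsto\Xi^*(\alpha^k_{\can})$: I claim it is exactly the space of \emph{horizontal} linear $k$-forms, meaning linear forms $\eta$ with $i_X\eta=0$ for every vertical vector field $X$ on $F$. This I would check in local coordinates $(x^i,u^a)$ adapted to $F\to M$, in which $\mathcal{E}=u^a\partial_{u^a}$: a form has weight one for $\Lie_{\mathcal{E}}$ precisely when, in the frame $\d x^i,\d u^a$, each monomial $f(x,u)\,\d x^I\wedge \d u^A$ satisfies $\deg_u f+|A|=1$, so a linear $k$-form is a sum of a piece $\sum f_{aI}(x)\,u^a\,\d x^I$ (with $|I|=k$) and a piece $\sum g_{bJ}(x)\,\d x^J\wedge\d u^b$ (with $|J|=k-1$); horizontality kills the second piece, and the first is manifestly $\Xi^*(\alpha^k_{\can})$ with $\Xi$ the linear map $u\mapsto\sum f_{aI}(x)\,u^a\,\d x^I$. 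Granting this, I would note that $i_{\mathcal{E}}\omega$ is again a linear form (because $\Lie_{\mathcal{E}}$ commutes with $i_{\mathcal{E}}$) and that it is horizontal---in the coordinates above $i_{\mathcal{E}}\omega=\pm\sum g_{bJ}\,u^b\,\d x^J$---so it equals $\Theta^*(\alpha^{k-1}_{\can})$ for a unique $\Theta$; likewise $\d\omega$ is linear because $\d$ commutes with $\Lie_{\mathcal{E}}$, so $i_{\mathcal{E}}\d\omega$ is a horizontal linear $k$-form and equals $\Xi^*(\alpha^k_{\can})$ for a unique $\Xi$. This yields the stated decomposition.

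For uniqueness, contraction with $\mathcal{E}$ is the inverse operation: since $i_{\mathcal{E}}(\Xi^*\alpha^k_{\can})=0$ and $i_{\mathcal{E}}\d(\Theta^*\alpha^{k-1}_{\can})=\Lie_{\mathcal{E}}(\Theta^*\alpha^{k-1}_{\can})-\d\,i_{\mathcal{E}}(\Theta^*\alpha^{k-1}_{\can})=\Theta^*\alpha^{k-1}_{\can}$, any decomposition forces $i_{\mathcal{E}}\omega=\Theta^*\alpha^{k-1}_{\can}$ and then $\Xi^*\alpha^k_{\can}=\omega-\d(\Theta^*\alpha^{k-1}_{\can})$; combined with the injectivity of $\Xi\mapsto\Xi^*\alpha^k_{\can}$ and $\Theta\mapsto\Theta^*\alpha^{k-1}_{\can}$ (clear from the coordinate formula, since $\alpha_{\can}$ is tautological), this pins down $\Xi$ and $\Theta$. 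The same computation gives $i_{\mathcal{E}}\d\omega=\Xi^*\alpha^k_{\can}$, so $\d\omega=0$ forces $\Xi^*\alpha^k_{\can}=0$ and hence $\Xi\equiv0$, while conversely $\Xi\equiv0$ makes $\omega=\d(\Theta^*\alpha^{k-1}_{\can})$ exact and thus closed. The main obstacle is really the coordinate classification of weight-one forms together with the identification of the horizontal ones with the tautological pullbacks $\Xi^*(\alpha^k_{\can})$; once that dictionary is in place, everything reduces to the formal identities relating $\Lie_{\mathcal{E}}$, $i_{\mathcal{E}}$ and $\d$.
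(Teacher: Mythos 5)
Your proposal is correct. Note that the paper itself gives no proof of this lemma: it appears in the appendix, where the authors state that proofs of such results are deferred to the references (\cite{BuCa12,BCdH16,CaDr17,CrSaSt15,DrEg19}), so there is no in-paper argument to compare against. Your argument is essentially the standard one and it is sound: the equivalence of linearity with $\Lie_{\mathcal{E}}\omega=\omega$, Cartan's formula $\omega=\d(i_{\mathcal{E}}\omega)+i_{\mathcal{E}}\d\omega$, the identification of horizontal linear forms with pullbacks $\Xi^*(\alpha_{\can})$, the fact that $i_{\mathcal{E}}$ and $\d$ preserve the weight-one condition, and the computation $i_{\mathcal{E}}\d(\Theta^*\alpha^{k-1}_{\can})=\Theta^*\alpha^{k-1}_{\can}$ (which simultaneously gives existence, uniqueness, and the closedness criterion) all check out. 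The only step stated a bit loosely is the coordinate classification of weight-one forms: writing the condition as ``$\deg_u f+|A|=1$'' presupposes polynomial coefficients, whereas what $\Lie_{\mathcal{E}}\omega=\omega$ actually gives is that the coefficient of $\d x^I\wedge\d u^A$ is fiberwise positively homogeneous of degree $1-|A|$; one should add the (standard, elementary) observation that a smooth function homogeneous of degree $1$ is fiberwise linear, of degree $0$ is basic, and of negative degree vanishes, which kills all terms with $|A|\geq 2$ and yields exactly the two types of monomials you use. With that sentence added, the proof is complete.
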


We have the corresponding notion on a VB groupoid:

\begin{definition}\label{definition:linear:multipl}
Given a VB groupoid:
\[
\xymatrix{
F \ar[r] \ar@<0.15pc>[d] \ar@<-0.15pc>[d] & \G \ar@<0.15pc>[d] \ar@<-0.15pc>[d]\\
E\ar[r] & M
}
\]
a form $\omega\in\Omega^k_\mult(F)$ is called a \textbf{linear multiplicative form} if it is multiplicative for the groupoid $F\tto E$ and linear for the vector bundle $F\to \G$.
\end{definition}
One important example is the canonical symplectic form $\omega_\can\in\Omega^2_\mult(T^*\G)$ on the cotangent VB groupoid
\[
\xymatrix{
T^*\G \ar[r] \ar@<0.15pc>[d] \ar@<-0.15pc>[d] & \G \ar@<0.15pc>[d] \ar@<-0.15pc>[d]\\
A^*\ar[r] & M
}
\]
where $A$ denotes the Lie algebroid of $\G$. The canonical primitive of $\omega_\can$, i.e., the Liouville 1-form $\al_{\can}^1\in\Omega^1_\mult(T^*\G)$, is also a linear multiplicative 1-form.
\smallskip

We now turn to the infinitesimal version of these results. 

\begin{definition}\label{definition:linear:IM}
A \textbf{linear IM form} on a VB algebroid
\[ 
\xymatrix{
B\ar@{=>}[d] \ar[r] & A\ar@{=>}[d] \\
E\ar[r] & M}
\]
is an IM form $(\mu,\zeta)\in\Omega^k_\imult(B)$ on the algebroid $B\to E$ satisfying 
\[  m_t^*\mu=t\,\mu,\quad m_t^*\zeta=t\,\zeta,\quad \forall t>0, \]
where $m_t:B\to B$ is the scalar multiplication on the vector bundle $B\to A$.
\end{definition}

Under the correspondence between multiplicative forms and IM forms, linear multiplicative forms on a VB groupoid correspond to linear IM forms on the associated VB algebroid.

For any Lie algebroid $A\to M$, its cotangent algebroid is a VB algebroid:
\[ 
\xymatrix{
T^*A\ar@{=>}[d] \ar[r] & A\ar@{=>}[d] \\
A^*\ar[r] & M}
\]
and carries a canonical closed, linear, multiplicative 2-form $\mu_\can\in\Omega^2_\imult(T^*A)$, namely the canonical isomorphism (called the reverse isomorphism in \cite{Mackenzie05}):
\[ \mu_\can:T^*A\to T^*A^*. \]
This IM 2-form is exact with canonical primitive the linear, multiplicative 1-form $(f_\can,\mu_\can)\in\Omega^1_\imult(T^*A)$, where $f_\can:T^*A\to\R$ is the tautological map
\[ f_\can(\al)=\al\Big(\frac{\d}{\d t}\big|_{t=0} e^tu \Big ),\quad \text{ for }\quad\al\in T^*_u A. \]

\end{document}